\documentclass{article}
\usepackage{titlesec}
\usepackage{titling}
\titleformat{\chapter}{\normalfont\huge}{\thechapter.}{20pt}{\huge\it}
\renewcommand{\paragraph}{\subsection}
\usepackage[utf8]{inputenc}
\usepackage[T1]{fontenc}
\usepackage{amsmath,amssymb,amsthm,mathrsfs,calligra,amsfonts}
\usepackage{enumerate}
\usepackage[framemethod=tikz]{mdframed}
\usepackage{array}
\usepackage{lipsum}
\usepackage{mathtools}
\usepackage{scrextend}
\usepackage{tikz-cd}
\usepackage{cite}
\usepackage{yhmath}
\usepackage{pdfrender,xcolor}
\usepackage{lmodern}
\usepackage{thmtools}
\usepackage{geometry}
\usepackage[parfill]{parskip}    
\usepackage{graphicx}
\usepackage{epstopdf}
\usepackage[colorlinks]{hyperref}
\usepackage[noabbrev,capitalize]{cleveref}
\crefformat{section}{\S#2#1#3}
\theoremstyle{plain}
\newtheorem{theorem}{Theorem}[subsection]   
\newtheorem{corollary}[theorem]{Corollary}
\newtheorem{lemma}[theorem]{Lemma}

\newtheorem{proposition}[theorem]{Proposition}
\theoremstyle{definition}
\newtheorem{definition}[theorem]{Definition}

\newtheorem{example}[theorem]{Example}
\newtheorem{conjecture}[theorem]{Conjecture}

\theoremstyle{remark}
\newtheorem{remark}[theorem]{Remark}
\titleformat*{\subsection}{\itshape\mdseries}

\newcommand{\Em}{\widehat{\mathscr{E}}_{\Omega_\mr{X}}}
\newcommand{\Db}{{\mathrm{\bf{D}}}^{\mr{b}}}

\newcommand{\mr}{\mathrm}

\newcommand{\Aa}{\mr{A}_{\infty}}
\newcommand{\Z}{\mathbf Z}
\newcommand{\Ww}{\widehat{\mathscr{W}}_{\mr{X}}}

\newcommand{\Q}{\mathbf Q}
\newcommand{\C}{\mathbf C}

\newcommand{\MS}{\mr{MS}}
\newcommand{\Cc}{\mathcal C}
\newcommand{\brak}{[\![\hbar]\!]}
\newcommand{\cbrak}{(\!(\hbar)\!)}

\newcommand{\2}{\vspace{2mm}}

\newcommand{\E}{\mathscr E}

\newcommand{\F}{\mathscr{F}}

\newcommand*{\shom}{\mathscr{H}\text{\kern -3pt {\it{om}}}\,}
\newcommand*{\ext}{\mathscr{E}\text{\kern -1.5pt {\it{xt}}}\,}
\newcommand*{\tor}{\mathscr{T}\text{\kern -3pt {\it{or}}}\,}

\makeatletter
 	
\newcommand{\xdashrightarrow}[2][]{\ext@arrow 0359\rightarrowfill@@{#1}{#2}}
\newcommand{\xdashleftarrow}[2][]{\ext@arrow 3095\leftarrowfill@@{#1}{#2}}
\newcommand{\xdashleftrightarrow}[2][]{\ext@arrow 3359\leftrightarrowfill@@{#1}{#2}}
\def\rightarrowfill@@{\arrowfill@@\relax\relbar\rightarrow}
\def\leftarrowfill@@{\arrowfill@@\leftarrow\relbar\relax}
\def\leftrightarrowfill@@{\arrowfill@@\leftarrow\relbar\rightarrow}
\def\arrowfill@@#1#2#3#4{%
  $\m@th\thickmuskip0mu\medmuskip\thickmuskip\thinmuskip\thickmuskip
   \relax#4#1
   \xleaders\hbox{$#4#2$}\hfill
   #3$%
}
\makeatother

\title{\vspace{-1cm}Differential graded categories in holomorphic symplectic geometry}
\author{Borislav Mladenov}

\AtEndDocument{\bigskip{\footnotesize%
  \text{Institute of Mathematics, Academia Sinica, Taipei, Taiwan} \\
  \href{mailto:mladenov@as.edu.tw}{\textit{mladenov@as.edu.tw}}
  }}
\predate{}
\date{}
\postdate{}
\begin{document}
\maketitle
\nocite{*}
\begin{abstract}
Let $(\mr{X},\sigma)$ be a holomorphic symplectic manifold. We study the differential graded category of canonical Lagrangian $\mr{D}$-branes $\mathcal{D}_\mr{Lag}(\mr{X},\sigma)$ along with its deformation quantisation, spanned by quantised orientations, $\mathcal{DQ}(\mr{X},\sigma)$, and the virtual de Rham category $\mathcal{DR}^{\mr{vir}}(\mr{X},\sigma)$.\\
$\hspace*{4mm}$We prove the formality of these dg categories when localised at a countable collection of orientable compact K\"{a}hler Lagrangian submanifolds with pairwise clean intersections. \\
$\hspace*{4mm}$Along the way, we define Kaledin classes of minimal $\Aa$-categories and show that they are the obstructions to formality. In addition, we obtain a formality criterion for flat weakly proper Calabi-Yau dg categories.
\end{abstract}
\tableofcontents

\section{Introduction}

Let $(\mr{X},\sigma)$ be a holomorphic symplectic manifold. The main goal of this paper is the study of various differential graded categories associated to $\mr{X}$ and its Lagrangian submanifolds.

\subsection{Lagrangian \texorpdfstring{$\mr{D}$}--branes}

Let $\mr{L}\subset \mr{X}$ be a compact K\"{a}hler Lagrangian submanifold equipped with a choice of a square root $\mr{K}_\mr{L}^{1/2}$ of its canonical bundle. Recall that there is a spectral sequence $$\mr{E}_2=\mr{H}(\mr{L}/\C)\Rightarrow \mr{Ext}_{\mathscr{O}_\mr{X}}\left(\mr{K}_\mr{L}^{1/2},\mr{K}_\mr{L}^{1/2}\right).$$
It was shown in \cite{MR4678893} that it collapses on the second page and moreover that the differential graded algebra $$\mr{RHom}_{\mathscr{O}_\mr{X}}\left(\mr{K}_\mr{L}^{1/2},\mr{K}_\mr{L}^{1/2}\right)$$ is formal and quasi-isomorphic to the de Rham algebra of $\mr{L}$.\\
These results were motivated by mirror symmetry and the latter suggests that we should be working with categories rather than single objects. In the present article, we shall upgrade the above to the level of differential graded categories.\\
Let $\mathcal{D}_\mr{Lag}(\mr{X},\sigma)$ be the full differential graded subcategory of $\Db_\mr{dg}(\mr{X})$ spanned by square-roots of the canonical bundles of Lagrangian submanifolds in $\mr{X}$. Thus:
\begin{itemize}
\item Objects in $\mathcal{D}_\mr{Lag}(\mr{X},\sigma)$ are choices of square-roots $\mr{K}_\mr{L}^{1/2}$ where $\mr{L}$ is a Lagrangian submanifold in $\mr{X}$;
\item For a pair of Lagrangian submanifolds $\mr{L}$ and $\mr{M}$ and two objects associated with these Lagrangians $\mr{K}_\mr{L}^{1/2}$ and $\mr{K}_\mr{M}^{1/2}$, the morphism spaces are given by the complexes $$\mathcal{D}_\mr{Lag}\left(\mr{K}_\mr{L}^{1/2},\mr{K}_\mr{M}^{1/2}\right)=\mr{RHom}_{\mathscr{O}_\mr{X}}\left(\mr{K}_\mr{L}^{1/2},\mr{K}_\mr{M}^{1/2}\right).$$
\end{itemize}
We introduce a local version of this category. Let $\mathfrak{L}$ be a collection of orientable Lagrangian submanifolds in $\mr{X}$. We denote by $\mathcal{D}_\mr{Lag}(\mathfrak{L})$ the full subcategory of $\mathcal{D}_\mr{Lag}(\mr{X},\sigma)$ spanned by objects supported in $\mathfrak{L}$. Our first result goes as follows:
\2
\begin{theorem}[\cref{main1proof}]\label{main1}
Let $(\mr{X},\sigma)$ be a holomorphic symplectic manifold and let $\mathfrak{L}$ be a (countable) collection of orientable compact K\"{a}hler Lagrangian submanifolds with clean pairwise intersections. Then the differential graded category $\mathcal{D}_\mr{Lag}(\mathfrak{L})$ is formal.
\end{theorem}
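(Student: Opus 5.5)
We plan to reduce formality to the vanishing of the obstruction classes mentioned in the abstract, the Kaledin classes of a minimal $\Aa$-model, and to kill these classes with a weight (grading) argument. Homological perturbation transfers the dg structure of $\mathcal{D}_\mr{Lag}(\mathfrak{L})$ to a minimal $\Aa$-category $\mathcal{A}$ on the cohomology
$$\mr{Ext}_{\mathscr{O}_\mr{X}}\left(\mr{K}_\mr{L}^{1/2},\mr{K}_\mr{M}^{1/2}\right), \qquad \mr{L},\mr{M}\in\mathfrak{L}.$$
Formality of $\mathcal{D}_\mr{Lag}(\mathfrak{L})$ is then equivalent to $\mathcal{A}$ being $\Aa$-isomorphic to its underlying graded category $(\mathcal{A},m_2)$. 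The Kaledin class lives in Hochschild cohomology, of the form $\mr{HH}^2(\mathcal{A},\mathcal{A}[-1])$ of the graded category, and is obtained from the $\Aa$-structure pulled to a family over $\mathbf{C}[\![t]\!]$. The first task is to set up this obstruction theory for minimal $\Aa$-categories with countably many objects. The countability hypothesis lets us build the homotopy transfer and the splittings object by object, and no finiteness is needed.

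The geometric input is the local computation. For $\mr{L},\mr{M}$ intersecting cleanly along $\mr{Z}=\mr{L}\cap\mr{M}$, the spectral sequence generalising the one recalled above has $\mr{E}_2$-page the cohomology of $\mr{Z}$ twisted by the orientation line $\mr{K}_\mr{L}^{1/2}|_\mr{Z}\otimes \mr{K}_\mr{M}^{1/2}|_\mr{Z}\otimes \det(N)$. Using orientability and the canonical square roots, we expect this line to be a flat rank-one local system on $\mr{Z}$ (compare the Kashiwara--Schapira / Behrend--Fantechi analysis). The resulting page is a twisted de Rham cohomology of a compact Kähler manifold, so Hodge theory makes it pure. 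The aim is to exhibit the morphism complexes as (twisted) Dolbeault/de Rham complexes carrying a compatible \emph{weight} grading, or alternatively a $\mathbf{C}^*$-action coming from the conformal rescaling $\sigma\mapsto\lambda\sigma$. The requirements are that compositions preserve weight and that the weight on $\mr{Ext}^k$ equals $k$ up to a shift determined by the pair $(\mr{L},\mr{M})$.

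Given such a purity statement, the argument runs as follows. A higher product $m_n$ has degree $2-n$. Since it preserves weight while weight tracks degree, $m_n$ must vanish for $n\ge3$ once the shifts are additive under composition, which is a cocycle condition on the shifts that we expect to hold because they come from the dimensions of the intersections. Equivalently, the Kaledin class is the class of the Euler derivation associated with the grading, so it is a coboundary and the category is formal. To make this rigorous at the dg level, we would implement the Deligne--Griffiths--Morgan--Sullivan $dd^c$-lemma argument on a dg model. The model would be built from Dolbeault resolutions of the $\mr{K}_\mr{L}^{1/2}$ together with the clean-intersection local form of the pair, namely a neighbourhood of $\mr{Z}$ modelled on the cotangent bundle. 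The zigzag of quasi-isomorphisms through $\ker d^c$ must be functorial in composition across three Lagrangians.

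The main obstacle is this compatibility across triples $\mr{L},\mr{M},\mr{N}$. The Hodge-theoretic weight and the $dd^c$-lemma are naturally defined on each $\mr{Z}=\mr{L}\cap\mr{M}$ separately, but composition passes through triple intersections, and these need not be clean relative to the pairwise data. The first thing to try is to avoid choosing models at all: we would show the weight/Euler class is canonical, for instance via the holomorphic Euler vector field from the $\mathbf{C}^*$-action on a formal neighbourhood, and check that the Kaledin class vanishes after pairing with the Calabi--Yau (Serre duality) structure. This is where we would invoke the formality criterion for flat weakly proper Calabi--Yau dg categories. The categories are weakly proper since the Lagrangians are compact, and $2$-Calabi--Yau from $\sigma$, so the criterion should reduce the vanishing to a statement about the pure Hodge structures on the individual Ext groups.
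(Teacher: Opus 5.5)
\textbf{There is a genuine gap.} Everything in your argument rests on a weight grading, or a cochain-level Hodge structure, on a model of $\mathcal{D}_\mr{Lag}(\mathfrak{L})$ that is preserved by composition across all triples and equals cohomological degree up to a shift. That grading is never produced, and none of your proposed sources gives it.
\begin{itemize}
\item The coherent category does not see $\sigma$ at all, so rescaling $\sigma\mapsto\lambda\sigma$ acts trivially on it and yields no weight.
\item A fibrewise $\C^*$-action exists only on a cotangent model of a neighbourhood of \emph{one} Lagrangian, since holomorphic Weinstein neighbourhoods are only local. It never exists simultaneously near $\mr{L}\cup\mr{M}\cup\mr{N}$, which is exactly the triple-compatibility problem you flag and leave open.
\item Purity of the individual groups $\mr{Ext}^k$ says nothing about $m_n$ for $n\ge 3$ unless it comes from a product-compatible Hodge structure on cochains (the DGMS mechanism).
\item A Dolbeault-type model on which a $dd^c$-lemma could run would need a splitting of $\mr{R}\shom_{\mathscr{O}_\mr{X}}(\mr{K}_\mr{L}^{1/2},\mr{K}_\mr{M}^{1/2})$ into its $\ext$-sheaves $\Omega^{\bullet-c}_{\mr{L}\cap\mr{M}}\otimes\mr{or}$, compatible with Yoneda products. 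That is an HKR-type formality statement at $\hbar=0$, and the local Koszul/cotangent models do not supply it because they do not glue.
\end{itemize}
Your fallback also misreads the Calabi--Yau criterion. \cref{a2cy} compares a flat family over an integral domain with its generic fibre: it says ``generically formal $\Rightarrow$ formal''. Applied to $\mathcal{D}_\mr{Lag}(\mathfrak{L})$ over $\C$ it is vacuous, and it does not reduce anything to purity of Ext groups. A smaller point: the Calabi--Yau dimension is $\dim\mr{X}=2n$, not $2$.

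\textbf{The missing idea is the one-parameter family the criterion needs.}
\begin{itemize}
\item By compactness, quantised orientations have global $\widehat{\mathscr{W}}_\mr{X}(0)$-lattices. These give a $\C\brak$-linear dg category $\widetilde{\mathcal{DQ}}_\mathfrak{L}$ whose central fibre is $\mathcal{D}_\mr{Lag}(\mathfrak{L})$ and whose localisation is $\mathcal{DQ}_\mathfrak{L}$.
\item On the generic fibre, the Behrend--Fantechi differential $\mr{d}_\mr{BF}$ on the $\ext$-sheaves becomes part of the differential. By Kashiwara--Schapira, Gunningham--Safronov and \cref{pdr}, $\mathcal{DQ}_\mathfrak{L}\simeq\mr{Ind}_{\C\cbrak/\C}(\mathcal{DR}^\mr{vir}_\mathfrak{L})$.
\item $\mathcal{DR}^\mr{vir}_\mathfrak{L}$ has morphism complexes $(\Gamma\,\mr{MS}(\mathscr{DR}^\mr{vir}),\mr{d}_\mr{BF}+\bar\partial)$. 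It is formal by the $\mr{d}_\mr{BF}\bar\partial$-lemma on the compact K\"ahler clean intersections. This is where your DGMS instinct actually applies.
\item Formality is then pushed to $\hbar=0$ via \cref{a2cy}. The inputs are: Ext groups free over $\C\brak$ (flatness); weak properness; the $2n$-Calabi--Yau structure from $\mr{K}^{\mathscr{W}}_\mr{X}\cong\hbar^{n}\mathscr{W}_\mr{X}(0)$; and torsion-freeness of $\mr{HH}_\bullet(\mr{H}\widetilde{\mathcal{DQ}}_\mathfrak{L})$.
\item Torsion-freeness comes from showing the composition on $\mr{H}\widetilde{\mathcal{DQ}}_\mathfrak{L}$ is an $\hbar$-trivial deformation (\cref{hfree}). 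The Kaledin-type obstruction classes are then torsion in a torsion-free module, hence zero. So $\widetilde{\mathcal{DQ}}_\mathfrak{L}$ is formal, and so is its central fibre.
\end{itemize}
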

We call an $\mathfrak{L}$ as in the theorem a Solomon-Verbitsky collection. The proof of this theorem is via deformation quantisation and a categorical generalisation of the above mentioned spectral sequence, more details on the method of proof will be given below.
 
\subsection{A (local) holomorphic Fukaya category via deformation quantisation}

As already mentioned, the proof of \cref{main1} is using deformation quantisation techniques. By \cite{10.1155/S1073792804132819} there exists a (unique) canonical deformation quantisation $\C\cbrak$-algebroid $\widehat{\mathscr{W}}_\mr{X}$. We are going to consider a differential graded category of $\widehat{\mathscr{W}}_\mr{X}$-modules.\\
Let $\mathcal{DQ}(\mr{X},\sigma)$ be the full differential graded subcategory of $\Db_{\mr{dg}}(\widehat{\mathscr{W}}_\mr{X})$ consisting of quantised orientation modules. As in the D-brane situation, for a collection $\mathfrak{L}$ of Lagrangian submanifolds, we write $\mathcal{DQ}_\mathfrak{L}(\mr{X},\sigma)$ be the full subcategory spanned by objects supported in $\mathfrak{L}$. Our second result is as follows, cf \cite[Conjecture~0.3.4]{MR4678893}:
\2
\begin{theorem}[\cref{lags}]\label{main2}
Let $(\mr{X},\sigma)$ be a holomorphic symplectic manifold and let $\mathfrak{L}$ be a (countable) collection of orientable compact K\"{a}hler Lagrangian submanifolds with clean pairwise intersections. Then the differential graded category $\mathcal{DQ}_\mathfrak{L}(\mr{X},\sigma)$ is formal.
\end{theorem}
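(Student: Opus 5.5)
The strategy is to transport the known single-object formality results, Solomon--Verbitsky and \cite{MR4678893}, to the categorical level. The point is to show that the obstructions to formality vanish. I would use the Kaledin classes of minimal $\Aa$-categories announced in the abstract: a minimal $\Aa$-category $\mathcal{A}$ over a field of characteristic zero is formal if and only if its Kaledin class $\kappa(\mathcal{A}) \in \mr{HH}^2(\mr{H}\mathcal{A}, \mr{H}\mathcal{A})$ vanishes. The class is computed from the $\hbar$-linear deformation or, equivalently, from a grading-type derivation. Concretely, I would take a minimal model of $\mathcal{DQ}_\mathfrak{L}(\mr{X},\sigma)$ over $\C\cbrak$ and aim to exhibit an Euler-type derivation of its cohomology, compatible with the $\Aa$-structure up to homotopy, whose associated class is the Kaledin class. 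Its vanishing then gives formality.

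The key steps, in order, are as follows.

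First, compute the morphism complexes. For $\mr{L},\mr{M}\in\mathfrak{L}$ intersecting cleanly, and quantised orientation modules $\mathscr{L},\mathscr{M}$ supported on them, I would use the result of Kashiwara--Schapira that $\mathrm{R}\shom_{\Ww}(\mathscr{L},\mathscr{M})$ is concentrated on $\mr{L}\cap\mr{M}$. This complex is a locally constant sheaf of $\C\cbrak$-modules, namely a twisted local system of rank one placed in degree $\mr{codim}$. Hence
\[
\mathcal{DQ}_\mathfrak{L}(\mathscr{L},\mathscr{M}) \simeq \mr{R}\Gamma\bigl(\mr{L}\cap\mr{M}; \mathcal{K}_{\mr{LM}}\bigr)[-d_{\mr{LM}}].
\]
Since $\mr{L}$ and $\mr{M}$ are orientable, $\mathcal{K}_{\mr{LM}}$ is trivialisable, so this is the shifted de Rham cohomology of a compact K\"ahler manifold, namely the components of $\mr{L}\cap\mr{M}$.

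Second, show that the composition maps (perverse-sheaf convolution) are compatible with a weight or Hodge grading. I would put the $\mathcal{DQ}$-category in the framework of the earlier formality criterion for flat weakly proper Calabi--Yau dg categories. Flatness over $\C\brak$ comes from the lattice versions of the quantised modules. Weak properness holds because the morphism spaces above are finite-dimensional over $\C\cbrak$. The Calabi--Yau structure of dimension $\dim\mr{X}/2$ comes from Kashiwara--Schapira duality for DQ-modules on a symplectic manifold.

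Third, show that the Kaledin class vanishes. Mimicking the Deligne--Griffiths--Morgan--Sullivan argument, I would use the pure Hodge structures on $\mr{H}^\bullet(\mr{L}\cap\mr{M})$, or a $dd^c$-type argument, to show that all Massey-type higher products respect a weight grading in which cohomological degree equals weight. This forces the class, which has nonzero weight, to be zero. Countability of $\mathfrak{L}$ is handled by writing the category as a filtered colimit of finite full subcategories. Formality of each finite piece then passes to the colimit, provided the formality quasi-isomorphisms can be chosen compatibly. I would arrange this by proving that the Kaledin class vanishes for the whole category at once, since it is determined object-pair-wise.

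The main obstacle is the third step: controlling the higher compositions across \emph{different} Lagrangians, where the morphism spaces are cohomologies of intersections carrying shifts and twists. A purity argument needs the compositions
\[
\mr{H}(\mr{L}\cap\mr{M})\otimes\mr{H}(\mr{M}\cap\mr{N})\to\mr{H}(\mr{L}\cap\mr{N})
\]
to be morphisms of (shifted) Hodge structures. To prove this, I would first try to realise each composition as pull-push along the clean triple intersection $\mr{L}\cap\mr{M}\cap\mr{N}$, using Gysin maps, which are known to be morphisms of Hodge structures with the appropriate Tate twist. Once this is established, the weight argument kills the Kaledin class. Passing from $\C\cbrak$ back to the $\mr{D}$-brane category in \cref{main1} would then follow by the degeneration (categorical spectral sequence) argument.
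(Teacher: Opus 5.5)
\textbf{There is a genuine gap, located at the step you flag as the main obstacle: even if completed, it does not prove formality.} Using Gysin pull--push to show that $\mr{H}(\mr{L}\cap\mr{M})\otimes\mr{H}(\mr{M}\cap\mr{N})\to\mr{H}(\mr{L}\cap\mr{N})$ is a morphism of Hodge structures only concerns $m_2$ on the cohomology category. The higher operations $m_k$, $k\ge 3$, of a minimal model (equivalently the Kaledin class, or the Massey products) come from chain-level homotopies. A weight argument constrains them only if the weight grading, or your Euler derivation, exists on a \emph{chain-level} model compatibly with composition, so that homotopy transfer can be made weight-preserving. Without that, ``the class has nonzero weight'' only says that the components $[m_k]$ have nonzero internal degree in the bigraded Hochschild cohomology of $\mr{H}\mathcal{A}$, and nothing forces those pieces to vanish. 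This is why the paper states ``purity implies formality'' for $\mathcal{DQ}$ only as a conjecture.

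There are three further problems.
\begin{itemize}
\item Your step 1 identifies each morphism complex separately with twisted de Rham cohomology of $\mr{L}\cap\mr{M}$. It gives no chain-level comparison of the \emph{compositions} of $\mathcal{DQ}_\mathfrak{L}$ with anything. Incidentally, the twist $\mr{or}(\mr{K}^{1/2}_\mr{L},\mr{K}^{1/2}_\mr{M})$ is $2$-torsion, not trivial in general; this is harmless since it is unitary.
\item Step 2 is circular. \cref{a2cy} takes formality of the generic fibre as input, and the generic fibre of $\widetilde{\mathcal{DQ}}_\mathfrak{L}$ is $\mathcal{DQ}_\mathfrak{L}$ itself. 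The paper uses that criterion only afterwards, to descend formality to $\mathcal{D}_{\mr{Lag}}(\mathfrak{L})$.
\item The Kaledin class is not determined object-pair-wise, since Hochschild cochains involve composable chains of arbitrary length. So the passage to a countable collection is not handled that way.
\end{itemize}

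\textbf{The missing idea is a multiplicative double-complex model of the whole category.} The paper builds $\mathcal{DR}^{\mr{vir}}_\mathfrak{L}$, with morphism complexes $\Gamma(\mr{X},\MS(\mathscr{DR}^{\mr{vir}}(\mathscr{D}_\mr{L},\mathscr{D}_\mr{M})))$. These carry two differentials, $\mr{d}_{\mr{BF}}$ and $\bar\partial$; in the clean case this is the Dolbeault double complex of $\mr{L}\cap\mr{M}$ twisted by a flat unitary line bundle. The compositions (Yoneda product together with multiplicativity of Malgrange--Serre resolutions) respect each differential separately.

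Because the components of each $\mr{L}\cap\mr{M}$ are compact K\"ahler, the $\mr{d}_{\mr{BF}}\bar\partial$-lemma holds. The DGMS zig-zag of dg functors $\mathcal{DR}^{\mr{vir}}_\mathfrak{L}\leftarrow\mathcal{DR}^{\mr{vir},\bar\partial}_\mathfrak{L}\to\mr{H}_{\bar\partial}(\mathcal{DR}^{\mr{vir}}_\mathfrak{L})$ then consists of quasi-isomorphisms onto a category with zero differential (\cref{drvir}). Being natural, it treats all countably many objects at once.

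Formality of $\mathcal{DQ}_\mathfrak{L}$ follows from the dg quasi-isomorphism $\mathcal{DQ}_\mathfrak{L}\simeq\mr{Ind}_{\C\cbrak/\C}(\mathcal{DR}^{\mr{vir}}_\mathfrak{L})$ of \cref{dqdr}. That is assembled from \cref{pervref} and \cref{pdr}, with multiplicativity checked locally. No Kaledin classes are needed for this statement. Your passing mention of a ``$dd^c$-type argument'' points the right way, but it only works once you have such a chain-level model with compatible compositions and a comparison of $\mathcal{DQ}_\mathfrak{L}$ with it as dg categories.
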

If each $\mr{L}$ in $\mathfrak{L}$ is equipped with a choice of (classical) orientation data $\mr{K}_\mr{L}^{1/2}$, then there is a canonical full subcategory $\mathcal{DQ}^{\mr{s}}_\mathfrak{L}(\mr{X},\sigma)$ of $\mathcal{DQ}_\mathfrak{L}(\mr{X},\sigma)$, containing a single object for each Lagrangian in $\mathfrak{L}$, defined as follows; by \cite{MR2331247} and \cite{saf}, for each $\lambda	\in \C$, one has a unique quantised orientation $\mathscr{D}^\lambda_\mr{L}$ such that there is an isomorphism $$\mu_{\Lambda_\mr{L}}\left(\mathscr{D}^\lambda_\mr{L}\right)|_{\mr{L}\times 1}\cong \mr{K}_\mr{L}^{1/2}\otimes \C_\lambda$$ of twisted $\mr{D}$-modules with $\mr{exp}(2\pi i \lambda)$ monodromy automorphism. By the formality results of \cite{MR4678893}, $\mathcal{DQ}^{\mr{s}}_\mathfrak{L}(\mr{X},\sigma)$ is independent of the choice of quantised orientations.
\2 
\begin{corollary}\label{main2a}
Let $(\mr{X},\sigma)$ be a holomorphic symplectic manifold and let $\mathfrak{L}$ be a (countable) collection of orientable compact K\"{a}hler Lagrangian submanifolds with clean pairwise intersections. Then the differential graded category $\mathcal{DQ}^\mr{s}_\mathfrak{L}(\mr{X},\sigma)$ is formal.
\end{corollary}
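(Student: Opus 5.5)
The corollary should follow from \cref{main2} by restricting to a full subcategory. The key fact is that formality passes to full subcategories. Suppose $\mathcal{A}$ is a dg category and $\Phi\colon \mathcal{A}\simeq \mr{H}(\mathcal{A})$ is an isomorphism in the homotopy category of dg categories, realised by a zigzag of quasi-equivalences or by an $\Aa$-quasi-isomorphism that is the identity on objects. For a set of objects $S$, let $\mathcal{A}_S$ be the full subcategory on $S$. Then $\mathcal{A}_S$ is formal: restricting each functor in the zigzag to the essential images of $S$ gives quasi-equivalences of full subcategories, and the cohomology of $\mathcal{A}_S$ is the full subcategory $\mr{H}(\mathcal{A})_S$. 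The cleanest route uses the $\Aa$ picture. Choose a minimal model of $\mathcal{A}$ with the same objects. Formality then means there is an $\Aa$-isomorphism $f$, with $f^1=\mr{id}$ on cohomology, from this model to a model whose higher products vanish. Since the components $f^n$ only involve morphism spaces between the objects appearing in their inputs, restricting $f$ to the objects of $S$ still gives an $\Aa$-isomorphism. Equivalently, the Kaledin class of the minimal model restricts to the Kaledin class of the full subcategory, so its vanishing is inherited.

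The plan is then as follows. First, record that $\mathcal{DQ}^{\mr{s}}_\mathfrak{L}(\mr{X},\sigma)$ is, by its definition above, the full subcategory of $\mathcal{DQ}_\mathfrak{L}(\mr{X},\sigma)$ whose objects are the chosen $\mathscr{D}^\lambda_\mr{L}$, one for each $\mr{L}\in\mathfrak{L}$. Its morphism complexes are the same $\mr{RHom}_{\widehat{\mathscr{W}}_\mr{X}}$ complexes computed in $\Db_{\mr{dg}}(\widehat{\mathscr{W}}_\mr{X})$. Second, invoke \cref{main2}, which applies because $\mathfrak{L}$ is a Solomon-Verbitsky collection, to get formality of $\mathcal{DQ}_\mathfrak{L}(\mr{X},\sigma)$. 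Third, apply the restriction lemma above to conclude that $\mathcal{DQ}^{\mr{s}}_\mathfrak{L}(\mr{X},\sigma)$ is formal.

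Independence from the choices of $\lambda$ and of quantised orientations is not needed for formality itself. It is needed only to make sense of the statement, since the formality results of \cite{MR4678893} make $\mathcal{DQ}^{\mr{s}}_\mathfrak{L}$ well defined up to quasi-equivalence. Formality is invariant under quasi-equivalence, so the conclusion holds for every choice.

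The only step needing care is the restriction lemma, specifically the compatibility of formality with full subcategories when formality is phrased through a zigzag of quasi-equivalences that may not be the identity on objects. I would avoid this issue by passing to minimal $\Aa$-models with a fixed object set, where the restriction is literally componentwise and no subtlety remains.
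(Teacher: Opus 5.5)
Your proposal is correct and follows essentially the paper's own route. The paper gives no separate proof: it presents this as an immediate consequence of \cref{main2}, because $\mathcal{DQ}^{\mr{s}}_\mathfrak{L}(\mr{X},\sigma)$ is a full subcategory of $\mathcal{DQ}_\mathfrak{L}(\mr{X},\sigma)$ and formality is inherited by full subcategories. Your remark is also accurate that independence of the choices is needed only for well-definedness, not for formality.
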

In fact, the morphism spaces of $\mathcal{DQ}(\mr{X},\sigma)$ are mixed Hodge complexes. Following the philosophy that "purity implies formality", we expect the next to be true:
\begin{conjecture}
Let $(\mr{X},\sigma)$ be a holomorphic symplectic manifold and let $\mathfrak{L}$ be a (countable) collection of orientable compact Lagrangian submanifolds such that the weight filtration of $\mr{H}\mathcal{DQ}_\mathfrak{L}(\mr{X},\sigma)$ is pure. Then the differential graded category $\mathcal{DQ}_\mathfrak{L}(\mr{X},\sigma)$ is formal.
\end{conjecture}
Suppose that $\mathfrak{L}$ consists of compact Lagrangian submanifolds. Then, we show that the $\C\cbrak$-linear dg category  $\mathcal{DQ}_\mathfrak{L}(\mr{X},\sigma)$ admits a lift to a $\C\brak$-linear dg category $\widetilde{\mathcal{DQ}}_\mathfrak{L}(\mr{X},\sigma)$ which will be used to relate the deformation quantisation model to the $\mr{D}$-brane model via its classical limit $\hbar\to 0$. 
 \begin{proposition}[\cref{lift}]
Let $\mathfrak{L}$ be a collection of compact Lagrangian submanifolds in $(\mr{X},\sigma)$, then there exists a $\C\brak$-linear differential graded $\widetilde{\mathcal{DQ}}_\mathfrak{L}(\mr{X},\sigma)$ such that $$\mr{loc}\left(\widetilde{\mathcal{DQ}}_\mathfrak{L}(\mr{X},\sigma)\right)=\mathcal{DQ}_\mathfrak{L}(\mr{X},\sigma).$$ 
\end{proposition}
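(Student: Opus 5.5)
The plan is to build $\widetilde{\mathcal{DQ}}_\mathfrak{L}(\mr{X},\sigma)$ from $\hbar$-adically complete, $\hbar$-torsion-free lattices inside the quantised orientation modules, and then to check that inverting $\hbar$ gives back $\mathcal{DQ}_\mathfrak{L}(\mr{X},\sigma)$.

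\textbf{Step 1: lattices.} Recall that $\Ww$ has a canonical $\C\brak$-form $\widehat{\mathscr{W}}_\mr{X}(0)$, locally the sheaf of formal microdifferential operators of order $\le 0$, with $\Ww = \widehat{\mathscr{W}}_\mr{X}(0)\otimes_{\C\brak}\C\cbrak$. Let $\mathscr{D}$ be a quantised orientation supported on a compact Lagrangian $\mr{L}\in\mathfrak{L}$. Such a module is a simple holonomic $\Ww$-module along $\mr{L}$. By the Kashiwara--Schapira theory of good lattices, it admits locally a lattice $\mathscr{D}(0)$. This lattice is a coherent $\widehat{\mathscr{W}}_\mr{X}(0)$-submodule, $\hbar$-complete and $\hbar$-torsion free, and its reduction $\mathscr{D}(0)/\hbar\mathscr{D}(0)$ is a line bundle on $\mr{L}$. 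For simple modules the local lattices are unique up to $\hbar^{\mathbf Z}$-shifts and gluing automorphisms. Because $\mr{L}$ is compact, only finitely many charts are needed, so the orders can be normalised and the local lattices glued to a global lattice $\mathscr{D}(0)$ with $\mathscr{D}(0)\otimes_{\C\brak}\C\cbrak\cong\mathscr{D}$.

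\textbf{Step 2: the category.} Define $\widetilde{\mathcal{DQ}}_\mathfrak{L}(\mr{X},\sigma)$ to have one object $\mathscr{D}(0)$ for each object $\mathscr{D}$ of $\mathcal{DQ}_\mathfrak{L}(\mr{X},\sigma)$. Its morphism complexes are $\mr{RHom}_{\widehat{\mathscr{W}}_\mr{X}(0)}(\mathscr{D}(0),\mathscr{D}'(0))$, computed in a $\C\brak$-linear dg enhancement of $\Db(\widehat{\mathscr{W}}_\mr{X}(0))$. Concretely, I would use h-injective resolutions, or Čech resolutions with respect to a fixed finite Stein cover of a neighbourhood of $\bigcup\mathfrak{L}$.

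\textbf{Step 3: localisation.} It remains to show that $\mr{RHom}_{\widehat{\mathscr{W}}_\mr{X}(0)}(\mathscr{D}(0),\mathscr{D}'(0))\otimes_{\C\brak}\C\cbrak$ is quasi-isomorphic to $\mr{RHom}_{\Ww}(\mathscr{D},\mathscr{D}')$. Tensoring with $\C\cbrak$ is exact on $\C\brak$-modules. Locally, $\mathscr{D}(0)$ has a finite free resolution by $\widehat{\mathscr{W}}_\mr{X}(0)$-modules, because $\widehat{\mathscr{W}}_\mr{X}(0)$ is coherent with finite global dimension. So locally $\mr{R}\shom$ commutes with $\otimes\C\cbrak$. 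The global issue is that $\mr{R}\Gamma$ need not commute with the colimit $\C\cbrak=\mr{colim}\,\hbar^{-n}\C\brak$.

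\textbf{Main obstacle.} Commuting $\mr{R}\Gamma$ with $\otimes\C\cbrak$ is where compactness is used. The sheaf $\mr{R}\shom_{\widehat{\mathscr{W}}_\mr{X}(0)}(\mathscr{D}(0),\mathscr{D}'(0))$ is supported on the compact set $\mr{L}\cap\mr{L}'$. It is $\hbar$-complete, and its reduction modulo $\hbar$ is $\mr{R}\shom_{\mathscr{O}_\mr{X}}$ of line bundles on $\mr{L}$ and $\mr{L}'$. That reduction has coherent cohomology, so $\mr{R}\Gamma$ of it has finite-dimensional cohomology in bounded degrees. A Mittag-Leffler argument, in the style of Kashiwara--Schapira's finiteness theorem for DQ-modules, then shows that $\mr{R}\Gamma$ of the $\hbar$-complete complex is cohomologically finitely generated over $\C\brak$. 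On a compact support, $\mr{R}\Gamma$ commutes with filtered colimits, and together this gives the required quasi-isomorphism.

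Composition is compatible with the localisation maps by construction. Hence $\mr{loc}(\widetilde{\mathcal{DQ}}_\mathfrak{L}(\mr{X},\sigma))$ is quasi-equivalent to $\mathcal{DQ}_\mathfrak{L}(\mr{X},\sigma)$, and the two can be identified after strictifying the enhancement.
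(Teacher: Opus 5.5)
Your overall route is the paper's. You produce a global $\widehat{\mathscr{W}}_\mr{X}(0)$-lattice for each object using compactness, take $\mr{RHom}$ over $\widehat{\mathscr{W}}_\mr{X}(0)$, and check that $\hbar$-localisation commutes with it. Step 3 is fine, and more careful than the paper's one-line ``$\mr{loc}$ commutes with $\mr{R}\shom$''. All it needs is local exactness plus the fact that $\mr{R}\Gamma$ commutes with the filtered colimit $\C\cbrak=\varinjlim_n \hbar^{-n}\C\brak$ for complexes supported on the compact set $\mr{L}\cap\mr{L}'$. The Mittag-Leffler finiteness is not needed for that.

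The gap is in Step 1. Grant that local simple lattices are unique up to $\hbar^{\Z}$. Then on components of overlaps you get $\mathscr{D}_i(0)=\hbar^{k_{ij}}\mathscr{D}_j(0)$ with $k_{ij}\in\Z$ locally constant, which is a \v{C}ech $1$-cocycle. The orders can be normalised if and only if its class in $\mr{H}^1(\mr{L};\Z)$ vanishes; having finitely many charts has nothing to do with this.

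Simplicity plus compactness really is not enough. Take $\mr{X}=\Omega_\mr{E}$ with $\mr{E}$ an elliptic curve, and glue the local models $\Ww/\Ww\partial_x$ using a rank-one $\C\cbrak$-local system on $\mr{E}$ with monodromy $\hbar$ around one loop. This module is simple holonomic along the compact Lagrangian $\mr{E}$, yet it has no global coherent $\widehat{\mathscr{W}}_\mr{X}(0)$-lattice at all. For a lattice $\mathscr{M}$, let $\Lambda_x$ be the smallest rank-one $\C\brak$-lattice in the one-dimensional fibre of the local system with $\mathscr{M}_x\subset\mathscr{O}_{\mr{E},x}\brak\otimes\Lambda_x$. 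Each $\hbar^k\mathscr{O}\brak\otimes\Lambda$ is $\widehat{\mathscr{W}}_\mr{X}(0)$-stable, so the identity theorem applied to the $\hbar$-coefficients of local generators shows $\Lambda_x$ is locally constant in $x$. It would therefore be monodromy-invariant, which multiplication by $\hbar$ rules out.

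What you are missing is the goodness hypothesis. It is built into the objects of $\mathcal{DQ}(\mr{X},\sigma)$, since quantised orientations are good by definition. Take a relatively compact open $\mr{U}$ containing the compact support. Goodness then directly gives a $\widehat{\mathscr{W}}_\mr{X}(0)|_\mr{U}$-module generating $\mathscr{D}|_\mr{U}$, and this extends by zero to a global lattice. That is exactly the paper's argument. Replace Step 1 by it (the lattice need not be simple), and your Steps 2--3 go through.
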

When $\mathfrak{L}$ is Solomon-Verbitsky, we obtain \cref{hfree}, allowing us to control the Hochschild (co)homology of $\widetilde{\mathcal{DQ}}_\mathfrak{L}(\mr{X},\sigma)$ and thus apply our general results on formality of dg categories in families to it.
\subsection{The virtual de Rham category}

In the case of a single Lagrangian submanifold, we have seen that de Rham cohomology and, somewhat more precisely, the spectral sequence which starts with de Rham cohomology and converges to the cohomology of $\mr{RHom}$, play a key role relating the coherent complex (counting massless states of open strings) and the deformation quantisation complex. Following this philosophy, we shall define a virtual de Rham differential graded category playing the role of de Rham cohomology in the case of categories.\\
Let $\mr{L}$ and $\mr{M}$ be two Lagrangian submanifolds and suppose given two smooth half-twisted $\mr{D}$-modules $\mathscr{L}$ and $\mathscr{M}$ on $\mr{L}$ and $\mr{M}$, respectively. Following Behrend and Fantechi, we define the virtual de Rham complex $$\mathscr{DR}^{\mr{vir}}(\mathscr{L},\mathscr{M})\coloneq \left(\ext^{\bullet}_{\mathscr{O}_\mr{X}}(\mathscr{L},\mathscr{M}),\mr{d}_\mr{BF}\right).$$
 In \cref{msres}, we introduce the Malgrange-Serre functor $\mr{MS}$ which provides soft resolutions of coherent sheaves on a complex manifold. These are particularly well-suited for our purposes as they are functorial and multiplicative when the underlying coherent sheaves are equipped with bilinear pairings.\\
For a contact manifold $\mr{Y}$ and a Legendrian $\Lambda$, the microlocalisation functor $$\mu_{\Lambda}:\mr{Mod}_{\Lambda,\mr{rh}}\left(\widehat{\mr{E}}_\mr{Y}\right)\to \gamma_*\mr{Mod}_\mr{rh}\left(\mr{D}_{\tilde{\Lambda}}^{\sqrt{v}}\right)$$ allows us in \cref{vdrsec} to extend this construction to regular holonomic $\widehat{\mathscr{W}}_\mr{X}$-modules admitting lifts along $\mu_{\Lambda}$ to $\widehat{\mr{E}}_{\mr{Y}}$-modules for a (local) contactification $\mr{Y}\to \mr{X}$. We note that in general the forgetful functor induced by the inclusion $\Ww\xhookrightarrow{} \rho_*\widehat{\mr{E}}_\mr{Y}$ is only locally essentially surjective. \\
Putting it all together, we obtain a differential graded category $$\mathcal{DR}^{\mr{vir}}(\mr{X},\sigma)$$ whose objects are quantised orientation modules and, for two such modules $\mathscr{D}_\mr{L}$ and $\mathscr{D}_\mr{M}$, the morphism complex is $$\Gamma\left(\mr{X},\mr{MS}\left(\mathscr{DR}^\mr{vir}\left(\mathscr{D}_\mr{L},\mathscr{D}_\mr{M}\right)\right)\right).$$
We then have a formality result for the local version along a collection $\mathfrak{L}$ which is a dg version of the K\"{a}hler formality of \cite{Deligne1975} :
\2
\begin{theorem}[\cref{drvir}]\label{main3}
Let $(\mr{X},\sigma)$ be a holomorphic symplectic manifold and let $\mathfrak{L}$ be a (countable) collection of orientable compact K\"{a}hler  Lagrangian submanifolds with clean pairwise intersections. Then the differential graded category $\mathcal{DR}^\mr{vir}_\mathfrak{L}(\mr{X},\sigma)$ is formal.
\end{theorem}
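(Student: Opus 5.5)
We prove \cref{main3} by a Deligne--Griffiths--Morgan--Sullivan argument carried out object by object on the pairwise clean intersections. The categorical input is the formality criterion via Kaledin classes from the body of the paper: a minimal $\Aa$-model of a dg category is formal if and only if its Kaledin class vanishes. It is therefore enough to produce, for every pair $\mr{L},\mr{M}\in\mathfrak{L}$, a zigzag of quasi-isomorphisms compatible with compositions, or equivalently a grading (weight) automorphism acting by scalars $t^n$ on cohomological degree $n$.

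The first step is a local computation of the virtual de Rham complex. For a clean intersection $\mr{Z}$ of the components of $\mr{L}\cap\mr{M}$, the sheaves $\ext^{\bullet}_{\mathscr{O}_\mr{X}}(\mathscr{D}_\mr{L},\mathscr{D}_\mr{M})$ are supported on $\mr{Z}$. After microlocalisation, the Behrend--Fantechi differential identifies $\mathscr{DR}^{\mr{vir}}(\mathscr{D}_\mr{L},\mathscr{D}_\mr{M})$ with the twisted de Rham complex of a rank-one local system $\mathscr{K}_{\mr{Z}}$ on $\mr{Z}$, shifted by half the codimension. Here $\mathscr{K}_{\mr{Z}}$ is built from $\mr{K}_\mr{L}^{1/2}$, $\mr{K}_\mr{M}^{1/2}$ and the orientation line of the excess bundle. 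Orientability of $\mathfrak{L}$ makes this local system unitary, indeed of finite monodromy. Applying the Malgrange--Serre functor $\mr{MS}$ then realises the morphism complex as the Dolbeault-type resolution $\mathcal{A}^{\bullet}(\mr{Z},\mathscr{K}_{\mr{Z}})$ of smooth forms with values in a flat unitary bundle. The multiplicativity of $\mr{MS}$ under bilinear pairings ensures that the compositions
$$\mathcal{DR}^{\mr{vir}}(\mathscr{D}_\mr{M},\mathscr{D}_\mr{N})\otimes\mathcal{DR}^{\mr{vir}}(\mathscr{D}_\mr{L},\mathscr{D}_\mr{M})\to\mathcal{DR}^{\mr{vir}}(\mathscr{D}_\mr{L},\mathscr{D}_\mr{N})$$
become wedge product followed by restriction to triple intersections, together with pushforward (Gysin/integration along fibres) where the intersection pattern requires it.

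The second step applies the $\partial\bar\partial$-lemma. Each $\mr{Z}$ is a compact K\"ahler submanifold, being closed in a compact K\"ahler $\mr{L}$, and the coefficients are unitary flat, so the $\mr{d}^c$-lemma holds on $\mathcal{A}^\bullet(\mr{Z},\mathscr{K}_\mr{Z})$. We take the standard zigzag
$$\mathcal{A}^\bullet\hookleftarrow \ker \mr{d}^c\twoheadrightarrow \ker \mr{d}^c/\mr{im}\,\mr{d}^c\cong \mr{H}^\bullet$$
for every morphism complex simultaneously. Restriction and wedge product commute with $\mr{d}^c$ as long as we use a K\"ahler structure compatible across all $\mr{Z}$, for instance one pulled back along inclusions. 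Under that condition the two subcategories $\ker\mr{d}^c$ and its quotient are dg subcategories, and the zigzag is a zigzag of dg functors. Countability of $\mathfrak{L}$ enters only in choosing such compatible data inductively, or in passing to a colimit of finite subcollections, where formality is preserved.

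The main obstacle is the compatibility of $\mr{d}^c$ with the composition maps that involve Gysin pushforwards. Pushforward commutes with $\mr{d}$ and $\mr{d}^c$ only up to a shift of Hodge type, and it requires a single $\mr{d}^c$ operator making sense on all the $\mr{Z}$ at once. To get around this, I would first try to avoid choosing global K\"ahler forms and instead use the weight argument. The Hodge decomposition gives a pure Hodge structure on each $\mr{H}^n(\mr{Z},\mathscr{K}_\mr{Z})$ of weight $n+\mathrm{codim}$, and composition (including Gysin) is a morphism of Hodge structures of the appropriate Tate twist. Then the automorphism acting by $t^{w}$ on weight $w$ is compatible with the $\Aa$-structure on the minimal model. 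By the Kaledin class criterion, this grading automorphism forces the Kaledin class to vanish, which gives formality of $\mathcal{DR}^\mr{vir}_\mathfrak{L}(\mr{X},\sigma)$.
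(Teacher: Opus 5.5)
\textbf{The weight argument assumes formality.} Your final step is circular, and this is a genuine gap. Take a single object, $\mr{L}=\mr{M}$. Then $\mr{Z}=\mr{L}$, the codimension is $0$, and your weight is just the cohomological degree. The rescaling $a\mapsto t^{|a|}a$ is an automorphism of \emph{every} graded category $(\mr{H}\mathcal{C},m_2)$, simply because $m_2$ has degree $0$. So purity of each $\mr{H}^n(\mr{Z},\mathscr{K}_\mr{Z})$, and the fact that $m_2$ (Gysin components included) is a morphism of Hodge structures, put no constraint at all on the higher products $m_n$, $n\ge 3$, of a minimal model. Those products are not determined by $(\mr{H}\mathcal{C},m_2)$. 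Whether the rescaling lifts to an $\Aa$-automorphism of the minimal model is exactly the formality question. Indeed, the Kaledin class $\mr{k}_\mathcal{C}=\sum_{n}(n-2)\mr{d}_{\mathcal{B}(\mathcal{C}),n}$ is the derivative at $t=1$ of the rescaled structures $\sum_n t^{n-2}\mr{d}_{\mathcal{B}(\mathcal{C}),n}$. Asserting that the rescaling lifts therefore assumes the conclusion. A weight argument needs chain-level input: morphism complexes that are mixed Hodge complexes, with compositions morphisms of such. The paper explicitly leaves that ``purity implies formality'' statement as a conjecture. Separately, formality does not automatically pass to a union of finite full subcategories, since the Hochschild cohomology of the union picks up $\lim^1$ terms.

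\textbf{What the paper does instead.} The zigzag you set aside is essentially the paper's proof, and in the paper's model your obstacle does not arise. The paper uses $\bar\partial$ instead of $\mr{d}^c$. After applying $\MS$, each morphism complex is the global sections of a double complex with differentials $\mr{d}_\mr{BF}$ (locally the holomorphic de Rham differential) and $\bar\partial$. The compositions are \emph{defined} as the $\MS$-extension of the sheaf-level Yoneda pairing of the $\ext$ sheaves, which is compatible with the splitting of the total differential $\mr{d}_\mr{BF}+\bar\partial$. Hence no chain-level Gysin maps enter. Both differentials are derivations for composition, the $\bar\partial$-closed morphisms form a dg subcategory $\mathcal{DR}^{\mr{vir},\bar\partial}_\mathfrak{L}$, and the $\bar\partial$-exact ones form a dg ideal in it. No compatible choice of K\"ahler forms is needed either. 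The operator $\bar\partial$ (and equally $\mr{d}^c$) depends only on the complex structure. The K\"ahler hypothesis is used only to get the $\mr{d}_\mr{BF}\bar\partial$-lemma for each morphism complex separately: the clean intersections are compact K\"ahler and the coefficient line bundle is $2$-torsion. The lemma then shows that both functors in $\mathcal{DR}^{\mr{vir}}_\mathfrak{L}\overset{\iota}{\longleftarrow}\mathcal{DR}^{\mr{vir},\bar\partial}_\mathfrak{L}\overset{\pi}{\longrightarrow}\mr{H}_{\bar\partial}(\mathcal{DR}^{\mr{vir}}_\mathfrak{L})$ are quasi-isomorphisms and that the target has zero differential. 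That gives formality directly, without Kaledin classes and without any use of countability.
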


In fact, \cref{main2} is a corollary of \cref{main3} since, for Solomon-Verbitsky collections, we have:
\2
\begin{proposition}[\cref{dqdr}]
Let $\mathfrak{L}$ be a Solomon-Verbitsky collection. There is a quasi-isomorphism $$\mathcal{DQ}_\mathfrak{L}(\mr{X},\sigma)\cong \mr{Ind}_{\C\cbrak/\C}\left(\mathcal{DR}^\mr{vir}_\mathfrak{L}(\mr{X},\sigma)\right).$$
\end{proposition}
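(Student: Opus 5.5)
The plan is to build an explicit dg functor $\mathcal{DR}^\mr{vir}_\mathfrak{L}(\mr{X},\sigma)\otimes_\C\C\cbrak \to \mathcal{DQ}_\mathfrak{L}(\mr{X},\sigma)$ and check that it is a quasi-isomorphism on every morphism complex. The functor is the identity on objects, since both categories have quantised orientation modules as objects. On morphisms, I would pass through the $\C\brak$-lift $\widetilde{\mathcal{DQ}}_\mathfrak{L}(\mr{X},\sigma)$ of \cref{lift}, whose localisation recovers $\mathcal{DQ}_\mathfrak{L}(\mr{X},\sigma)$.

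For each pair $\mathscr{D}_\mr{L},\mathscr{D}_\mr{M}$, I filter the lattice-level complex $\widetilde{\mathcal{DQ}}(\mathscr{D}_\mr{L},\mathscr{D}_\mr{M})$ $\hbar$-adically. The associated graded is identified with the classical $\mr{RHom}_{\mathscr{O}_\mr{X}}(\mr{K}_\mr{L}^{1/2},\mr{K}_\mr{M}^{1/2})$, and the first differential of the resulting spectral sequence is the Behrend--Fantechi differential $\mr{d}_\mr{BF}$ on $\ext^\bullet_{\mathscr{O}_\mr{X}}$. This is the categorical version of the spectral sequence $\mr{E}_2=\mr{H}(\mr{L}/\C)\Rightarrow\mr{Ext}$ recalled in the introduction.

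To make this statement about cochains, I work locally.
\begin{itemize}
\item Near a clean intersection component I choose a contactification $\mr{Y}\to\mr{X}$ and use the lifts along $\mu_\Lambda$ to $\widehat{\mr{E}}_\mr{Y}$-modules from \cref{vdrsec}.
\item These lifts identify $\shom_{\Ww}(\mathscr{D}_\mr{L},\mathscr{D}_\mr{M})$ with a $\C\cbrak$-twisted local system on $\mr{L}\cap\mr{M}$. That local system is computed by $(\ext^\bullet_{\mathscr{O}_\mr{X}},\mr{d}_\mr{BF})\otimes\C\cbrak$, using the microlocal normal form for clean intersections (Kashiwara--Schapira/D'Agnolo--Schapira).
\item Because $\mr{MS}$ is functorial and multiplicative, these local comparisons glue into soft resolutions, and taking $\Gamma(\mr{X},-)$ then gives maps compatible with composition.
\end{itemize}

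The key global input is degeneration: for a Solomon--Verbitsky collection the $\hbar$-adic spectral sequence degenerates at the $\mr{d}_\mr{BF}$-page. I would prove this with Hodge theory on the compact K\"{a}hler intersections $\mr{L}\cap\mr{M}$, extending the argument of \cite{MR4678893}. That argument shows the higher differentials vanish for weight reasons once we know that $\mr{H}(\mathscr{DR}^\mr{vir})$ is the cohomology of a twisted local system on a compact K\"{a}hler manifold, which is pure. I would use \cref{hfree}, which says the lift is $\hbar$-torsion-free in cohomology, to get convergence and to rule out torsion. Inverting $\hbar$ then yields a quasi-isomorphism of complexes
$$\mr{Ind}_{\C\cbrak/\C}\mathcal{DR}^\mr{vir}_\mathfrak{L}(\mathscr{D}_\mr{L},\mathscr{D}_\mr{M})\to\mathcal{DQ}_\mathfrak{L}(\mathscr{D}_\mr{L},\mathscr{D}_\mr{M}).$$
Countability of $\mathfrak{L}$ is harmless here, since quasi-isomorphism is checked one hom-complex at a time.

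I expect the main obstacle to be strict compatibility with composition, not the objectwise quasi-isomorphism. The comparison maps come from choices of local lifts to $\widehat{\mr{E}}_\mr{Y}$ and of contactifications, and these are only locally essentially surjective, so they agree only up to homotopy on overlaps. If a strict dg functor cannot be produced, my first attempt would be to build a zigzag through an intermediate dg category whose hom-complexes are $\Gamma(\mr{X},\mr{MS}(-))$ of the $\hbar$-filtered sheaf $\shom$. Both sides would map to it multiplicatively: the $\Ww$-side via $\mr{MS}$-resolution of the $\shom$-sheaf, and the $\mathcal{DR}^\mr{vir}$-side via its $\mr{gr}$. This uses the fact that $\mr{MS}$ respects bilinear pairings, and the resulting zigzag is a quasi-isomorphism of dg categories.
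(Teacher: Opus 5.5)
Your proposal has a genuine gap, and it lies in the step you call the key global input.

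First, invoking \cref{hfree} is circular. In the paper that corollary is read off from the lemma just before it. That lemma's proof uses the formality of $\mathcal{DQ}_\mathfrak{L}$ (\cref{lags}) and the identification $\mr{H}\mathcal{D}\cong\mr{Ind}_{\C\cbrak/\C}(\mr{H}\mathcal{D}^{\mr{vir}})$, and the paper derives both from the very proposition you are proving. \cref{hfree} also does not say what you use it for. It asserts that the $\hbar$-dependence of the composition can be gauged away; freeness of the $\mr{Ext}$ groups over $\C\brak$ is the input from \cite{MR4678893}.

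Second, degeneration of a spectral sequence only identifies $\mr{E}_\infty$ with the associated graded of the cohomology. It supplies no chain map, let alone a multiplicative one. And once a comparison map exists at the level of sheaves, degeneration is superfluous. Your description also mixes two spectral sequences:
\begin{itemize}
\item The $\hbar$-adic filtration on the global complex $\mr{RHom}_{\widehat{\mathscr{W}}_\mr{X}(0)}(\widetilde{\mathscr{D}}_\mr{L},\widetilde{\mathscr{D}}_\mr{M})$ has $\mr{E}_1$-page the global groups $\mr{Ext}_{\mathscr{O}_\mr{X}}(\mr{K}_\mr{L}^{1/2},\mr{K}_\mr{M}^{1/2})$, with a Bockstein differential.
\item The complex $(\ext^\bullet_{\mathscr{O}_\mr{X}},\mr{d}_\mr{BF})$ only appears for the sheafified filtration. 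There, in the clean case, degeneration holds locally for degree reasons and needs no Hodge theory.
\end{itemize}

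The missing idea is to make the comparison globally at the level of sheaves, which is how the paper argues. Composing Gunningham--Safronov's \cref{pervref} with \cref{pdr} gives $\mr{R}\shom_{\Ww}(\mathscr{D}_\mr{L},\mathscr{D}_\mr{M})\simeq\C\cbrak\otimes_\C\mathscr{DR}^{\mr{vir}}(\mathscr{D}_\mr{L},\mathscr{D}_\mr{M})$ in the derived category of sheaves on $\mr{X}$. Applying $\mr{R}\Gamma$ then gives the quasi-isomorphism of hom-complexes directly. On the right, $\mr{R}\Gamma$ is computed by the soft $\mr{MS}$-resolution, and compact support lets $\C\cbrak\otimes_\C-$ pass through $\mr{R}\Gamma$. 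No degeneration or purity argument is needed, and no appeal to the lift $\widetilde{\mathcal{DQ}}_\mathfrak{L}$.

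Your local bullets reproduce the local content of this, but ``these local comparisons glue'' is exactly the non-formal step. Functoriality of $\mr{MS}$ does not make choice-dependent local identifications agree on overlaps. That they do assemble is precisely the theorem of \cite{saf}: $\mr{R}\shom_{\Ww}(\mathscr{D}_\mr{L},\mathscr{D}_\mr{M})[n]$ is globally $\C\cbrak\otimes_\C\mathscr{P}_{\mr{L},\mr{M}}$, with the twist dictated by $\mu_{\Lambda_\mr{L}}(\mathscr{D}_\mr{L})|_{\mr{L}\times 1}$ and $\mu_{\Lambda_\mr{M}}(\mathscr{D}_\mr{M})|_{\mr{M}\times 1}$.

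With a sheaf-level comparison in hand, compatibility with composition reduces to a local check. One verifies that the Yoneda product on $\mr{R}\shom_{\Ww}$ matches the product on $\mathscr{DR}^{\mr{vir}}$, which the paper does with Koszul resolutions.

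Your fallback zigzag does not work as stated. $\mr{MS}$ is defined on $\mathscr{O}_\mr{X}$-modules, while $\shom_{\Ww}(\mathscr{D}_\mr{L},\mathscr{D}_\mr{M})$ is only a sheaf of $\C\cbrak$-vector spaces. Moreover, there is no natural chain map in either direction between an $\hbar$-filtered complex and its $\mr{E}_1$-page with $\mr{d}_1$.
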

In general, the virtual de Rham complexes are closely related to the perverse sheaf of vanishing cycles and we conjecture that there is a simply-graded spectral sequence whose first page is the de Rham complex which converges to the cohomology of the (shifted) perverse sheaf. 
\subsection{Formality criteria for differential graded categories}
Proving \cref{main1} requires a generalisation of the results of Lunts on formality of $\Aa$ and differential graded algebras over general rings to $\Aa$ and differential graded categories.\\
For a minimal $\Aa$-category $\Cc$ over a $\Q$-algebra $\mr{R}$, we first extend the construction of Lunts' Kaledin class $\mr{k}_\Cc$ to categories and prove:
\2
\begin{theorem}[\cref{kal}]
Let $\mathcal{C}$ be a minimal $\Aa$-category over $\mr{R}$ and assume $\mr{R}$ is a $\Q$-algebra. The following are equivalent:
\begin{enumerate}
\item The category $\mathcal{C}$ is formal.
\item The category $\mathcal{C}$ is $\mr{A}_n$-formal for all $n\ge 1$.
\item For all $n\ge 3$, the truncated Kaledin class $\mr{k}^{\le n}_\mathcal{C}$ vanishes.
\item The Kaledin class $\mr{k}_\mathcal{C}$ vanishes.
\end{enumerate}
\end{theorem}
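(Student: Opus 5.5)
The plan is to transplant Lunts' argument for minimal $\Aa$-algebras to the categorical setting, and to check that nothing in it depends on having a single object. The Hochschild complex of $\Cc$ is the product over all finite chains of objects $X_0,\dots,X_n$ of $\mr{Hom}_\mr{R}\big(\Cc(X_{n-1},X_n)\otimes\cdots\otimes\Cc(X_0,X_1),\Cc(X_0,X_n)\big)$, and all constructions below are made componentwise.

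\textbf{Definition of the Kaledin class.} Write $\Cc=(\Cc,m_2,m_3,\dots)$ and form the $\mr{R}\brak$-linear minimal $\Aa$-category $\Cc_\hbar$, with the same objects, morphisms $\Cc(X,Y)\brak$ and operations $m_n^\hbar=\hbar^{n-2}m_n$. The $\Aa$-relations are homogeneous in $\hbar$, so $\Cc_\hbar$ is a genuine $\Aa$-category. It specialises to $\Cc$ at $\hbar=1$ and to the formal category $(\mr{H}\Cc,m_2)$ at $\hbar=0$. The cochain $\kappa=\sum_n (n-2)\hbar^{n-2}m_n=\hbar\,\partial_\hbar m^\hbar$ is a Hochschild cocycle for $\Cc_\hbar$, because differentiating the $\Aa$-equation gives $[m^\hbar,\partial_\hbar m^\hbar]=0$. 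I define $\mr{k}_\Cc$ to be its class in $\mr{HH}^2(\Cc_\hbar)$ over $\mr{R}\brak$. The truncated class $\mr{k}^{\le n}_\Cc$ is the image of $\mr{k}_\Cc$ in the Hochschild cohomology of the $\mr{A}_n$-truncation over $\mr{R}[\hbar]/\hbar^{n-1}$.

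\textbf{Easy implications.} The implication $(1)\Rightarrow(2)$ is trivial. For $(1)\Rightarrow(4)$, formality gives an $\Aa$-isomorphism with identity linear part; conjugating it by the rescaling $\hbar$ turns it into an $\hbar$-family $F_\hbar$. Then $\hbar\partial_\hbar F_\hbar\circ F_\hbar^{-1}$ is a cochain bounding $\kappa$. The same argument, truncated, gives $(2)\Rightarrow(3)$.

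\textbf{Order-by-order step for $(3)\Rightarrow(2)$.} I argue by induction on $n$. Suppose $\Cc$ has already been gauged by an $\Aa$-functor with identity linear part, so that $m_3=\dots=m_{n-1}=0$. Then $m_n$ is a Hochschild cocycle for $m_2$, and $\kappa$ modulo $\hbar^{n-1}$ is $(n-2)\hbar^{n-2}m_n$. I will show that vanishing of $\mr{k}^{\le n}_\Cc$ forces $(n-2)[m_n]=0$ in $\mr{HH}^2(\mr{H}\Cc)$; this step uses the hypothesis that $\mr{R}$ is a $\Q$-algebra, so that $n-2$ is invertible. Hence $m_n=[m_2,\varphi_n]$, and the functor $\mr{id}+\varphi_n$ kills $m_n$ without changing the lower operations.

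\textbf{Passing to the limit, $(2)\Rightarrow(1)$ and $(4)\Rightarrow(1)$.} The composite gauge functors stabilise arity by arity: the $n$-th step changes only components of arity $\ge n-1$. The infinite composite therefore converges in the product topology and gives an $\Aa$-isomorphism $\Cc\simeq\mr{H}\Cc$; this proves $(2)\Rightarrow(1)$, and the same bookkeeping gives $(3)\Rightarrow(4)$. For $(4)\Rightarrow(1)$ directly, I write $\kappa=[m^\hbar,\varphi]$ and integrate the ODE $\hbar\partial_\hbar F=\varphi\circ F$ degree by degree in $\hbar$, starting from $F_0=\mr{id}$. At order $\hbar^k$ this requires dividing by $k$, which again uses $\Q\subset\mr{R}$. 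Setting $\hbar=1$ gives the formality functor.

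\textbf{Main obstacle.} The delicate point is the comparison between the truncated Hochschild class and $[m_n]$ in $\mr{HH}^2(\mr{H}\Cc)$. A cochain bounding $\kappa$ modulo $\hbar^{n-1}$ might have lower-order terms whose $m_2$-coboundary interacts with the $\hbar$-weights. I would handle this by splitting Hochschild cochains by $\hbar$-weight and internal arity: the Euler-type grading with $\hbar$ of weight one and arity-$k$ cochains of weight $k-2$ makes $m^\hbar$ homogeneous. The weight-$(n-2)$ component of any bounding cochain then yields the required primitive of $(n-2)m_n$. A secondary concern is $\varprojlim^1$ for the infinite product over chains of objects, for which countability is irrelevant. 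It is avoided because the inductive construction fixes lower components at each step, so no choices need to be made compatible after the fact.
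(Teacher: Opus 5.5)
\textbf{There is a genuine gap, and it lies in your definition of the class.}

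Let $E$ be the degree (Euler) derivation, $E|_{\Cc(X,Y)^p}=p\cdot\mr{id}$. It is an arity-one, degree-zero, $\hbar$-independent cochain. Since $m_n$ has degree $2-n$, you get $[m_n,E]=\pm(n-2)m_n$, with a sign independent of $n$. Hence
$$\kappa=\pm[m^\hbar,E].$$
So the class of $\kappa$ in $\mr{HH}^2(\Cc_\hbar)$ over $\mr{R}\brak$ vanishes for \emph{every} minimal $\Aa$-category, and so does each of its truncations. With your definition, $(4)\Rightarrow(1)$ would make every minimal $\Aa$-category formal, for instance one carrying nontrivial Massey products. The failure shows up in both of your key steps.

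\emph{The order-by-order step.} Write $\varphi=\sum_j\hbar^j\varphi_j$ and assume $m_3=\cdots=m_{n-1}=0$. Then the $\hbar^{n-2}$-coefficient of $[m^\hbar,\varphi]$ is $[m_2,\varphi_{n-2}]+[m_n,\varphi_0]$. The cross term $[m_n,\varphi_0]$ cannot be discarded: for $\varphi=E$ it is all of $(n-2)m_n$. It is also homogeneous for any Euler-type weight, so the splitting proposed in your last paragraph does not remove it. Consequently ``$\mr{k}^{\le n}_\Cc=0\Rightarrow (n-2)[m_n]=0$'' is false as stated.

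\emph{The ODE.} At order $\hbar^0$ the equation $\hbar\partial_\hbar F=\varphi\circ F$ reads $0=\varphi_0$. So it has no power-series solution with $F_0=\mr{id}$ unless $\varphi_0=0$. For $\varphi=E$ the would-be solution is the rescaling $\hbar^{E}$, which is not a power series.

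\textbf{The missing ingredient is a restriction on primitives.} Take the class of $\kappa$ in the cohomology of the subcomplex $\hbar\,\mr{CC}(\Cc_\hbar)$. This subcomplex contains $\kappa$, since every term has $n\ge 3$, and is preserved by $[m^\hbar,-]$. Equivalently, after splitting by $\hbar$-degree against arity, take the class in the cohomology of cochains of arity $\ge 2$.

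This is exactly the paper's setup. Its proof of $(4)\Rightarrow(1)$ picks $\tau^{(2)}\in\mr{W}_2\mr{coDer}^0(1_{\mathcal{B}(\Cc)})$. In the invariance lemma, the correction term $\partial\mr{F}\circ\mr{F}^{-1}$ has no arity-one component, so invariance holds in that subcomplex.

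With $\varphi_0=0$ your argument goes through:
\begin{itemize}
\item The order-$(n-2)$ equation becomes $[m_2,\varphi_{n-2}]=(n-2)m_n$.
\item The ODE starts correctly.
\item Your $(1)\Rightarrow(4)$ primitive $\hbar\partial_\hbar F_\hbar\circ F_\hbar^{-1}$ does lie in $\hbar\,\mr{CC}$.
\end{itemize}
The proof then reduces to the paper's. Kill $m_3$, then use invariance of the class under isotopies to see that the new class still vanishes. Kill $m_4$ (dividing by $2$), and continue, taking the arity-wise convergent infinite composite at the end.

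A minor point: over $\mr{R}\brak$ you cannot set $\hbar=1$. Work over $\mr{R}[\hbar]$, or with Euler-homogeneous cochains, whose arity-$k$ components are monomials $\hbar^{k-1}(\cdot)$.
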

As an easy corollary, mimicking Lunts' case of algebras, we have:
\2
\begin{corollary}[\cref{a2generic}]
Suppose $(\Cc,m)$ is a finite minimal $\Aa$-category which is flat and proper over an integral domain $\mr{R}$ with generic point $\eta$. Assume the $\mr{R}$-module $\mr{HH}^2_c(\Cc,m_2)$ is torsion-free. Then $\Cc$ is formal if and only if $\Cc_\eta$ is formal.
\end{corollary}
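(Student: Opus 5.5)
We deduce the corollary from the Kaledin class characterisation of formality (\cref{kal}), following Lunts' argument for algebras. By \cref{kal}, $\Cc$ is formal if and only if $\mr{k}_\Cc=0$, and likewise for $\Cc_\eta$. The direction ``$\Cc$ formal $\Rightarrow$ $\Cc_\eta$ formal'' is immediate: an $\Aa$-quasi-isomorphism $\Cc\simeq \mr{H}\Cc=(\Cc,m_2)$ base changes along $\mr{R}\to\eta=\mr{Frac}(\mr{R})$. Flatness guarantees that $\Cc_\eta$ is still minimal with cohomology $\Cc\otimes_\mr{R}\eta$.

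For the converse, the first step is to check that the Kaledin class is compatible with flat base change. The class $\mr{k}_\Cc$ is built from the $\Aa$-structure $m=m_2+m_3+\dots$ via the Euler/grading-derivation construction over $\mr{R}\brak$ (or the truncated versions $\mr{k}^{\le n}_\Cc$). It lives in a Hochschild cohomology group of $(\Cc,m_2)$ with compact support, which, in the relevant weights, is assembled from $\mr{HH}^2_c(\Cc,m_2)$. Since the Hochschild cochains are given by the same formulas after tensoring, we get
$$\mr{k}_{\Cc_\eta}=\mr{k}_\Cc\otimes 1 \quad\text{under}\quad \mr{HH}^2_c(\Cc,m_2)\otimes_\mr{R}\eta\to \mr{HH}^2_c(\Cc_\eta,m_2).$$

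The second step is to show that this comparison map is an isomorphism, or at least that the composite $\mr{HH}^2_c(\Cc,m_2)\to \mr{HH}^2_c(\Cc,m_2)\otimes_\mr{R}\eta\to\mr{HH}^2_c(\Cc_\eta,m_2)$ is injective. Finiteness (finitely many objects, or compact support) and properness (morphism modules finitely generated) make each Hochschild cochain module a finite product of $\mr{Hom}_\mr{R}$'s between finitely generated flat modules. Hence $\mr{Hom}$ commutes with the localisation $\eta=S^{-1}\mr{R}$, since localisation is exact and the modules are finitely presented. Localisation is also exact on the cochain complex, so cohomology commutes with it:
$$\mr{HH}^2_c(\Cc_\eta,m_2)\cong \mr{HH}^2_c(\Cc,m_2)\otimes_\mr{R}\eta.$$
Because $\mr{HH}^2_c(\Cc,m_2)$ is torsion-free, the map $M\to M\otimes_\mr{R}\eta$ is injective. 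So if $\mr{k}_{\Cc_\eta}=0$, then $\mr{k}_\Cc=0$, and \cref{kal} gives that $\Cc$ is formal.

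The main obstacle is the first step: making precise in which group $\mr{k}_\Cc$ lives, and checking its naturality under base change. If $\mr{k}_\Cc$ sits in a group like $\mr{HH}^2_c(\Cc\brak,m_2)$ or a product over weights, one must verify two things:
\begin{enumerate}
\item the torsion-freeness hypothesis on $\mr{HH}^2_c(\Cc,m_2)$ propagates to that group, for instance because infinite products and $\hbar$-adic completions of torsion-free modules remain torsion-free;
\item the completions involved still commute with localisation on finitely generated pieces.
\end{enumerate}
If the full class is problematic, we would instead argue inductively with the truncated classes $\mr{k}^{\le n}_\Cc$, each of which lies in a finite-type group. We would then use condition (3) of \cref{kal}.
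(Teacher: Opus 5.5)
\textbf{Verdict: genuine gap.} Your second step matches the paper: base change of $\mr{HH}^2_c(\Cc,m_2)$ along the flat map $\mr{R}\to k(\eta)$ (\cref{hochbasechange}), then injectivity of $M\to M\otimes_\mr{R}k(\eta)$ for torsion-free $M$. The gap is in your first step, which puts the obstruction in the wrong group.

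The Kaledin class $\mr{k}_\Cc=\mr{d}_{\mathcal{B}(\Cc),3}+2\mr{d}_{\mathcal{B}(\Cc),4}+\cdots$ is a cocycle for $[\mr{d}_{\mathcal{B}(\Cc)},-]$. That is the Hochschild differential of the \emph{full} structure $m=m_2+m_3+\cdots$, so $\mr{k}_\Cc$ is a class in $\mr{W}_2\mr{HH}^2(\Cc,m)$. It is neither an element of $\mr{HH}^2_c(\Cc,m_2)$ nor ``assembled'' from it.

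Your hypothesis says nothing about $\mr{HH}^2(\Cc,m)$. The weight filtration only gives a spectral sequence whose first page is built from $\mr{HH}(\Cc,m_2)$. Torsion can appear on later pages as cokernels of the differentials. The complex also involves infinite products over weights, so it need not commute with localisation. Remarks about products or completions preserving torsion-freeness do not touch this.

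Your fallback has the same defect as stated. For a fixed $m$, the truncated class $\mr{k}^{\le n}_\Cc$ lives in the cohomology of $\mr{W}_2\mr{coDer}/\mr{W}_{n+1}\mr{coDer}$ with the truncated \emph{full} differential. That group is of finite type and base-changes well, but nothing in the hypotheses makes it torsion-free.

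The missing idea is the paper's gauge-fixing induction:
\begin{enumerate}
\item Assume inductively that $\Cc$ is $\mr{A}_n$-formal over $\mr{R}$. Transport $m$ along an $\mr{R}$-linear $\Aa$-isotopy so that $m_3=\cdots=m_n=0$. The base change of this isotopy is an isotopy of $\Cc_\eta$, so formality of $\Cc_\eta$ is preserved.
\item Now $m_{n+1}$ is a cocycle for the $m_2$-Hochschild differential, and $\mr{k}^{\le n+1}_\Cc$ is $[m_{n+1}]$ up to the factor $n-1$. By \cref{a2kaledin}, $\Cc$ is $\mr{A}_{n+1}$-formal if and only if $[m_{n+1}]=0$ in $\mr{HH}^2(\Cc,m_2)$.
\item Since $m_{n+1}$ is a single cochain on a finite category, its class lies in $\mr{HH}^2_c(\Cc,m_2)\subset\mr{HH}^2(\Cc,m_2)$.
\item Only now does your second step apply. $\Cc_\eta$ is formal, hence $\mr{A}_{n+1}$-formal, so $[m_{n+1}]\otimes 1=0$ in $\mr{HH}^2_c(\Cc_\eta,m_2)\cong\mr{HH}^2_c(\Cc,m_2)\otimes_\mr{R}k(\eta)$. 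Torsion-freeness then forces $[m_{n+1}]=0$.
\end{enumerate}
So $\Cc$ is $\mr{A}_n$-formal for every $n$, and the implication $2\Rightarrow 1$ of \cref{kal} gives formality.
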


There is a differential graded version of this, just as in the algebra case, see \cref{a2dg}. The difficulty of working with categories, rather than algebras, is that the Kaledin class is never living in the compactly supported cohomology if the category is not finite, hence we lose the nice base-change properties of the compactly supported Hochschild cohomology. We can still salvage the following which is good enough for our applications:
\2
\begin{theorem}[\cref{a2cy}]
Let $\Cc$ be a flat weakly proper Calabi-Yau differential graded category over an intergral domain $\mr{R}$ with generic point $\eta$. Assume the $\mr{R}$-module $\mr{HH}_\bullet(\mr{H}\Cc,m_2)$ is projective, where $m_2$ is the composition in $\Cc$. Then $\Cc$ is formal if and only if $\Cc_\eta=\mr{Ind}_{k(\eta)/\mr{R}}(\Cc)$ is formal.
\end{theorem}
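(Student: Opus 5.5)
One direction is immediate: if $\Cc$ is formal, then $\Cc\simeq \mr{H}\Cc$. Flatness lets induction commute with cohomology, so $\Cc_\eta\simeq \mr{Ind}_{k(\eta)/\mr{R}}(\mr{H}\Cc)=\mr{H}(\Cc_\eta)$ is formal. For the converse, the plan is to reduce to the Kaledin class criterion of \cref{kal}.

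First use homotopy transfer, which is possible since $\Cc$ is flat and, being weakly proper, has $\mr{R}$-flat cohomology. This replaces $\Cc$ by a minimal $\Aa$-model $(\mr{H}\Cc,m)$ whose $m_2$ is the composition. Transfer is compatible with base change to $k(\eta)$, so the minimal model of $\Cc_\eta$ is $(\mr{H}\Cc,m)_\eta$. By \cref{kal} it suffices to show $\mr{k}_{\mr{H}\Cc}=0$. Naturality of the Kaledin class under base change gives that its image in $\mr{HH}^2(\mr{H}\Cc_\eta,m_2)$ (with the appropriate grading and coefficients in $\mr{H}\Cc$) is $\mr{k}_{\mr{H}\Cc_\eta}$, and this vanishes by \cref{kal} applied to $\Cc_\eta$. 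What remains is an injectivity statement: the base change map $\mr{HH}^\bullet(\mr{H}\Cc,m_2)\to \mr{HH}^\bullet(\mr{H}\Cc_\eta,m_2)$ must be injective in the relevant degree.

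This is the main obstacle. Since $\Cc$ need not be finite, compactly supported Hochschild cohomology is unavailable. Hochschild cochains are a product over tuples of objects, and $\mr{HH}^\bullet$ of the graded category need not commute with base change or be torsion-free. Here I would use the Calabi--Yau hypothesis together with weak properness. The CY structure gives a bimodule quasi-isomorphism $\mr{H}\Cc\simeq (\mr{H}\Cc)^\vee[d]$, where $\vee$ is the objectwise $\mr{R}$-linear dual; this makes sense because the Hom modules are finitely generated projective. One should also check that $m_2$ is cyclic with respect to this pairing, so the equivalence holds for the graded category $(\mr{H}\Cc,m_2)$ and not only for $\Cc$. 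This identifies
$$\mr{HH}^\bullet(\mr{H}\Cc,m_2)\cong \mr{HH}^\bullet\bigl(\mr{H}\Cc,(\mr{H}\Cc)^\vee\bigr)[d]\cong \mr{Hom}_\mr{R}\bigl(\mr{HH}_\bullet(\mr{H}\Cc,m_2),\mr{R}\bigr)[d],$$
where the last isomorphism is adjunction between the Hochschild chain complex, a direct sum, and the cochain complex with dual coefficients, a product. That duality is exact at the level of cohomology precisely because $\mr{HH}_\bullet(\mr{H}\Cc,m_2)$ is assumed projective, so the universal coefficient spectral sequence degenerates without $\mr{Ext}^1$ terms.

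Hochschild chains do commute with base change, since they are direct sums of tensor products of flat modules. Projectivity then gives $\mr{HH}_\bullet(\mr{H}\Cc_\eta)=\mr{HH}_\bullet(\mr{H}\Cc)\otimes k(\eta)$. The base change map on cohomology therefore becomes $\mr{Hom}_\mr{R}(P,\mr{R})\to \mr{Hom}_{k(\eta)}(P\otimes k(\eta),k(\eta))$ for a projective module $P$. This map is injective because $P^\vee$ is torsion-free over the domain $\mr{R}$, being a submodule of a product of copies of $\mr{R}$. Hence $\mr{k}_{\mr{H}\Cc}=0$, and $\Cc$ is formal by \cref{kal}.

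The delicate points I would check carefully are two. The first is the compatibility of the Kaledin class with the CY identification; the class lives in Hochschild cohomology with coefficients in $\mr{H}\Cc$ twisted by the grading, and we need the duality to respect that twist. The second is possibly working with truncated classes $\mr{k}^{\le n}$ degree by degree if infinite products cause trouble.
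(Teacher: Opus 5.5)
\textbf{Gap: the Kaledin class does not live where your injectivity argument applies.} Several parts of your proposal are sound and match the paper: the easy direction, the homotopy transfer, and the CY/universal-coefficient argument showing that $\mr{HH}^\bullet(\mr{H}\Cc,m_2)\to\mr{HH}^\bullet(\mr{H}\Cc_\eta,m_2)$ is injective.

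Write $\mr{d}_{\mathcal{B},i}$ for the arity-$i$ part of the codifferential of $\mathcal{B}(\mr{H}\Cc)$. The full Kaledin class $\mr{k}_{\mr{H}\Cc}=\mr{d}_{\mathcal{B},3}+2\mr{d}_{\mathcal{B},4}+\cdots$ is a cocycle only for the full Hochschild differential $[\mr{d}_{\mathcal{B}},-]$. So it is a class in $\mr{HH}^2(\mr{H}\Cc,m)$, i.e.\ Hochschild cohomology of $\Cc$ itself, and not in $\mr{HH}^2(\mr{H}\Cc,m_2)$. It is not even a $[\mr{d}_{\mathcal{B},2},-]$-cocycle in general. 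The arity-$5$ part of $[\mr{d}_{\mathcal{B},2},\mr{k}_{\mr{H}\Cc}]$ is $2[\mr{d}_{\mathcal{B},2},\mr{d}_{\mathcal{B},4}]=-[\mr{d}_{\mathcal{B},3},\mr{d}_{\mathcal{B},3}]$, which need not vanish.

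The group $\mr{HH}^\bullet(\mr{H}\Cc,m)$ is related to $\mr{HH}^\bullet(\mr{H}\Cc,m_2)$ only through the weight spectral sequence, which need not degenerate. The hypothesis gives projectivity of $\mr{HH}_\bullet(\mr{H}\Cc,m_2)$, not of $\mr{HH}_\bullet(\Cc)$. So you have no injectivity statement for the group that actually contains $\mr{k}_{\mr{H}\Cc}$. Your step ``its image in $\mr{HH}^2(\mr{H}\Cc_\eta,m_2)$ is $\mr{k}_{\mr{H}\Cc_\eta}$'' therefore cannot be carried out as stated.

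\textbf{The fix is the paper's inductive argument via \cref{a2kaledin}.} You only gesture at truncated classes in your last sentence, and for a different reason (infinite products). The induction runs as follows.
\begin{enumerate}
\item Suppose that, after an $\Aa$-isotopy, $m_3=\cdots=m_n=0$. Then the obstruction to $\mr{A}_{n+1}$-formality is $[m_{n+1}]\in\mr{HH}^2(\mr{H}\Cc,m_2)$. This is a genuine class in the graded category's Hochschild cohomology, where the CY duality and the projectivity hypothesis apply.
\item Base change the isotopy. The same vanishing $m_3=\cdots=m_n=0$ holds over $\eta$. Formality of $\Cc_\eta$ then forces $[m_{n+1}]_\eta=0$ in $\mr{HH}^2(\mr{H}\Cc_\eta,m_2)$.
\item Your injectivity result (equivalently, torsion-freeness of $\mr{HH}^2(\mr{H}\Cc,m_2)$, as the paper phrases it) now gives $[m_{n+1}]=0$. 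Hence $m_{n+1}$ is killed by an isotopy $\mr{exp}(\tau_n)$ that is the identity in arities $<n$.
\item The infinite composition of these isotopies converges arity by arity. This yields formality directly, without ever appealing to the full class $\mr{k}_{\mr{H}\Cc}$ or to Theorem \ref{kal}.
\end{enumerate}
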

\subsection{\textbf{Context}}

\subsubsection{Holomorphic Floer theory}
There has been a duality philosophy for some time now in the framework of holomorphic symplectic manifolds. The papers \cite{2004IJGMM..01...49K}, \cite{Kapustin2005AbranesAN} are first works on the subject.
The paper \cite{2004IJGMM..01...49K} puts forward a conjecture that the Fukaya category of a hyperkähler variety $(\mr{X},\mr{I,J,K})$ with symplectic form $\omega_\mr{J}$ should be quasi-equivalent to a non-commutative deformation of the derived category on the holomorphic symplectic manifold $(\mr{X},\mr{I},\sigma_\mr{I} = \omega_\mr{J}+i\omega_\mr{K})$. In fact, his calculations show that this should be a formal deformation in the non-commutative Poisson bivector direction.\\ More generally, in the subsequent paper \cite{Kapustin2005AbranesAN}, he upgrades the above to a duality between $\mr{A}$ and $\mr{B}$ branes on $(\mr{X},\omega_\mr{J})$ and $(\mr{X},\mr{I},\sigma_\mr{I} = \omega_\mr{J}+i\omega_\mr{K})$, respectively. The following conjecture is a precise formulation of the above discussion.
\2
\begin{conjecture}\label{swdconj}
 Let $(\mr{X},\mr{I,J,K},g)$ be a hyperkähler manifold. There is a quasi-equivalence $$\mathcal{DF}(\mr{X},\omega_\mr{J}) \simeq \mr{Ind}_{\mr{Nov}/\C\cbrak}\left(\mathbf{D}_\mr{dg,h}(\widehat{\mathscr{W}}_\mr{X})\right),$$ between the Fukaya category $\mathcal{DF}(\mr{X},\omega_\mr{J})$ and the differential graded category of holonomic $\widehat{\mathscr{W}}_\mr{X}$-modules associated to the holomorphic symplectic manifold $(\mr{X},\mr{I},\sigma_\mr{I} = \omega_\mr{J}+i\omega_\mr{K})$.
\end{conjecture} 
More recently, Kontsevich and Soibelman \cite{hft} have introduced a generalised Riemann-Hilbert correspondence for a holomorphic symplectic manifold $(\mr{X},\sigma)$. Their global conjecture is an equivalence, after extension of scalars, between the $\mr{A}$-side, also called Betti, which is a global Fukaya category $\mathcal{F}_\mr{glob}$ defined over the limit of rings analytic functions on punctured unit discs, and the $\mr{B}$-side, also known as de Rham, which is the category of global holonomic deformation quantisation modules $\mathcal{H}ol_\mr{glob}$. Their theory includes Lagrangian subvarieties and the theory of quantum wave functions is supposed to produce deformation quantisation modules supported of these singular Lagrangian subvarieties. Kontsevich and Soibelman have local versions of this correspondence which is associated with a neighbourhood of a fixed Lagrangian and seems better understood.
\subsubsection{Symplectic geometry and Solomon-Verbitsky}In \cite{solomon_verbitsky_2019} Solomon and Verbitsky consider the (local) Fukaya category $\widehat{\mathcal{A}}_\mathfrak{L}$ of a collection of $\mr{I}$-holomorphic graded spin Lagrangians $\mathfrak{L}$ in a hyperkähler variety $(\mr{X},\mr{I,J,K},g)$, equipped with the symplectic form $\omega_\mr{J}=g(\mr{J}\cdot,\cdot)$. Their main result shows that, generically, finite energy holomorphic curves bounding $\mr{I}$-holomorphic Lagrangian submanifolds must be constant.\\
It follows that for two such Lagrangian $\mr{L}$ and $\mr{M}$ which intersect cleanly, the Floer coboundary operator $\mu_1(\mr{L,M})$ of $\mr{CF}(\mr{L,M})$ coincides with the de Rham differential, hence the spectral sequence $$\mr{H}^{*}(\mr{L}/\C)\otimes \mr{Nov} \Rightarrow \mr{HF}^{*}(\mr{L,L})$$ collapses on the second page. In addition, the Floer composition $\mu_2(\mr{L})$ of $\mr{CF}(\mr{L,L})$ is the wedge product of differential forms up to sign and $\mu_k(\mr{L,L})=0$ for $k \ge 3$. This implies, as in \cite{Deligne1975}, that the Floer $\Aa$-algebra $\mr{CF}(\mr{L},\mr{L})$ is formal. The de Rham versions of these results were obtained in \cite{MR4678893}.\\
These results motivate them to state the following conjecture attributed to Ivan Smith:
\2
\begin{conjecture}\label{ivanformality}
For a collection of compact spin $\mr{I}$-holomorphic Lagrangian submanifolds with clean pairwise intersections, the $\Aa$-category $\widehat{\mathcal{A}}_\mathfrak{L}$ is a formal.
\end{conjecture}
Then our \cref{main2} is the de Rham analogue of this conjecture under the generalised Riemann-Hilbert correspondence of Kontsevich and Soibelman \cite{hft}.\\
The category $\widehat{\mathcal{A}}_\mathcal{L}$ is known to be (intrinsically) formal by \cite{MR3486414} for the Slodowy slice to a nilpotent matrix with two equal Jordan blocks and the (finite) Seidel-Smith collection of distinguished Lagrangian submanifolds.\\
In relation to the work of Solomon and Verbitsky, we introduce also a local version of \cref{swdconj} which should be much more accessible.\\
Note that, as explained above, given a Solomon-Verbitsky collection $\mathfrak{L}$, the full differential graded subcategory $\mathcal{DQ}_\mathfrak{L}^{\mr{s}}$ of $\mathcal{DQ}_\mathfrak{L}$, spanned by a choice of quantised orientation for each orientable Lagrangian submanifold, equipped with orientation data, in $\mathfrak{L}$, is independent of these choices as the morphism complexes are invariant upon chaining (quantised) orientations. Since the Solomon-Verbitsky category $\widehat{\mathcal{A}}_\mathfrak{L}$ has a single object associated to each Lagrangian in $\mathfrak{L}$, we should expect in line with \cref{main2a} and \cref{ivanformality}: 
\2
 \begin{conjecture}\label{svdq}
  Let $\mathfrak{L}$ be a Solomon-Verbitsky collection of Lagrangian submanifolds in $(\mr{X},\sigma)$. There is a quasi-isomorphism $$\widehat{\mathcal{A}}_\mathfrak{L}\cong \mr{Ind}_{\mr{Nov}/\C\cbrak}\left(\mathcal{DQ}_\mathfrak{L}^{\mr{s}}\right).$$
 \end{conjecture}

\subsection{Plan}
In \cref{sec1} we begin by reviewing the general theory of $\Aa$-categories and cocategories and their relationship via the bar construction, following \cite{lefvrehasegawa2003sur}. Next, we recall several notions of Hochschild (co)homology for $\Aa$-categories as well as (weak) Calabi-Yau structures on differential graded categories after \cite{MR3911626}. This culminates in the last subsection which contains the theory of Kaledin classes of $\Aa$-categories and their applications to formality.\\
The next section \cref{sec2} is mostly a refresher on deformation quantisation after \cite{MR3012169}, $\mr{d}$-critical loci and perverse sheaves \cite{bbdjs}, \cite{saf}, as these play crucial roles in later parts of the paper. It contains a few new definitions, most importantly the notion of quantised orientations for Lagrangian submanifolds.\\
Then in \cref{sec3}, we introduce the Malgrange-Serre resolutions, recall the constructible complexes of Behrend and Fantechi \cite{MR2641169} and define the virtual de Rham (sheaf) complex for Lagrangian intersections. We conclude by relating it to the perverse sheaf of \cite{bbdjs} in the clean intersection case.\\
The last section \cref{sec4} begins with the definitions the various differential graded categories associated with a holomorphic symplectic manifold and its Lagrangian submanifolds which occupy the first three subsections. In the last subsection, we conclude with our main results on formality of these dg categories.

\subsection{Acknowledgements}

I wish to thank Jake Solomon and Richard Thomas for asking several questions, to which this paper owes its existence, and related discussions. 

\section{\texorpdfstring{ $\Aa$-}- categories}\label{sec1}

\subsection{Graded categories}

Fix a ring $\mr{R}$. We denote by $\mr{Mod}_{\mr{gr}}(\mr R)$ the category of $\Z$-graded $\mr{R}$-modules whose objects are modules $\mr{M}$ over $\mr{R}$ equipped with a decomposition $$\mr{M}=\bigoplus_{n \in \Z} \mr{M}_n.$$ An $\mr{R}$-linear map $f:\mr{M} \to \mr{N}$ is said to be of degree $m$ if $$f(\mr{M}_n) \subseteq \mr{N}_{n+m}.$$ Morphisms in the category of graded $\mr{R}$-modules are $\mr{R}$-linear maps of degree $0$, also called (co)chain maps.\\
A differential on a graded module $\mr{M}$ is a degree $1$ linear map $\mr{d}_\mr{M}:\mr{M}\to \mr{M}$ such that $\mr{d}_\mr{M}^2=0$. We call the pair $(\mr{M},\mr{d}_\mr{M})$ a differential graded module. A morphism of differential graded modules is a degree $0$ linear morphism commuting with the differential. We denote by $\mr{Mod}_\mr{dg}(\mr{R})$ the category of differential graded $\mr{R}$-modules.
\2
\begin{definition}
A graded category over $\mr{R}$ is a category $\mathcal{C}$ enriched over $\mr{Mod}_{\mr{gr}}(\mr{R})$.
\end{definition}
More explicitly, this means for any two objects $\mr{X,Y} \in \mathcal{C}$, we have a decomposition of the space of morphisms $$\mathcal{C}(\mr{X},\mr{Y}) = \bigoplus_{n \in Z}\mathcal{C}^{n}(\mr{X},\mr{Y}).$$
Moreover, the composition of morphisms is of degree zero.\\
A graded functor $\mr{F}:\mathcal{C}_1\to \mathcal{C}_2$ is a functor which induces a degree $0$ morphism $$\mr{F}_{\mr{X},\mr{Y}}:\mathcal{C}_1(\mr{X},\mr{Y})\to \mathcal{C}_2(\mr{F}(\mr{X}),\mr{F}(\mr{Y}))$$ for any $\mr{X},\mr{Y} \in \mr{Ob}(\mathcal{C}_1)$.\\
For a graded category $\mathcal{C}$, the opposite graded category $\mathcal{C}^{\mr{op}}$ has the same objects as $\mathcal{C}$, the morphism spaces are $$\mathcal{C}^{\mr{op}}(\mr{X,Y})=\mathcal{C}(\mr{Y,X})$$ with composition $$\mathcal{C}^{\mr{op},n}(\mr{Y,Z})\otimes \mathcal{C}^{\mr{op},m}(\mr{X,Y})\to \mathcal{C}^{\mr{op},n+m}(\mr{X,Z})$$ such that $$g\otimes f \mapsto (-1)^{nm}f\circ g.$$
\subsection{Differential graded categories}
\begin{definition}
A differential graded category over $\mr{R}$ is a category $\mathcal{C}$  enriched over $\mr{Mod}_\mr{dg}(\mr{R})$.
\end{definition}

Hence the morphism spaces of a dg category $\mathcal{C}$ are endowed with a structure of differential graded modules over $\mr{R}$, that is, $$\mathcal{C}(\mr{X,Y})=\bigoplus_{n\in \Z}\mathcal{C}^{n}(\mr{X,Y})$$ with a differential $\mr{d}_\mr{X,Y}$ of degree $1$ and the compositions $$\mathcal{C}(\mr{Y,Z})\otimes \mathcal{C}(\mr{X,Y})\to \mathcal{C}(\mr{X,Z})$$ are morphisms of dg $\mr{R}$-modules.\\
A differential graded (dg) functor is a functor $$\mr{F}:\mathcal{C}_1\to \mathcal{C}_2$$ such that $$\mr{F}_{\mr{X},\mr{Y}}:\mathcal{C}_1(\mr{X},\mr{Y})\to \mathcal{C}_2(\mr{F}(\mr{X}),\mr{F}(\mr{Y}))$$ is a morphism of dg $\mr{R}$-modules, i.e. it has degree $0$ and commutes with the differentials, for any $\mr{X},\mr{Y} \in \mr{Ob}(\mathcal{C}_1)$. A dg functor $$\mr{F}:\mathcal{C}_1\to \mathcal{C}_2$$ is quasi-isomorphism if it induces a bijection on objects $$\mr{Ob}(\mathcal{C}_1)\cong \mr{Ob}(\mathcal{C}_2)$$ and, for any two $\mr{X,Y}\in \mr{Ob}(\mathcal{C}_1)$, a quasi-isomorphism $$\mr{F}_{\mr{X},\mr{Y}}:\mathcal{C}_1(\mr{X},\mr{Y})\to \mathcal{C}_2(\mr{F}(\mr{X}),\mr{F}(\mr{Y})).$$
The opposite differential graded category of $\mathcal{C}$ is defined as the opposite graded category $\mathcal{C}^\mr{op}$ with the same differential as $\mathcal{C}$.

\subsection{\texorpdfstring{ $\Aa$-}- categories}

\begin{definition}
Let $n \in \mathbf{N} \cup \{\infty \}$. An $\mr A_n$-category $\mathcal{C}$ is a consists of class of objects $\mr{Ob}(\mathcal{C})$ and graded $\mr{R}$-modules of morphisms $\mathcal{C}(\mr{X,Y})$ for all $\mr{X,Y}\in \mr{Ob}(\mathcal{C})$ such that for all $n\ge i\ge 1$ and all $\mr{X}_0,\cdots,\mr{X}_i \in \mr{Ob}(\mathcal{C})$ there are $\mr{R}$-linear morphisms $$m_i:\mathcal{C}(\mr{X}_{i-1},\mr{X}_{i})\otimes\cdots\otimes \mathcal{C}(\mr{X}_0,\mr{X}_1)\to \mathcal{C}(\mr{X}_0,\mr{X}_i)[2-i]$$ satisfying \begin{equation}\label{defeq1cat} \tag{$*_{m}$}
 \sum_{j+k+l=m} (-1)^{jk+l}m_{j+1+l}(\mr{id}^{\otimes j}\otimes m_{k}\otimes \mr{id}^{\otimes l})=0
 \end{equation}
 for all $m\le n$.
\end{definition}
\2
\begin{example}
\begin{enumerate}
\item An $\mr{A}_n$-category $\mathcal{C}$ with one object $\{\bullet\}$ is the data of an $\mr{A}_n$-algebra structure on the endomorphism space $\mathcal{C}(\bullet,\bullet)$.
\item A graded category is an $\Aa$-category with $m_i=0$ for all $i\not=2$.
\item A (possibly non-unital) dg category is an $\Aa$-category with $m_i=0$ for all $i\ge 3$.
\end{enumerate}
\end{example}
\2
\begin{remark}
Note that an $\mr{A}_n$-category needn't be a category for its composition may not be associative and there may be no identities.
\end{remark}
\2
\begin{example}
 Let $\Cc$ be an $\Aa$-category. Then the first relation is $$(*_1) \quad m_1m_1 =0,$$ i.e. $m_1$ is a differential. The second relation is $$(*_2) \quad m_1m_2 = m_2(m_1\otimes \mr{id} + \mr{id} \otimes m_1),$$ meaning that $m_1$ is a derivation for the composition $m_2$. The third equation shows $m_2$ is associative up to the homotopy $m_3$: $$(*_3) \quad m_2(m_2\otimes \mr{id} - \mr{id}\otimes m_2) = m_1m_3+m_3(m_1\otimes \mr{id}^{\otimes 2} + \mr{id}\otimes m_1 \otimes \mr{id} + \mr{id}^{\otimes 2}\otimes m_1).$$ In particular, any minimal $\mr{A}_n$-category is associative. The cohomology of an $\mr{A}_n$-category is a graded associative category.
\end{example}
\2 
\begin{definition}
Let $\mathcal{C}$ be an $\mr{A}_n$ category. Its cohomology category $\mr{H}\mathcal{C}$ is the graded category with the same objects and morphisms $$\mr{H}\mathcal{C}(\mr{X,Y}) = \mr{H}^{\bullet}(\mathcal{C}(\mr{X,Y}))$$ for any two objects $\mr{X,Y}$.\\
The homotopy category of $\mathcal{C}$, $\mr{H}^0\mathcal{C}$, is the degree $0$ part of $\mr{H}\mathcal{C}$.
\end{definition}
\2
\begin{remark}
Note that these auxiliary categories are functorially associated to $\mathcal{C}$.
\end{remark}
\2
\begin{definition}
Let $\mathscr{C}_1$ and $\mathcal{C}_2$ be two $\mr{A}_n$-categories. An $\mr{A}_n$-functor $\mr{F}:\mathcal{C}_1\to \mathcal{C}_2$ is a tuple $(\mr{F},\mr{F}_1,\cdots,\mr{F}_n)$, where $\mr{F}:\mr{Ob}(\mathcal{C}_1)\to \mr{Ob}(\mathcal{C}_2)$ and for all $i\le n$ and all $\mr{X}_0,\cdots,\mr{X}_i \in \mr{Ob}(\mathcal{C}_1)$ $$\mr{F}_i:\mathcal{C}_1(\mr{X}_{i-1},\mr{X}_{i})\otimes\cdots\otimes \mathcal{C}_1(\mr{X}_0,\mr{X}_1)\to \mathcal{C}_2(\mr{F}\mr{X}_0,\mr{F}\mr{X}_i)$$ are morphisms of degree $1-i$ such that for all $m\le n$ we have \begin{equation}\label{defeq2cat}\tag{$**_m$}
 \sum_{j+k+l=m} (-1)^{jk+l}\mr{F}_{j+1+l}(\mr{id}^{\otimes j}\otimes m_{k}\otimes \mr{id}^{\otimes l}) =  \sum_{i_{1}+\cdots +i_{r}=m}(-1)^{s(i_1,\cdots,i_r)}m_{r}(\mr{F}_{i_1}\otimes \cdots \otimes \mr{F}_{i_r}),                                                                                                                                                                                                                                                                                                                                                      \end{equation}
 where we set $$s(i_1,\cdots,i_r) = \sum_{2\le u\le r}\big((1-i_u)\sum_{1\le v \le u} i_v\big).$$
The composition of $\mr{F} : \mathcal{C}_1 \to \mathcal{C}_2$ and $\mr{G} : \mathcal{C}_2 \to \mathcal{C}_3$ is defined by $$(\mr{G} \circ \mr{F})_n = \sum_r \sum_{i_1 +\cdots+i_r=n} (-1)^{s(i_1,\cdots,i_r)}\mr{G}_{r}(\mr{F}_{i_{1}}\otimes \cdots \otimes \mr{F}_{i_{r}}).$$
\end{definition}
\2 
\begin{remark}
\begin{enumerate}
\item We abuse notation in the above definition, writing $m_i$ for the structure morphisms of both $\mathcal{C}_1$ and $\mathcal{C}_2$.
\item We shall say that $\mr{F}$ is strict if the morphisms $\mr{F}_i=0$ for all $i\ge 2$. Hence the morphism $\mr{F}_1$ commutes with all multiplications.
\end{enumerate}
\end{remark}
\2
\begin{example}
 Let $F : \Cc_1 \to \Cc_2$ be an $\Aa$-functor. Then $$(**_1) \quad F_1m_1=m_1F_1,$$ that is, $F_1$ is a morphism of complexes, i.e. $\mr{A}_1$-morphisms are just morphisms of complexes together with a map of objects. The second relation is $$(**_2) \quad F_1m_2 = m_2(F_1\otimes F_1) + m_1F_2+F_2(m_1\otimes \mr{id} +\mr{id}\otimes m_1),$$ measuring the compatibility of $F_1$ with the compositions of $\Cc_1$ and $\Cc_2$.
\end{example}
\2
\begin{definition}
An $\mr{A}_n$ functor $\mr{F}:\mathcal{C}_1\to \mathcal{C}_2$ is an quasi-equivalence if $$\mr{F}_1:\mathcal{C}_1(\mr{X},\mr{Y})\to \mathcal{C}_2(\mr{F}\mr{X},\mr{F}\mr{Y})$$ is a quasi-isomorphism for all $\mr{X,Y}\in \mr{Ob}(\mathcal{C}_1)$ and $\mr{H}^0\mr{F}:\mr{H}^0\mathcal{C}_1\to \mr{H}^0\mathcal{C}_2$ is an equivalence of categories. \\
We say $\mathcal{C}_1$ and $\mathcal{C}_2$ are quasi-equivalent if there exist $\mr{A}_n$-categories $\mathcal{A}_1,\cdots \mathcal{A}_m$ and quasi-equivalences $\Cc_1 \leftarrow \mathcal{A}_1 \rightarrow \cdots \leftarrow \mathcal{A}_m \rightarrow \Cc_2$.
\end{definition}
\2
\begin{definition}Let $n$ be a positive integer or $\infty$.
\begin{enumerate}
 \item An $\mr{A}_n$-category $\Cc$ is called minimal if $m_1=0$.
 \item A minimal model for an $\mr{A}_n$-category $\Cc$ is a minimal $\mr{A}_n$-category $\Cc_0$ together with a quasi-equivalence $\Cc_0 \to \Cc$.
 \end{enumerate}
\end{definition}
Recall the following classical theorem of Kadeishvilli:
\2
\begin{theorem}[Kadeishvili \cite{MR580645}]
 Let $\mr{A}$ be an $\Aa$-algebra over $\mr{R}$ such that $\mr{HA}$ is a projective $\mr{R}$-module. For any choice of a quasi-isomorphism $f_1 : \mr{HA} \to \mr{A}$ of complexes of $\mr{R}$-modules, there exists a minimal $\Aa$-structure on $\mr{HA}$, with $m^{\mr{HA}}_2$ being induced by $m_2$, and an $\Aa$-quasi-isomorphism $f : \mr{HA} \to \mr{A}$ lifting $f_1$.
\end{theorem}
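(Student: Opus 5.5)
The plan is the standard homological perturbation/obstruction argument, run inductively on the arity. Since $\mr{HA}$ is projective over $\mr{R}$, each short exact sequence $0\to \mr{B}^n\to \mr{Z}^n\to \mr{H}^n\mr{A}\to 0$ splits, and $\mr{H}^n\mr{A}$ is projective. Given the chosen quasi-isomorphism $f_1$, sending each class to a cocycle representative, I will construct three pieces of data:
\begin{itemize}
\item a projection $p:\mr{A}\to \mr{HA}$;
\item a degree $-1$ homotopy $h:\mr{A}\to\mr{A}$;
\item the identities $pf_1=\mr{id}$ and $\mr{id}-f_1p=m_1h+hm_1$.
\end{itemize}
Here lies the one place where the hypothesis is used in an essential way. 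For $f_1$ to be part of a strong deformation retract, one also needs $\mr{B}^n$ to be a direct summand of $\mr{A}^n$. This requires $\mr{A}^n\to \mr{B}^{n+1}$ to split, which is not automatic over a general ring. I expect this to be the main obstacle.

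My first attempt is to avoid needing $h$ globally and instead run Kadeishvili's original obstruction argument. That argument only needs two things. First, it needs to lift cocycles and boundaries. Second, it needs the maps $\mr{Z}(\mathrm{Hom}(\mr{HA}^{\otimes n},\mr{A}))\to \mathrm{Hom}(\mr{HA}^{\otimes n},\mr{HA})$ to be surjective, with the right kernel behaviour. Both hold because $\mr{HA}^{\otimes n}$ is projective, so $\mathrm{Hom}(\mr{HA}^{\otimes n},-)$ is exact and commutes with taking cohomology.

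The induction goes as follows. I set $m_1^{\mr{HA}}=0$, and define $m_2^{\mr{HA}}$ to be the product induced on cohomology, i.e. $m_2^{\mr{HA}}=[m_2(f_1\otimes f_1)]$. Suppose $f_1,\dots,f_{n-1}$ and $m_2^{\mr{HA}},\dots,m_{n-1}^{\mr{HA}}$ have been built satisfying $(**_k)$ for $k<n$. I form the element
$$U_n=\sum_{r\ge 2}\pm m_r(f_{i_1}\otimes\cdots\otimes f_{i_r})-\sum_{k<n}\pm f_{j+1+l}(\mr{id}^{\otimes j}\otimes m^{\mr{HA}}_k\otimes \mr{id}^{\otimes l}),$$
which lies in $\mathrm{Hom}(\mr{HA}^{\otimes n},\mr{A})$, using only the terms already defined. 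A routine check using $(*_m)$ for $\mr{A}$ and $(**_k)$ for $k<n$ shows that $m_1U_n=0$, since $m_1^{\mr{HA}}=0$.

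Since $\mathrm{Hom}(\mr{HA}^{\otimes n},-)$ is exact, the cohomology class of $U_n$ is an element of $\mathrm{Hom}(\mr{HA}^{\otimes n},\mr{HA})$ of degree $2-n$. I define $m_n^{\mr{HA}}$ to be this class, with the appropriate sign. Then $U_n-f_1m_n^{\mr{HA}}$ is a coboundary in $\mathrm{Hom}(\mr{HA}^{\otimes n},\mr{A})$. By exactness again, I can write it as $m_1f_n$ for some $f_n$ of degree $1-n$. This choice makes $(**_n)$ hold.

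It remains to verify that the new $m_n^{\mr{HA}}$ satisfy $(*_n)$ on $\mr{HA}$. I apply $f_1$ to the $(*_n)$-expression and rewrite it using the $(**)$ relations. This shows the expression is $f_1$ of something that is a boundary in $\mr{A}$. Since $f_1$ is injective on cohomology and $\mr{HA}$ has zero differential, the expression must vanish.

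Finally, $f$ is an $\Aa$-morphism with $f_1$ a quasi-isomorphism, so it is an $\Aa$-quasi-isomorphism lifting $f_1$. The sign bookkeeping in the identity $m_1U_n=0$ is routine and is the same computation as in Kadeishvili's original work. The key point to get right is the exactness of $\mathrm{Hom}(\mr{HA}^{\otimes n},-)$ on cohomology, which follows from projectivity of the $\mr{R}$-modules $\mr{HA}^{\otimes n}$.
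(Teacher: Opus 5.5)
Your argument is correct in substance and follows a genuinely different route from the one the paper relies on. The paper does not prove this statement; it cites Kadeishvili. Its own tool for such results is Markl's homotopy transfer, which produces $m_i$ and $\mr{F}_i$ by explicit tree formulas from contraction data $(\mr{F}_1,\mr{G}_1,\mr{h}_1)$. You instead run Kadeishvili's inductive obstruction argument. Its only input is that $\mathrm{Hom}(\mr{HA}^{\otimes n},-)$ is exact, so that $\mr{H}\,\mathrm{Hom}(\mr{HA}^{\otimes n},\mr{A})\cong \mathrm{Hom}(\mr{HA}^{\otimes n},\mr{HA})$.

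The two routes are compatible. When a contraction $(p,h)$ exists, the transfer formulas are just the particular solution $m_n^{\mr{HA}}=pU_n$, $f_n=\pm hU_n$ of your equation, since $(\mr{id}-f_1p)U_n=m_1hU_n$ once $U_n$ is a cocycle. Transfer buys explicit formulas together with $\mr{G}$ and the homotopy $\mr{h}$. Your route buys exactly the stated generality: as you rightly note, projectivity of $\mr{HA}$ does not give a contraction over a general ring. For example, the exact but non-split complex $\Z\xrightarrow{2}\Z\to\Z/2$ has $\mr{HA}=0$ and is not contractible. Your argument extends verbatim to $\Aa$-categories. That is what actually justifies the paper's first corollary to Markl's theorem under its projectivity hypothesis alone.

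One step must be reordered.
\begin{enumerate}
\item From $(*)$ for $\mr{A}$, $(**_k)$ for $k<n$, and $(*_k)$ for $\mr{HA}$ with $k<n$, the routine computation gives $m_1U_n=\pm f_1\Psi_n$, not $m_1U_n=0$. Here $\Psi_n$ is the left-hand side of $(*_n)$ for $\mr{HA}$.
\item Because $m_1^{\mr{HA}}=0$, $\Psi_n$ involves only $m_2^{\mr{HA}},\dots,m_{n-1}^{\mr{HA}}$. It does not involve the new $m_n^{\mr{HA}}$, which first enters in $(*_{n+1})$.
\item So your last argument must come first at each stage. Since $f_1\Psi_n$ is a boundary, injectivity of $\mr{H}(f_1)$ gives $\Psi_n=0$, and only then is $U_n$ a cocycle.
\end{enumerate}
Accordingly, include $(*_k)$ for $k<n$ in the inductive hypothesis. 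Finally, if $\mr{H}(f_1)\neq\mr{id}$, set $m_n^{\mr{HA}}=\mr{H}(f_1)^{-1}[U_n]$.
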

The homotopy transfer theorem extends this theorem on $\Aa$-algebras to $\Aa$-categories.
\2
\begin{theorem}[Markl \cite{MR2287133}]
Suppose that $\mathcal{C}$ is an $\Aa$-category and for all objects $\mr{X,Y}$ in $\mathcal{C}$, we have a diagram
\[
\begin{tikzcd}
\mathcal{C}_0(\mr{X,Y}) \arrow[r, bend left, "\mr{F}_1"] & \mathcal{C}(\mr{X,Y}) \arrow[l, bend left, "\mr{G}_1"] \arrow[loop right, "\mr{h}_1"]
\end{tikzcd}
\]
where $\mathcal{C}_0(\mr{X,Y})$ are complexes, $\mr{F_1,G_1}$ are morphisms of complexes, and $\mr{h}_1$ is a degree $-1$ map such that $\mr{dh_1+h_1d}=\mr{F_1G_1}-\mr{id}$.\\
Then, there is an $\Aa$-category $\mathcal{C}_0$ with objects $\mr{Ob}(\mathcal{C})$, whose $\mr{A}_1$-structure is given by the complexes $\mathcal{C}_0(\mr{X,Y})$. Furthermore, there are $\Aa$-functors $\mr{F}:\mathcal{C}_0\to \mathcal{C}$ and $\mr{G}:\mathcal{C}\to \mathcal{C}_0$ which are identity on objects and lift $\mr{F}_1$ and $\mr{G}_1$, respectively, such that there is a homotopy $\mr{h}: \mr{F}\circ\mr{G}\simeq \mr{id}_\mathcal{C}$ lifting $\mr{h}_1$.
\end{theorem}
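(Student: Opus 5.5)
The plan is to run the homotopy transfer argument on the tensor coalgebra (bar construction) of $\Cc$, working object-pair by object-pair. Since the data $(\mr{F}_1,\mr{G}_1,\mr{h}_1)$ is given separately for each pair $(\mr{X},\mr{Y})$, we assemble it into maps on the bar construction $\mr{B}\Cc=\bigoplus_{n\ge 1}\bigoplus_{\mr{X}_0,\dots,\mr{X}_n}s\Cc(\mr{X}_{n-1},\mr{X}_n)\otimes\cdots\otimes s\Cc(\mr{X}_0,\mr{X}_1)$. This is a cocategory over the discrete set $\mr{Ob}(\Cc)$, and an $\Aa$-structure on $\Cc$ is the same as a square-zero coderivation $b$ on it. Objects never change in what follows, so every construction is componentwise over composable strings of objects. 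The proof for one-object $\Aa$-algebras therefore goes through verbatim: all tensor products are taken over $\mr{R}$ along composable chains, and every formula is a finite sum for each fixed length $n$.

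\textbf{Step 1 (explicit transferred structure).} Define the operations $m_n^{0}$ on $\Cc_0$ by the Kontsevich--Soibelman tree formula. For each planar rooted tree with $n$ leaves and all internal vertices of valence $\ge 3$, put $\mr{F}_1$ on the leaves, an $m_k$ of $\Cc$ at each vertex of arity $k$, $\mr{h}_1$ on each internal edge, and $\mr{G}_1$ at the root, with the standard Koszul signs. Set $m_1^0=\mr{d}_{\Cc_0}$. Define $\mr{F}_n$ the same way but with $\mr{h}_1$ in place of $\mr{G}_1$ at the root, and define $\mr{G}_n$ and $\mr{h}_n$ by the analogous trees in which $\Cc$ sits at the leaves and $\mr{h}_1$ is used on the leaf and internal edges. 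Each tree only composes morphisms along a composable chain $\mr{X}_0\to\cdots\to\mr{X}_n$, so these maps are well defined as multilinear maps between the correct hom-complexes and have the correct degrees.

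\textbf{Step 2 (verifying the relations).} To avoid checking $(*_m)$ and $(**_m)$ tree by tree, we use the homological perturbation lemma on bar constructions. Regard $b=b_1+b_{\ge 2}$ on $\mr{B}\Cc$ as a perturbation of the tensor differential $b_1$. The data $(\mr{F}_1,\mr{G}_1,\mr{h}_1)$ induces a strong deformation retract between $(\mr{B}\Cc_0,b_1^0)$ and $(\mr{B}\Cc,b_1)$ via $\mr{F}_1^{\otimes n}$, $\mr{G}_1^{\otimes n}$ and the symmetrised tensor homotopy $\sum \mr{id}^{\otimes i}\otimes \mr{h}_1\otimes (\mr{F}_1\mr{G}_1)^{\otimes j}$. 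Before this, we replace $\mr{h}_1$ by a homotopy satisfying the side conditions $\mr{h}_1\mr{F}_1=0$, $\mr{G}_1\mr{h}_1=0$, $\mr{h}_1^2=0$ by the usual modifications; this does not change the conclusion. The perturbation is locally nilpotent, because $b_{\ge 2}$ strictly lowers tensor length. Hence the perturbed data $\mr{B}\mr{F}$, $\mr{B}\mr{G}$, $\mr{B}\mr{h}$ are given by convergent (in fact finite in each length) geometric series. Expanding these series recovers exactly the tree formulas of Step 1, which shows:
\begin{itemize}
\item the transferred codifferential squares to zero;
\item $\mr{B}\mr{F}$ and $\mr{B}\mr{G}$ are cocategory morphisms commuting with the codifferentials, i.e.\ $\Aa$-functors;
\item $\mr{B}\mr{h}$ is a $(\mr{B}\mr{F}\circ\mr{B}\mr{G},\mr{id})$-coderivation homotopy, i.e.\ the required homotopy $\mr{h}:\mr{F}\circ\mr{G}\simeq\mr{id}_\Cc$ lifting $\mr{h}_1$.
\end{itemize}

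\textbf{Main obstacle.} The genuine difficulty is that the perturbed maps come out as cocategory maps and coderivation homotopies. The perturbation lemma only yields chain-level data. That $\mr{B}\mr{F}$ and $\mr{B}\mr{G}$ are comultiplicative, and that $\mr{B}\mr{h}$ is a coderivation homotopy, needs the side conditions together with the specific choice of tensor homotopy. I would prove this by induction on tensor length: use that a map into a cofree conilpotent cocategory is determined by its projection to cogenerators, and check compatibility with the comultiplication length by length. The categorical setting adds only bookkeeping, since the comultiplication splits a string only at intermediate objects. If this becomes unwieldy, the fallback is to verify $(*_m)$ and $(**_m)$ directly from the tree formulas by induction on $m$, contracting internal edges with $\mr{d}\mr{h}_1+\mr{h}_1\mr{d}=\mr{F}_1\mr{G}_1-\mr{id}$.
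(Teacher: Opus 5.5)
\textbf{There is a genuine gap in Step 2, and it comes from the hypotheses.} You are only given $\mr{d}\mr{h}_1+\mr{h}_1\mr{d}=\mr{F}_1\mr{G}_1-\mr{id}$ on $\Cc$. Nothing is assumed about $\mr{G}_1\mr{F}_1$. For example, $\Cc_0=\Cc\oplus\mr{K}$ with $\mr{K}$ non-acyclic, $\mr{F}_1$ the projection, $\mr{G}_1$ the inclusion and $\mr{h}_1=0$ is allowed; there $\mr{F}_1$ is not even a quasi-isomorphism.

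So you do not have a strong deformation retract, and the ``usual modifications'' that produce side conditions are unavailable. Any $\mr{h}'$ with $\mr{d}\mr{h}'+\mr{h}'\mr{d}=\mr{F}_1\mr{G}_1-\mr{id}$ and $\mr{h}'\mr{F}_1=0$ forces $\mr{F}_1(\mr{G}_1\mr{F}_1-\mr{id})=(\mr{d}\mr{h}'+\mr{h}'\mr{d})\mr{F}_1=0$. This fails for $\Cc_0=\Cc$, $\mr{F}_1=\mr{id}$ and any $\mr{G}_1\neq\mr{id}$ homotopic to the identity.

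The issue is not cosmetic. The chain-level perturbation lemma does hold for such one-sided data. However, the perturbed differential $\overline{\mr{T}}\mr{G}_1\circ\delta(1-\mr{T}\mr{h}\,\delta)^{-1}\circ\overline{\mr{T}}\mr{F}_1$ on $\overline{\mr{T}}(\Cc_0[1])$, with $\delta=b_{\ge 2}$, is then not a coderivation. Its component from weight $3$ to weight $2$ comes only from $\delta=b_2\otimes\mr{id}+\mr{id}\otimes b_2$. It equals $m^0_2\otimes\mr{G}_1\mr{F}_1\pm\mr{G}_1\mr{F}_1\otimes m^0_2$ rather than $m^0_2\otimes\mr{id}\pm\mr{id}\otimes m^0_2$. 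For the same reason, the leaves of the expanded series carry $\mr{G}_1\mr{F}_1$ and $\mr{h}_1\mr{F}_1$ decorations, so the expansion does not reproduce your Step 1 trees.

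Consequently Step 2 proves at most the strong-deformation-retract case. That case would suffice for the paper's corollaries with $\Cc_0=\mr{H}\Cc$, but not for the theorem as stated. Even in that case, the comultiplicativity of the perturbed maps, which you relegate to the ``main obstacle'', is the actual content of the theorem. Your description of $\mr{G}_n$ and $\mr{h}_n$ as ``analogous trees'' is likewise unsupported.

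\textbf{Your fallback is the route that works, and it is what the paper sketches following Markl.} Define $\mr{F}_i=\sum\pm\mr{h}_1m_{\Cc,r}(\mr{F}_{i_1}\otimes\cdots\otimes\mr{F}_{i_r})$ and $m_{\Cc_0,i}=\sum\pm\mr{G}_1m_{\Cc,r}(\mr{F}_{i_1}\otimes\cdots\otimes\mr{F}_{i_r})$, summing over $r\ge 2$. Take the cofunctor $\mr{F}$ and the coderivation $b^0$ they cogenerate, and verify the relations directly.

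In bar language this is short. Let $\mr{pr}$ be the projection to weight one. The defect $\Theta=b\mr{F}-\mr{F}b^0$ is an $(\mr{F},\mr{F})$-coderivation. Using only $\mr{F}_1\mr{G}_1-\mr{id}=\mr{d}\mr{h}_1+\mr{h}_1\mr{d}$ and $b^2=0$, one finds, up to Koszul signs, $\mr{pr}\,\Theta=\mr{h}_1\,\mr{pr}\,b_{\ge 2}\Theta$. Since $b_{\ge 2}$ lowers weight, induction on weight gives $\Theta=0$. Then $\mr{pr}\,(b^0)^2=-\mr{G}_1\,\mr{pr}\,b_{\ge 2}\Theta=0$ yields $(b^0)^2=0$ without any injectivity of $\mr{F}_1$. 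The extensions $\mr{G}$ and $\mr{h}$ need Markl's separate, more involved recursions rather than a direct tree sum.
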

\begin{proof}
We are going to sketch an inductive definition of the $\Aa$-structure on $\Cc_0$ and the functor $\mr{F}$, referring to \cite{MR2287133} for details and formulae for $\mr{G}$ and $\mr{h}$. We start with $\mr{F}$ and then define the structure on $\Cc_0$ so that $\mr{F}$ is an $\Aa$ functor. Let $$\mr{F}_i=\sum_r\sum_{i_1,\cdots,i_r} (-1)^{s(i_1,\cdots,i_r)}\mr{h}_1\circ m_{\Cc,r}\circ \left(\mr{F}_{i_1}\otimes\cdots\otimes \mr{F}_{i_r}\right).$$ For the $\Aa$-structure maps, we have $$m_{\Cc_0,i}=\sum_r\sum_{i_1,\cdots,i_r} (-1)^{s(i_1,\cdots,i_r)}\mr{G}_1\circ m_{\Cc,r}\circ \left(\mr{F}_{i_1}\otimes\cdots\otimes \mr{F}_{i_r}\right),$$ where we are summing over all $r>1$ and $i_1+\cdots+i_r=i$. It is not difficult to check that these satisfy the $\Aa$-relations. Let us just mention that the inductive formulae for $\mr{G}$ and $\mr{h}$ are somewhat more complicated and planar binary trees provide a cleaner approach to the proof.
\end{proof}
We record the following immediate corollaries for future use:
\2
\begin{corollary}
Let $\mathcal{C}$ be an $\Aa$-category such that $\mr{H}\mathcal{C}(\mr{X,Y})$ are projective $\mr{R}$-modules. Then for any quasi-isomorphism $\mr{F}_1:\mr{H}\mathcal{C}(\mr{X,Y}) \to \mathcal{C}(\mr{X,Y})$, there is a minimal $\Aa$-structure on $\mr{H}\mathcal{C}$ and a quasi-isomorphism $\mr{F}:\mr{H}\mathcal{C}\to \mathcal{C}$ lifting $\mr{F}_1$.
\end{corollary}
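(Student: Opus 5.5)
The corollary should follow from Markl's homotopy transfer theorem once we build, for every pair of objects, the deformation-retract data it requires with $\mathcal{C}_0(\mr{X,Y})=\mr{H}\mathcal{C}(\mr{X,Y})$ carrying the zero differential. Then $m_{\Cc_0,1}=0$, so the transferred structure is minimal, and the transferred $\Aa$-functor $\mr{F}:\mr{H}\mathcal{C}\to\mathcal{C}$ has linear component equal to the given $\mr{F}_1$. That functor is the identity on objects and a quasi-isomorphism on every morphism complex, so it is a quasi-equivalence.

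The main work is to produce the data. Fix objects $\mr{X,Y}$ and write $\mr{C}=\mathcal{C}(\mr{X,Y})$ with differential $\mr{d}=m_1$. I read the given quasi-isomorphism $\mr{F}_1:\mr{H}\mr{C}\to\mr{C}$ as a degree-zero chain map out of a complex with zero differential. Such a map picks a cocycle representative for each cohomology class, and its being a quasi-isomorphism means the composite $\mr{H}\mr{C}\to\mr{Z}\mr{C}\to\mr{H}\mr{C}$ is the identity. We then need a chain map $\mr{G}_1:\mr{C}\to\mr{H}\mr{C}$ and a degree $-1$ map $\mr{h}_1$ with $\mr{dh}_1+\mr{h}_1\mr{d}=\mr{F}_1\mr{G}_1-\mr{id}$. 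The natural route is to split $\mr{C}$ degreewise as $\mr{C}^n=\mr{B}^n\oplus \mr{F}_1(\mr{H}^n)\oplus \mr{W}^n$, where $\mr{B}=\mathrm{im}\,\mr{d}$, $\mr{Z}^n=\mr{B}^n\oplus\mr{F}_1(\mr{H}^n)$, and $\mr{d}:\mr{W}^n\to \mr{B}^{n+1}$ is an isomorphism. With this splitting, $\mr{G}_1$ is the projection onto the $\mr{F}_1(\mr{H})$-summand, and $\mr{h}_1$ is minus the inverse of $\mr{d}|_{\mr{W}}$ on $\mr{B}$, extended by zero. The homotopy identity is then checked summand by summand, and choosing the sign of $\mr{h}_1$ so that it matches the paper's convention $\mr{dh_1+h_1d}=\mr{F_1G_1}-\mr{id}$ is routine.

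\textbf{Main obstacle.} The splitting needs projectivity beyond what is stated, and this is the step I expect to be hardest. The inclusion $\mr{Z}\subset\mr{C}$ splits if and only if $\mr{B}^{n+1}$ is projective. Projectivity of $\mr{H}$ only guarantees that $\mr{Z}^n\to\mr{H}^n$ splits, which is already supplied by $\mr{F}_1$. I would therefore argue under the standing hypothesis implicit in the paper, as in Kadeishvili's theorem and its categorical version: either the morphism complexes are bounded above complexes of projective $\mr{R}$-modules, or $\mr{R}$ is a field or semisimple. In the first case I would show by descending induction on degree that each $\mr{B}^n$ is projective, using the exact sequences $0\to\mr{B}^n\to\mr{Z}^n\to\mr{H}^n\to 0$ and $0\to\mr{Z}^n\to\mr{C}^n\to\mr{B}^{n+1}\to 0$. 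If no such hypothesis is available, the fallback is to invoke Kadeishvili's original argument directly, object pair by object pair. That argument only ever needs to lift cycles through $\mr{d}$ (the operator $\mr{h}_1$ is used only on boundaries), and it builds the categorical $m_i$ and $\mr{F}_i$ inductively by the tree formulas quoted in the sketch proof of Markl's theorem, applied compatibly across composable strings $\mr{X}_0,\dots,\mr{X}_i$.

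Finally, I would note that $m^{\mr{H}\mathcal{C}}_2$ is the composition induced by $m_2$. This follows from the transfer formula $m_{\Cc_0,2}=\mr{G}_1\circ m_2\circ(\mr{F}_1\otimes\mr{F}_1)$ together with $\mr{G}_1\mr{F}_1=\mr{id}$ on cohomology.
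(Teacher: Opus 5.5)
Your route is the paper's: it records this corollary without further argument, as an immediate consequence of Markl's transfer theorem applied with $\mathcal{C}_0(\mr{X,Y})=\mr{H}\mathcal{C}(\mr{X,Y})$ carrying the zero differential. Your caveat is correct, and the paper passes over it: projectivity of $\mr{H}\mathcal{C}$ does not yield the retract data $(\mr{G}_1,\mr{h}_1)$. For instance, over $\mr{R}=\Z$ the complex $\Z\xrightarrow{2}\Z\to\Z/2$ is acyclic, so its cohomology $0$ is projective. It is not contractible, because $\mr{Hom}(\Z/2,\Z)=0$. So Markl's theorem as quoted cannot be applied, even though the corollary holds trivially there. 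Adding a standing hypothesis proves a weaker statement than the one recorded; with bounded above complexes of projectives your descending induction is fine. So your fallback should be the main argument. (Minor: projectivity of $\mr{B}^{n+1}$ is sufficient for $\mr{Z}^n\subset\mr{C}^n$ to split, not necessary.)

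State the fallback without any $\mr{h}_1$ and without the tree formulas, since those presuppose a globally defined homotopy. Inductively, suppose $m_2,\dots,m_{i-1}$ on $\mr{H}\mathcal{C}$ and $\mr{F}_1,\dots,\mr{F}_{i-1}$ satisfy $(**_k)$ for $k<i$. Rewrite $(**_i)$ as $m_1\mr{F}_i=\mr{F}_1m_i-U_i$, where $U_i$ collects the terms built from lower data. A standard computation, using $(**_k)$ for $k<i$ and injectivity of $\mr{H}\mr{F}_1$, shows that $m_2,\dots,m_{i-1}$ satisfy $(*_i)$ and that $U_i$ takes values in cycles. Put $m_i=(\mr{H}\mr{F}_1)^{-1}[U_i]$. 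Then $\mr{F}_1m_i-U_i$ takes values in boundaries, and $\mr{F}_i$ is obtained by lifting this map along $m_1$. The lift exists because its source $\mr{H}\mathcal{C}(\mr{X}_{i-1},\mr{X}_i)\otimes\cdots\otimes\mr{H}\mathcal{C}(\mr{X}_0,\mr{X}_1)$ is projective. What is used is projectivity of the source, not of the boundaries, and this is exactly why the stated hypothesis suffices, one tuple of objects at a time.

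Also, a quasi-isomorphism from a complex with zero differential induces an automorphism $\mr{H}\mr{F}_1$ of $\mr{H}\mathcal{C}$, not necessarily the identity. So $(\mr{H}\mr{F}_1)^{-1}$ must appear in $\mr{G}_1$, as it does in $m_i$ above. Consequently $m_2^{\mr{H}\mathcal{C}}$ is the induced composition transported along $\mr{H}\mr{F}_1$. Your closing claim that it equals the induced composition holds only when $\mr{H}\mr{F}_1$ is multiplicative, but the corollary does not assert it.
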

\2
\begin{corollary}
Let $\mathcal{C}$ be a dg category such that $\mr{H}\mathcal{C}(\mr{X,Y})$ are projective $\mr{R}$-modules. Then $\mathcal{C}$ has a minimal $\Aa$-model.
\end{corollary}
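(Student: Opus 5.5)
The plan is to obtain this from the preceding corollary, i.e. from Markl's homotopy transfer theorem. A dg category is an $\Aa$-category with $m_i=0$ for $i\ge 3$, so it suffices to produce, for each pair of objects $\mr{X},\mr{Y}$, a quasi-isomorphism of complexes $\mr{F}_1:\mr{H}\mathcal{C}(\mr{X,Y})\to\mathcal{C}(\mr{X,Y})$, where $\mr{H}\mathcal{C}(\mr{X,Y})$ carries the zero differential. The preceding corollary then supplies a minimal $\Aa$-structure on $\mr{H}\mathcal{C}$ together with an $\Aa$-quasi-isomorphism $\mr{F}:\mr{H}\mathcal{C}\to\mathcal{C}$ lifting $\mr{F}_1$. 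By construction $\mr{F}$ is the identity on objects, so $\mr{H}^0\mr{F}$ is an isomorphism, hence an equivalence; thus $\mr{F}$ is a quasi-equivalence and $\mr{H}\mathcal{C}$ is a minimal model.

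The key step is constructing $\mr{F}_1$, and this is where projectivity enters. For each $n$, write $Z^n$ and $B^n$ for the cycles and boundaries of $\mathcal{C}(\mr{X,Y})$. Since $\mr{H}^n=Z^n/B^n$ is projective, the surjection $Z^n\to\mr{H}^n$ has a section $s_n$. Setting $\mr{F}_1=\bigoplus_n s_n$ gives a degree-$0$ map into cycles. It commutes with the differentials (both sides are zero) and induces the identity on cohomology, so it is a quasi-isomorphism. This choice is made independently for each pair of objects, so no compatibility is needed.

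The main obstacle lies in the hypotheses of Markl's theorem rather than in the statement of the corollary. The theorem asks for $\mr{G}_1$ and a homotopy $\mr{h}_1$ with $\mr{dh}_1+\mr{h}_1\mr{d}=\mr{F}_1\mr{G}_1-\mr{id}$, which presupposes that $\mathcal{C}(\mr{X,Y})$ deformation retracts onto its cohomology. Over a field this is automatic, but over a general ring $\mr{R}$ it requires $Z^n\subset\mathcal{C}^n$ and $B^{n+1}\subset\mathcal{C}^{n+1}$ to split off.

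I would handle this by reading the hypothesis as the paper's previous corollary does: that corollary already asserts the minimal model exists given only projectivity of $\mr{H}\mathcal{C}$ and a quasi-isomorphism $\mr{F}_1$, in the spirit of Kadeishvili's theorem over $\mr{R}$. If more care is wanted, one can instead run Kadeishvili's inductive obstruction argument directly. At stage $n$ the obstruction to extending $\mr{F}_1,\dots,\mr{F}_{n-1}$ is a cycle in $\mathcal{C}(\mr{X}_0,\mr{X}_n)$ depending on $\mr{H}\mathcal{C}^{\otimes n}$. Its cohomology class defines $m_n$ on $\mr{H}\mathcal{C}$, and a primitive of the corrected cycle defines $\mr{F}_n$. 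Choosing these lifts uses only projectivity of the tensor products of the $\mr{H}\mathcal{C}(\mr{X}_{i-1},\mr{X}_i)$, which holds because tensor products of projective modules are projective. This yields the minimal model without requiring a splitting of $\mathcal{C}$ itself.
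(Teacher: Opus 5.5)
Your proposal is correct and follows the paper, which records this statement as an immediate consequence of Markl's homotopy transfer theorem via the preceding corollary; you obtain $\mathrm{F}_1$ by splitting $Z^n\to\mathrm{H}^n$ using projectivity, which the paper leaves implicit. Your caveat is also well founded: the transfer theorem as stated needs a homotopy $\mathrm{h}_1$ with $\mathrm{d}\mathrm{h}_1+\mathrm{h}_1\mathrm{d}=\mathrm{F}_1\mathrm{G}_1-\mathrm{id}$, which projectivity of $\mathrm{H}\mathcal{C}$ alone does not provide over a general $\mathrm{R}$ (e.g.\ the acyclic but non-contractible complex $\mathbf{Z}\xrightarrow{2}\mathbf{Z}\to\mathbf{Z}/2$); your Kadeishvili-style induction only lifts maps out of the projective modules $\mathrm{H}\mathcal{C}(\mathrm{X}_{n-1},\mathrm{X}_n)\otimes\cdots\otimes\mathrm{H}\mathcal{C}(\mathrm{X}_0,\mathrm{X}_1)$ through $\mathrm{d}$ onto the boundaries, and so it is the right way to close that gap.
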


\subsection{Cocategories}

\begin{definition}
A cocategory $\mathcal{C}$ over $\mr{R}$ is given by a class of objects $\mr{Ob}(\mathcal{C})$ together with $\mr{R}$-linear comultiplication $\Delta_\mathcal{C} : \mathcal{C}\to \mathcal{C}\otimes \mathcal{C}$ which is coassociative, that is, $\left(\Delta_\mathcal{C}\otimes 1_\mathcal{C}\right)\circ \Delta_\mathcal{C} = \left(1_\mathcal{C}\otimes \Delta_\mathcal{C}\right)\circ \Delta_\mathcal{C}$.
\end{definition}
\2
\begin{definition}
Let $\mathcal{C}_1$ and $\mathcal{C}_2$ be $\mr{R}$-linear cocategories. A cofunctor $\mr{F}:\mathcal{C}_1\to\mathcal{C}_2$ is a pair consisting of a map of objects $\mr{F}:\mr{Ob}(\mathcal{C}_1)\to\mr{Ob}(\mathcal{C}_2)$ and an $\mr{R}$-linear degree $0$ morphism  $$\mr{F}:\mathcal{C}_1(\mr{X},\mr{Y})\to \mathcal{C}_2(\mr{FX},\mr{FY})$$ such that $$\left(\mr{F}\otimes \mr{F}\right)\circ \Delta_{\mathcal{C}_1}=\Delta_{\mathcal{C}_2}\circ \mr{F}.$$
\end{definition}

The main example for us is the reduced tensor cocategory associated to a graded category (or more generally graph as we don't need the compositions) over $\mr{R}$. Namely, let $\mathcal{C}$ be an $\mr{R}$-graph. Let $\overline{\mr{T}}\mathcal{C}$ be the cocategory with the same objects as $\mathcal{C}$ and morphisms $$\overline{\mr{T}}\mathcal{C}(\mr{X,Y})=\bigoplus_{n\ge 1}\bigoplus_{\mr{X}=\mr{X}_0,\cdots,\mr{X}_n=\mr{Y}}\mathcal{C}(\mr{X}_{n-1},\mr{X}_n)\otimes \cdots\otimes \mathcal{C}(\mr{X}_0,\mr{X}_1).$$ The comultiplication is given by $$\Delta:\overline{\mr{T}}\mathcal{C}(\mr{X,Y})\rightarrow\bigoplus_{\mr{Z}\in \mr{Ob}(\mathcal{C})}\overline{\mr T}\mathcal{C}(\mr{Z},\mr{Y})\otimes \overline{\mr T}\mathcal{C}(\mr{Z},\mr{X})$$ which is determined on pure tensors by $$f_1\otimes f_2\otimes \cdots\otimes f_n\mapsto \sum_{i=1}^{n}\left(f_n\otimes \cdots \otimes f_{i+1}\right)\otimes \left(f_i\otimes \cdots \otimes f_1\right).$$
In addition to the internal grading arising from $\mathcal{C}$, $\overline{\mr T}\mathcal{C}$ has external grading by weight coming from the fact that it is cofree cocategory. The weights are given by tensor length. Hence we obtain an increasing filtration by sub cocategories $$\mr{W}_1\overline{\mr T}\mathcal{C} \subseteq \mr{W}_2\overline{\mr T}\mathcal{C}\subseteq \cdots \subset \mr{W}_n\overline{\mr T}\mathcal{C}\subseteq \cdots \subseteq \overline{\mr T}\mathcal{C},$$ where $\mr{W}_n\overline{\mr T}\mathcal{C}$ has the same objects as $\overline{\mr T}\mathcal{C}$ and morphisms of weight $\le n$.
\2
\begin{definition}
Let $\mr{F,G}:\mathcal{C}_1\to \mathcal{C}_2$ be cofunctors. An $\left(\mr{F,G}\right)$-coderivation of degree $p$ is given by a map of objects $\mr{D}:\mr{Ob}(\mathcal{C}_1)\to \mr{Ob}(\mathcal{C}_2)$ and a collection of $\mr{R}$-linear morphisms of degree $p$ $$\mr{D}:\mathcal{C}_1(\mr{X,Y})\to \mathcal{C}_2(\mr{D}\mr{X},\mr{D}\mr{Y})$$ 
such that $$\Delta_{\mathcal{C}_2}\circ \mr{D}=\left(\mr{G}\otimes \mr{D}+\mr{D}\otimes \mr{F}\right)\circ \Delta_{\mathcal{C}_1}.$$
\end{definition}
We denote the graded $\mr{R}$-module of $(\mr{F,G})$-coderivations by $\mr{coDer}(\mr{F,G})$. Any increasing filtration $\mr{W}_\bullet\mathcal{C}_1$ on $\mathcal{C}_1$ induces a decreasing filtration $\mr{W}_\bullet\mr{coDer}(\mr{F,G})$ with $\mr{W}_n\mr{coDer}(\mr{F,G})$ consisting of all coderivations vanishing on $\mr{W}_{n-1}\mathcal{C}_1$.

\subsection{The bar construction.}

Let $\mathcal{C}$ be a graded $\mr{R}$-linear category endowed with morphisms $$m_{i}: \mathcal{C}^{\otimes i} \to \mathcal{C}.$$ For $i \ge 1$ we have a bijection
\begin{align*}
\mr{Hom}(\mathcal{C}^{\otimes i},\mathcal{C}) &\to \mr{Hom}((\mathcal{C}[1])^{\otimes i},\mathcal{C}[1])\\
 m_i &\mapsto \mr{d}_i=(-1)^{i-1+\mr{deg}m_i}s\circ m_i\circ (s^{-1})^{\otimes i},
\end{align*}
 where $s:\mathcal{C} \to \mathcal{C}[1]$ is the canonical degree $-1$ morphism. Remark that in our case $m_i$ are of degree $2-i$, so the corresponding $\mr{d}_i$ have degree $1$. The morphisms $\mr{d}_i$ define a unique morphism $$\overline{\mr{T}}(\mathcal{C}[1]) \to \mathcal{C}[1],$$ which by the universal property of the reduced tensor cocategory (in the category of cocomplete (also known as conilpotent) cocategories) corresponds to a unique degree $1$ coderivation $$\mr{d} : \overline{\mr{T}}(\mathcal{C}[1]) \to \overline{\mr{T}}(\mathcal{C}[1]).$$

\begin{lemma}[Lefèvre-Hasegawa \cite{lefvrehasegawa2003sur}]
 The morphisms $m_i$ define an $\Aa$-category structure on $\mathcal{C}$ iff $\mr{d}$ is a codifferential, i.e. $\mr{d}^2=0$.
\end{lemma}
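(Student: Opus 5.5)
The plan is to compute $\mr{d}^2$ explicitly and match it, component by component, with the left-hand sides of the relations $(*_m)$. First I would make $\mr{d}$ explicit. Since $\overline{\mr{T}}(\Cc[1])$ is the cofree conilpotent cocategory on the graph $\Cc[1]$, a coderivation is determined by its projection to weight one, $\mathrm{pr}_1\circ \mr{d}=\sum_i \mr{d}_i$. One checks directly from the comultiplication formula that the unique extension is
$$\mr{d}(f_n\otimes\cdots\otimes f_1)=\sum_{j+k+l=n}(\mr{id}^{\otimes j}\otimes \mr{d}_k\otimes \mr{id}^{\otimes l})(f_n\otimes\cdots\otimes f_1),$$
with the Koszul sign convention on the shifted graph. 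Uniqueness follows by induction on weight. Two coderivations with the same weight-one projection differ by a coderivation $\mr{D}$ with $\mathrm{pr}_1\mr{D}=0$. The coderivation identity $\Delta\mr{D}=(\mr{id}\otimes\mr{D}+\mr{D}\otimes\mr{id})\Delta$ then forces $\mr{D}$ to vanish on $\mr{W}_n$ for every $n$. This works because the reduced coproduct is injective on elements of weight $\ge 2$ once the weight-one component is removed.

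Next I would observe that $\mr{d}$ has odd degree $1$, so $\mr{d}^2=\tfrac12[\mr{d},\mr{d}]$ is again a coderivation (of degree $2$, with respect to the identity cofunctors). Verifying this is a short computation with $\Delta$, using the Koszul signs. By the uniqueness argument above, $\mr{d}^2=0$ if and only if $\mathrm{pr}_1\circ\mr{d}^2=0$.

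The computation then reduces to weight one. Restricting to weight $m$, we get
$$\mathrm{pr}_1\mr{d}^2|_{(\Cc[1])^{\otimes m}}=\sum_{j+k+l=m}\mr{d}_{j+1+l}(\mr{id}^{\otimes j}\otimes\mr{d}_k\otimes\mr{id}^{\otimes l}).$$
So $\mr{d}^2=0$ is equivalent to the vanishing of these sums for all $m\ge1$. Since $\mr{d}_i$ and $m_i$ correspond bijectively, it remains to transport each sum back through $m_i\mapsto \mr{d}_i=(-1)^{i-1+\deg m_i}s\circ m_i\circ(s^{-1})^{\otimes i}$. Doing so turns the $m$-th sum into $\pm$ the left-hand side of $(*_m)$, with the overall sign depending only on $m$. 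The objects play no role beyond requiring composable sequences $\mr{X}_0,\dots,\mr{X}_m$: every map involved respects the decomposition over such sequences, so the algebra argument works verbatim, one summand at a time.

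The main obstacle is the sign bookkeeping in this last step. Commuting $(s^{-1})^{\otimes i}$ past $\mr{id}^{\otimes j}\otimes m_k\otimes\mr{id}^{\otimes l}$ and using $(s^{-1})^{\otimes i}\circ s^{\otimes i}=(-1)^{i(i-1)/2}$ has to produce exactly $(-1)^{jk+l}$, up to a global sign per $m$. I would handle this as in the one-object case, by citing the standard computation for $\Aa$-algebras (Getzler–Jones, Lefèvre-Hasegawa) and checking it on the low cases $m=1,2,3$ against the displayed relations $(*_1)$–$(*_3)$.
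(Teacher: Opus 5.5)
Your proposal is correct and is the standard argument (a coderivation of the cofree conilpotent cocategory is determined by its weight-one projection, $\mr{d}^2$ is again a coderivation, and that projection of $\mr{d}^2$ gives the $\Aa$ relations), which is what the paper relies on when it cites Lefèvre-Hasegawa without giving a proof. The sign step you flag can be done directly rather than by citation and low-degree checks: since $\deg m_i=2-i$, the paper's sign $(-1)^{i-1+\deg m_i}$ is uniformly $-1$, and the Koszul rule gives exactly $\mr{d}_{j+1+l}(\mr{id}^{\otimes j}\otimes\mr{d}_k\otimes\mr{id}^{\otimes l})=(-1)^{jk+l}\,s\circ m_{j+1+l}(\mr{id}^{\otimes j}\otimes m_k\otimes\mr{id}^{\otimes l})\circ(s^{-1})^{\otimes m}$, where $(-1)^{l}$ comes from moving $(s^{-1})^{\otimes l}$ past $\mr{d}_k$ and $(-1)^{jk}$ from moving $m_k$ past $(s^{-1})^{\otimes j}$ (and you should check that $\mr{d}^2$ is a coderivation by the direct computation with $\Delta$, as you propose, rather than via $\tfrac12[\mr{d},\mr{d}]$, since $\mr{R}$ is an arbitrary ring).
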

\2
\begin{definition}
 The bar construction of an $\Aa$-category $\mathcal{C}$ is the differential graded cocategory $\mathcal{B}(\mathcal{C})\coloneqq(\overline{\mr{T}}(\mathcal{C}[1]),\mr d)$.
\end{definition}
\2
Let $\mathcal{C}, \mathcal{D}$ be graded categories. For $i\ge 1$ we have a bijection
\begin{align*}
\mr{Hom}(\mathcal{C}^{\otimes i},\mathcal{D}) &\to \mr{Hom}((\mathcal{C}[1])^{\otimes i},\mathcal{D}[1])\\
 f_i &\mapsto \mr{F}_i=(-1)^{i-1+\mr{deg}f_i}s_{\mathcal{D}}\circ f_i\circ (s_{\mathcal{C}}^{-1})^{\otimes i}.
\end{align*}
If $f_i$ are of degree $1-i$, the maps $\mr{F}_i$ define a degree $0$ cofunctors of graded cocategories $$\mr{F}:\mathcal{B}(\mathcal{C}) \to \mathcal{B}(\mathcal{D}).$$
\begin{lemma}[Lefèvre-Hasegawa \cite{lefvrehasegawa2003sur}]
 Let $\mathcal{C}, \mathcal{D}$ be $\Aa$-categories and let $f_i \in \mr{Hom}(\mathcal{C}^{\otimes i},\mr{D})$ be of degree $1-i$. The morphisms $f_i$ define an $\Aa$ functor if and only if $F$ is compatible with the codifferentials, i.e. we have a bijection
 $$\mr{Hom}_{\mr{A}_{\infty}}(\mathcal{C},\mathcal{D}) \xrightarrow{\sim} \mr{Hom}_{\mr{dgcoCat}}(\mathcal{B}(\mathcal{C}),\mathcal{B}(\mathcal{D})).$$
\end{lemma}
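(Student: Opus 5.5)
The plan is to reduce the statement to the universal property of the reduced tensor cocategory, applied to cofunctors, and then to compare the equation $\mr{d}\circ\mr{F}=\mr{F}\circ\mr{d}$ with the relations $(**_m)$ one weight component at a time.

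\emph{Step 1: the bijection on graded cofunctors.} A degree $0$ cofunctor $\mr{F}:\overline{\mr{T}}(\mathcal{C}[1])\to\overline{\mr{T}}(\mathcal{D}[1])$ of cocomplete cocategories is determined by its corestriction $\mr{pr}_1\circ\mr{F}:\overline{\mr{T}}(\mathcal{C}[1])\to\mathcal{D}[1]$, where $\mr{pr}_1$ is the projection onto weight $1$. Conversely, any family of degree $0$ maps $\mr{F}_i:(\mathcal{C}[1])^{\otimes i}\to\mathcal{D}[1]$, together with a map of objects, extends uniquely to a cofunctor. On a pure tensor $x_n\otimes\cdots\otimes x_1$ the extension is
$$\sum_{r}\sum_{i_1+\cdots+i_r=n}\mr{F}_{i_r}\otimes\cdots\otimes\mr{F}_{i_1},$$
applied with the Koszul signs. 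This is the same universal property of the cofree cocategory used above for coderivations, now in its version for morphisms, and it is proved by induction on the weight using coassociativity of $\Delta$. Under the shift bijection $f_i\mapsto\mr{F}_i$, data $(\mr{F},f_1,f_2,\dots)$ with $\deg f_i=1-i$ therefore correspond exactly to graded cofunctors $\mathcal{B}(\mathcal{C})\to\mathcal{B}(\mathcal{D})$.

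\emph{Step 2: reduce the compatibility with codifferentials to weight $1$.} The maps $\mr{d}\circ\mr{F}$ and $\mr{F}\circ\mr{d}$ are both $(\mr{F},\mr{F})$-coderivations of degree $1$. This follows directly from the coderivation property of $\mr{d}$ and the cofunctor property of $\mr{F}$. The difference $\mr{d}\mr{F}-\mr{F}\mr{d}$ is again an $(\mr{F},\mr{F})$-coderivation. By the universal property for coderivations along a cofunctor into a cofree cocategory, such a coderivation is determined by its corestriction to $\mathcal{D}[1]$. The difference therefore vanishes if and only if
$$\mr{pr}_1\circ\mr{d}\circ\mr{F}=\mr{pr}_1\circ\mr{F}\circ\mr{d}$$
on each weight-$m$ component $(\mathcal{C}[1])^{\otimes m}$. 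The left side is $\sum_{r}\mr{d}_r(\mr{F}_{i_1}\otimes\cdots\otimes\mr{F}_{i_r})$ and the right side is $\sum_{j+k+l=m}\mr{F}_{j+1+l}(\mr{id}^{\otimes j}\otimes\mr{d}_k\otimes\mr{id}^{\otimes l})$.

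\emph{Step 3: desuspend and match signs.} This step is the main obstacle, and it is purely sign bookkeeping. Conjugating the weight-$m$ identity by $s$ and $(s^{-1})^{\otimes m}$, using $\mr{d}_i=(-1)^{i-1+\deg m_i}s\,m_i\,(s^{-1})^{\otimes i}$ and the analogous formula for $\mr{F}_i$, should give back exactly $(**_m)$. The signs $(-1)^{jk+l}$ arise from passing $(s^{-1})^{\otimes j}$ past $m_k$. The signs $(-1)^{s(i_1,\dots,i_r)}$ arise from commuting the $s^{-1}$'s past the $\mr{F}_{i_v}$ of degree $1-i_v$, via the Koszul rule $(s^{-1})^{\otimes r}$ versus $\mr{F}_{i_1}\otimes\cdots\otimes\mr{F}_{i_r}$. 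I would do this exactly as in the proof of the previous lemma for $\mr{d}^2=0$, and first check it at $m=1,2$ against the displayed $(**_1)$ and $(**_2)$.

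\emph{Step 4: compatibility with composition.} Since the construction of the cofunctor in Step 1 is functorial, the composite of cofunctors has corestriction $\sum\mr{G}_r(\mr{F}_{i_1}\otimes\cdots\otimes\mr{F}_{i_r})$. This matches the composition of $\Aa$-functors defined above, so the bijection is compatible with composition.
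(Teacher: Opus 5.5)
Your proposal is correct. It is the standard argument of Lef\`evre-Hasegawa, which the paper cites without giving its own proof: cofreeness of $\overline{\mr{T}}$ identifies the data $(f_i)$ with graded cofunctors, and since $\mr{d}\circ\mr{F}-\mr{F}\circ\mr{d}$ is an $(\mr{F},\mr{F})$-coderivation, it vanishes if and only if its weight-one corestriction does. The sign check in Step 3 goes through with the paper's conventions, but two of your attributions need correcting: $(-1)^{i-1+\deg m_i}=-1$ for every $\mr{d}_i$ and $(-1)^{i-1+\deg f_i}=+1$ for every $\mr{F}_i$, so the global sign cancels; $(-1)^{jk}$ comes from moving $m_k$ past $(s^{-1})^{\otimes j}$, while $(-1)^{l}$ comes from moving $(s^{-1})^{\otimes l}$ past the degree-one $\mr{d}_k$; and $(-1)^{s(i_1,\dots,i_r)}$ comes from moving each $f_{i_w}$ (not $\mr{F}_{i_w}$, which has degree $0$) past $(s^{-1})^{\otimes i_u}$ for $u<w$, with positions labelled left to right as in $(**_m)$, which agrees with the paper's formula modulo $2$ because each extra term $(1-i_u)i_u$ is even.
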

\subsection{Deformations of linear categories}

\begin{definition}
\begin{enumerate}
\item An $\mr{R}$-linear category $\Cc$ is flat if for all $\mr{X,Y}\in \mr{Ob}(\Cc)$, the $\mr{R}$-modules $\Cc(\mr{X,Y})$ are projective.
\item An $\Aa$-category $\Cc$ over $\mr{R}$ is flat if its cohomology $\mr{H}\Cc$ is flat.
\end{enumerate}
\end{definition}
\2
\begin{remark}
We note that $\Cc$ is flat iff $\Cc^{\mr op}$ is flat.
\end{remark}
Let $\varphi : \mr{R}\to \mr{S}$ be a map of rings. We have an induced functor $$\mr{Res}_\varphi : \mr{Mod}_\mr{S}\to \mr{Mod}_\mr{R}.$$ For an $\mr{S}$-linear category $\Cc$, we let $\mr{Res}_\varphi(\Cc)$ be the $\mr{R}$-linear category with the same objects and morphisms $\mr{Res}_\varphi(\Cc)(\mr{X,Y})=\mr{Res}_\varphi(\Cc(\mr{X,Y}))$. In the opposite direction, we have $$\mr{Ind}_\varphi\coloneqq-\otimes_{\mr{R}}\mr{S} :\mr{Mod}_\mr{R}\to \mr{Mod}_\mr{S}$$ and for an $\mr{R}$-linear category $\Cc$, we let $\mr{Ind}_\varphi(\Cc)$ be the $\mr{S}$-linear category with the same objects as $\Cc$ and $$\mr{Ind}_\varphi(\Cc)(\mr{X,Y})=\Cc(\mr{X,Y})\otimes_\mr{R}\mr{S}.$$
It is an easy check that $\mr{Ind}_\varphi$ is left adjoint to $\mr{Res}_\varphi$. Following \cite{MR2238922}, we shall define deformations of linear categories as lifts along these functors.
\2
\begin{definition}
Let $\varphi:\mr{R}\to\mr{S}$ be a map of rings. Let $\Cc$ be an $\mr S$-linear category. An $\mr{R}$-linear deformation of $\Cc$ is a flat $\mr R$-linear category $\mathcal{D}$ equipped with an $\mr{R}$-linear functor $\mathcal{D}\to \mr{Res}_\varphi(\Cc)$ which induces an $\mr{S}$-linear equivalence $\mr{Ind}_\varphi(\mathcal{D})\to \Cc$. 
\end{definition}
\2
\begin{remark}
In sufficiently nice circumstances, an $\mr{R}$-linear deformation induces an abelian deformation of the corresponding categories of modules. We refer to \cite{MR2238922} for the definition of deformations of abelian categories and details.
\end{remark}
\subsection{Hochschild cohomology}

Suppose $\mathcal{C}_1$ and $\mathcal{C}_2$ are dg cocategories. That is, they are equipped with degree $1$ coderivations $\mr{d}_{\mathcal{C}_i}\in \mr{coDer}(1_{\mathcal{C}_i})$. Then, for any two dg cofunctors $\mr{F,G}:\mathcal{C}_1\to\mathcal{C}_2$, the space $\mr{coDer}(\mr{F,G})$ is a dg $\mr{R}$-module with differential $$\mr{D}\mapsto \mr{d}_{\mathcal{C}_2}\circ \mr{D}-(-1)^{\mr{deg}\mr{D}}\mr{D}\circ \mr{d}_{\mathcal{C}_1}.$$

\begin{definition}
Let $\mathcal{C}$ be an $\Aa$-category. The Hochschild cohomology complex of $\mathcal{C}$ is $$\mr{CC}^\bullet(\mathcal{C})\coloneqq \mr{coDer}\left(1_{\overline{\mr{T}}\left(\mathcal{C}[1]\right)}\right),$$ with differential $\mr{d}_\mr{Hoch}=[\mr{d}_{\overline{\mr{T}}\left(\mathcal{C}[1]\right)},-]$. Its cohomology is called the Hochschild cohomology of $\Cc$ and is denoted by $\mr{HH}^\bullet(\mathcal{C})\coloneqq \mr{H}^{\bullet-1}(\mr{CC}(\Cc))$.
\end{definition} 
\2
\begin{remark}
We see immediately from the universal property of the reduced tensor cocategory that $$\mr{CC}^{n}(\mathcal{C})\simeq \mr{Hom}^{n}\left(\mathcal{B}\left(\mathcal{C}\right),\mathcal{C}[1]\right)$$
\end{remark}
The weight filtration on $\mr{CC}(\mathcal{C})$ given by tensor length corresponds to the weight filtration on the space of coderivation under the above identification. Since the differential of $\mr{CC}(\mathcal{C})$ lies in $\mr{W}_1\mr{CC}(\mathcal{C})$, we see that the filtration descends to cohomology that we still denote by $\mr{W}_\bullet\mr{HH}^{\bullet}(\mathcal{C})$ and call the (cohomological) weight filtration.\\ We shall later relate the second piece $\mr{W}_2\mr{HH}^\bullet(\mathcal{C})$ to $\Aa$-isotopies which will be useful for applications.
\2
\begin{definition}
Let $(\mathcal{C},m)$ and $(\Cc,m')$ be two $\Aa$-category structures on $\Cc$. An $\Aa$-isotopy between $m$ and $m'$ is an $\Aa$-functor $\mr{F}:(\Cc,m)\to (\Cc,m')$ such that $\mr{F}=\mr{id}_{\mr{Ob}(\Cc)}$ and for all $\mr{X,Y}\in \mr{Ob}(\Cc)$ we have  $\mr{F}_1=1_{\Cc(\mr{X,Y})}:\Cc(\mr{X,Y})\to \Cc(\mr{X,Y})$.
\end{definition}
\2
\begin{remark}
\begin{enumerate}
\item The set of $\Aa$-isotopies of $\Cc$ forms an $\Aa$-subcategory of $\mr{Fun}(\Cc,\Cc)$.
\item Note that an $\Aa$-isotopy is equivalent to an element $\mr{F}_{+}\in \mr{W}_2\mr{coDer}^{0}\left(1_{\mathcal{B}(\Cc)}\right)$ given by the "coefficients" $\mr{F}_i$, $i\ge 2$, of $\mr{F}$.
\end{enumerate} 
\end{remark}
\2
We observe that $\mr{CC}(\Cc)$ is a complete pre-Lie algebra with complete proper pre-Lie subalgebra given by $\mr{CC}(\Cc)_+=\mr{W}_2\mr{CC}(\Cc)$. This subalgebra is pro-nilpotent In this set up we have an exponential map:
\2
\begin{definition}
For any $\mr{c}\in \mr{W}_2\mr{CC}(\Cc)$, the exponential of $\mr{c}$ is defined as $$\mr{exp}(\mr{c})=\sum_{i \ge 0}\frac{\mr{c}^i}{i!}.$$ 
\end{definition}
\2
As $\mr{c}\in \mr{W}_2\mr{CC}(\Cc)$, we get $\mr{c}^i\in \mr{W}_{i+1}\mr{CC}(\Cc)$, hence the series converges with respect to the topology induced by the filtration $\mr{W}$.
\2
\begin{proposition}
For any $\mr{c}$ in $\mr{W}_2\mr{CC}^0(\Cc)$, the exponential $\mr{exp}(\mr{c})$ defines an $\Aa$-isotopy $\Cc\to \Cc$ iff $\mr{d}_\mr{Hoch}(\mr{c})=0$, i.e. $[\mr{c}]$ is a Hochschild class in $\mr{W}_2\mr{HH}^1(\Cc)$.
\end{proposition}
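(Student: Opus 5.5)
The plan is to read $\mr{exp}(\mr{c})$ as a degree $0$ cofunctor of $\overline{\mr{T}}(\Cc[1])$ and to show that it intertwines the bar codifferential $\mr{d}$ exactly when $\mr{d}_\mr{Hoch}(\mr{c})=0$. Then the Lefèvre-Hasegawa lemma identifying $\Aa$-functors with dg cofunctors of bar constructions turns this into the statement about $\Aa$-isotopies.

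First, the pre-Lie product $\mr{c}\cdot \mr{c}'$ on $\mr{CC}(\Cc)=\mr{coDer}(1_{\overline{\mr{T}}(\Cc[1])})$ is to be taken as the brace insertion $\mr{pr}\circ \mr{c}\circ \mr{c}'$, where $\mr{pr}$ is the projection to cogenerators. Because $\mr{c}$ lies in $\mr{W}_2$ it strictly raises weight by at least one, so the endomorphism $\mr{E}=\sum_i \mr{c}^{\circ i}/i!$ of $\overline{\mr{T}}(\Cc[1])$, a composition power series, is locally finite on each $\mr{W}_n$. The paper's $\mr{exp}(\mr{c})$ is then the projection $\mr{pr}\circ \mr{E}$, with components $\mr{F}_1=1$ and $\mr{F}_i$ for $i\ge 2$. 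Rationality of $\mr{R}$ is used here for the factorials.

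Next, I check that $\mr{E}$ is a cofunctor. Since $\mr{c}$ is a $(1,1)$-coderivation of degree $0$ (internal degree $0$ in $\mr{CC}^0$), we have $\Delta \circ \mr{c}^{\circ i}=\sum_{j+k=i}\binom{i}{j}(\mr{c}^{\circ j}\otimes \mr{c}^{\circ k})\circ \Delta$. This is the Leibniz rule, with no signs since the degree is even. Dividing by $i!$ gives $\Delta\circ \mr{E}=(\mr{E}\otimes \mr{E})\circ\Delta$. By cofreeness, $\mr{E}$ is the unique cofunctor with corestriction $\mr{exp}(\mr{c})$, so $\mr{exp}(\mr{c})$ has linear part the identity, i.e. it is an isotopy candidate.

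The main step is the compatibility. We have $\mr{d}\circ \mr{E}-\mr{E}\circ \mr{d}=\sum_i \frac{1}{i!}\sum_{a+b=i-1}\mr{c}^{\circ a}\circ[\mr{d},\mr{c}]\circ \mr{c}^{\circ b}$. If $\mr{d}_\mr{Hoch}(\mr{c})=[\mr{d},\mr{c}]=0$, this vanishes and $\mr{E}$ is a dg cofunctor, so $\mr{exp}(\mr{c})$ is an $\Aa$-isotopy.

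The converse is where I expect the real obstacle. Suppose $\mr{d}\mr{E}=\mr{E}\mr{d}$. Write $\delta=[\mr{d},\mr{c}]\in \mr{W}_2\mr{CC}^1$, and suppose $\delta\neq 0$ with lowest nonzero weight component $\delta_p$ (from tensor length $p$). I would compare the weight $p$ components of $\mr{pr}\circ(\mr{d}\mr{E}-\mr{E}\mr{d})$. The terms with $a+b\ge 1$ involve at least one further application of $\mr{c}$ and hence strictly higher weight, while the term $i=1$ gives exactly $\delta_p$. So the lowest piece of the defect is $\delta_p$, a contradiction.

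The delicate point is that $\mr{d}$ itself contains the weight-preserving part $\mr{d}_1$ from $m_1$, so weight need not increase strictly along the composites. I would handle this by filtering by $\mr{W}$ and working in the associated graded, where only the $\mr{c}$ factors shift weight. The induction on $p$ then goes through, and $\mr{exp}(\mr{c})$ is an $\Aa$-isotopy iff $\mr{d}_\mr{Hoch}(\mr{c})=0$.
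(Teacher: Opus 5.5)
Your argument is correct. The paper states this proposition without proof, so there is no competing argument to compare with. Your route through the Lef\`evre-Hasegawa correspondence between $\Aa$-functors and dg cofunctors of bar constructions is evidently the intended one. It also supplies the two facts the paper leaves implicit: the exponential of a degree $0$ coderivation in $\mr{W}_2$ is a cofunctor with linear part the identity, and $\mr{d}\mr{E}-\mr{E}\mr{d}$ satisfies the telescoping identity you write down.

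A few points of precision.

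First, a coderivation $\mr{c}\in\mr{W}_2$ strictly \emph{lowers} tensor length, since it kills $\mr{W}_1$ and its components have arity at least $2$. That is what makes $\mr{E}$ locally finite. What increases under composition with $\mr{c}$ is the arity of a corestriction, not the weight in the paper's sense.

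Second, identifying $\mr{pr}\circ\mr{E}$ with the paper's $\mr{exp}(\mr{c})$ requires reading $\mr{c}^i$ as the left-normed pre-Lie power $\mr{c}^{i-1}\cdot\mr{c}$. For that reading, $\mr{pr}\circ\mr{c}^{\circ i}=(\mr{pr}\circ\mr{c}^{\circ(i-1)})\circ\mr{c}$ is tautological. Right-normed powers would omit the insertions of several copies of $\mr{c}$ into disjoint slots.

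Third, your worry about $\mr{d}_1$ in the converse is unfounded. After telescoping, $\mr{d}$ appears only inside $\delta=[\mr{d},\mr{c}]$. A direct count shows that $\mr{pr}\circ\mr{c}^{\circ a}\circ\delta\circ\mr{c}^{\circ b}$ vanishes on tensors of length less than $p+a+b$. So the length-$p$ component of $\mr{pr}\circ(\mr{d}\mr{E}-\mr{E}\mr{d})$ is exactly $\delta_p$, and no passage to the associated graded is needed.

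Equivalently, $\mr{E}\mr{d}\mr{E}^{-1}-\mr{d}=\frac{e^{\mr{ad}_\mr{c}}-1}{\mr{ad}_\mr{c}}([\mr{c},\mr{d}])$. The operator in front is invertible on the complete space $\mr{CC}(\Cc)$ because $\mr{ad}_\mr{c}$ raises the $\mr{W}$-filtration. This gives both implications at once.
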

\2
\begin{definition}
Let $\mathcal{C}$ be an $\Aa$-category. The Hochschild homology complex of $\mathcal{C}$ is $$\mr{CC}_\bullet(\mathcal{C})=\mathcal{C}\otimes \mathcal{B}(\mathcal{C})$$ with differential \begin{align*}\partial(f_0,f_1,\cdots,f_n)&=\sum_{j+k\le n}(-1)^{\epsilon_k}\mr{d}_j(f_{n-k+1}\otimes \cdots f_n \otimes f_0\otimes  f_1\otimes \cdots \otimes f_{j-k-1})\otimes f_{j-k}\otimes \cdots\otimes  f_{n-k}\\&+ \sum_{j+k\le n} (-1)^{\lambda_k}f_0\otimes f_1\otimes \cdots\otimes f_k\otimes \mr{d}_{j}(f_{k+1}\otimes \cdots \otimes f_{k+j})\otimes f_{k+j+1}\otimes \cdots\otimes f_n,\end{align*} where we let $$\epsilon_k=\sum_{l\le k-1}|f_{n-l}|(s_n-|f_{n-l}|) \text{ and } \lambda_k = \sum_{l\le k}|f_l| \text{ for } s_n=\sum_{l\le n}|f_l|.$$
The Hochschild homology of $\mathcal{C}$ is denoted by $\mr{HH}_\bullet(\mathcal{C})$.
\end{definition}

\subsection{Bi-graded and compactly supported Hochschild cohomology}
We shall need variants of Hochschild cohomology. These will be mostly useful in the case of a (finite) graded category $\Cc$. We extend the following definition of Lunts from algebras to categories:
\begin{definition}
A minimal flat $\Aa$-category $\Cc$ is called finitely defined if only finitely many of its higher multiplications $m_i$ are non-zero.
\end{definition}
\2
\begin{definition}
Let $\Cc$ be a finitely defined $\Aa$-category. Its compactly supported Hochschild cohomology complex is $$\mr{CC}_c^{d}(\Cc)=\bigoplus_{p\ge 0}\prod_{\mr{X}_0,\cdots,\mr{X}_p} \mr{Hom}_\mr{R}^d\left(\Cc(\mr{X}_{p-1},\mr{X}_p)[1]\otimes \cdots\otimes\Cc(\mr{X}_0,\mr{X}_1)[1]),\Cc(\mr{X}_0,\mr{X}_p)[1]\right).$$
\end{definition}
\2
\begin{remark}
This is a subcomplex of the Hochschild complex. The reason usually one considers this subcomplex is that direct sums, unlike products, base-change. Hence, the version for categories could play that role only if $\Cc$ is finite.
\end{remark}
Let now $\Cc$ be a graded category, define $$\mr{CC}^{p,q}(\Cc) = \prod_{\mr{X}_0,\cdots,\mr{X}_p} \mr{Hom}_\mr{R}^{p+q}\left(\Cc(\mr{X}_{p-1},\mr{X}_p)\otimes \cdots\otimes\Cc(\mr{X}_0,\mr{X}_1)),\Cc(\mr{X}_0,\mr{X}_p)\right).$$ Then have a decomposition of the compactly supported Hochschild complex \begin{equation}\label{cdecomp}
\mr{CC}^{\bullet}_c(\Cc)=\bigoplus_{q\in \Z} \mr{CC}^{\bullet,q}(\Cc).\end{equation}
\begin{remark}
Observe that the decompostion \eqref{cdecomp} is a splitting of complexes which holds precisely because $\Cc$ has no higher multiplications. For a general $\Aa$-category, we only get a spectral sequence for the cohomologies instead.
\end{remark}
The next results are straightforward generalisations of algebra versions of Lunts. 
\2
\begin{proposition}\label{hochbasechange}
 Let $\Cc$ be a finitely defined $\Aa$-category over $\mr{R}$. Assume that $\Cc$ is finite and flat of finite type over $\mr{R}$. Suppose that $\varphi:\mr{R} \to \mr{S}$ is a morphism of commutative rings. 
 \begin{enumerate}
  \item $\mr{CC}^{\bullet}_c(\mr{Ind}_\varphi(\Cc)) = \mr{Ind}_\varphi\left(\mr{CC}^{\bullet}_c(\Cc)\right)$.
  \item Assuming $\varphi$ flat, $\mr{HH}_c^{\bullet}(\mr{Ind}_\varphi(\Cc)) = \mr{Ind}_\varphi(\mr{HH}^{\bullet}_c(\Cc))$.
 \end{enumerate}
\end{proposition}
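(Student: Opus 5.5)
The plan is to reduce both statements to elementary facts about $\mr{Hom}$ out of finitely generated projective modules. Since $\Cc$ is finite, $\mr{Ob}(\Cc)$ is a finite set. Hence for each $p$ the product $\prod_{\mr{X}_0,\cdots,\mr{X}_p}$ in the definition of $\mr{CC}^d_c(\Cc)$ is a finite product, so it equals the finite direct sum. Flatness (projectivity of the graded morphism modules) together with finite type lets us treat each tensor product
$$\Cc(\mr{X}_{p-1},\mr{X}_p)[1]\otimes_\mr{R}\cdots\otimes_\mr{R}\Cc(\mr{X}_0,\mr{X}_1)[1]$$
as a finitely generated projective graded $\mr{R}$-module, at least in each internal degree.

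The key algebraic input is the following. For $\mr{P}$ finitely generated projective and $\mr{M}$ arbitrary, the natural map
$$\mr{Hom}_\mr{R}(\mr{P},\mr{M})\otimes_\mr{R}\mr{S}\to \mr{Hom}_\mr{S}(\mr{P}\otimes_\mr{R}\mr{S},\mr{M}\otimes_\mr{R}\mr{S})$$
is an isomorphism. This is checked on $\mr{P}=\mr{R}^n$ and then passed to direct summands. We also use that $\mr{Ind}_\varphi$ commutes with tensor products and shifts, namely
$$(\mr{A}\otimes_\mr{R}\mr{B})\otimes_\mr{R}\mr{S}\cong(\mr{A}\otimes_\mr{R}\mr{S})\otimes_\mr{S}(\mr{B}\otimes_\mr{R}\mr{S}).$$
Combining these term by term identifies each summand of $\mr{CC}^d_c(\mr{Ind}_\varphi(\Cc))$ with $\mr{Ind}_\varphi$ of the corresponding summand of $\mr{CC}^d_c(\Cc)$. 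Since $\mr{Ind}_\varphi$ commutes with the outer direct sum over $p$, this gives the identification of graded modules.

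It remains to see that this identification respects the differentials. The differential is $[\mr{d},-]$, where $\mr{d}$ is assembled from the finitely many nonzero $m_i$ (this is where finitely defined enters). The structure maps of $\mr{Ind}_\varphi(\Cc)$ are $m_i\otimes\mr{S}$. Both compositions of multilinear maps and insertions of $m_i$ are compatible with the base-change isomorphism above, so the isomorphism is one of complexes. This proves (1).

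For (2), we note that when $\varphi$ is flat the functor $-\otimes_\mr{R}\mr{S}$ is exact, so it commutes with taking cohomology: $\mr{H}(\mr{K}\otimes_\mr{R}\mr{S})\cong\mr{H}(\mr{K})\otimes_\mr{R}\mr{S}$ for any complex $\mr{K}$. Applying this to $\mr{K}=\mr{CC}_c(\Cc)$ and using (1) gives $\mr{HH}_c^\bullet(\mr{Ind}_\varphi(\Cc))=\mr{Ind}_\varphi(\mr{HH}_c^\bullet(\Cc))$.

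The main obstacle is the finiteness bookkeeping in (1). In a fixed total degree $d$ the direct sum over $p$ is infinite, but that is harmless because $\mr{Ind}_\varphi$ commutes with arbitrary direct sums. What does need care is the following:
\begin{itemize}
\item the graded $\mr{Hom}^d$ out of a module graded in infinitely many degrees is itself a product over internal degrees;
\item so we must use \emph{finite type} to mean the total morphism modules are finitely generated, hence concentrated in finitely many degrees;
\item this makes the product over internal degrees finite as well.
\end{itemize}
This interpretation of finite type is what makes the $\mr{Hom}$-base-change isomorphism apply.
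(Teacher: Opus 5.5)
Your proposal is correct and follows essentially the same route as the paper's proof. In both, the finiteness of $\mr{Ob}(\Cc)$ turns the products into finite sums, and finitely generated projective morphism modules (hence tensor products) make $\mr{Hom}$ commute with base change. In both, (2) is deduced from (1) by exactness of $-\otimes_{\mr{R}}\mr{S}$ for flat $\varphi$. You spell out more than the paper does, namely compatibility with the Hochschild differential via the finitely many nonzero $m_i$ and the internal-degree bookkeeping, but the argument is the same.
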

\begin{proof}
 Clearly it is enough to prove the first assertion. Since $\Cc$ is finite and its morphism spaces are finite projective over $\mr{R}$, so are their tensor products. The claim now follows immediately from the definition of $\mr{CC}^{\bullet}_c(\Cc)$.
\end{proof}
\2
\begin{corollary}
In the situation of \cref{hochbasechange} assume $\Cc$ is a graded $\mr R$-linear category. Then the conclusions remain true for the bi-graded Hochschild complex and its cohomology.
\end{corollary}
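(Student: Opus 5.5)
The plan is to deduce the corollary directly from \cref{hochbasechange}, using that for a graded category the compactly supported Hochschild complex splits as a direct sum of the bi-graded pieces, as recorded in \eqref{cdecomp}. Since $\Cc$ is graded, it has no higher multiplications, so the Hochschild differential preserves the internal grading $q$. Hence
\[
\mr{CC}^\bullet_c(\Cc)=\bigoplus_{q\in\Z}\mr{CC}^{\bullet,q}(\Cc)
\]
is a splitting of complexes. The same holds for $\mr{Ind}_\varphi(\Cc)$, which is again a graded category with the same objects and with composition $m_2\otimes_\mr{R}\mr{S}$.

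\textbf{Step 1: chain level.} I would first check that the base-change identification of \cref{hochbasechange}(1) respects the bi-grading. The identification is built term by term: for each chain $\mr{X}_0,\dots,\mr{X}_p$, the map
\[
\mr{Hom}_\mr{R}\bigl(\Cc(\mr{X}_{p-1},\mr{X}_p)\otimes\cdots\otimes\Cc(\mr{X}_0,\mr{X}_1),\Cc(\mr{X}_0,\mr{X}_p)\bigr)\otimes_\mr{R}\mr{S}\to \mr{Hom}_\mr{S}(\cdots\otimes_\mr{R}\mr{S},\cdots\otimes_\mr{R}\mr{S})
\]
is an isomorphism because the morphism modules are finite projective. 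This map preserves the degree of homomorphisms, and so preserves the $(p,q)$-bidegree. Since $\Cc$ is finite, each product over chains is a finite product, so it commutes with $-\otimes_\mr{R}\mr{S}$. The internal grading decomposition in each tensor-length $p$ is also compatible with base change, since $\mr{Ind}_\varphi$ is additive. Finally, the differential on either side is the bracket with $m_2$ (respectively $m_2\otimes\mr{S}$), so the isomorphism intertwines the differentials. Together these give
\[
\mr{CC}^{\bullet,q}(\mr{Ind}_\varphi\Cc)\cong\mr{Ind}_\varphi\bigl(\mr{CC}^{\bullet,q}(\Cc)\bigr)
\]
as complexes, for each $q$.

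\textbf{Step 2: cohomology.} If $\varphi$ is flat, then $-\otimes_\mr{R}\mr{S}$ is exact and commutes with taking cohomology. Applying this to the complex isomorphism of Step 1 gives
\[
\mr{HH}^{\bullet,q}(\mr{Ind}_\varphi\Cc)\cong\mr{Ind}_\varphi\bigl(\mr{HH}^{\bullet,q}(\Cc)\bigr).
\]
The only point needing any care is in Step 1: the projectivity and finiteness hypotheses are exactly what make the Hom modules commute with base change. Everything else is bookkeeping of gradings.
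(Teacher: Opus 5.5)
Your proposal is correct and follows essentially the route the paper intends. The paper gives no separate proof: the corollary is meant to follow from the splitting \eqref{cdecomp} of $\mr{CC}^\bullet_c(\Cc)$ into the subcomplexes $\mr{CC}^{\bullet,q}(\Cc)$, together with the termwise base-change argument of \cref{hochbasechange}. That is exactly what you carry out: a chain-level isomorphism preserving the $(p,q)$-bidegree, and then exactness of flat base change to pass to cohomology.
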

\2
\begin{proposition}\label{extraformalityres}
 Suppose that $\mr{R}$ is Noetherian. Consider a graded category $\Cc$ over $\mr{R}$ as in \cref{hochbasechange} and assume and that for all $p,q \in \mathbf{Z}$ the $\mr{R}$-module $\mr{HH}^{p,q}(\Cc)$ is projective. Then, for any morphism of commutative rings $\varphi:\mr{R} \to \mr{S}$, we have $$\mr{HH}^{p,q}(\mr{Ind}_\varphi(\Cc)) = \mr{Ind}_\varphi(\mr{HH}^{p,q}(\Cc)).$$
\end{proposition}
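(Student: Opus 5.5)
The idea is to use the splitting \eqref{cdecomp}, which shows that $\mr{CC}^{\bullet,q}(\Cc)$ is a direct summand complex of $\mr{CC}_c^\bullet(\Cc)$. For a fixed $q$, the complex $\mr{K}^\bullet\coloneqq \mr{CC}^{\bullet,q}(\Cc)$, indexed by $p$, is a complex of finitely generated projective $\mr{R}$-modules. Indeed, $\Cc$ is finite, so the product over $(\mr{X}_0,\cdots,\mr{X}_p)$ is a finite product. Moreover each $\Cc(\mr{X},\mr{Y})$ is finite projective, so the tensor products and the $\mr{Hom}_\mr{R}$ into a finite projective module are again finite projective, and the degree-$(p+q)$ part is a direct summand. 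The same reasoning gives the bigraded analogue of \cref{hochbasechange}(1): $\mr{CC}^{\bullet,q}(\mr{Ind}_\varphi\Cc)=\mr{K}^\bullet\otimes_\mr{R}\mr{S}$ as complexes, and this holds for any $\varphi$, flat or not, because $\mr{Hom}_\mr{R}(\mr{P},\mr{Q})\otimes\mr{S}\cong\mr{Hom}_\mr{S}(\mr{P}\otimes\mr{S},\mr{Q}\otimes\mr{S})$ for finite projective $\mr{P}$. The claim therefore reduces to a purely homological statement: if $\mr{K}^\bullet$ is a complex of projectives whose cohomology modules $\mr{H}^p(\mr{K})=\mr{HH}^{p,q}(\Cc)$ are all projective, then $\mr{H}^p(\mr{K}\otimes_\mr{R}\mr{S})=\mr{H}^p(\mr{K})\otimes_\mr{R}\mr{S}$.

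For this statement I would follow Lunts' argument for algebras. The plan is to show that $\mr{K}^\bullet$ is homotopy equivalent to its cohomology with zero differential. Write $\mr{Z}^p$ for the cycles, $\mr{B}^p$ for the boundaries and $\mr{H}^p$ for the cohomology. Note that $\mr{K}^p$ vanishes for $p<0$, so the complex is bounded below; since $\Cc$ is graded and the morphism modules are finitely generated, the internal degrees that occur are bounded, and I expect $\mr{K}$ to be bounded in each fixed $q$ as well, although the argument below does not need this. The key point is to prove inductively that every $\mr{B}^p$ and every $\mr{Z}^p$ is projective. The induction starts at the top or runs upward from $p=0$: $\mr{Z}^0\subset \mr{K}^0$, and $\mr{B}^1\cong \mr{K}^0/\mr{Z}^0$. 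The proof uses the exact sequences $0\to\mr{Z}^p\to\mr{K}^p\to\mr{B}^{p+1}\to0$ and $0\to\mr{B}^p\to\mr{Z}^p\to\mr{H}^p\to0$.

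The main obstacle is that projectivity of cycles and boundaries does not follow directly from these sequences when one inducts upward, since a submodule of a projective module need not be projective. I would address this with Noetherianity and finiteness. Over a Noetherian ring all the modules involved are finitely generated, and projectivity can be tested locally, where it means being free over a local ring. Over a local Noetherian ring one can then argue with depth or Tor-dimension, which is Lunts' route. Alternatively, one can use that $\mr{K}$ is a bounded complex of finite projectives, so it is perfect, and run the induction downward from the top degree $N$. There $\mr{B}^N\subset\mr{K}^N$ fits into $0\to\mr{B}^N\to\mr{K}^N\to\mr{H}^N\to0$, and since $\mr{H}^N$ is projective this sequence splits, so $\mr{B}^N$ is projective. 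Next, $0\to\mr{Z}^{N-1}\to\mr{K}^{N-1}\to\mr{B}^N\to0$ splits, so $\mr{Z}^{N-1}$ is projective, and $\mr{B}^{N-1}$ is then projective from the second sequence. Continuing downward, all the short exact sequences split. This is why I would first confirm the boundedness of each $\mr{CC}^{\bullet,q}$ from finiteness of the grading.

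Once all the sequences split, tensoring with $\mr{S}$ keeps them exact, and this gives $\mr{H}^p(\mr{K}\otimes\mr{S})=\mr{H}^p(\mr{K})\otimes\mr{S}$. Combined with the identification of complexes from the first paragraph, this proves the claim.
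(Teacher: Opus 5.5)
Your reduction is the paper's own. Each summand $\mathrm{CC}^{\bullet,q}(\Cc)$ of \eqref{cdecomp} is a complex of finite projective $\mathrm R$-modules, its formation commutes with $-\otimes_{\mathrm R}\mathrm S$ for any $\varphi$, and once all boundary modules are projective the sequences split and survive base change. The paper then simply quotes Lunts' lemma, which is stated for \emph{bounded below} complexes.

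The gap is in your proof of that lemma. Your downward induction needs $\mathrm{CC}^{\bullet,q}(\Cc)$ to be bounded above, and in general it is not. The Hochschild differential of a graded category (the bracket with $m_2$) raises the length $p$ by one and preserves the internal degree of a cochain. So for \eqref{cdecomp} to be a splitting of complexes, $q$ must be the internal degree of the cochain, with $p+q$ its Hochschild degree. A fixed-$q$ summand therefore contains cochains of every length $p\ge 0$. For example, take a single object with endomorphism ring $\mathrm R$ in degree $0$: the $q=0$ summand is $\mathrm{Hom}_{\mathrm R}(\mathrm R^{\otimes p},\mathrm R)\cong\mathrm R$ in every $p$. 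Finiteness of the grading of each $\Cc(\mathrm X,\mathrm Y)$ does not help, because the degree range of the $p$-fold tensor product grows with $p$ and identities sit in degree $0$. So there is no top degree to start from, and, as you note, the upward induction does not close by itself. Your argument also never uses that $\mathrm R$ is Noetherian, which signals the problem. The phrase ``argue with depth or Tor-dimension'' is where all the content lies, and it is not carried out.

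The missing step is Lunts' local argument.
\begin{itemize}
\item Localise at a maximal ideal $\mathfrak m$ with residue field $k$. The modules $\mathrm B^p$ are finitely generated since $\mathrm R$ is Noetherian, and projectivity of such modules is a local property.
\item Since $\mathrm H^p$ is projective, $\mathrm Z^p\cong\mathrm B^p\oplus\mathrm H^p$. The sequence $0\to\mathrm Z^p\to\mathrm K^p\to\mathrm B^{p+1}\to0$ then gives $\mathrm{Tor}_j(\mathrm B^{p+1},k)\cong\mathrm{Tor}_{j-1}(\mathrm B^p,k)$ for $j\ge2$.
\item Starting from $\mathrm B^0=0$, this yields $\mathrm{pd}\,\mathrm B^p\le p<\infty$. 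By Auslander--Buchsbaum, $\mathrm{pd}\,\mathrm B^p\le\delta:=\mathrm{depth}\,\mathrm R_{\mathfrak m}$, uniformly in $p$.
\item Reading the same isomorphisms upwards, $\mathrm{Tor}_1(\mathrm B^p,k)\cong\mathrm{Tor}_{n+1}(\mathrm B^{p+n},k)=0$ for $n\ge\delta$. Hence every $\mathrm B^p$ is free.
\end{itemize}
This uses only boundedness below, and it is exactly where Noetherianity (finite depth of the local rings) enters. With this step supplied, or with Lunts' lemma cited as in the paper, your final paragraph completes the proof.
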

\begin{proof}
 This follows from the next lemma.
\end{proof}
\begin{lemma}[Lunts \cite{MR2578584}]
 Let $\mr{R}$ be Noetherian and assume $(\mr{K},\mr{d})$ is a bounded below complex of finite projective $\mr{R}$-modules such that each $\mr{R}$-module $\mr{H}^p(\mr{K})$ is projective. Then, for each $p$, $\mr{Im}(\mr{d}^p)$ is projective over $\mr{R}$ and hence $\mr{K}$ is homotopy equivalent to its cohomology $\oplus_p \mr{H}^{p}(\mr{K})[-p]$.
\end{lemma}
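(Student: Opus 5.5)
We prove the lemma by induction along the complex, using the fact that over a Noetherian ring a finitely generated module $\mr{M}$ is projective precisely when $\mr{Tor}^\mr{R}_1(\mr{M},-)=0$. Equivalently, such an $\mr{M}$ is projective if it sits in a short exact sequence $0\to \mr{M}'\to \mr{P}\to \mr{M}\to 0$ that splits. The useful closure property is the following. If $0\to \mr{A}\to \mr{B}\to \mr{C}\to 0$ is exact with $\mr{C}$ projective, the sequence splits. Hence $\mr{A}$ is projective whenever $\mr{B}$ is, and $\mr{B}$ is projective whenever $\mr{A}$ is. Write $\mr{Z}^p=\ker \mr{d}^p$ and $\mr{B}^{p}=\mr{Im}(\mr{d}^{p-1})\subseteq \mr{K}^p$. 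Noetherianity guarantees that all of these are finitely generated, since they are submodules of the finite modules $\mr{K}^p$.

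Since $\mr{K}$ is bounded below, we may assume $\mr{K}^p=0$ for $p<p_0$ and induct upward on $p$. At stage $p$ we assume $\mr{B}^p$ is projective; this holds trivially for $p=p_0$, where $\mr{B}^{p_0}=0$. There are two exact sequences:
$$0\to \mr{B}^p\to \mr{Z}^p\to \mr{H}^p(\mr{K})\to 0,\qquad 0\to \mr{Z}^p\to \mr{K}^p\to \mr{B}^{p+1}\to 0.$$
In the first, $\mr{H}^p(\mr{K})$ is projective by hypothesis, so the sequence splits. Since $\mr{B}^p$ is projective, $\mr{Z}^p\cong \mr{B}^p\oplus \mr{H}^p(\mr{K})$ is projective. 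Projectivity of $\mr{B}^{p+1}=\mr{Im}(\mr{d}^p)$ does not follow formally from the second sequence, and this is the main obstacle. The submodule and the middle term are projective, but a quotient of projectives need not be.

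We resolve this with the flatness criterion. Tensoring the second sequence with an arbitrary $\mr{R}$-module $\mr{N}$ gives the long exact sequence
$$0=\mr{Tor}_1(\mr{K}^p,\mr{N})\to \mr{Tor}_1(\mr{B}^{p+1},\mr{N})\to \mr{Z}^p\otimes \mr{N}\to \mr{K}^p\otimes \mr{N}.$$
So it suffices to show that $\mr{Z}^p\otimes\mr{N}\to \mr{K}^p\otimes \mr{N}$ is injective. This is where we use the hypothesis at later degrees, the only genuinely non-formal step. Since $\mr{K}^p$ is finite, we cannot truncate from above. Instead we argue that the splitting pieces already obtained make $\mr{K}^{\le p}$ a direct sum of the complexes $\mr{H}^i[-i]$ and $[\mr{B}^{i+1}\xrightarrow{\mathrm{id}}\mr{B}^{i+1}]$ for $i<p$, together with the inclusion $\mr{Z}^p\hookrightarrow\mr{K}^p$. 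We then apply the standard Noetherian argument at closed points, reducing to $\mr{N}=k(\mathfrak p)$ for primes $\mathfrak p$, which suffices by the local criterion of flatness for finite modules over Noetherian rings.

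The argument at primes runs as follows. Localise at $\mathfrak p$, so that $\mr{R}$ is local and the finite projectives are free. The composite $\mr{Z}^p\otimes k\to \mr{K}^p\otimes k$ is injective if and only if $\mr{Z}^p$ is a direct summand of $\mr{K}^p$. We obtain this by noting that $\mr{H}^{p+1}$ projective and $\mr{Z}^{p+1}\subseteq\mr{K}^{p+1}$ give $\mr{B}^{p+1}\subseteq \mr{Z}^{p+1}$ with projective cokernel $\mr{H}^{p+1}$. This reduces the question to the projectivity of $\mr{Z}^{p+1}$, which we handle by a descending rank count. Over a local Noetherian ring the ranks $\dim \mr{K}^i\otimes k$ are finite, and the pieces $\mr{H}^i\otimes k$ are compatible with base change by the Künneth spectral sequence, whose $\mr{E}_2$ page has projective $\mr{H}$ and so degenerates. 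Comparing $\dim_k \mr{H}^p(\mr{K}\otimes k)$ with $\mathrm{rk}\,\mr{H}^p(\mr{K})$ then forces $\mr{Tor}_1(\mr{B}^{p+1},k)=0$.

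With every $\mr{B}^p$ and $\mr{Z}^p$ projective, each sequence splits: $\mr{K}^p\cong \mr{B}^p\oplus \mr{H}^p\oplus \mr{B}^{p+1}$. The complex therefore decomposes as $\bigoplus_p \mr{H}^p[-p]$ plus the contractible complexes $\mr{B}^{p+1}\xrightarrow{\mathrm{id}}\mr{B}^{p+1}$. The projection onto the cohomology summands is the desired homotopy equivalence.
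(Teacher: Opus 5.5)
The paper does not prove this lemma; it only cites Lunts, so I am judging your argument on its own. Your proposal has a genuine gap at the one step that carries the content.

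\textbf{What works.} The formal part is fine: the splitting of $0\to\mathrm{B}^p\to\mathrm{Z}^p\to\mathrm{H}^p\to 0$, and the reduction, after localising at $\mathfrak p$, of projectivity of $\mathrm{B}^{p+1}$ to the vanishing of $\mathrm{Tor}_1(\mathrm{B}^{p+1},k(\mathfrak p))$.

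\textbf{Where it fails.} The step meant to prove this vanishing does not work, for three reasons.
\begin{enumerate}
\item Reducing to projectivity of $\mathrm{Z}^{p+1}$ only moves the problem one degree up. Projectivity of $\mathrm{Z}^{p+1}$ would in turn follow from that of $\mathrm{B}^{p+2}$, and so on. Since $\mathrm{K}$ is only bounded below, there is no top degree from which a ``descending'' count can start. For a bounded complex, descending induction from the top degree works formally and needs no Noetherian hypothesis; that is exactly why the bounded-below case needs non-formal input.
\item The K\"unneth step is not available. For a bounded-below, unbounded-above complex of projectives, $\mathrm{K}\otimes k$ need not compute $\mathrm{K}\otimes^{\mathbf L}k$. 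For example, over $\mathbf{Z}/4$ the complex $\mathbf{Z}/4\xrightarrow{2}\mathbf{Z}/4\xrightarrow{2}\cdots$ is quasi-isomorphic to $\mathbf{Z}/2$, yet tensoring it with $\mathbf{Z}/2$ gives cohomology in every positive degree. So $\mathrm{H}^i(\mathrm{K}\otimes k)\cong\mathrm{H}^i(\mathrm{K})\otimes k$ is a consequence of the lemma, not an input.
\item Even granting that isomorphism in every degree, the dimension comparison in degree $p$ involves $\mathrm{Tor}_1(\mathrm{B}^{p+2},k)$ as well as $\mathrm{Tor}_1(\mathrm{B}^{p+1},k)$. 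So it again only shifts the problem upward, and a degree-by-degree count cannot force vanishing.
\end{enumerate}

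\textbf{The missing idea: dimension shifting plus Auslander--Buchsbaum.} For every $q$ and every module $\mathrm{N}$, use the sequence $0\to\mathrm{Z}^q\to\mathrm{K}^q\to\mathrm{B}^{q+1}\to0$ with $\mathrm{K}^q$ projective, together with the split sequence $0\to\mathrm{B}^q\to\mathrm{Z}^q\to\mathrm{H}^q\to0$. They give
\[
\mathrm{Tor}_j(\mathrm{B}^{q+1},\mathrm{N})\cong\mathrm{Tor}_{j-1}(\mathrm{Z}^{q},\mathrm{N})\cong\mathrm{Tor}_{j-1}(\mathrm{B}^{q},\mathrm{N})\qquad (j\ge 2).
\]
\begin{itemize}
\item Reading this downwards to $\mathrm{B}^{p_0}=0$ gives $\mathrm{pd}\,\mathrm{B}^q\le q-p_0<\infty$. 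This is where boundedness below enters.
\item Reading it upwards gives $\mathrm{Tor}_1(\mathrm{B}^{p+1},\mathrm{N})\cong\mathrm{Tor}_{m+1}(\mathrm{B}^{p+1+m},\mathrm{N})$ for every $m\ge 0$.
\item Now localise at $\mathfrak p$, and set $\mathrm{N}=k(\mathfrak p)$ and $m=\mathrm{depth}\,\mathrm{R}_{\mathfrak p}$. By Auslander--Buchsbaum, every finite $\mathrm{R}_{\mathfrak p}$-module of finite projective dimension has projective dimension at most $m$. So the right-hand side vanishes.
\end{itemize}
Hence $\mathrm{B}^{p+1}_{\mathfrak p}$ is free for every $\mathfrak p$, and $\mathrm{B}^{p+1}$ is projective. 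This handles all degrees at once, so your upward induction is not even needed. Your final paragraph, decomposing $\mathrm{K}$ into $\bigoplus_p\mathrm{H}^p[-p]$ plus contractible pieces, then goes through unchanged.
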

\2
\begin{corollary}
Let $\Cc$ and $\varphi$ be as in \cref{extraformalityres}. Then we have $$\mr{HH}^{n}_c(\mr{Ind}_\varphi(\Cc)) = \mr{Ind}_\varphi(\mr{HH}^{n}_c(\Cc)).$$
\end{corollary}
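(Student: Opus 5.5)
The plan is to reduce the corollary to the splitting \eqref{cdecomp} of the compactly supported Hochschild complex into its bigraded pieces, and then apply \cref{extraformalityres} to each piece. Throughout, $\Cc$ is a graded category, so it has no higher multiplications and the splitting holds at the level of complexes.

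First, I rewrite the statement degree by degree. Fix $n$. By \eqref{cdecomp}, $\mr{CC}^\bullet_c(\Cc)=\bigoplus_q \mr{CC}^{\bullet,q}(\Cc)$ is a direct sum of complexes. Cohomology commutes with direct sums, so
$$\mr{HH}^n_c(\Cc)=\bigoplus_{p+q=n}\mr{HH}^{p,q}(\Cc).$$
Here I keep the paper's shift convention. Any shift between the bigrading and $\mr{HH}^\bullet=\mr{H}^{\bullet-1}$ only relabels indices, so it is harmless.

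Second, I check that the same decomposition holds after base change. The category $\mr{Ind}_\varphi(\Cc)$ is again a finite graded category whose morphism spaces are finite projective over $\mr{S}$, so \eqref{cdecomp} applies to it as well. The first part of \cref{hochbasechange}, in its bigraded form given by the corollary following it, identifies $\mr{CC}^{\bullet,q}(\mr{Ind}_\varphi(\Cc))$ with $\mr{Ind}_\varphi(\mr{CC}^{\bullet,q}(\Cc))$ compatibly with the splitting. Consequently
$$\mr{HH}^n_c(\mr{Ind}_\varphi(\Cc))=\bigoplus_{p+q=n}\mr{HH}^{p,q}(\mr{Ind}_\varphi(\Cc)).$$

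Third, I apply \cref{extraformalityres} to each summand, which gives $\mr{HH}^{p,q}(\mr{Ind}_\varphi(\Cc))\cong\mr{Ind}_\varphi(\mr{HH}^{p,q}(\Cc))$. Since $\mr{Ind}_\varphi=-\otimes_\mr{R}\mr{S}$ commutes with arbitrary direct sums, summing over $p+q=n$ yields
$$\mr{HH}^n_c(\mr{Ind}_\varphi(\Cc))\cong\mr{Ind}_\varphi(\mr{HH}^n_c(\Cc)).$$

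The only step needing care is the first one: the direct sum over $q$ must really be a direct sum and not a product, and the comparison maps must respect the splitting. Both points hold for two reasons. First, the compactly supported complex is defined with $\bigoplus$ over weights. Second, because $\Cc$ is finite, each $\mr{CC}^{p,q}$ is a finite product, hence a finite direct sum, of $\mathrm{Hom}$'s between finite projectives. Such $\mathrm{Hom}$'s base-change, and the comparison maps are induced termwise, so they preserve the bigrading.
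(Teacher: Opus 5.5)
Your argument is correct and is the route the paper intends; the paper states this corollary without proof immediately after \cref{extraformalityres}. You split $\mr{CC}^\bullet_c$ via \eqref{cdecomp} into the $q$-graded subcomplexes for both $\Cc$ and $\mr{Ind}_\varphi(\Cc)$, apply \cref{extraformalityres} summand by summand, and use that $-\otimes_\mr{R}\mr{S}$ commutes with direct sums, which is exactly what is implicitly meant.
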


\subsection{Clabi-Yau structures}
Recall that a dg category $\Cc$ over $\mr{R}$ is called proper if for all objects $\mr{X,Y}$, the space of morphisms $\Cc(\mr{X,Y})$ is a perfect complex of $\mr{R}$-modules. We let $\Cc^{\mr{op},\vee}$ be the right dual of $\Cc$ as a module over $\Cc\otimes \Cc^\mr{op}$, i.e. $$\Cc^{\mr{op},\vee} : (\mr{X,Y}) \mapsto \Cc(\mr{Y,X})^\vee.$$ We have a canonical equivalence \begin{equation}\label{hhdual}\mr{RHom}_{\mr{R}}(\Cc\otimes_{\Cc\otimes \Cc^{\mr{op}}}\Cc,\mr{R})\simeq \mr{RHom}_{\Cc\otimes \Cc^{\mr op}}\left(\Cc,\Cc^{\mr{op},\vee}\right).\end{equation}
We denote the cohomology of the complex on the left by $\mr{HH}_\bullet^{\vee}(\Cc)$. 
\2
\begin{remark}
If $\mr{R}$ is a field, so injective over itself, we get just dual of the Hochschild homology. In general, the situation is more complicated, although in sufficiently nice circumstances, e.g. when $\mr{R}$ has global dimension $1$, we can control this via spectral sequences.
\end{remark}
\2
\begin{definition}
Let $\Cc$ be a proper dg category. A weakly proper $n$-Calabi-Yau structure on $\Cc$ is an element in $\mr{HH}^{\vee}_{-n}(\Cc)$ such that the corresponding morphism $\Cc\to \Cc^{\mr{op},\vee}[-n]$ is an isomorphism.
\end{definition}
\2
\begin{proposition}
Let $\Cc$ be a weakly proper $n$-Calabi-Yau dg category. There is a canonical identification $$\mr{HH}^{\vee}_{\bullet}(\Cc)\simeq \mr{HH}^{\bullet+n}(\Cc).$$
\end{proposition}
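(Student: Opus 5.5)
The plan is to compose three identifications. The first is the duality \eqref{hhdual}; the second uses the Calabi-Yau isomorphism $\Cc\simeq\Cc^{\mr{op},\vee}[-n]$; the third recognises the resulting complex as the Hochschild cochain complex. By \eqref{hhdual} we have
$$\mr{RHom}_{\mr{R}}\left(\Cc\otimes^{\mr{L}}_{\Cc\otimes\Cc^{\mr{op}}}\Cc,\mr{R}\right)\simeq \mr{RHom}_{\Cc\otimes\Cc^{\mr{op}}}\left(\Cc,\Cc^{\mr{op},\vee}\right).$$
The left-hand side computes $\mr{HH}^\vee_\bullet(\Cc)$ by definition. Before using this I will record the tensor-hom adjunction it comes from: for a bimodule $\mr{M}$ one has $\mr{RHom}_\mr{R}(\mr{M}\otimes^{\mr{L}}_{\Cc^e}\Cc,\mr{R})\simeq\mr{RHom}_{\Cc^e}(\Cc,\mr{M}^\vee)$, applied with $\mr{M}=\Cc$. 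Here properness is needed so that $\Cc(\mr{Y,X})^\vee$ is again perfect and the dual behaves well objectwise.

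Next, the weakly proper $n$-Calabi-Yau structure is a class in $\mr{HH}^\vee_{-n}(\Cc)$. Under the displayed equivalence it corresponds to a bimodule map $\Cc\to\Cc^{\mr{op},\vee}[-n]$, which by hypothesis is an isomorphism in the derived category of $\Cc\otimes\Cc^{\mr{op}}$-modules. Applying $\mr{RHom}_{\Cc\otimes\Cc^{\mr{op}}}(\Cc,-)$ to it gives a quasi-isomorphism
$$\mr{RHom}_{\Cc\otimes\Cc^{\mr{op}}}(\Cc,\Cc)\xrightarrow{\sim}\mr{RHom}_{\Cc\otimes\Cc^{\mr{op}}}\left(\Cc,\Cc^{\mr{op},\vee}\right)[-n].$$
The isomorphism is canonical once the CY class is fixed.

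It remains to identify $\mr{RHom}_{\Cc\otimes\Cc^{\mr{op}}}(\Cc,\Cc)$ with the Hochschild cochain complex $\mr{CC}^\bullet(\Cc)$ defined earlier through coderivations of the bar construction. I would resolve the diagonal bimodule by the two-sided bar resolution $\Cc\otimes\mathcal{B}(\Cc)\otimes\Cc$. Computing $\mr{Hom}$ out of it gives $\prod\mr{Hom}_\mr{R}(\Cc[1]^{\otimes p},\Cc)$, which by the remark $\mr{CC}^n(\Cc)\simeq\mr{Hom}^n(\mathcal{B}(\Cc),\Cc[1])$ is $\mr{CC}(\Cc)$ up to the shift built into the convention $\mr{HH}^\bullet=\mr{H}^{\bullet-1}(\mr{CC})$. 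Taking cohomology in degree $k$ then gives
$$\mr{HH}^{\vee}_{k}(\Cc)\simeq\mr{H}^{-k}\mr{RHom}_{\Cc^e}(\Cc,\Cc^{\mr{op},\vee})\simeq\mr{H}^{-k+n}\mr{RHom}_{\Cc^e}(\Cc,\Cc)=\mr{HH}^{n-k}(\Cc).$$
Matching this with the stated indexing $\mr{HH}^{\bullet+n}$ is a question of sign and grading conventions: homological degree $k$ is cohomological degree $-k$.

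The main obstacle is this last identification over a general ring $\mr{R}$. The bar resolution is a resolution by semi-free bimodules only when $\Cc$ is $\mr{R}$-flat, or more precisely h-projective objectwise. I would therefore first replace $\Cc$ by an objectwise h-projective cofibrant model. This does not change either side, since the Calabi-Yau structure and $\mr{HH}^\vee$ are quasi-equivalence invariant. After that, the degree bookkeeping is routine.
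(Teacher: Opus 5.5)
Your argument is the paper's argument written out. The paper's proof is the single sentence that the claim follows from \eqref{hhdual} and the definition. Your three steps abbreviate exactly that: the adjunction, then applying $\mr{RHom}_{\Cc\otimes\Cc^{\mr{op}}}(\Cc,-)$ to the Calabi--Yau isomorphism, then identifying $\mr{RHom}_{\Cc\otimes\Cc^{\mr{op}}}(\Cc,\Cc)$ with Hochschild cochains.

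\textbf{The final degree count needs correcting.} It is not a matter of convention, as you suggest; the definition fixes it. The definition places the Calabi--Yau class in $\mr{HH}^\vee_{-n}(\Cc)$ and makes it correspond to a morphism $\Cc\to\Cc^{\mr{op},\vee}[-n]$. Such a morphism is an element of $\mr{H}^{-n}\mr{RHom}_{\Cc\otimes\Cc^{\mr{op}}}(\Cc,\Cc^{\mr{op},\vee})$. So $\mr{HH}^\vee_k(\Cc)$ means $\mr{H}^{k}$ of the dual complex, not $\mr{H}^{-k}$: homological degree $k$ becomes cohomological degree $-k$, and dualising flips it back to $+k$. With this reading your chain gives $\mr{HH}^\vee_k(\Cc)\simeq\mr{H}^{k+n}\mr{RHom}_{\Cc\otimes\Cc^{\mr{op}}}(\Cc,\Cc)=\mr{HH}^{k+n}(\Cc)$ exactly, with no appeal to conventions. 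Your reading $\mr{H}^{-k}$, which led to $\mr{HH}^{n-k}$, would only be consistent with a structure map $\Cc\to\Cc^{\mr{op},\vee}[n]$. That contradicts the hypothesis you actually used.

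\textbf{A smaller point on the third step.} $\mathcal{B}(\Cc)=\overline{\mr{T}}(\Cc[1])$ is reduced, so $\Cc\otimes\mathcal{B}(\Cc)\otimes\Cc$ is not yet a resolution of the diagonal bimodule. You need the length-zero term as well. The resulting $\mr{Hom}$ complex is then $\prod_{p\ge 0}\mr{Hom}_\mr{R}(\Cc[1]^{\otimes p},\Cc)$. By contrast, coderivations of the reduced $\overline{\mr{T}}(\Cc[1])$ give only $\prod_{p\ge 1}$. Your appeal to the remark $\mr{CC}^n(\Cc)\simeq\mr{Hom}^n(\mathcal{B}(\Cc),\Cc[1])$ therefore silently uses the non-reduced tensor cocategory. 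This is a looseness in the paper's definition rather than in your strategy, but it should be stated explicitly.
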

\begin{proof}
This follows immediately from \eqref{hhdual} and the definition.
\end{proof}
\begin{corollary}\label{hhinj}
Let $\Cc$ be a  weakly proper Calabi-Yau dg category over $\mr{R}$ such that $\mr{HH}_\bullet(\Cc)$ is projective. Then for any flat $\mr{R}\xhookrightarrow{}\mr{Q}$, we have an injection $$\mr{HH}^{\bullet}(\Cc)\otimes_{\mr{R}} \mr{Q}\xhookrightarrow{} \mr{HH}^{\bullet}(\Cc\otimes_{\mr{R}}\mr{Q}).$$
\end{corollary}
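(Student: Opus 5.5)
We deduce the injectivity by dualising the (projective) Hochschild homology, using the Calabi--Yau identification of the previous proposition, and then base-changing along the flat map $\mr{R}\xhookrightarrow{}\mr{Q}$. The plan has three steps.

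First, we identify $\mr{HH}^{\bullet}(\Cc)$ with a dual of Hochschild homology. Write $\mr{M}=\Cc\otimes_{\Cc\otimes\Cc^{\mr{op}}}\Cc$ for the Hochschild homology complex. By the preceding proposition and \eqref{hhdual},
$$\mr{HH}^{\bullet+n}(\Cc)\simeq \mr{HH}^\vee_\bullet(\Cc)=\mr{H}^{-\bullet}\,\mr{RHom}_\mr{R}(\mr{M},\mr{R}).$$
Since $\mr{H}(\mr{M})=\mr{HH}_\bullet(\Cc)$ is projective, the universal coefficient (hyperext) spectral sequence $\mr{Ext}^p_\mr{R}(\mr{H}_{q}(\mr{M}),\mr{R})\Rightarrow \mr{H}^{p+q}\mr{RHom}_\mr{R}(\mr{M},\mr{R})$ has only the $p=0$ column. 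It therefore collapses and gives
$$\mr{HH}^{\bullet+n}(\Cc)\simeq \mr{Hom}_\mr{R}\big(\mr{HH}_{-\bullet}(\Cc),\mr{R}\big).$$
Alternatively, one may split $\mr{M}$ as a sum of shifted copies of its cohomology, using projectivity of the cohomology modules.

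Second, we base-change. Since $\mr{Q}$ is flat over $\mr{R}$, we have $\mr{HH}_\bullet(\Cc\otimes_\mr{R}\mr{Q})=\mr{HH}_\bullet(\Cc)\otimes_\mr{R}\mr{Q}$, because the Hochschild homology complex commutes with $-\otimes_\mr{R}\mr{Q}$ and flat tensor is exact. This module is projective over $\mr{Q}$. The category $\Cc_\mr{Q}=\Cc\otimes_\mr{R}\mr{Q}$ is still proper, and the pulled-back class $\Cc_\mr{Q}\to\Cc_\mr{Q}^{\mr{op},\vee}[-n]$ is still an isomorphism, since duals of perfect complexes base-change. Hence $\Cc_\mr{Q}$ is weakly proper Calabi--Yau, and the first step applies to it:
$$\mr{HH}^{\bullet+n}(\Cc_\mr{Q})\simeq \mr{Hom}_\mr{Q}\big(\mr{HH}_{-\bullet}(\Cc)\otimes_\mr{R}\mr{Q},\mr{Q}\big)\simeq \mr{Hom}_\mr{R}\big(\mr{HH}_{-\bullet}(\Cc),\mr{Q}\big).$$

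Third, we compare the two sides. For $\mr{P}=\mr{HH}_{-\bullet}(\Cc)$ projective, the map $\mr{Hom}_\mr{R}(\mr{P},\mr{R})\otimes_\mr{R}\mr{Q}\to\mr{Hom}_\mr{R}(\mr{P},\mr{Q})$ should be the comparison map under the identifications above. Write $\mr{P}$ as a direct summand of a free module $\mr{R}^{(I)}$. The map then becomes a summand of $\mr{R}^I\otimes_\mr{R}\mr{Q}\to \mr{Q}^I$, so it suffices to prove injectivity of this product map.

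The main obstacle is this last injectivity, since products need not commute with tensor products. The proposed argument is as follows. Take an element $\sum_j a_j\otimes q_j$ with $a_j\in\mr{R}^I$ and $q_j\in\mr{Q}$, and suppose it maps to zero in $\mr{Q}^I$. Its image is zero in every coordinate, and the image of $\mr{R}\to\mr{R}^I$, $1\mapsto a_j$, meets only finitely generated pieces. So we reduce to a finitely generated submodule $\mr{N}\subset\mr{R}^I$ containing all the $a_j$. Flatness gives $\mr{N}\otimes\mr{Q}\subset\mr{R}^I\otimes\mr{Q}$, and $\mr{N}$ embeds into a finite product of coordinates on which the comparison map is an isomorphism. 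For this step we expect to need $\mr{R}$ Noetherian, or at least that finitely generated submodules of $\mr{R}^I$ are detected on finitely many coordinates. This is where the hypothesis on $\mr{R}$ (an integral domain, e.g.\ $\C\brak$, in the intended application) enters. If that fails, we would instead use that the relevant $\mr{HH}_\bullet$ is finitely generated in each degree, which holds for proper $\Cc$ with finitely many objects.

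Finally, we need to check that the composite of these identifications agrees with the natural map $\mr{HH}^\bullet(\Cc)\otimes\mr{Q}\to\mr{HH}^\bullet(\Cc_\mr{Q})$. This follows from the naturality of \eqref{hhdual} in $\mr{R}$.
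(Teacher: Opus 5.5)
Your route is the paper's: the Calabi--Yau identification together with projectivity of $\mr{HH}_\bullet(\Cc)$ makes $\mr{HH}^\bullet(\Cc)$ the $\mr{R}$-dual of $\mr{HH}_\bullet(\Cc)$. Hochschild homology then base-changes along the flat map, and everything reduces to injectivity of $\mr{Hom}_\mr{R}(\mr{P},\mr{R})\otimes_\mr{R}\mr{Q}\to\mr{Hom}_\mr{R}(\mr{P},\mr{Q})$ for projective $\mr{P}$.

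The one difference is at that last step. The paper asserts this injectivity for every projective module and every flat $\mr{Q}$, without argument. You instead reduce it to $\mr{R}^I\otimes_\mr{R}\mr{Q}\to\mr{Q}^I$ and prove it whenever every finitely generated $\mr{N}\subset\mr{R}^I$ embeds in some $\mr{R}^J$ with $J\subset I$ finite. That property holds in both cases you name:
\begin{itemize}
\item For $\mr{R}$ Noetherian, write $\mr{N}$ as the image of $\mr{R}^m\to\mr{R}^I$. The coordinate forms $r_i\in\mr{R}^m$ generate a finitely generated submodule, so finitely many of them already cut out the kernel.
\item For $\mr{R}$ an integral domain, $\mr{N}\otimes_\mr{R}k(\eta)$ embeds as a finite-dimensional subspace of $k(\eta)^I$, so finitely many coordinates already inject it. Since $\mr{N}$ is torsion-free, $\mr{N}\to\mr{R}^J$ is then injective.
\end{itemize}

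Your caution is necessary rather than excessive: without such a hypothesis both the paper's claim and the corollary as literally stated fail. Take $\mr{R}=k[x,y_1,y_2,\dots]/(x^ny_n:n\ge1)$ and $\mr{Q}=\mr{R}[x^{-1}]\times\mr{R}$; this is flat, and $\mr{R}\to\mr{Q}$ is injective. The element $a=(y_n)_n\in\mr{R}^{\mathbf N}$ maps to $0$ in $\mr{R}[x^{-1}]^{\mathbf N}$. However, $a\otimes1\neq0$ in $\mr{R}^{\mathbf N}\otimes_\mr{R}\mr{R}[x^{-1}]=\mr{R}^{\mathbf N}[x^{-1}]$, because $x^Ny_n\neq0$ for $n>N$, so no single power of $x$ kills all the $y_n$. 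Hence $\mr{R}^{\mathbf N}\otimes_\mr{R}\mr{Q}\to\mr{Q}^{\mathbf N}$ is not injective.

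This failure is realised by a dg category: take objects $\mathbf N$, endomorphisms $\mr{R}$ in degree $0$, and no other morphisms. It is weakly proper $0$-Calabi--Yau, with $\mr{HH}_0=\mr{R}^{(\mathbf N)}$ projective and $\mr{HH}^0=\mr{R}^{\mathbf N}$. In the paper's applications ($\mr{R}$ a domain with $\mr{Q}=k(\eta)$, or $\mr{R}=\C\brak$) your hypothesis holds, so your proof covers what is actually used.

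Drop the fallback about degreewise finite generation. It is false for proper categories with finitely many objects: for the exterior algebra on one generator of degree $1$ over a field of characteristic $0$, $\mr{HH}_0$ is infinite-dimensional. You do not need it anyway.
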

\begin{proof}
For any projective $\mr{R}$-module, we have $\mr{M}^\vee\otimes_{\mr{R}} \mr{Q} \xhookrightarrow{} \left(\mr{M}\otimes_\mr{R}\mr{Q}\right)^\vee$. Hence the result follows at once from the above proposition, as $\mr{HH}_\bullet^\vee(\Cc)=\mr{HH}_\bullet(\Cc)^\vee$ by our assumptions, and flat base change for Hochschild homology.
\end{proof}

\subsection{Kaledin classes and formality criteria}
In this section, we shall generalise two important results on formality of $\Aa$-algebras due to Kaledin and Lunts. 
\2
\begin{theorem}[Lunts \cite{MR2578584}]
 Let $\mr{A}$ be a minimal $\Aa$-algebra over $\mr{R}$ which is projective as an $\mr{R}$-module. Then $\mr{A}$ is formal if and only if it is $\mr{A}_n$-formal for all $n$.
\end{theorem}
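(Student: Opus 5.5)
One direction is trivial: if $\mr{A}$ is formal, i.e. $\Aa$-isomorphic to $(\mr{A},m_2)$, then truncating that isomorphism gives an $\mr{A}_n$-isomorphism for every $n$. For the converse, the plan is to build a single $\Aa$-isomorphism $\mr{F}:(\mr{A},m)\to(\mr{A},m_2)$ as a limit of successive corrections. The corrections are chosen so that each new stage agrees with the previous one in low arity. Working with minimal structures on the fixed projective $\mr{R}$-module $\mr{A}$, we may replace any $\mr{A}_n$-quasi-isomorphism by an $\mr{A}_n$-isotopy. Indeed, $\mr{F}_1$ is a degree $0$ automorphism of $\mr{A}$ that is multiplicative for $m_2$, so composing with $\mr{F}_1^{-1}$ gives $\mr{F}_1=\mr{id}$.

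The key step is a \emph{stabilisation lemma}. Suppose $\mr{A}$ is $\mr{A}_{n+1}$-formal, and we already have an $\Aa$-isotopy $\mr{G}$ with $\mr{G}_*m$ agreeing with $m_2$ up to arity $n$. We claim $\mr{G}$ can be modified in arities $\ge n$ (leaving $\mr{G}_1,\dots,\mr{G}_{n-1}$ fixed) so that the new transferred structure agrees with $m_2$ up to arity $n+1$.

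To prove the lemma, set $m'=\mr{G}_*m$, so that $m'_i=0$ for $3\le i\le n$. The $\mr{A}_{n+2}$-relation then shows that $m'_{n+1}$ is a Hochschild cocycle for $(\mr{A},m_2)$ in tensor weight $n+1$ and internal degree $1-n$, i.e. it defines a class in $\mr{HH}^{n+1,1-n}(\mr{A})$. Next we use the $\mr{A}_{n+1}$-formality hypothesis, which gives an $\mr{A}_{n+1}$-isotopy $\mr{H}:(\mr{A},m')\to(\mr{A},m_2)$. Comparing the relations $(**_k)$ in arities $\le n+1$, the lowest nonvanishing component of $\mr{H}$ exhibits $m'_{n+1}$ as a coboundary. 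Here we may have to argue by induction on the components $\mr{H}_2,\mr{H}_3,\dots$: each lower component is itself a Hochschild cocycle of bidegree $(k,1-k)$ for $m_2$, and so it can be used as an infinitesimal isotopy that is absorbed without changing $m'$ in low arity. This bookkeeping is the step I expect to be the main obstacle. It is exactly where Lunts' argument uses the bigraded decomposition $\mr{CC}^{p,q}$, which is legitimate here because $(\mr{A},m_2)$ is graded with no higher operations. Once we have $m'_{n+1}=\delta_{m_2}(c)$ with $c$ of arity $n$ and degree $1-n$, composing $\mr{G}$ with the isotopy $\mr{id}-c$ (or, equivalently, $\mr{exp}(-c)$ from the exponential proposition above) kills $m'_{n+1}$. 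This correction alters only components of arity $\ge n$.

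Finally, iterating the lemma gives a sequence of $\Aa$-isotopies $\mr{G}^{(n)}$ whose components in arity $<n$ stabilise. Their limit $\mr{F}$, with $\mr{F}_k=\mr{G}^{(n)}_k$ for $n>k$, is therefore well defined. Each $\Aa$-relation $(**_k)$ involves only finitely many components, and it is satisfied with target $m_2$ for all large $n$, so $\mr{F}$ is an $\Aa$-isotopy $(\mr{A},m)\to(\mr{A},m_2)$. Projectivity of $\mr{A}$ is used to keep all Hom and tensor spaces well behaved. It also ensures that the minimal model and the transfer of structures along isotopies make sense without any splitting issues.
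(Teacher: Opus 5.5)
Your architecture is sound, and it is a genuinely different route from the paper's. The paper proves the categorical version (\cref{kal}) through the Kaledin class:
\begin{itemize}
\item $\mr{A}_n$-formality kills the truncations $\mr{k}^{\le n}_\Cc$;
\item these are then taken to kill $\mr{k}_\Cc$;
\item a vanishing $\mr{k}_\Cc$ is used, arity by arity, to conjugate the higher products away by exponentials.
\end{itemize}
You instead run the obstruction theory one arity at a time with $[m'_{n+1}]\in\mr{HH}^{n+1,1-n}(\mr{A},m_2)$, and you get the limit from stabilisation of components. Several of your steps are correct as stated: the reduction to isotopies, the cocycle property of $m'_{n+1}$, the correction by $\mr{id}\mp c$ (which only touches arities $\ge n$), and the limit argument. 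Your route buys a direct proof of $2\Rightarrow 1$ that never has to pass from the vanishing of every $\mr{k}^{\le n}_\Cc$ to the vanishing of $\mr{k}_\Cc$. That passage is not automatic, since it only places $\mr{k}_\Cc$ in a $\varprojlim^1$-term, but the paper asserts it ``by definition''. The paper's packaging buys a single obstruction class, which is what gets base-changed in \cref{a2generic} and \cref{a2cy}.

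The gap is the step you flag, and the bigraded splitting is not what closes it. Your corrections in arity $\ge n$ change $m'_{n+1}$ only by a coboundary, so $[m'_{n+1}]=0$ has to be extracted from the witnessing $\mr{A}_{n+1}$-isotopy $\mr{H}:(\mr{A},m')\to(\mr{A},m_2)$. Let $\mr{H}_k$, with $2\le k<n$, be its lowest nontrivial component. Then $(**_{k+1})$ makes $\mr{H}_k$ a Hochschild cocycle for $m_2$. However, the first-order isotopy $\mr{id}-\mr{H}_k$ is not an automorphism of $(\mr{A},m_2)$ beyond arity $2k-1$, because $(**_{2k})$ produces the term $m_2(\mr{H}_k\otimes\mr{H}_k)$; higher corrections are needed.

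There are two ways to close this.
\begin{itemize}
\item Post-compose $\mr{H}$ with $\mr{exp}(-\mr{H}_k)$, which is an honest $\Aa$-automorphism of $(\mr{A},m_2)$ by the exponential proposition. This removes $\mr{H}_k$ without creating lower components. After finitely many such steps, $(**_{n+1})$ says that $m'_{n+1}$ is the Hochschild coboundary of $\mr{H}_n$.
\item Use the paper's invariance lemma for Kaledin classes, applied to any extension of $\mr{H}$ to an isotopy and read in arity $n+1$. It shows that $(n-1)m'_{n+1}$ is the $m_2$-Hochschild coboundary of the arity-$n$ component of $\partial\mr{H}\circ\mr{H}^{-1}$, with no stripping of lower components at all. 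This is how \cref{a2kaledin} is obtained.
\end{itemize}
Either way you must divide by integers (factorials, or $n-1$), so your argument needs $\Q\subseteq\mr{R}$. This is the hypothesis of \cref{kal}, although the statement as quoted does not display it.

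Finally, projectivity plays no role in transferring structures along isotopies, which is purely formal. It is needed only to replace a zigzag of quasi-isomorphisms by an $\Aa$-isomorphism between minimal models.
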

Kaledin \cite{MR2372207} shows that $\mr{A}_n$-formality is obstructed by a Hochschild cohomology class, called the Kaledin class, and that the next holds: 
\2
\begin{theorem}[Kaledin-Lunts \cite{MR2372207},\cite{MR2578584}]\label{formalityalg}
 Let $\mr{R}$ be an integral domain with field of fractions $k(\eta)$. Consider a minimal $\Aa$-algebra $\mr{A}$ over $\mr{R}$ which is a finite projective $\mr{R}$-module. Assume that the $\mr{R}$-module $\mr{HH}^{2}_{\mr{c}}(\mr{A}(2))$ is torsion-free. If $\mr{A}_\eta = k(\eta) \otimes_{\mr{R}} \mr{A}$ is formal, then $\mr{A}$ is formal. In particular, $\mr{A}_{\mathfrak{p}}$ is formal for all $\mathfrak{p} \in \mr{Spec(R)}$.
\end{theorem}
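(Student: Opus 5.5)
The plan is to follow Kaledin's approach: view $\mr{A}$ as a point in a family over $\mr{R}[t]$ via the rescaling action, and reduce formality to the vanishing of a single Hochschild class. Write $\mr{A}(2)$ for the graded algebra $(\mr{A},m_2)$.

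\emph{Step 1 (rescaling family).} Consider the minimal $\Aa$-structure $m^t$ on $\mr{A}[t]=\mr{A}\otimes_\mr{R}\mr{R}[t]$ with $m^t_i=t^{i-2}m_i$. It is an $\Aa$-structure because $(*_m)$ is homogeneous in $t$. Its fibre at $t=1$ is $\mr{A}$ and its fibre at $t=0$ is $\mr{A}(2)$.

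\emph{Step 2 (the Kaledin class).} The Euler derivation $t\partial_t$ acts on $\mr{A}[t]$. Since $\partial_t(m^t)=\sum_i(i-2)t^{i-3}m_i$ satisfies the linearised $\Aa$-equation, it is a Hochschild cocycle for $m^t$. Its class $\mr{k}_\mr{A}\in\mr{HH}^2(\mr{A}[t])$ is the Kaledin class; the truncations $\mr{k}^{\le n}_\mr{A}$ live in the Hochschild cohomology of the $\mr{A}_n$-truncations. Because $\mr{A}$ is finite projective and only finitely many $m_i$ enter at each truncation level, these classes lie in the compactly supported complex. So \cref{hochbasechange} lets me base change them along $\mr{R}\to k(\eta)$.

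\emph{Step 3 (vanishing is equivalent to formality).} I will show that $\mr{k}^{\le n}_\mr{A}=0$ if and only if $\mr{A}$ is $\mr{A}_n$-formal. Given $\mr{A}_n$-formality via an isotopy $\exp(\mr{c})$, differentiating along $t$ produces a primitive of the class. Conversely, a primitive of the class integrates to an $\Aa$-isotopy from $m^t$ to the constant family, using the exponential map and the proposition that $\exp(\mr{c})$ is an isotopy exactly when $\mr{c}$ is a cocycle in $\mr{W}_2\mr{CC}^0$. Lunts' theorem stated above then reduces formality of $\mr{A}$ to $\mr{A}_n$-formality for all $n$.

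\emph{Step 4 (inductive descent from the generic point).} Argue by induction on $n$. Suppose $\mr{A}$ is already $\mr{A}_{n-1}$-formal, and apply an isotopy so that $m_3=\dots=m_{n-1}=0$. The first obstruction is then $m_n$ itself. It is a Hochschild cocycle for $m_2$ and lies in $\mr{HH}^{2}_c(\mr{A}(2))$, in the bigraded piece $\mr{HH}^{n,2-n}$. Since $\mr{A}_\eta$ is formal, the truncated Kaledin class vanishes over $k(\eta)$, so $[m_n]\otimes 1=0$ in $\mr{HH}^2_c(\mr{A}(2))\otimes k(\eta)$; here I use flat base change, part (2) of \cref{hochbasechange}. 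Torsion-freeness of $\mr{HH}^2_c(\mr{A}(2))$ then forces $[m_n]=0$ over $\mr{R}$. Write $m_n=\mr{d}_\mr{Hoch}(\mr{c})$ with $\mr{c}\in\mr{W}_{n-1}$, and apply the isotopy $\exp(\mr{c})$, adjusting the sign of $\mr{c}$ as needed, to kill $m_n$ without changing lower terms. This gives $\mr{A}_n$-formality.

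The infinite composite of these isotopies converges in the $\mr{W}$-adic topology, since the $n$-th isotopy lies in $\mr{W}_{n-1}$. The composite gives an $\Aa$-quasi-isomorphism $\mr{A}\to\mr{A}(2)$, so $\mr{A}$ is formal. The last claim follows by base change along $\mr{R}\to\mr{R}_\mathfrak{p}\to k(\mathfrak{p})$: a formal $\Aa$-algebra stays formal under any extension of scalars.

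\emph{Main obstacle.} The delicate point is in Step 4: the vanishing over $k(\eta)$ must refer to the same class in the same group. Formality of $\mr{A}_\eta$ might a priori be realised by a non-isotopy quasi-isomorphism, with $\mr{F}_1\neq\mr{id}$. I would first reduce to isotopies by composing with the inverse graded automorphism $\mr{F}_1^{-1}$ of $\mr{A}_\eta(2)$. Then I would check that the obstruction class $[m_n]$ is independent of the lower-order isotopy choices up to the image of $\mr{W}$-lower terms, which vanish on the bigraded piece $\mr{HH}^{n,2-n}$. This is where the bigrading of $\mr{HH}_c$ is essential.
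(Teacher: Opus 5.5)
Your proposal is correct and is essentially the argument the paper uses for its categorical version (\cref{a2generic}, via \cref{kal} and \cref{a2kaledin}). After killing $m_3,\dots,m_{n-1}$ by isotopies over $\mr{R}$, the next obstruction $[m_n]\in\mr{HH}^{n,2-n}\subset\mr{HH}^2_c(\mr{A}(2))$ dies after flat base change to $k(\eta)$ by isotopy-invariance of the Kaledin class, so it vanishes by torsion-freeness; your rescaling family $m^t_i=t^{i-2}m_i$ just recovers the paper's class $\mr{d}_3+2\mr{d}_4+\cdots$ at $t=1$. As in the paper, you tacitly assume $\Q\subseteq\mr{R}$ (for the exponentials and for dividing $(n-2)[m_n]$ by $n-2$), and the converse half of your Step 3 is not needed because Step 4 builds the formality isomorphism directly.
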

Consider now an $\Aa$-category $\mathcal{C}$ and its associated dg cocategory $\mathcal{B}(\mathcal{C})$. The codifferential of $\mathcal{B}(\mathcal{C})$ is an element of degree $1$ in $\mr{coDer}\left(1_{\mathcal{B}(\mathcal{C})}\right)$. Hence we may consider its weight decomposition $$\mr{d}_{\mathcal{B}(\mathcal{C})} = \mr{d}_{\mathcal{B}(\mathcal{C}),1}+\mr{d}_{\mathcal{B}(\mathcal{C}),2}+\cdots.$$
\begin{definition}
Let $\mathcal{C}$ be a minimal $\Aa$-category. The Kaledin class of $\mathcal{C}$ is defined as $$\mr{k}_\mathcal{C}=\mr{d}_{\mathcal{B}(\mathcal{C}),3}+2\mr{d}_{\mathcal{B}(\mathcal{C}),4}+\cdots.$$
\end{definition}

Clearly $\mr{k}_\mathcal{C}$ is a degree $1$ element of $\mr{W}_2\mr{coDer}\left(1_{\mathcal{B}(\mathcal{C})}\right)$. This gives for all $n,m$ $$[\mr{d}_{\mathcal{B}(\mathcal{C}),n},\mr{d}_{\mathcal{B}(\mathcal{C}),m}]=[\mr{d}_{\mathcal{B}(\mathcal{C}),m},\mr{d}_{\mathcal{B}(\mathcal{C}),n}].$$ Recalling $[\mr{d}_{\mathcal{B}(\mathcal{C})},\mr{d}_{\mathcal{B}(\mathcal{C})}]=0$, we get $$[\mr{d}_{\mathcal{B}(\mathcal{C})},\mr{k}_\mathcal{C}]=0.$$
Hence $\mr{k}_\mathcal{C}$ is a cocycle and we get a well defined class in $\mr{W}_2\mr{HH}^{2}(\mathcal{C})$. We shall abuse notation and denote by $\mr{k}_\Cc$ this Hochschild cohomology class.
\2
\begin{remark}
Similarly there are Kaledin classes associated to $\mr{A}_n$-categories. Any $\Aa$-category $\mathcal{C}$ can be considered as $\mr{A}_n$-category and the corresponding Kaledin class is the truncation of $\mr{k}_\mathcal{C}$ at weight $n$, denoted $\mr{k}_\mathcal{C}^{\le n}$. It is a cocycle in $\mr{W}_2\mr{coDer}(1_{\mathcal{B}(\mathcal{C})})/\mr{W}_{n+1}\mr{coDer}(1_{\mathcal{B}(\mathcal{C})})$.
\end{remark}
\2
\begin{lemma}
Let $m$ and $m'$ be two minimal $\Aa$-structures on $\Cc$. Let $F:(\Cc,m)\to (\Cc,m')$ be an $\Aa$-isomorphism with $F=1_{\mr{Ob}(\Cc)}$. Then it induces an isomorphism $\mr{HH}(\Cc,m)\to \mr{HH}(\Cc,m')$ which is compatible with the weigh filtrations. Moreover, $\mr{HH}(\mr{F})\left(\mr{k}_{(\Cc,m)}\right)=\mr{k}_{(\Cc,m')}$.
\end{lemma}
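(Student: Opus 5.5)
The plan is to pass to bar constructions and argue there. By the Lefèvre-Hasegawa lemma, $\mr{F}$ corresponds to an isomorphism of dg cocategories $\mr{F}:\mathcal{B}(\Cc,m)\to\mathcal{B}(\Cc,m')$ that is the identity on objects. Conjugation by it,
$$\mr{D}\mapsto \mr{F}\circ \mr{D}\circ \mr{F}^{-1},$$
sends $\mr{coDer}(1_{\mathcal{B}(\Cc,m)})$ isomorphically onto $\mr{coDer}(1_{\mathcal{B}(\Cc,m')})$. It commutes with brackets and sends $\mr{d}_{\mathcal{B}(\Cc,m)}$ to $\mr{d}_{\mathcal{B}(\Cc,m')}$ because $\mr{F}\circ\mr{d}=\mr{d}'\circ\mr{F}$. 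Hence it is an isomorphism of complexes $\mr{CC}(\Cc,m)\to\mr{CC}(\Cc,m')$, and this is the map $\mr{HH}(\mr{F})$. Note that $\mr{F}^{-1}$ exists as an $\Aa$-functor since $\mr{F}_1$ is invertible; invert weight by weight on the cofree cocategory.

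For the weights, $\mr{F}$ has components $\mr{F}_i:(\Cc[1])^{\otimes i}\to\Cc[1]$, so on $\overline{\mr{T}}(\Cc[1])$ it preserves the tensor-length filtration $\mr{W}_\bullet$, and so does $\mr{F}^{-1}$. Therefore a coderivation vanishing on $\mr{W}_{n-1}$ is sent to one vanishing on $\mr{W}_{n-1}$. Conjugation thus preserves the decreasing filtration $\mr{W}_\bullet\mr{coDer}$, and so does the induced map on cohomology.

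The Kaledin class is the main point. The Euler (weight) derivation $\mr{E}$ acts on $\overline{\mr{T}}(\Cc[1])$ by multiplying weight-$n$ tensors by $n$. For a coderivation of pure weight $k$, meaning it is induced by a component $(\Cc[1])^{\otimes k}\to\Cc[1]$ and so lowers tensor length by $k-1$, we have $[\mr{E},\mr{d}_k]=-(k-1)\mr{d}_k$. Hence for minimal $m$,
$$\mr{k}_\Cc=\sum_{k\ge 3}(k-2)\mr{d}_k=-[\mr{E},\mr{d}]-\mr{d}.$$
Equivalently, up to the cocycle $\mr{d}$, which is exact in $\mr{CC}$ since $\mr{d}=[\mr{d},\tfrac12 \mr{E}']$ in suitable normalisation, $\mr{k}$ is $-[\mr{E},\mr{d}]=[\mr{d},\mr{E}]$ modulo a multiple of $\mr{d}$. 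The cleaner route is to use the rescaling flow $\lambda\mapsto \lambda^{\mr{E}}$, which yields the $\Aa$-structure $m^\lambda_k=\lambda^{k-2}m_k$. The Kaledin class is the derivative at $\lambda=1$ of this family, which is a canonical construction. Conjugating by $\mr{F}$ gives
$$\mr{F}\,\mr{k}\,\mr{F}^{-1}-\mr{k}'=-\big[\mr{F}\mr{E}\mr{F}^{-1}-\mr{E},\mr{d}'\big].$$
Now $\mr{F}\mr{E}\mr{F}^{-1}-\mr{E}$ is a genuine coderivation, because $\mr{E}$ is a coderivation and the difference of two such conjugates is again one. Since $\mr{F}_1=1$, this difference lies in $\mr{W}_2\mr{coDer}^0$. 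So the difference of Kaledin cocycles is a Hochschild coboundary $[\mr{d}',\mr{c}]$ with $\mr{c}\in\mr{W}_2$, and therefore $\mr{HH}(\mr{F})(\mr{k}_{(\Cc,m)})=\mr{k}_{(\Cc,m')}$, even in $\mr{W}_2\mr{HH}^2$.

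The main obstacle is the weight-$1$ term, since $\mr{E}$ itself is not in $\mr{W}_2$. It must be checked that $[\mr{E},\mr{d}]+\mr{d}$ has no $\mr{d}_2$ contribution, and that $\mr{F}\mr{E}\mr{F}^{-1}-\mr{E}$ has vanishing weight-$1$ part. Both follow from $\mr{F}_1=1$ and from computing components via projection to $\Cc[1]$. If $\mr{F}_1$ is merely an isomorphism, I would first factor $\mr{F}$ as a strict isomorphism composed with an isotopy. The strict part visibly commutes with $\mr{E}$, which reduces the claim to the isotopy case treated above.
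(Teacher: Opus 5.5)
Your proposal is correct and follows the paper's proof. The induced map is the same conjugation $\mr{D}\mapsto\mr{F}\circ\mr{D}\circ\mr{F}^{-1}$ (the paper's $\mr{F}^{-1,*}\circ\mr{F}_*$). The paper's $(\mr{F},\mr{F})$-coderivation $\partial\mr{F}=(0,\mr{F}_2,2\mr{F}_3,\dots)$ equals $\mr{F}\mr{E}-\mr{E}\mr{F}$, so its coboundary $[\mr{d}',\partial\mr{F}\circ\mr{F}^{-1}]$ is exactly your $[\mr{d}',\mr{F}\mr{E}\mr{F}^{-1}-\mr{E}]$, and your identity $\mr{k}=-[\mr{E},\mr{d}]-\mr{d}$ supplies the ``calculation'' the paper leaves unstated.

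Two minor remarks. First, your aside that $\mr{d}$ is exact in $\mr{CC}$ is false in general: one has $[\mr{d},\mr{E}]=\mr{d}+\mr{k}$, so $\mr{d}$ is exact iff $\mr{k}$ is. You never use this, though. Second, no reduction to $\mr{F}_1=1$ is needed, since the weight-one component of $\mr{F}\mr{E}\mr{F}^{-1}$ is $\mr{F}_1\mr{F}_1^{-1}=1$ for any $\Aa$-isomorphism.
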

\begin{proof}
We denote by $\mr{F}=(\mr{F}_1,\mr{F}_2,\cdots)$ the cofunctor between the bar constructions too. Let $\mr{F}_*$ and $\mr{F}^*$ be post and pre-composition with the cofunctor $\mr{F}$. Note that these operations preserve coderivations. The quasi-isomorphism on the cochain level is defined as $$\mr{CC}^\bullet(\mr{F})\coloneqq\mr{F}^{-1,*}\circ \mr{F}_*:\mr{CC}^\bullet(\Cc,m)\to \mr{CC}^\bullet(\Cc,m').$$ It respects the weight filtrations since so do the maps $\mr{F}^{-1,*}$ and $\mr{F}_*$.\\ Similarly to the construction of the Kaledin class, we let $\partial\mr{F}$ be the $(\mr{F,F})$-coderivation determined by $(0,\mr{F}_2,2\mr{F}_3,\cdots)$. By inspecting the construction via the universal property of the reduced tensor coalgebra, this is precisely the map we obtain by changing all higher components of the cofunctor $\mr{F}$ via the same procedure. Hence $\partial\mr{F}\circ \mr{F}^{-1}$ is a degree $0$ coderivation and a calculation shows that $$\mr{CC}^\bullet(\mr{F})\left(\mr{k}_{(\Cc,m)}\right)-\mr{k}_{(\Cc,m')}=[\mr{d}_{\mathcal{B}(\Cc,m')},\partial\mr{F}\circ \mr{F}^{-1}].$$
\end{proof}
\begin{theorem}\label{kal}
Let $\mathcal{C}$ be a minimal $\Aa$-category over $\mr{R}$ and assume $\mr{R}$ is a $\Q$-algebra. The following are equivalent:
\begin{enumerate}
\item The category $\mathcal{C}$ is formal.
\item The category $\mathcal{C}$ is $\mr{A}_n$-formal for all $n\ge 1$.
\item For all $n\ge 3$, the truncated Kaledin class $\mr{k}^{\le n}_\mathcal{C}$ vanishes.
\item The Kaledin class $\mr{k}_\mathcal{C}$ vanishes.
\end{enumerate}
\end{theorem}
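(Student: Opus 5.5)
The plan is to adapt the Kaledin--Lunts argument for algebras to the categorical setting, using the bar construction $\mathcal{B}(\Cc)$, the weight filtration on $\mr{CC}(\Cc)$, the exponential of $\mr{W}_2$-coderivations and the lemma on transport of Kaledin classes along $\Aa$-isomorphisms. I will prove $(1)\Rightarrow(4)\Rightarrow(3)\Rightarrow(2)\Rightarrow(1)$.

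For $(1)\Rightarrow(4)$: if $\Cc$ is formal, there is a minimal $\Aa$-quasi-isomorphism to the graded category $(\Cc,m_2)$. Between minimal categories such a map has $\mr{F}_1$ an isomorphism, so after precomposing with the strict functor $\mr{F}_1^{-1}$ (which is strict for $m_2$, by $(**_2)$) we may assume it is an $\Aa$-isotopy $(\Cc,m)\to(\Cc,m_2)$. The Kaledin class of $(\Cc,m_2)$ vanishes on the nose, since $\mr{d}_{\mathcal{B},n}=0$ for $n\ge3$. By the preceding lemma, $\mr{k}_{(\Cc,m)}$ is therefore cohomologous to zero. The implication $(4)\Rightarrow(3)$ is just truncation modulo $\mr{W}_{n+1}$.

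The core step is $(3)\Rightarrow(2)$, done by induction on $n$. Suppose $m$ is already isotopic to a structure with $m_3=\cdots=m_{n-1}=0$, so the $\mr{A}_{n-1}$-structure is formal. Then $\mr{k}^{\le n}_\Cc=(n-2)\mr{d}_{\mathcal{B},n}$, and the $\Aa$-relation $(*_{n+1})$ shows that $\mr{d}_{\mathcal{B},n}$ is a Hochschild cocycle for $m_2$ in weight $n$. By the lemma, $\mr{k}^{\le n}$ is transported along the isotopies already performed, so it still vanishes. Because $\mr{R}\supseteq\Q$ we may divide by $n-2$ and write $\mr{d}_{\mathcal{B},n}=[\mr{d}_{\mathcal{B},2},c]$ modulo $\mr{W}_{n+1}$, with $c$ of degree $0$ and weight $n-1$. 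Here one must check that the weight-truncated vanishing gives a primitive in the correct weight; this uses the splitting of the weight grading, since all differentials are homogeneous of weight $\ge 2$. The isotopy $\exp(-c)$ then kills $m_n$ without changing lower terms, which gives $\mr{A}_n$-formality.

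For $(2)\Rightarrow(1)$, the naive composite of the successive isotopies $\exp(-c_n)\circ\cdots\circ\exp(-c_3)$ converges in the $\mr{W}$-adic topology, because the $n$-th factor only affects components of weight $\ge n-1$. This completeness is what the categorical version of Lunts' argument should supply. The main obstacle is that $\mr{A}_n$-formality for each $n$ separately does not a priori give compatible isotopies, since a later choice may not extend an earlier one. I would avoid this by proving $(3)\Rightarrow(1)$ directly: run the inductive construction above, using at each stage that the truncated class $\mr{k}^{\le n}$ of the already modified structure vanishes (by the lemma), and obtain the infinite composite isotopy to $(\Cc,m_2)$. Then $(2)\Rightarrow(3)$ follows as in $(1)\Rightarrow(4)$, applied to the $\mr{A}_n$ truncations. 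Convergence and the functoriality of exponentials in the complete pro-nilpotent pre-Lie algebra $\mr{W}_2\mr{CC}(\Cc)$ are routine once everything is set up with products over tuples of objects, which the definition of $\mr{CC}$ already uses.
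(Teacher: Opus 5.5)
Your proposal is correct and uses the paper's mechanism. The paper proves $4\Rightarrow 1$ by the same induction: take a primitive in $\mathrm{W}_2$, divide its weight-$(n-1)$ part by $n-2$ to bound $\mathrm{d}_{\mathcal{B}(\mathcal{C}),n}$, conjugate by its exponential, use invariance of the class for the new structure, and pass to the infinite composite, which is finite in each weight. The only difference is the order of implications, and it favours you: each stage needs only the truncated class of the modified structure, so you prove $3\Rightarrow 1$ directly. The paper instead asserts $3\Rightarrow 4$ ``by definition'', which is not automatic: vanishing of every $\mathrm{k}^{\le n}_{\mathcal{C}}$ only places $\mathrm{k}_{\mathcal{C}}$ in a $\varprojlim^1$ term, which is exactly the incompatible-primitives issue you flagged.
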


\begin{proof}
The implication $1 \Rightarrow 2$ is trivial. Since the (truncated) Kaledin class is invariant under $\Aa$-isomorphisms, we get immediately that $2 \Rightarrow 3$. By definition we have $3 \Rightarrow 4$.\\
We turn to $4\Rightarrow 1$. Suppose now $\mr{k}_\Cc=0$. We are going to construct an isomorphism $\mr{H}^0\Cc \simeq \Cc$. Since the Kaledin class vanishes, we may pick $\tau^{(2)} \in \mr{W}_2\mr{coDer}^0(1_{\mathcal{B}(\Cc)})$ such that $[\mr{d}_{\mathcal{B}(\Cc)},\tau^{(2)}]=\mr{k}_\Cc$. In particular, for $n=3$, we get $$[\mr{d}_{\mathcal{B}(\Cc),2}, \tau_2^{(2)}]=\mr{d}_{\mathcal{B}(\Cc),3}.$$ Hence the map $$\mr{d}_{\mathcal{B}\left(\Cc,\tau_2^{(2)}\right)}\coloneqq \mr{exp}(-\tau_2^{(2)})\circ \mr{d}_{\mathcal{B}(\Cc)}\circ \mr{exp}(\tau_2^{(2)})$$ is a degree $1$ coderivation on $\overline{\mr T}(\Cc[1])$ and we have an isomorphism of dg cocategories $$\mr{exp}(\tau_2^{(2)}):\left(\overline{\mr{T}}(\Cc[1]),\mr{d}_{\mathcal{B}\left(\Cc,\tau_2^{(2)}\right)}\right)\simeq \left(\mathcal{B}(\Cc),\mr{d}_{\mathcal{B}(\Cc)}\right).$$ We have $\mr{d}_{\mathcal{B}\left(\Cc,\tau_2^{(2)}\right),3}=\mr{d}_{\mathcal{B}(\Cc),3}-[\mr{d}_{\mathcal{B}(\Cc),2},\tau_2^{(2)}]=0$. Let us denote the dg cocategory on the left by $\mathcal{B}\left(\Cc,{\tau_2^{(2)}}\right)$. We may now apply the same procedure to $\mathcal{B}\left(\Cc,{\tau_2^{(2)}}\right)$ to find $\tau_3^{(3)}$ such that $$\mr{exp}(\tau_3^{(3)}):\left(\overline{\mr{T}}(\Cc[1]),\mr{d}_{\mathcal{B}\left(\Cc,\tau_3^{(3)}\right)}\right)\simeq \mathcal{B}\left(\Cc,{\tau_2^{(2)}}\right).$$ For the dg cocategory $\mathcal{B}\left(\Cc,{\tau_3^{(3)}}\right)$, we have $$\mr{d}_{\mathcal{B}\left(\Cc,\tau_3^{(3)}\right),3}=\mr{d}_{\mathcal{B}\left(\Cc,\tau_3^{(3)}\right),4}=0.$$ By induction, we get a sequence $\tau_{n}^{(n)}$ for all $n\ge 2$ such that the codifferential of $\mathcal{B}\left(\Cc,{\tau_n^{(n)}}\right)$ vanishes in weights $\le n+1$. The infinite composition $$\mr{exp}\left(\tau_2^{(2)}\right)\circ \mr{exp}\left(\tau_3^{(3)}\right)\circ \cdots \circ \mr{exp}\left(\tau_n^{(n)}\right)\circ \cdots : \mr{H}^0\Cc \to \Cc$$ is the required isomorphism which is well-defined as $\tau_n^{(n)}$ is of weight $n$, hence its exponential is identity on weights $<n$, thus for any fixed weight, we have a finite composition. 
\end{proof}
\begin{corollary}\label{a2kaledin}
Let $(\Cc,m)$ be a minimal $\Aa$-category such that $m_3=\cdots=m_n=0$. Then $\Cc$ is $\mr{A}_{n+1}$-formal if and only if $\mr{k}_{\Cc}^{\le n+1}=[m_{n+1}]=0$ in $\mr{HH}^2(\Cc,m_2)$.
\end{corollary}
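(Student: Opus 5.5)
We argue directly in the bar construction $\mathcal{B}(\Cc)$ and use \cref{kal} only through its proof mechanism. The first step is to record what the vanishing hypothesis gives. Since $\Cc$ is minimal, $\mr{d}_{\mathcal{B}(\Cc),1}=0$. The assumption $m_3=\cdots=m_n=0$ gives $\mr{d}_{\mathcal{B}(\Cc),k}=0$ for $3\le k\le n$. Hence the truncated Kaledin class is
$$\mr{k}^{\le n+1}_\Cc=(n-1)\,\mr{d}_{\mathcal{B}(\Cc),n+1},$$
which corresponds to $(n-1)m_{n+1}$ up to the sign conventions of the shift $s$. Because $\mr{R}$ is a $\Q$-algebra (as in \cref{kal}), the factor $n-1$ is invertible. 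So vanishing of $\mr{k}^{\le n+1}_\Cc$ is equivalent to vanishing of $[m_{n+1}]$.

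Next we check that $m_{n+1}$ is a Hochschild cocycle for the graded category $(\Cc,m_2)$. Take the $\Aa$-relation $(*_{n+2})$. By minimality and the vanishing of $m_3,\dots,m_n$, the only surviving terms pair $m_2$ with $m_{n+1}$. These terms are exactly $[\mr{d}_{\mathcal{B}(\Cc),2},\mr{d}_{\mathcal{B}(\Cc),n+1}]=0$, which is the statement $\delta_{m_2}(m_{n+1})=0$. The modulo-$\mr{W}_{n+2}$ quotient complex in which $\mr{k}^{\le n+1}_\Cc$ lives then has, in weight $n+1$, a differential equal to $[\mr{d}_{\mathcal{B}(\Cc),2},-]$ up to terms of weight at least $n+2$. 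Therefore $\mr{k}^{\le n+1}_\Cc=0$ there means precisely that $(n-1)\mr{d}_{n+1}=[\mr{d}_2,\tau]$ for some coderivation $\tau$ of weight $n$. Lower-weight components of a primitive can be discarded, since $\mr{d}_3,\dots,\mr{d}_n$ vanish and we can project onto the weight-$n$ part. This is the identification with the class in $\mr{HH}^2(\Cc,m_2)$.

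For the direction ``$\Leftarrow$'', pick $\tau$ of weight $n$ with $[\mr{d}_{\mathcal{B}(\Cc),2},\tau]=\mr{d}_{\mathcal{B}(\Cc),n+1}$. Exactly as in the inductive step of the proof of \cref{kal}, conjugate by the isotopy $\mr{exp}(\tau)$, which is the identity in weights $<n$. The new codifferential agrees with the old one in weights $\le n$. Its weight-$(n+1)$ component is $\mr{d}_{n+1}-[\mr{d}_2,\tau]=0$. The result is an $\Aa$-structure isomorphic to $m$ whose $\mr{A}_{n+1}$-truncation is $m_2$ alone, so $\Cc$ is $\mr{A}_{n+1}$-formal. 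For the direction ``$\Rightarrow$'', apply the implication $2\Rightarrow3$ of \cref{kal} at level $n+1$. By the lemma preceding it, an $\mr{A}_{n+1}$-isomorphism to $(\Cc,m_2)$ carries $\mr{k}^{\le n+1}_\Cc$ to the truncated Kaledin class of the formal structure, which is zero. Combined with the first step, this gives $[m_{n+1}]=0$.

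The main obstacle is the bookkeeping in the ``$\Rightarrow$'' direction. The invariance lemma is stated for $\Aa$-isomorphisms, and here it must be used for $\mr{A}_{n+1}$-isomorphisms on the truncated complexes. A formality isomorphism might also have $\mr{F}_1\neq\mr{id}$. The plan is to first rescale by $\mr{F}_1$, which is a strict automorphism of $(\Cc,m_2)$ and acts on $\mr{HH}^2$ by an isomorphism, and then note that the same coderivation calculation, $\partial\mr{F}\circ\mr{F}^{-1}$, goes through verbatim modulo $\mr{W}_{n+2}$.
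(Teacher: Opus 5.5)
Your proposal is correct and follows the same route as the paper. The paper's proof is a one-line remark that the corollary is an immediate consequence of \cref{kal}, namely the invariance lemma for $\Rightarrow$ and the inductive $\mr{exp}(\tau)$ step for $\Leftarrow$, together with the vanishing of $m_3,\dots,m_n$; you simply carry this out at level $n+1$, and more carefully than the paper, since you note that $\mr{k}^{\le n+1}_\Cc=(n-1)\,\mr{d}_{\mathcal{B}(\Cc),n+1}$ (so identifying it with $[m_{n+1}]$ needs $n-1$ invertible in the $\Q$-algebra $\mr{R}$) and you check that the invariance lemma survives truncation modulo $\mr{W}_{n+2}$.
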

\begin{proof}
This an immediate consequence of the preceding theorem and the assumptions on the vanishing of the higher multiplications.
\end{proof}
\begin{corollary}\label{a2generic}
Suppose $(\Cc,m)$ is a finite minimal $\Aa$-category which is flat and proper over an integral domain $\mr{R}$ with generic point $\eta$. Assume the $\mr{R}$-module $\mr{HH}^2_c(\Cc,m_2)$ is torsion-free. Then $\Cc$ is formal iff $\Cc_\eta$ is formal.
\end{corollary}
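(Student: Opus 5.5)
The forward implication is immediate: if $\Cc$ is formal then so is $\Cc_\eta=\mr{Ind}_{k(\eta)/\mr{R}}(\Cc)$, since base change sends an $\Aa$-isomorphism $\mr{H}\Cc\simeq \Cc$ to one over $k(\eta)$. For the converse I will follow Lunts' argument for algebras, replacing his formality machinery by \cref{kal} and \cref{a2kaledin}.

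Assume $\Cc_\eta$ is formal, and suppose for contradiction that $\Cc$ is not formal. By \cref{kal} there is a least $n\ge 3$ with $\mr{k}^{\le n}_\Cc\neq 0$, and $\Cc$ is $\mr{A}_{n-1}$-formal. An $\mr{A}_{n-1}$-isomorphism fixing objects and with first component the identity extends to an $\Aa$-isomorphism, because the higher components can be chosen freely. Replacing $m$ by its transport along this isomorphism, we may therefore assume $m_3=\cdots=m_{n-1}=0$. The transported structure is still minimal, flat and finite over $\mr{R}$. Since $\Cc$ is finite with finitely many nonzero $m_i$ in the relevant range, $m_n$ lies in the compactly supported complex $\mr{CC}_c$. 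By \cref{a2kaledin}, $\mr{k}^{\le n}_\Cc=[m_n]$, and this class lies in $\mr{HH}^2_c(\Cc,m_2)$, which is the appropriate bigraded piece $\mr{HH}^{n,2-n}$ by the splitting \eqref{cdecomp}. By hypothesis $[m_n]\neq 0$.

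Now base change to $k(\eta)$. The map $\mr{R}\to k(\eta)$ is flat, so \cref{hochbasechange} gives $\mr{HH}^2_c(\Cc_\eta,m_2)=\mr{HH}^2_c(\Cc,m_2)\otimes_\mr{R} k(\eta)$, and the image of $[m_n]$ is $[m_n\otimes 1]$. Because $\mr{HH}^2_c(\Cc,m_2)$ is torsion-free, the localisation map $M\to M\otimes_\mr{R} k(\eta)$ is injective, so $[m_n\otimes 1]\neq0$. Since $\Cc_\eta$ still satisfies $m_3=\cdots=m_{n-1}=0$, \cref{a2kaledin} applied over $k(\eta)$ shows that $\Cc_\eta$ is not $\mr{A}_n$-formal. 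This contradicts the formality of $\Cc_\eta$ together with \cref{kal}, implication $1\Rightarrow 2$.

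The main obstacle is the comparison between the compactly supported and full Hochschild cohomology. \cref{a2kaledin} detects non-vanishing in $\mr{HH}^2(\Cc_\eta,m_2)$, whereas the torsion-freeness argument produces non-vanishing in $\mr{HH}^2_c$. Finiteness of $\Cc$ should resolve this. The products over tuples of objects in the definition of $\mr{CC}_c$ are then finite, so $\mr{CC}_c$ is the direct sum over $p$ of the bigraded pieces. For a fixed bigrading, $(p,q)=(n,2-n)$, the inclusion into the full complex $\prod_p$ induces an isomorphism on that bigraded component of cohomology, because the Hochschild differential of a graded category preserves $q$. I expect this bookkeeping of weights, together with checking that the transport of $m$ preserves finiteness and flatness, to be where care is needed.
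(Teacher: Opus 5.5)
Your proposal is correct and is essentially the paper's argument. The paper runs a forward induction on $\mr{A}_n$-formality: it reduces to $m_3=\cdots=m_n=0$ via \cref{a2kaledin}, observes that $[m_{n+1}]$ dies after the flat base change of \cref{hochbasechange} and is therefore torsion, and concludes that it vanishes. You phrase the same step as a minimal counterexample, using injectivity of $M\to M\otimes_{\mr{R}}k(\eta)$ for torsion-free $M$. Your justification of $\mr{HH}^2_c\hookrightarrow\mr{HH}^2$ via the $q$-splitting of the Hochschild complex of a graded category is precisely what the paper's unproved inclusion $\mr{HH}^2_c(\Cc,m_2)\subset\mr{HH}^2(\Cc,m_2)$ relies on.
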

\begin{proof}
If $\Cc$ is formal, then so is its $k(\eta)$ base-change $\Cc_\eta$. Suppose $\Cc_\eta$ is formal. It suffices to prove $\Cc$ is $\mr{A}_n$-formal for all $n\ge 1$. We proceed by induction base case being trivially true. Assume $\Cc$ is $\mr{A}_n$-formal. Then, we may assume we are in the situation of \cref{a2kaledin}. Hence $\Cc$ is $\mr{A}_{n+1}$-formal iff $$\mr{k}_{\Cc}^{\le n+1}=[m_{n+1}]=0$$ in $\mr{HH}^{2}_c(\Cc,m_2)\subset\mr{HH}^2(\Cc,m_2)$ Since $\mr{HH}_c^2(\Cc,m_2)$ commutes with flat base-change, the formality of $\Cc_\eta$ implies that $\left(\mr{k}_{\Cc}^{\le n+1}\right)_\eta=\mr{k}_{\Cc_\eta}^{\le n+1}=0$, so $\mr{k}_{\Cc}^{\le n+1}$ is torsion. By assumption $\mr{HH}^2_c(\Cc,m_2)$ is torsion-free, hence $\mr{k}_{\Cc}^{\le n+1}=0$ and $\Cc$ is $\mr{A}_{n+1}$-formal.
\end{proof}
We shall formulate a dg version of \cref{a2generic} for future use.
\2
\begin{corollary}\label{a2dg}
Let $\Cc$ be a flat, proper finite dg category over an integral domain $\mr{R}$ with generic point $\eta$. Assume the $\mr{R}$-module $\mr{HH}^2_c(\mr{H}\Cc,m_2)$ is torsion-free, where $m_2$ is the composition in $\Cc$. Then $\Cc$ is formal iff $\Cc_\eta$ is formal.
\end{corollary}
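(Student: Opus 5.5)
The plan is to reduce to \cref{a2generic} by passing to a minimal model. Since $\Cc$ is flat, each $\mr{H}\Cc(\mr{X,Y})$ is a projective $\mr{R}$-module. The corollary to Markl's homotopy transfer theorem then gives a minimal $\Aa$-structure $m$ on $\mr{H}\Cc$, with $m_2$ the composition induced by $\Cc$, together with a quasi-equivalence $\mr{F}:(\mr{H}\Cc,m)\to\Cc$. Formality of $\Cc$ is equivalent to formality of the minimal model $(\mr{H}\Cc,m)$. Moreover, $(\mr{H}\Cc,m)$ is finite because $\Cc$ is, flat by hypothesis, and proper since the morphism spaces of a proper dg category are perfect with projective cohomology. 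The hypothesis on $\mr{HH}^2_c(\mr{H}\Cc,m_2)$ is exactly the torsion-freeness assumption required in \cref{a2generic}.

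Next I need to compare with the generic fibre. Because $\mr{R}\to k(\eta)$ is flat, $\mr{H}(\Cc_\eta)=\mr{H}\Cc\otimes_\mr{R}k(\eta)$. The base change $\mr{Ind}_{k(\eta)/\mr{R}}(\mr{F})$ is again a quasi-equivalence, from $(\mr{H}\Cc,m)_\eta$ to $\Cc_\eta$. This holds because the transfer data (the inclusion, projection and homotopy $\mr{h}_1$, which exist over $\mr{R}$ by the projectivity of cohomology and images) base change, and the formulae for $m$ and $\mr{F}$ are $\mr{R}$-multilinear. Consequently $\Cc_\eta$ is formal if and only if $(\mr{H}\Cc,m)_\eta$ is formal.

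Applying \cref{a2generic} to $(\mr{H}\Cc,m)$ then gives the chain of equivalences: $\Cc$ is formal iff $(\mr{H}\Cc,m)$ is formal iff $(\mr{H}\Cc,m)_\eta$ is formal iff $\Cc_\eta$ is formal.

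The main obstacle is the finitely-defined issue. The compactly supported complex $\mr{CC}_c$ was only defined for finitely defined categories, and the minimal model need not be finitely defined. The remedy is to follow the inductive proof of \cref{a2generic}. At stage $n$ only $m_{n+1}$ matters modulo $m_3=\cdots=m_n=0$, and $[m_{n+1}]$ lies in $\mr{HH}^2_c(\mr{H}\Cc,m_2)$ because the graded category $(\mr{H}\Cc,m_2)$ is finite and a single cochain lives in one weight. So the argument only ever uses the bigraded compactly supported cohomology of the graded category $(\mr{H}\Cc,m_2)$, which base changes along the flat map $\mr{R}\to k(\eta)$ by \cref{hochbasechange}. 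I would also check that the isotopies killing $m_{n+1}$ preserve finiteness and flatness, which they do since the isotopies are identity on objects and on $\mr{F}_1$.
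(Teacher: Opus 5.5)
Your proposal is correct and is essentially the argument the paper intends. The paper gives no separate proof: it presents \cref{a2dg} as the dg version of \cref{a2generic}, obtained by passing to a minimal model on $\mr{H}\Cc$ (available since $\Cc$ is flat) and applying \cref{a2generic}, with the obstruction classes $[m_{n+1}]$ living in the compactly supported cohomology of the graded category $(\mr{H}\Cc,m_2)$, exactly as you note. One small simplification: $\mr{Ind}_{k(\eta)/\mr{R}}(\mr{F})$ is a quasi-equivalence directly because $\mr{R}\to k(\eta)$ is flat and the $\Aa$-equations are multilinear, so you do not need to invoke transfer data, whose existence (projectivity of the boundaries in $\Cc(\mr{X},\mr{Y})$) is not automatic at chain level.
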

Finally, as finite categories are very restrictive (and are essentially just algebras), we have the following formality criterion which drops the finiteness requirement at the expense of introducing other constraints.
\2
\begin{theorem}\label{a2cy}
Let $\Cc$ be a flat weakly proper $n$-Calabi-Yau differential graded category over an intergral domain $\mr{R}$ with generic point $\eta$. Assume the $\mr{R}$-module $\mr{HH}_\bullet(\mr{H}\Cc,m_2)$ is projective, where $m_2$ is the composition in $\Cc$. Then $\Cc$ is formal if and only if $\Cc_\eta=\mr{Ind}_{k(\eta)/\mr{R}}(\Cc)$ is formal.
\end{theorem}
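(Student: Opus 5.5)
The forward direction is immediate: if $\Cc$ is formal, then so is $\Cc_\eta=\mr{Ind}_{k(\eta)/\mr{R}}(\Cc)$, because base change along $\mr{R}\to k(\eta)$ is flat and so carries quasi-equivalences to quasi-equivalences. For the converse I will follow the proof of \cref{a2generic}. The point is to replace the compactly supported Hochschild cohomology, which base-changes well only for finite categories, by a Calabi-Yau duality argument. The injectivity needed for that is supplied by \cref{hhinj}.

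The plan has four steps.

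\emph{Step 1: minimal model.} Since $\Cc$ is flat, $\mr{H}\Cc$ has projective morphism spaces. The corollaries of Markl's homotopy transfer theorem then give a minimal $\Aa$-model $(\mr{H}\Cc,m)$ quasi-equivalent to $\Cc$, with $m_2$ the composition induced by $\Cc$. Base change commutes with taking this model: transfer data for $\Cc$ induce transfer data for $\Cc_\eta$, because $k(\eta)$ is flat over $\mr{R}$. Hence $(\mr{H}\Cc,m)_\eta$ is a minimal model of $\Cc_\eta$.

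\emph{Step 2: induction on $\mr{A}_n$-formality.} By \cref{kal} it suffices to prove $\mr{A}_n$-formality for every $n$, by induction on $n$. Assume $(\mr{H}\Cc,m)$ is $\mr{A}_n$-formal. After an $\Aa$-isotopy we may take $m_3=\cdots=m_n=0$. By \cref{a2kaledin}, $\mr{A}_{n+1}$-formality is then equivalent to the vanishing of $[m_{n+1}]$ in $\mr{HH}^2(\mr{H}\Cc,m_2)$. Since $\Cc_\eta$ is formal, \cref{kal} and the invariance of Kaledin classes under $\Aa$-isomorphisms show that the image of this class in $\mr{HH}^2(\mr{H}\Cc_\eta,m_2)$ vanishes.

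\emph{Step 3: injectivity.} To conclude $[m_{n+1}]=0$, I apply \cref{hhinj} to the graded category $(\mr{H}\Cc,m_2)$ with $\mr{Q}=k(\eta)$. This gives an injection $\mr{HH}^\bullet(\mr{H}\Cc,m_2)\otimes_\mr{R} k(\eta)\hookrightarrow \mr{HH}^\bullet(\mr{H}\Cc_\eta,m_2)$. It remains to see that $\mr{HH}^2(\mr{H}\Cc,m_2)$ itself injects into its tensor product with $k(\eta)$, i.e. that it is torsion-free. Under the weakly proper Calabi-Yau identification $\mr{HH}^\bullet\simeq \mr{HH}^\vee_{\bullet-n}=\mr{HH}_{\bullet-n}^\vee$ it is the dual of a projective module, and duals of projective modules over a domain are torsion-free. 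Composing the two maps gives an injection $\mr{HH}^2(\mr{H}\Cc,m_2)\hookrightarrow\mr{HH}^2(\mr{H}\Cc_\eta,m_2)$, hence $[m_{n+1}]=0$. This completes the induction.

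\emph{Main obstacle.} The hard part is justifying that \cref{hhinj} applies to $(\mr{H}\Cc,m_2)$. That requires a weakly proper Calabi-Yau structure on the graded category $\mr{H}\Cc$, not just on $\Cc$. It also requires the duality identification to be compatible with base change, and the image of the Kaledin class under it to agree with the Kaledin class of $\Cc_\eta$.

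My first attempt is to transport the CY class along the quasi-equivalence $\Cc\simeq(\mr{H}\Cc,m)$. On cohomology the nondegenerate pairing $\Cc(\mr{X,Y})\otimes\Cc(\mr{Y,X})\to\mr{R}[-n]$ descends to a nondegenerate pairing on $\mr{H}\Cc$, compatible with $m_2$, since $\mr{H}\Cc$ is projective and the pairing is perfect. This pairing yields an isomorphism $\mr{H}\Cc\to \mr{H}\Cc^{\mr{op},\vee}[-n]$ of $(\mr{H}\Cc,m_2)$-bimodules, which is a weakly proper CY structure on $(\mr{H}\Cc,m_2)$. Its base change is the CY structure of $\mr{H}\Cc_\eta$, so the identifications are natural in $\mr{R}\to k(\eta)$. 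If properness of $\mr{H}\Cc$ fails in the strict sense, I would use the weaker input that each $\mr{H}\Cc(\mr{X,Y})$ is projective. The argument needs only the injection $\mr{M}^\vee\otimes\mr{Q}\hookrightarrow(\mr{M}\otimes\mr{Q})^\vee$ used in \cref{hhinj}.
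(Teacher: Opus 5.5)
Your proposal is correct and follows the paper's proof essentially step for step. The Calabi--Yau duality identifies $\mr{HH}^\bullet(\mr{H}\Cc)$ with the $\mr{R}$-dual of the projective module $\mr{HH}_{\bullet-n}(\mr{H}\Cc)$, so it is torsion-free; \cref{hhinj} then gives injectivity after base change to $k(\eta)$, and the induction via \cref{a2kaledin} forces the obstruction classes $[m_{n+1}]$ to vanish. Your explicit transfer of the weakly proper Calabi--Yau structure from $\Cc$ to the graded category $(\mr{H}\Cc,m_2)$, using projectivity of $\mr{H}\Cc$ to identify $\mr{H}(\Cc^{\mr{op},\vee})$ with $(\mr{H}\Cc)^{\mr{op},\vee}$, fills in a step the paper uses without comment.
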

\begin{proof}
By the Calabi-Yau property, we have an isomorphism of $\mr{R}$-modules $$\mr{HH}_\bullet^\vee\left(\mr{H}\Cc\right)\simeq\mr{HH}^{\bullet+n}\left(\mr{H}\Cc\right).$$ We can calculate the left side via the spectral sequence $$\mr{E}_2^{p,q}=\mr{Ext}_{\mr{R}}^p\left(\mr{HH}_q\left(\mr{H}\Cc\right),\mr{R}\right)\Rightarrow \mr{Ext}^{p+q}_{\mr{R}}\left(\mr{CC}_\bullet\left(\mr{H}\Cc\right),\mr{R}\right).$$ Since $\mr{HH}_\bullet(\mr{H}\Cc,m_2)$ is projective over $\mr{R}$, the spectral sequence collapses on the second page. Hence $\mr{HH}^\bullet\left(\mr{H}\Cc\right)$ is the $\mr{R}$-dual of $\mr{HH}_\bullet\left(\mr{H}\Cc\right)$ and is, therefore, torsion-free.\\
Next, we claim that the natural map 
\begin{equation}\label{injmap}
\mr{HH}^\bullet\left(\mr{H}\Cc\right)\otimes_\mr{R} k(\eta) \to \mr{HH}^\bullet(\mr{H}\Cc_\eta)
\end{equation}is injective. Indeed, $k(\eta)$ is flat over $\mr{R}$, so \cref{hhinj} implies the claim. Now, by \cref{a2kaledin}, we may work by induction on $n$, then injectivity of the map \eqref{injmap} and formality of $\Cc_\eta$ imply that the obstruction classes map to $0$ and hence are torsion in the $\mr{R}$-module $\mr{HH}^2\left(\mr{H}\Cc\right)$ which is torsion-free, so must vanish.
\end{proof}

\section{Deformation quantisation and perverse sheaves}\label{sec2}

\subsection{Holomorphic contact and symplectic manifolds}
This section introduces the main geometric objects of interest, mainly fixing notation and terminology. We refer to \cite{saf} for more careful and detailed exposition. We start with the complex analogues of real symplectic geometry.
\2
\begin{definition}
A holomorphic symplectic manifold is a pair $(\mr{X},\sigma)$, where $\mr{X}$ is a complex manifold and $\sigma$ is non-degenerate holomorphic $2$-form. 
\end{definition}
\2
\begin{example}
\begin{enumerate}
\item The most basic example of a holomorphic symplectic manifold is given by cotangent bundles of complex manifolds. 
\item There is a holomorphic Darboux theorem, meaning that the local models of holomorphic symplectic manifolds are open subsets of cotangent bundles.
\end{enumerate}
\end{example}
\2
\begin{definition}
A complex Lagrangian subvariety of $(\mr{X},\sigma)$ is a locally closed subvariety $\mr{L}$ of dimension $\mr{dim}\mr{X}/2$ such that the restriction of the symplectic form to the regular locus vanishes, i.e. $\sigma|_{\mr{L}_{\mr{sm}}}=0$. When $\mr{L}$ is smooth, we call it a submanifold.
\end{definition}
Similarly, there is a holomorphic version of contact geometry.
\2
\begin{definition}
A holomorphic contact manifold is a triple $(\mr{Y},\mathscr{L},\alpha)$, where $\mr{Y}$ is a complex manifold of dimension $2n+1$, $\mathscr{L}$ is a line bundle on it and $\alpha \in \Gamma(\mr{Y},\Omega_\mr{Y}\otimes \mathscr{L})$ is such that $\alpha\wedge(\mr{d}\alpha)^n$ is non-degenerate.
\end{definition}
We call $\mathscr{L}$ the contact line bundle and $\alpha$ - the contact form. A contactomorphism $\phi :(\mr{Y}_1,\alpha_1) \to (\mr{Y}_2,\alpha_2)$ is a local biholomorphism such that $\phi^*\alpha_2=\alpha_1$. 
\2
\begin{example}
\begin{enumerate}
\item Here the most simple example is the projectivisation of the cotangent bundle of a complex manifold. 
\item Projective spaces of odd dimension are contact.
\item There is a contact version of the Darboux's theorem asserting that the local models of contact manifolds are given by open subsets of projectivised cotangent bundles.
\end{enumerate}
\end{example}
The analogue of Lagrangian subvarieties is as follows:
\2
\begin{definition}
A complex Legendrian subvariety of $(\mr{Y},\mathscr{L},\alpha)$ is a locally closed subvariety $\Lambda$ of dimension $n$ such that $\alpha|_{\Lambda_\mr{sm}}=0$.
\end{definition}
Next, we describe how to go back and forth between the symplectic and contact geometries.
\2
\begin{definition}
Let $(\mr{Y},\mathscr{L},\alpha)$ be a contact manifold. Its canonical symplectisation is the principal $\C^*$-bundle $\gamma: \tilde{\mr{Y}}\to \mr{Y}$ associated to the line bundle $\mathscr{L}^{-1}$ equipped with the $2$-form $\sigma = -\mr{d}\tilde{\alpha}$, where $\tilde{\alpha}$ is the lift of $\alpha$ to $\tilde{\mr{Y}}$.
\end{definition}
\2
\begin{remark}
\begin{enumerate}
\item In fact the non-degeneracy of $\sigma$ in the above is equivalent to the non-degeneracy condition on $\alpha$. In any case, we get $\mathscr{L}^{\otimes -(n+1)}\cong \mr{K}_\mr{Y}$ and so the Lie derivative along the Euler vector field $\mr{Lie}_{eu}\sigma=\sigma$, i.e. the symplectic form is homogeneous of weight $1$.
\item We could have defined contactomorphisms as isomorphisms of the associated principal $\C^*$-bundles.
\end{enumerate}
\end{remark}
Note that any contactomorphism lifts to a homogeneous symplectomorphism of the canonical symplectisations.
\2
\begin{definition}
A contactification of a symplectic manifold $(\mr{X},\sigma)$ is a $\C$-principal bundle $$\rho:\mr{Y}\to \mr{X}$$ with a connection whose curvature is $-\sigma$. 
\end{definition}
A contactification is naturally a contact manifold with contact form given by the connection $1$-form. We will also need to contactify Lagrangians in $\mr{X}$. 
\2
\begin{definition}
Let $\mr{L}$ be a Lagrangian subvariety in $\mr{X}$. A contactification of $\mr{L}$ is a contactification $\rho:\mr{Y}\to \mr{X}$ of $\mr{X}$ and a Legendrian (for the connection $1$-form) $\Lambda \subset\mr{Y}$ such that $\rho|_\Lambda:\Lambda	\to \mr{L}$ is a homeomorphism and induces an isomorphism of the smooth loci.
\end{definition}
Given a symplectic manifold $\mr{X}$ with $2$-form $\sigma$, its contactifications form a $\C$-gerbe $\mathbf{\mathfrak{C}}_\sigma$, whose objects are pairs $(\rho:\mr{V}\to \mr{U},\alpha)$, where $\mr{U}\subset \mr{X}$ is open, $\rho:\mr{V}\to \mr{U}$ is a principal $\C$-bundle and $\alpha$ is a vertical $1$-form such that $\mr{d}\alpha=\rho^*\sigma$. Morphisms in $\mathbf{\mathfrak{C}}_\sigma$ are morphisms of principal bundles compatible with the $1$-forms, i.e. the domain $1$-form is the pull-back of the codomain $1$-form.\\
The following will be important later:
\2
\begin{proposition}[Gunningham-Safronov \cite{saf}]
Let $(\mr{X},\sigma)$ be a symplectic manifold and $\mr{L}\subset \mr{X}$ be a Lagrangian subvariety. There exists a canonical section of $\mathbf{\mathfrak{C}}_\sigma|_\mr{L}$.
\end{proposition}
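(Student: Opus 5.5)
The plan is to produce the canonical section as a stack of contactifications \emph{together with a Legendrian lift of $\mr{L}$}, and then show this stack is locally a single object up to unique isomorphism. Throughout I take the restriction $\mathbf{\mathfrak{C}}_\sigma|_\mr{L}$ to mean the sheaf-theoretic inverse image, so its objects over an open $\mr{U}\cap\mr{L}$ are germs of contactifications along $\mr{U}\cap \mr{L}$.

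First I will identify the band of $\mathbf{\mathfrak{C}}_\sigma$. An automorphism of an object $(\rho:\mr{V}\to\mr{U},\alpha)$ is a translation by a function $c$ on $\mr{U}$ with $\alpha+\rho^*\mr{d}c=\alpha$. Hence $\mr{d}c=0$, and $\mr{Aut}$ is the constant sheaf $\C$. Local existence of objects comes from the holomorphic Poincaré lemma: locally $\sigma=\mr{d}\beta$, and one takes $\mr{V}=\mr{U}\times\C$ with $\alpha=\mr{d}t+\beta$. Any two local objects are isomorphic via $t\mapsto t+f$, where $\mr{d}f=\beta'-\beta$.

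Next I define an auxiliary stack $\mathfrak{S}$ on $\mr{L}$. Its objects over $\mr{U}\cap\mr{L}$ are pairs $((\mr{V},\alpha),s)$, where $(\mr{V},\alpha)$ is an object of $\mathbf{\mathfrak{C}}_\sigma$ near $\mr{U}\cap\mr{L}$ and $s:\mr{U}\cap\mr{L}\to\mr{V}$ is a continuous section of $\rho$. I require $s$ to be holomorphic on the smooth locus with $s^*\alpha=0$ there, so that $\Lambda=s(\mr{U}\cap\mr{L})$ is a contactification of $\mr{L}$ in the sense of the definition above. Morphisms are morphisms in $\mathbf{\mathfrak{C}}_\sigma$ that intertwine the sections. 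Forgetting $s$ gives a morphism of stacks $\mathfrak{S}\to\mathbf{\mathfrak{C}}_\sigma|_\mr{L}$. The claim then reduces to two facts:
\begin{enumerate}
\item $\mathfrak{S}$ is locally non-empty;
\item $\mathfrak{S}$ is locally connected with trivial automorphisms.
\end{enumerate}
Together these make $\mathfrak{S}$ equivalent to the terminal stack, so it has an essentially unique global object, and its image is the canonical section.

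Fact (2) is straightforward. An automorphism is a constant translation $c$ fixing $s$, so $c=0$. Given two objects $((\mr{V},\alpha),s)$ and $((\mr{V}',\alpha'),s')$, choose any isomorphism $\phi$ of contactifications, which is unique up to a constant. Then $s'-\phi\circ s$ is a function $g$ on $\mr{U}\cap\mr{L}$ with $\mr{d}g=0$ on the smooth locus. Taking $\mr{U}\cap\mr{L}$ connected, $g$ is constant, and so exactly one choice of $\phi$ matches the sections.

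Fact (1) is where the Lagrangian condition enters. In the local model $\alpha=\mr{d}t+\beta$, a section is $t=-f$ with $f$ continuous on $\mr{U}\cap\mr{L}$ and $\mr{d}f=\beta|_{\mr{L}_{\mr{sm}}}$. The form $\beta|_{\mr{L}_{\mr{sm}}}$ is closed because $\mr{d}\beta|_{\mr{L}_{\mr{sm}}}=\sigma|_{\mr{L}_{\mr{sm}}}=0$. For smooth $\mr{L}$ I simply shrink to a contractible neighbourhood and integrate.

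The main obstacle is singular $\mr{L}$. There I would use conic structure: choose $\mr{U}$ to be a small ball around $x\in\mr{L}$ on which $\mr{U}\cap\mr{L}$ is a cone over its link. I then define $f(y)=\int_\gamma\beta$ along paths $\gamma$ from $x$ to $y$ that lie in $\mr{L}_\mr{sm}$ except possibly at their endpoints. Independence of the path requires the periods of $\beta$ over loops in $\mr{L}_\mr{sm}\cap\mr{U}$ to vanish. I plan to get this by passing to a resolution $\pi:\tilde{\mr{L}}\to \mr{L}$ (or a Whitney stratification): there $\pi^*\beta$ is a closed holomorphic $1$-form, since $\sigma$ vanishes on every stratum. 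Closed holomorphic forms on $\pi^{-1}$ of a small contractible neighbourhood should be exact, by a homotopy/retraction argument along the cone structure in which $\beta$ is taken bounded. The resulting $f$ is then continuous, since it is bounded by $\|\beta\|$ times the path length, which tends to $0$ near the vertex.
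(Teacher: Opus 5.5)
Your overall mechanism is sound. $\mathfrak{S}$ has trivial automorphism sheaves, so local non-emptiness plus local connectedness make it equivalent to the terminal stack, and its essentially unique global object gives the canonical section. For a Lagrangian \emph{submanifold} your proof is complete; this is the only case the paper actually uses, and the paper gives no argument of its own, only the citation to Gunningham--Safronov. In that case your argument is the standard observation that $\sigma|_{\mr{L}}=0$ makes the restricted $\C_{\mr{L}}$-gerbe tautologically trivial, phrased through flat sections.

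The gap is fact (1) for singular $\mr{L}$, which the statement explicitly covers. Two steps fail as written.
\begin{enumerate}
\item \emph{Path independence.} Your paths all start at the singular point $x$. Two of them leaving $x$ (or arriving at $y$) along different local branches of $\mr{L}$ compose to a loop through singular points that is not homotopic inside $\mr{L}_{\mr{sm}}\cap\mr{U}$ to anything. So vanishing of the periods of $\beta$ on $\mr{L}_{\mr{sm}}\cap\mr{U}$ does not give path independence. Note that $\mr{L}_{\mr{sm}}\cap\mr{U}$ may be disconnected, and the real issue is matching constants across its components.
\item \emph{Exactness on the resolution.} The claim that closed holomorphic $1$-forms on $\pi^{-1}(\mr{U}\cap\mr{L})$ are exact is false. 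This space retracts onto $\pi^{-1}(x)$, which can have non-zero $\mr{H}^1$. For the cone over a Legendrian elliptic curve $\mr{E}\subset\mathbf{P}^3$, which is a Lagrangian cone in $\C^4$, $\pi^{-1}(\mr{U}\cap\mr{L})$ is a disc bundle over $\mr{E}$. The pull-back of the holomorphic $1$-form of $\mr{E}$ to it is closed but not exact.
\end{enumerate}
What does hold is special to $\pi^*\beta$: it restricts to zero on $\pi^{-1}(x)$, which $\pi$ contracts, so its class vanishes. Even then, the primitive must be constant on \emph{every} fibre of $\pi$ to descend to a continuous function on $\mr{L}$. When $\mr{L}$ is not normal these fibres are disconnected along a positive-dimensional locus, and constancy there is again a statement about $\int\beta$ over loops in $\mr{L}$ passing through singular points.

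The missing ingredient is that $\sigma$ vanishes on every stratum of a Whitney stratification of $\mr{L}$, not only on $\mr{L}_{\mr{sm}}$. This follows because, by condition (a), tangent spaces of lower strata lie in limits of tangent planes of higher strata, and limits of isotropic planes are isotropic. With this, the argument goes as follows.
\begin{enumerate}
\item Take $\mr{U}$ a small ball, so that $\mr{U}\cap\mr{L}$ is contractible, and triangulate it compatibly with the strata ({\L}ojasiewicz).
\item Any piecewise analytic loop $\gamma$ in $\mr{U}\cap\mr{L}$ bounds a $2$-chain $\Sigma\subset\mr{U}\cap\mr{L}$ whose open simplices lie in strata (after refining the triangulation to contain $\gamma$). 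Stokes then gives $\int_\gamma\beta=\int_\Sigma\sigma=0$.
\item Hence $f(y)=\int_x^y\beta$, taken along arbitrary such paths in $\mr{U}\cap\mr{L}$, is well defined and continuous, with $\mr{d}f=\beta$ on each stratum. No resolution is needed, and the graph of $f$ is analytic because $f$ is c-holomorphic.
\end{enumerate}

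A smaller version of the same issue appears in fact (2). A continuous $g$ with $\mr{d}g=0$ on the dense open $\mr{L}_{\mr{sm}}$ need not be constant on a connected set; the Cantor function is an example. You must add that $g$ is locally constant at singular points. This follows from the stratified conic structure: for small $\epsilon$, every component of $\mr{L}_{\mr{sm}}\cap\mr{B}_\epsilon(x)$ has $x$ in its closure, so $g$ equals $g(x)$ on each of them.
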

In other words, there exists a unique contactification of $\mr{X}$ in a neighbourhood of $\mr{L}$ together with a Legendrian lift of $\mr{L}$ which contactifies it.
\subsection{Microdifferential modules}
Let $\mr{X}$ be a complex manifold. The cotangent bundle $\Omega_\mr{X}$ supports a filtered sheaf of $\C$-algebras $\widehat{\mathscr{E}}_{\Omega_\mr{X}}$ of formal microdifferential operators. Fix $(x_1,\cdots,x_n)$ local coordinates on $\mr{X}$, and write $(x_1,\cdots,x_n,\xi_1,\cdots,\xi_n)$ for the induced coordinates on $\Omega_\mr{X}$. Let $\mathscr{O}_{\Omega_\mr{X}}(m)$ be the sheaf of homogeneous functions in the fibre coordinates on $\Omega_\mr{X}$ of degree $m$, i.e. $$\left(\sum \xi_j\partial/\partial\xi_j - m\right)f(x,\xi)=0.$$
 We define the sheaf of formal microdifferential operators of order $\le m$ by $$\widehat{\mathscr{E}}_{\Omega_\mr{X}}(m) = \prod_{j \in \mathbf{N}} \mathscr{O}_{\Omega_\mr{X}}(m-j).$$ In order to get a sheaf globally on $\Omega_\mr{X}$, glue these sheaves on overlaps using the transformation rule for total symbols of differential operators. Taking the limit over $m \in \mathbf{Z}$, we get the sheaf of formal microdifferential operators on $\Omega_\mr{X}$: $$\widehat{\mathscr{E}}_{\Omega_\mr{X}} = \varinjlim_{m \in \mathbf{Z}}\widehat{\mathscr{E}}_{\Omega_\mr{X}}(m).$$
Let $\sigma_m: \widehat{\mathscr{E}}_{\Omega_\mr{X}}(m) \to \mathscr{O}_{\Omega_\mr{X}}(m)$ be the principal symbol. There is an isomorphism $$\mr{gr}\widehat{\mathscr{E}}_{\Omega_\mr{X}}\simeq \bigoplus_{m\in \Z}\mathscr{O}_{\Omega_\mr{X}}(m).$$
There are products $\widehat{\mathscr{E}}_{\Omega_\mr{X}}(l)\otimes_{\C}\widehat{\mathscr{E}}_{\Omega_\mr{X}}(m) \to \widehat{\mathscr{E}}_{\Omega_\mr{X}}(l+m)$, given by $$(f\star g)_{l}(x,\xi) = \sum_{\substack{l=i+j-|\alpha|\\ \alpha \in \mathbf{N}}}\frac{1}{\alpha!}(\partial/\partial\xi_1)^{\alpha_1}\cdots (\partial/\partial\xi_n)^{\alpha_n}f_i(x,\xi)\cdot (\partial/\partial x_1)^{\alpha_1}\cdots (\partial/\partial x_n)^{\alpha_n}g_j(x,\xi).$$ In particular $\widehat{\mathscr{E}}_{\Omega_\mr{X}}$ and $\widehat{\mathscr{E}}_{\Omega_\mr{X}}(0)$ are sheaves of (non-commutative) $\C$-algebras. 
 \2
 \begin{remark}
 Notice that the total symbol of a differential operator is a polynomial in $\xi_1,\cdots, \xi_n$ and now we are allowing symbols which are general holomorphic functions rather than just polynomials.
 \end{remark}
 Let $\mr{\mathbf{P}}(\Omega_\mr{X})$ be the projectivised cotangent bundle of $\mr{X}$. The sheaf $\widehat{\mathscr{E}}_{\Omega_\mr{X}}$ is constant on the fibres of the projection $\pi:\Omega_\mr{X}\setminus \mr{X} \to \mr{\mathbf{P}}(\Omega_\mr{X})$ and we shall use the same notation for its pushforward to $\mr{\mathbf{P}}(\Omega_\mr{X})$. We shall eventually be interested in modules over the stack of formal microdifferential operators on a general contact manifold. The next properties of modules over $\Em$ are local and will readily extend to the stack version. \\
 Let $\Lambda$ be a Legendrian subvariety of $\mr{\mathbf{P}}(\Omega_\mr{X})$. There is a lattice in $\Em$ associated to $\Lambda$ defined as the subalgebra of $\Em|_\Lambda$ generated by $$\mr{I}_\Lambda=\{P\in \Em(1)|_\Lambda \; |\; \sigma_1(P)|_\Lambda=0\}.$$ We denote this subalgebra by $\widehat{\mathscr{E}}_{\Lambda/\Omega_\mr{X}}$.
 \2
 \begin{definition}
 \begin{enumerate}
 \item A coherent module $\mathscr{M}$ over $\Em$ is holonomic if its support is a Legendrian subvariety.
 \item A holonomic $\Em$-module $\mathscr{M}$ is regular holonomic if  locally there is a coherent $\Em(0)$-module $\mathscr{M}^0$ which generates $\mathscr{M}$ over $\Em$ and $\mathscr{M}^0/\Em(-1)\mathscr{M}^0$ is a $\mathscr{O}_\Lambda(0)$-module.
 \item A regular holonomic $\Em$-module $\mathscr{M}$ is simple along $\Lambda$ if $\mathscr{M}^0/\Em(-1)\mathscr{M}^0$ is an invertible $\mathscr{O}_\Lambda(0)$-module.
 \end{enumerate}
 \end{definition}
 \2
 \begin{remark}
 Regularity is equivalent to requiring that $\mathscr{M}^0$ be $\mr{I}_\Lambda$-invariant.
 \end{remark}
We recall briefly the microlocalisation of microdifferential modules, again referring to \cite{saf} for more details and references. Fix a Legendrian $\Lambda$ in $\mr{\mathbf{P}}(\Omega_\mr{X})$ and let $$\gamma :\Omega_\mr{X}\setminus \mr{X}\to \mathbf{P}(\Omega_\mr{X})$$ be its symplectisation. The canonical filtration on $\Em$ induces a filtration on the algebra $\widehat{\mathscr{E}}_{\Lambda/\Omega_\mr{X}}$. There is a canonical embedding $$\mr{gr}_\Lambda\Em\xhookrightarrow{} \gamma_*\mr{D}_{\widetilde{\Lambda}}^{\mr{K}_{\widetilde{\Lambda}|\mr{X}}^{1/2}}.$$ and a regular holonomic $\Em$-module $\mathscr{M}$. Then $\mathscr{M}$ carries a canonical $V$-filtration $V_{\Lambda}\mathscr{M}$ along the Legendrian $\Lambda$. We denote the associated graded by $\mr{gr}_\Lambda\mathscr{M}$.
\2
\begin{definition}
Let $\Lambda$ be a Legendrian in $\mr{\mathbf{P}}(\Omega_\mr{X})$ and let $\widetilde{\Lambda}$ be the homogeneous Lagrangian. The microlocalisation functor $$\mu_\Lambda: \mr{Mod}_\mr{rh}\left(\Em\right)\to \gamma_*\mr{Mod}_\mr{rh}\left(\mr{D}_{\widetilde{\Lambda}}^{\mr{K}_{\widetilde{\Lambda}|\mr{X}}^{1/2}}\right)$$ defined by $$\mathscr{M}\mapsto \mr{D}_{\widetilde{\Lambda}}^{\mr{K}_{\widetilde{\Lambda}|\mr{X}}^{1/2}}\otimes_{\gamma^{-1}{\mr{gr}_\Lambda\Em}}\gamma^{-1}\mr{gr}_\Lambda\mathscr{M}.$$
\end{definition}
In particular, we have the following lemma.
\2
\begin{lemma}
An $\Em$-module $\mathscr{M}$ is simple along $\Lambda$ if $\mu_\Lambda(\mathscr{M})$ is a line bundle.
\end{lemma}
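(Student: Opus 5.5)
The statement is local on $\Lambda$, so I will work near a point of $\Lambda$ where $\mr{\mathbf{P}}(\Omega_\mr{X})$ is in Darboux form. I will take $\mathscr{M}$ to be regular holonomic, since $\mu_\Lambda$ is only defined on such modules, and assume that $\mu_\Lambda(\mathscr{M})$ is a line bundle on $\widetilde{\Lambda}$, i.e. an invertible twisted $\mr{D}$-module. The goal is to produce a coherent $\Em(0)$-lattice $\mathscr{M}^0$ generating $\mathscr{M}$ such that $\mathscr{M}^0/\Em(-1)\mathscr{M}^0$ is an invertible $\mathscr{O}_\Lambda(0)$-module. The natural candidate is one step of the canonical $V$-filtration $V_\Lambda\mathscr{M}$, and the argument consists in comparing its graded pieces with homogeneous pieces of $\mu_\Lambda(\mathscr{M})$.

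\emph{First step: read off the homogeneity.} The Euler field $eu$ of the $\C^*$-bundle $\gamma:\widetilde{\Lambda}\to\Lambda$ acts on $\mu_\Lambda(\mathscr{M})$. Since $\mu_\Lambda(\mathscr{M})$ has rank one, $eu$ acts on its fibrewise homogeneous sections with a single generalised eigenvalue class $\lambda+\Z$. Equivalently, the monodromy along the $\C^*$-fibres is one scalar $\mr{exp}(2\pi i\lambda)$, and on a line bundle it is semisimple. Consequently
$$\mu_\Lambda(\mathscr{M})=\bigoplus_{k\in\Z}\mu_\Lambda(\mathscr{M})^{eu=\lambda+k},$$
and each $\gamma_*\mu_\Lambda(\mathscr{M})^{eu=\lambda+k}$ is an invertible $\mathscr{O}_\Lambda$-module, namely the line bundle $\mu_\Lambda(\mathscr{M})$ twisted by $\mathscr{O}_\Lambda(k)$.

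\emph{Second step: compare with $\mr{gr}_\Lambda\mathscr{M}$.} I will show that the natural map $\gamma^{-1}\mr{gr}_\Lambda\mathscr{M}\to\mu_\Lambda(\mathscr{M})$ identifies $\mr{gr}^{\lambda+k}_V\mathscr{M}$ with $\gamma_*\mu_\Lambda(\mathscr{M})^{eu=\lambda+k}$. This uses two facts:
\begin{itemize}
\item the embedding $\mr{gr}_\Lambda\Em\xhookrightarrow{}\gamma_*\mr{D}_{\widetilde{\Lambda}}^{\mr{K}^{1/2}}$ is an isomorphism onto the homogeneous (degree-graded) part;
\item by the defining property of the $V$-filtration, $eu$ acts on $\mr{gr}^\alpha_V$ with generalised eigenvalue $\alpha$.
\end{itemize}
Hence the extension of scalars only reassembles the homogeneous pieces and is faithful on them. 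It follows that $\mr{gr}^{\alpha}_V\mathscr{M}=0$ unless $\alpha\in\lambda+\Z$. I expect this identification to be the main obstacle: one must check that tensoring with $\mr{D}$ over $\gamma^{-1}\mr{gr}_\Lambda\Em$ creates no new homogeneous sections and kills none, so that no torsion or cokernel appears. I would first attempt it in the Darboux model with $\Lambda$ the conormal to a hypersurface, where $\mr{gr}_\Lambda\Em$ is explicit. The general case then follows by quantised contact transformations.

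\emph{Third step: build the lattice.} Put $\mathscr{M}^0:=V_{\lambda}\mathscr{M}$, normalising $\lambda$ within its class as needed. This is a coherent $\Em(0)$-module. It is $\mr{I}_\Lambda$-stable and generates $\mathscr{M}$, because the $V$-filtration is exhaustive and $\Em(1)$ shifts indices by $1$. Since the jumps of $V$ occur only in $\lambda+\Z$, we have $\Em(-1)\mathscr{M}^0=V_{\lambda-1}\mathscr{M}$, which coincides with $V_{<\lambda}\mathscr{M}$. Therefore
$$\mathscr{M}^0/\Em(-1)\mathscr{M}^0\cong\mr{gr}^\lambda_V\mathscr{M}\cong\gamma_*\mu_\Lambda(\mathscr{M})^{eu=\lambda},$$
which is an invertible $\mathscr{O}_\Lambda(0)$-module by the first step. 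This says exactly that $\mathscr{M}$ is simple along $\Lambda$.
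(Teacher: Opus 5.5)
The paper gives no argument for this lemma. It is introduced with ``In particular'' directly after the definition of $\mu_\Lambda$, so it is treated as an immediate consequence of the definitions and of the cited microlocal theory. From the paper's own toolkit, the natural way to fill it in is through the equivalence of \cref{miclass}: a module whose microlocalisation has rank one is locally isomorphic to a standard simple module, such as $\Em/\bigl(\Em(x_1\partial_1-\lambda)+\sum_{j\ge 2}\Em\partial_j\bigr)$, with the same monodromy.

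Your proposal is correct and takes a genuinely different, more self-contained route. You do not use full faithfulness of $\mu_\Lambda$ or a normal form. Instead you exhibit the lattice directly as $\mathscr{M}^0=V_\lambda\mathscr{M}$, and you read off invertibility of $V_\lambda\mathscr{M}/V_{\lambda-1}\mathscr{M}$ from the homogeneous sections of $\mu_\Lambda(\mathscr{M})$. This needs only the existence and basic properties of the canonical $V$-filtration and the identification of $\mr{gr}_\Lambda\Em$ with homogeneous operators. It also yields a canonical lattice over each component of $\Lambda$. The classification route is shorter but rests on the whole equivalence.

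There are two points to tighten.

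\emph{First step.} The decomposition $\mu_\Lambda(\mathscr{M})=\bigoplus_k\mu_\Lambda(\mathscr{M})^{eu=\lambda+k}$ is not literally true for analytic sections. Along a $\C^*$-fibre a section is a convergent Laurent series, not a finite sum of eigensections. You only use homogeneous sections, so state the step in those terms.

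\emph{Second step.} The step you flag as the main obstacle needs neither a Darboux model nor quantised contact transformations. Trivialise $\gamma$ locally as $\Lambda\times\C^*_t$. Then:
\begin{itemize}
\item the homogeneous operators form the (locally untwisted) algebra $\mr{D}_\Lambda[t^{\pm1},t\partial_t]$;
\item this algebra is free over $\mathscr{O}_\Lambda[t^{\pm1}]$ on the same monomials $\partial_x^\beta(t\partial_t)^m$ on which $\mr{D}_{\widetilde{\Lambda}}$ is free over $\mathscr{O}_{\widetilde{\Lambda}}$;
\item $t$, the image of an invertible operator of order one, acts by isomorphisms $\mr{gr}^\alpha_V\mathscr{M}\to\mr{gr}^{\alpha+1}_V\mathscr{M}$.
\end{itemize}
Hence
\[
\mu_\Lambda(\mathscr{M})\simeq\mathscr{O}_{\widetilde{\Lambda}}\otimes_{\gamma^{-1}\mathscr{O}_\Lambda[t^{\pm1}]}\gamma^{-1}\mr{gr}_\Lambda\mathscr{M}\simeq\bigoplus_{c\in\C/\Z}\gamma^*\mr{gr}^{\alpha_c}_V\mathscr{M},
\]
where $\alpha_c$ is any representative of $c$.

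Each $\mr{gr}^\alpha_V\mathscr{M}$ is $\mathscr{O}_\Lambda$-coherent. It is a quotient of $V_\alpha\mathscr{M}/\Em(-1)V_\alpha\mathscr{M}$, which is killed by the ideal of $\Lambda$ because $V_\alpha\mathscr{M}$ is $\mr{I}_\Lambda$-stable. Coherence of $V_\alpha\mathscr{M}$ over $\Em(0)$, which you also assert in your third step, follows by comparing with an $\mr{I}_\Lambda$-stable $\Em(0)$-lattice; such a lattice exists by regularity.

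Since $\gamma$ is faithfully flat, rank one then forces a single class $c=\lambda+\Z$ and makes $\mr{gr}^\lambda_V\mathscr{M}$ invertible. That is exactly what your third step needs.
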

 For the global version of the microdifferential modules, we shall need the following:
 \2
 \begin{definition}
 Let $\mr{X}$ be a complex manifold. The sheaf of half twisted microdifferential operators is defined as $$\Em^{\sqrt{v}}=\pi^{-1}\mr{K}_\mr{X}^{1/2}\otimes_{\pi^{-1}\mathscr{O}_\mr{X}}\Em\otimes_{\pi^{-1}\mathscr{O}_\mr{X}}\pi^{-1}\mr{K}_\mr{X}^{1/2}$$
 \end{definition}
 \2
 \begin{remark}
 We note that the above definitions and constructions extend without difficulty to the half twisted setting.
 
 In particular, we have a microlocalisation functor $$\mu_\Lambda: \mr{Mod}_\mr{rh}\left(\Em^{\sqrt{v}}\right)\to \gamma_*\mr{Mod}_\mr{rh}\left(\mr{D}_{\widetilde{\Lambda}}^{\sqrt{v}}\right).$$
 \end{remark}
 \subsection{Algebroid stacks}
 \begin{definition}
 Let $\mr{X}$ be a topological space. An $\mr{R}$-algebroid prestack on $\mr{X}$ is an $\mr{R}$-linear prestack $\mathscr{A}$ such that 
 \begin{enumerate}
 \item For any $x\in \mr{X}$, there is an open neighbourhood $x\in \mr{U}$ such that $\mathscr{A}(\mr{U})\not=\emptyset$.
 \item For any two objects $\sigma,\tau \in \mathscr{A}(\mr{U})$ and any $x\in \mr{U}$, there exists an open $x \in \mr{V}\subset \mr{U}$ such that $\sigma|_\mr{V}\cong \tau|_\mr{V}$.
\end{enumerate}
An $\mr{R}$-algebroid stack is an $\mr{R}$-algebroid prestack which is a stack.
\end{definition}
\2
\begin{example}Fix a topological space $\mr{X}$.
 Let $\mathscr{A}$ be a sheaf of $\mr{R}$-algebras on $\mr{X}$. The prestack $\mr{U} \to \mathscr{A}(\mr{U})^{+}$, where $\mathscr{A}(\mr{U})^{+}$ is the $\mr{R}$-linear category with one object whose endomorphisms are given by $\mathscr{A}(\mr{U})$, is an $\mr{R}$-algebroid prestack. \\
The associated stack $\mathscr{A}^{+}$ to this prestack is an algebroid stack. It is given by $$\mathscr{A}^{+}:\mr{U}\mapsto \{\text{locally free rank 1 } \mathscr{A}|_\mr{U}-\text{modules}\}.$$
Conversely, suppose that $\mathscr{A}$ is an algebroid. If $\mathscr{A}(\mr{X})$ is non-empty, choose any $\tau \in \mathscr{A}(\mr{X})$. We have an equivalence $\mathscr{A} \simeq \shom(\tau,\tau)^{+}$ of $\mr{R}$-algebroid stacks.
\end{example}
Given an $\mr{R}$-algebroid $\mathscr{A}$ over $\mr{X}$, let $\mathscr{M}_\mr{R}$ be the stack of sheaves of $\mr{R}$-modules on $\mr X$, we define the $\mr{R}$-linear abelian category of modules over $\mathscr{A}$ by $$\mr{Mod}(\mathscr{A}) = \mr{Fct}(\mathscr{A},\mathscr{M}_\mr{R}).$$
\2
\begin{definition}Let $\mathscr{R}$ be a sheaf of commutative $\C$-algebras.
\begin{itemize}
 \item An $\mathscr{R}$-algebroid is a $\C$-algebroid $\mathscr{A}$ together with a morphism of sheaves of $\C$-algebras $\mathscr{R} \to \mathscr{E}nd(\mr{id}_{\mathscr{A}})$.
 \item An $\mathscr{R}$-algebroid $\mathscr{A}$ is called invertible if $\mathscr{R}|_\mr{U} \to \mathscr{E}nd(\tau)$ is an isomorphism for every open $\mr{U}\subset \mr{X}$ and any $\tau \in \mathscr{A}(\mr U)$.
 \end{itemize}
\end{definition}
\subsection{Quantisation of contact manifolds}
Given a (holomorphic) contact manifold $\mr{Y}$, we may choose an open covering $\{\mr{U}_i\}_{i\in I}$ together with contact embeddings $\rho_i:\mr{U}_i\to \mr{\mathbf{P}}\Omega_{\mr{Y}_i}$ into projectivised cotangent bundles. The sheaves of formal microdifferential operators on these projectivised cotangent bundles do not glue and one need to rigidify the problem.\\
Let $\mr{X}$ be a complex manifold. Recall that locally the formal adjoint of a microdifferential operator $\mr{P}=\sum p_i \in \Em(m)$ is given by $\mr{P}^*=\sum p^*_i,$ where $$p_i^*(x,	\xi)= \sum_{\substack{i=k-|\alpha|\\ \alpha \in \mathbf{N}}}\frac{(-1)^{|\alpha|}}{\alpha!}(\partial/\partial\xi_1)^{\alpha_1}\cdots (\partial/\partial\xi_n)^{\alpha_n}\cdot (\partial/\partial x_1)^{\alpha_1}\cdots (\partial/\partial x_n)^{\alpha_n}p_k(x,-\xi).$$
We get a map $$*:\Em \to \pi^{-1}\mr{K}_\mr{X}\otimes_{\pi^{-1}\mathscr{O}_\mr{X}}\Em\otimes_{\pi^{-1}\mathscr{O}_\mr{X}}\pi^{-1}\mr{K}_\mr{X}^{-1}.$$
Hence, upon passing to half twists by the canonical bundle, we get a canonical anti-involution:
$$*:\Em^{\sqrt{v}}\to \Em^{\sqrt{v}}.$$
\begin{theorem}[\cite{MR3276591}]
Let $\left(\mr{Y},\mathscr{O}_\mr{Y}(1)\right)$ be a (holomorphic) contact manifold. There is a canonical filtered $\C$-algebroid $\widehat{\mr{E}}_\mr{Y}$ such that for any open $\mr{U}\subset \mr{Y}$ together with a contact embedding $\mr{U}\xhookrightarrow{} \mr{\mathbf{P}}\Omega_{\mr{X}}$, we have $$\widehat{\mr{E}}_\mr{Y}|_\mr{U}\simeq \left(\Em^{\sqrt{v}}|_\mr{U}\right)^{+}.$$ Moreover, there is a trivialisation $$\mr{gr}\widehat{\mr{E}}_\mr{Y}\simeq \left(\bigoplus_{m\in \Z}\mathscr{O}_\mr{Y}(m)\right)^{+}.$$
\end{theorem}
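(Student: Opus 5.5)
The statement is the existence of the canonical algebroid $\widehat{\mr{E}}_\mr{Y}$ (Polesello / Kashiwara). I would construct it by gluing local models, using the anti-involution $*$ to rigidify the gluing. First I cover $\mr{Y}$ by opens $\mr{U}_i$ with contact embeddings $\rho_i:\mr{U}_i\xhookrightarrow{}\mathbf{P}\Omega_{\mr{X}_i}$, which exist by the contact Darboux theorem recalled above. On each $\mr{U}_i$ I take the sheaf of algebras $\mathscr{A}_i=\rho_i^{-1}\Em^{\sqrt{v}}$, together with its anti-involution $*$ and its order filtration.

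On an overlap $\mr{U}_{ij}$, the two embeddings differ by a local contactomorphism $\phi_{ij}$ between open subsets of projectivised cotangent bundles. By the quantised contact transformation theorem of Sato–Kashiwara–Kawai, $\phi_{ij}$ lifts locally to a filtered $\C$-algebra isomorphism $\Phi_{ij}:\mathscr{A}_j\to\mathscr{A}_i$ whose principal symbol is $\phi_{ij}^*$. These lifts are not unique: two of them differ by conjugation by an invertible operator of order $0$.

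The key rigidification step is to require in addition that $\Phi_{ij}$ commute with $*$. I would show that such $*$-compatible lifts exist locally. Given any lift $\Phi$, the composite $\Phi^{-1}\circ *\circ\Phi\circ *$ is an automorphism with trivial symbol, hence conjugation by some $\mr{P}$ with $\sigma_0(\mr{P})=1$. One can then correct $\Phi$ by a square root of a suitable combination of $\mr{P}$ and $\mr{P}^*$. I would also show that the ambiguity shrinks to conjugation by $\mr{P}$ with $\mr{P}^*\mr{P}=1$ and $\sigma_0(\mr{P})=\pm1$, or more precisely to a torsor that is locally trivial up to $\C^\times$-scalars. This is the main obstacle. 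I would attack it by the standard order-by-order argument: solve $\mr{P}^*\mr{P}=1$ degree by degree in the symbol expansion, using that the obstruction at order $-k$ is an antisymmetric function, and that such a function can always be absorbed.

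Granting this, the local algebras together with the $*$-compatible isomorphisms form a gerbe-like descent datum. The cocycle condition on triple overlaps holds only up to inner automorphisms by elements of a locally constant group; this is exactly the situation in which one obtains an algebroid stack rather than a sheaf of algebras. So I define $\widehat{\mr{E}}_\mr{Y}$ as the stack whose local objects over $\mr{U}_i$ are the objects of $\mathscr{A}_i^{+}$, with morphisms transported via the $\Phi_{ij}$. The two algebroid axioms hold because each $\mathscr{A}_i^{+}$ satisfies them.

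Independence of the chosen cover and embeddings is proved the same way: for any two atlases, the $*$-compatible transition functors give an equivalence, unique up to unique isomorphism. This uniqueness is what makes the construction canonical. It also yields the local identification $\widehat{\mr{E}}_\mr{Y}|_\mr{U}\simeq(\Em^{\sqrt{v}}|_\mr{U})^{+}$ for an arbitrary contact embedding $\mr{U}\xhookrightarrow{}\mathbf{P}\Omega_\mr{X}$.

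Finally, the filtration is preserved by all the gluing data, so $\mr{gr}\widehat{\mr{E}}_\mr{Y}$ is glued from the $\mr{gr}\,\Em^{\sqrt{v}}\simeq\bigoplus_m\mathscr{O}(m)$. The transition isomorphisms act on these by $\phi_{ij}^*$, while the inner ambiguities have symbol $\pm1$ and so act trivially on the commutative $\mr{gr}$. Since $\mathscr{O}_\mr{Y}(1)$ is the contact line bundle, the $\mathscr{O}(m)$ glue to $\mathscr{O}_\mr{Y}(m)$, which gives the stated trivialisation $\mr{gr}\widehat{\mr{E}}_\mr{Y}\simeq\left(\bigoplus_{m\in\Z}\mathscr{O}_\mr{Y}(m)\right)^{+}$.
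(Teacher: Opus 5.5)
Your local steps (Darboux charts, Sato--Kawai--Kashiwara lifts $\Phi_{ij}$, and correcting a lift by a self-adjoint square root so that it commutes with $*$) follow Kashiwara's construction, up to a slip. One has $*\circ\Phi\circ *=\mr{Ad}(\mr{R})\circ\Phi$ with $\mr{R}$ an invertible operator of order $0$ whose symbol $\sigma_0(\mr{R})$ is an arbitrary unit, not $1$. Applying this identity twice gives $\mr{R}^*=\pm\mr{R}$, and comparing order-$0$ symbols gives $\mr{R}^*=\mr{R}$; then $\mr{Ad}(\mr{R}^{1/2})\circ\Phi$ commutes with $*$.

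The genuine gap is in the descent step, which is where the content of the theorem lies. On a triple overlap, $\Phi_{ij}\circ\Phi_{jk}\circ\Phi_{ik}^{-1}$ is a filtered automorphism of $\mathscr{A}_i$ that commutes with $*$ and induces the identity on $\mr{gr}$. It is therefore locally inner, $\mr{Ad}(a_{ijk})$, with $a_{ijk}$ in the sheaf of groups $\{\mr{P}\in\mathscr{A}_i(0)^\times : \sigma_0(\mr{P})=1,\ \mr{P}^*\mr{P}=1\}$. This group is large and non-abelian: it contains $\exp(\mr{A})$ for every $\mr{A}$ of order $\le -1$ with $\mr{A}^*=-\mr{A}$. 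It is not locally constant. Indeed, if the defects really were conjugations by scalars they would be trivial, and you would be gluing a sheaf of algebras rather than an algebroid. To glue the stacks $\mathscr{A}_i^{+}$ you must specify these units as $2$-isomorphisms $\Phi_{ij}\circ\Phi_{jk}\Rightarrow\Phi_{ik}$. You must then verify the coherence $a_{ijk}\,a_{ikl}=\Phi_{ij}(a_{jkl})\,a_{ijl}$ on quadruple overlaps. ``Morphisms transported via the $\Phi_{ij}$'' does not define a stack without these data, and your proposal never produces them.

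What supplies them is the rigidity lemma that your ``main obstacle'' paragraph circles around. Locally, a filtered automorphism of $\Em^{\sqrt{v}}$ commuting with $*$ with $\mr{gr}=\mr{id}$ is $\mr{Ad}(\mr{P})$ for a \emph{unique} $\mr{P}$ with $\sigma_0(\mr{P})=1$ and $\mr{P}^*\mr{P}=1$. No order-by-order solving is needed:
\begin{itemize}
\item $\mr{Ad}(\mr{P})$ commutes with $*$ iff $\mr{P}^*\mr{P}$ is central, i.e.\ a constant $c$.
\item Since $\sigma_0(\mr{P}^*)=\sigma_0(\mr{P})$ in order $0$, $\sigma_0(\mr{P})^2=c$ is constant, so you can rescale to $\sigma_0(\mr{P})=1$.
\item This removes the last $\C^\times$-ambiguity, because the centre is $\C$.
\end{itemize}
Both sides of the coherence identity are normalised units inducing the same automorphism, so they coincide. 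Here $\Phi_{ij}$ preserves the normalisation because it commutes with $*$ and $\sigma_0(\Phi_{ij}(a))=\phi_{ij}^*\sigma_0(a)$. The same uniqueness is what makes the result canonical and independent of the atlas.

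Leaving the ambiguity as ``$\sigma_0(\mr{P})=\pm1$'' or ``up to $\C^\times$-scalars'' leaves exactly a sign-valued (resp.\ $\C^\times$-valued) $2$-cochain. That is the obstruction to coherence, not its resolution. This also affects your last step. $\mr{gr}\,\widehat{\mr{E}}_\mr{Y}$ is glued from the $\bigoplus_m\mathscr{O}(m)$ via the $\phi_{ij}^*$ \emph{together with} the $2$-cocycle $\sigma_0(a_{ijk})$. It is equivalent to $\left(\bigoplus_{m\in\Z}\mathscr{O}_\mr{Y}(m)\right)^{+}$ because $\sigma_0(a_{ijk})=1$, not merely because inner automorphisms act trivially on a commutative algebra. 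If you only know $\sigma_0(a_{ijk})=\pm1$, then even granting coherence the graded algebroid is twisted by a $\Z/2$-valued $2$-cocycle, which need not be a coboundary.
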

\2
\begin{remark}
Regularity is local and invariant under the gluing maps (quantised contact transformations) and so extends to $\widehat{\mr{E}}_\mr{Y}$.
\end{remark}
\2
\begin{definition}
Let $\mr{Y}$ be a holomorphic contact manifold. A coherent $\widehat{\mr{E}}_\mr{Y}$-module $\mathscr{D}$ is good if for any relatively compact open $\mr{U}\subset \mr{Y}$, there exists a $\widehat{\mr{E}}_\mr{Y}(0)$-module $\mathscr{D}^{0}$ generating $\mathscr{D}$ over $\mr{U}$.
\end{definition}
\subsection{Classification of \texorpdfstring{$\widehat{\mr{E}}_\mr{Y}$-}-modules}

Here we recall the classification of regular holonomic $\widehat{\mr{E}}_\mr{Y}$-modules.
\2
\begin{theorem}[\cite{MR2331247}, \cite{saf}]\label{miclass}
Let $\mr{Y}$ be a holomorphic contact manifold and let $\Lambda$ be a smooth Legendrian in $\mr{Y}$. Let $\gamma : \tilde{\mr{Y}}\to \mr{Y}$ be the symplectisation and $\tilde{\Lambda}=\gamma^{-1}(\Lambda)$. There is a global microlocalisation functor $$\mu_{\Lambda}:\mr{Mod}_{\Lambda,\mr{rh}}\left(\widehat{\mr{E}}_\mr{Y}\right)\to \gamma_*\mr{Mod}_\mr{rh}\left(\mr{D}_{\widetilde{\Lambda}}^{\sqrt{v}}\right).$$ Moreover, it induces an equivalence $$\mu_{\Lambda}:\mr{Mod}_{\Lambda,\mr{rh}}\left(\widehat{\mr{E}}_\mr{Y}\right)\simeq \gamma_*\mr{LocSys}_{\tilde{\Lambda}}^{\sqrt{v}}$$ between regular holonomic $\widehat{\mr{E}}_\mr{Y}$-modules along $\Lambda$ and the twisted local systems on the lift $\tilde{\Lambda}.$
\end{theorem}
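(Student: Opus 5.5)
The plan is to reduce the global statement to the local model given by the contact Darboux theorem, and then glue using the canonical algebroid $\widehat{\mr{E}}_\mr{Y}$ of the preceding theorem. Locally, on an open $\mr{U}\subset\mr{Y}$ with a contact embedding $\mr{U}\xhookrightarrow{}\mathbf{P}\Omega_\mr{X}$, we may take $\Lambda$ to be the conormal $\mathbf{P}\Omega_{\mr{Z}}\mr{X}$ of a submanifold $\mr{Z}\subset\mr{X}$, since any smooth Legendrian can be brought to this form by a quantised contact transformation. In this model, the regular holonomic $\Em^{\sqrt{v}}$-modules along $\Lambda$ are the microlocalisations of regular holonomic (half-twisted) $\mr{D}_\mr{X}$-modules supported on $\mr{Z}$. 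By Kashiwara's equivalence, these correspond to flat connections on $\mr{Z}$ (pulled back along $\tilde{\Lambda}\to\Lambda$ with the homogeneity twist). This lets us define $\mu_\Lambda$ locally by the formula $\mr{D}^{\sqrt{v}}_{\tilde{\Lambda}}\otimes_{\gamma^{-1}\mr{gr}_\Lambda}\gamma^{-1}\mr{gr}_\Lambda\mathscr{M}$ already given for $\Em$. We then check that regularity makes $\mr{gr}_\Lambda\mathscr{M}$, taken with respect to the canonical $V$-filtration, a coherent $\mr{gr}_\Lambda\Em$-module whose induced $\mr{D}_{\tilde\Lambda}$-module is $\mathscr{O}$-coherent. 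The result is a twisted local system, on which the Euler field acts with eigenvalues prescribed by the $V$-filtration.

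Next we glue. The key point is that $\mu_\Lambda$ is compatible with the gluing data of $\widehat{\mr{E}}_\mr{Y}$. On overlaps, two local models differ by a quantised contact transformation preserving $\Lambda$. Such a transformation preserves the lattice $\widehat{\mathscr{E}}_{\Lambda/\Omega_\mr{X}}$ and the $V$-filtration, since both are defined intrinsically through $\mr{I}_\Lambda$ and the principal symbol. It therefore induces an automorphism of $\mr{gr}_\Lambda$ compatible with the canonical embedding into $\gamma_*\mr{D}^{\sqrt{v}}_{\tilde\Lambda}$. The half-twist by $\mr{K}^{1/2}$ is exactly what makes these induced maps canonical: it kills the ambiguity coming from the anti-involution $*$, which was used to rigidify $\widehat{\mr{E}}_\mr{Y}$. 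Since $\mr{Mod}_{\Lambda,\mr{rh}}$ and twisted local systems both form stacks, the local functors and the canonical isomorphisms between them on overlaps descend to a global functor.

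Finally we prove the equivalence. Full faithfulness and essential surjectivity are local statements, because both sides are stacks. So it suffices to treat the Darboux model with $\Lambda$ a conormal bundle. There, $\mr{Mod}_{\Lambda,\mr{rh}}(\Em)$ is equivalent to regular holonomic $\mr{D}$-modules supported on $\mr{Z}$ localised away from the zero section. This category is in turn equivalent to monodromic flat connections on $\tilde\Lambda$, i.e.\ those on which $eu$ acts semisimply with locally constant eigenvalues modulo $\Z$, identified with $\gamma_*\mr{LocSys}^{\sqrt{v}}_{\tilde\Lambda}$.

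The main obstacle I expect is this last local identification: it requires showing that $\mr{gr}_\Lambda$ is exact and conservative on regular holonomic modules. This amounts to strictness of morphisms for the $V$-filtration. I would first try to prove it for simple modules, where $\mathscr{M}^0/\Em(-1)\mathscr{M}^0$ is invertible by the lemma above, and then extend by induction on length using that regular holonomic modules along a smooth $\Lambda$ have finite length.
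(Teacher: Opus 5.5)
The paper does not prove this statement; it quotes it from D'Agnolo--Schapira and Gunningham--Safronov, so your sketch has to stand on its own. The overall architecture is the standard one: Darboux model, glue the local functors because both sides are stacks, check the equivalence locally. But the local model you build on is wrong, and that is where the content of the theorem lies.

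You assert that, for $\Lambda$ the projectivised conormal of $\mr{Z}$, the regular holonomic modules along $\Lambda$ are microlocalisations of $\mr{D}_\mr{X}$-modules supported on $\mr{Z}$, and you then invoke Kashiwara's equivalence. Near a point of $\mr{Z}$, a module supported on $\mr{Z}$ whose microlocalisation is supported on $\Lambda$ must be $i_+$ of a flat connection on $\mr{Z}$. So this only produces local systems on $\tilde\Lambda$ pulled back from $\mr{Z}$, with one fixed scalar monodromy along the $\C^*$-fibres of $\gamma$. Both sides of the theorem are much bigger.

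Already for $\mr{X}=\C$, $\mr{Z}=\{0\}$, the module $\widehat{\mathscr{E}}/\widehat{\mathscr{E}}(x\partial_x-\lambda)$ is simple along $\Lambda$. Its isomorphism class depends only on $\lambda$ modulo $\Z$, since $\partial_x$ is invertible and $\partial_x(x\partial_x-\lambda)=(x\partial_x-\lambda+1)\partial_x$. It is the microlocalisation of $\mr{D}/\mr{D}(x\partial_x-\lambda)$, which is supported on all of $\C$. For $\lambda$ outside a single residue class it is not the microlocalisation of anything supported at $0$. This $\lambda$ is exactly the parameter $\mr{exp}(2\pi i\lambda)$ in \cref{existsimplehol}. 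Similarly, $\widehat{\mathscr{E}}/\widehat{\mathscr{E}}(x\partial_x)^2$ has a unipotent Jordan block as fibre monodromy. So the target is not characterised by $eu$ acting semisimply; it only acts locally finitely. Hence your local functor is far from essentially surjective, and both equivalences claimed in your last paragraph fail.

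What is missing is a local classification that sees the fibre monodromy. One option is a local normal form going back to Kashiwara--Kawai. Take $\Lambda$ to be the conormal of the hypersurface $\{x_1=0\}$. Then every regular holonomic module along $\Lambda$ is locally isomorphic to $\widehat{\mathscr{E}}^N/\big(\widehat{\mathscr{E}}^N(x_1\partial_1-\mr{A})+\sum_{j\ge 2}\widehat{\mathscr{E}}^N\partial_j\big)$ for a constant matrix $\mr{A}$, with isomorphism class governed by $\mr{exp}(2\pi i\mr{A})$. This identifies the local category with local systems on $\Lambda\times\C^*$, and it is what gives both full faithfulness and essential surjectivity. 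Equivalently, on the $\mr{D}$-module side (with $\mr{Z}$ a hypersurface), use regular holonomic $\mr{D}_\mr{X}$-modules whose characteristic variety lies in the union of the zero section and the conormal of $\mr{Z}$, modulo those with characteristic variety in the zero section. Modules supported on $\mr{Z}$ are not enough.

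Exactness of $\mr{gr}_\Lambda$, which you flag as the main obstacle, is not the real difficulty. Strictness for the canonical $V$-filtration follows from its uniqueness: the two filtrations on an image, induced from source and from target, both satisfy the characterisation. No induction on length is needed.

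Finally, your gluing step is only asserted. You must check that the isomorphisms induced on $\mr{gr}_\Lambda$ by the $2$-morphisms of $\widehat{\mr{E}}_\mr{Y}$ satisfy the cocycle condition with values in $\mr{D}^{\sqrt{v}}_{\tilde\Lambda}$-modules. That is a subprincipal-symbol computation, not a formal consequence of the half-twist.
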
 
\2
We are going to be interested in a particular subset of the left side in the above theorem for a collection of Legendrians. 
\2
\begin{definition}\label{microor}
Let $\Lambda$ be a Legendrian in $\mr{Y}$ with its symplectisation $\gamma:\tilde{\mr{Y}}\to \mr{Y}$. A microdifferential orientation of $\Lambda$ is a good, simple $\widehat{\mr{E}}_\mr{Y}$-module $\mathscr{D}_\Lambda$ along $\Lambda$ such that $\mu_\Lambda\left(\mathscr{D}_\Lambda\right)$ is a square root of the canonical bundle $\mr{K}_{\tilde{\Lambda}}$ of the homogeneous Lagrangian corresponding to $\Lambda$.
\end{definition}
\subsection{DQ-algebras}
We let $\mr{X}$ be a complex manifold. Let $\C\brak$ be the ring of formal power series in $\hbar$, and $\C\cbrak$ its field of fractions, i.e. the field of formal Laurent series. Define a sheaf of $\C\brak$-algebras: $$\mathscr{O}_\mr{X}[\![\hbar]\!] = \varprojlim \mathscr{O}_\mr{X} \otimes_{\C} \C[\![\hbar]\!]/\hbar^{n}.$$
\begin{definition}
 A star product on $\mathscr{O}_\mr{X}[\![\hbar]\!]$ is a $\C\brak$-bilinear associative multiplication $\star$ such that $$f\star g = \sum_{i\ge 0} \mr{P}_i(f,g)\hbar^{i}, \text{ where } f,g \in \mathscr{O}_\mr{X},$$ such that $\mr{P}_i$ are holomorphic bidifferential operators with $\mr{P}_{0}(f,g)=fg$ and $\mr{P}_{i}(f,1) = \mr{P}_i(1,f)=0$ for all $i \ge 1$. The pair $(\mathscr{O}_\mr{X}[\![\hbar]\!],\star)$ is called a star algebra.
\end{definition}
\2
\begin{definition}
 A deformation quantisation algebra (DQ-algebra) on a complex manifold $\mr{X}$ is a sheaf of $\C\brak$-algebras $\mathscr{A}_\mr{X}$ locally isomorphic to a star algebra as a $\C\brak$-algebra.
\end{definition}
\2
\begin{example}\label{Poissonexample}
 Let $\mathscr{A}_\mr{X}$ be a DQ-algebra on $\mr{X}$. Let $\pi : \mathscr{A}_\mr{X} \to \mathscr{A}_\mr{X}/\hbar\mathscr{A}_\mr{X} \cong \mathscr{O}_\mr{X}$. For any $f,g \in \mathscr{O}_\mr{X}$, choose lifts $\tilde{f}, \tilde{g}$ such that $\pi(\tilde{f})=f$ and $\pi(\tilde{g})=g$. Then define a bracket $$\{f,g\} = \pi(\hbar^{-1}(\tilde{f}\tilde{g}-\tilde{g}\tilde{f})).$$ This is independent of the choices made and defines a Poisson structure on $\mr{X}$.
\end{example}
\2
\begin{example}
 Let $t$ be the coordinate on $\C$ and $(t;\tau)$ - the symplectic coordinates on $\Omega_{\C}$. Let $\Omega_{\mr{X}\times \C,\tau\not=0}$ be the open subset of $\Omega_{\mr{X}\times \C}$ where $\tau\not=0$. We have a map $$\rho : \Omega_{\mr{X}\times \C,\tau\not=0} \to \Omega_{\mr{X}}, \quad (x,t;\xi,\tau) \mapsto (x,\tau^{-1}\xi).$$ Define the subsheaf of operators independent of $t$: $$\widehat{\mathscr{E}}_{\Omega_{\mr{X}\times \C},\widehat{t}}(0)= \{\mr{P} \in \widehat{\mathscr{E}}_{\Omega_{\mr{X}\times \C}}(0) \text{ such that } [\mr{P},\partial_{t}]=0\}.$$ Then, letting $\hbar$ act as $\tau^{-1}$, we define the canonical DQ-algebra on $\Omega_{\mr{X}}$ by $$\widehat{\mathscr{W}}_{\Omega_\mr{X}}(0) = \rho_{*}\widehat{\mathscr{E}}_{\Omega_{\mr{X}\times \C},\widehat{t}}(0)\subset \rho_{*}\widehat{\mathscr{E}}_{\Omega_{\mr{X}\times \C}}(0).$$ The $\hbar$-localisation of $\widehat{\mathscr{W}}_{\Omega_\mr{X}}(0)$ is denoted by $\widehat{\mathscr{W}}_{\Omega_\mr{X}}$.
\end{example}
\2
\begin{remark}
We call the DQ algebra in the example above the canonical deformation quantisation of the cotangent bundle of a complex manifold. 
\begin{enumerate}
\item We have $\widehat{\mathscr{W}}_{\Omega_{\mr{X}}}(0)/\hbar\cong \mathscr{O}_{\Omega_\mr{X}}$, so it's indeed deformation quantisation.
\item $\widehat{\mathscr{W}}_{\Omega_\mr{X}}|_\mr{X}\cong \mr{D}_\mr{X}\cbrak$, that is, restricting to the zero section $\mr{X}$, we get differential operators over formal Laurent power series.
\item $\rho_{*}\widehat{\mathscr{E}}_{\Omega_{\mr{X}\times \C}}(0)$ is a flat $\widehat{\mathscr{W}}_{\Omega_\mr{X}}(0)$-module.
\end{enumerate}
\end{remark}
\subsection{DQ algebroids}
\begin{definition}
 A deformation quantisation algebroid (DQ-algebroid) on $\mr{X}$ is a $\C\brak$-algebroid $\mathscr{A}$ such that, for any open $\mr{U}\subset \mr{X}$ and $\tau \in \mathscr{A}(\mr{U})$, the $\C\brak$-algebra $\mathscr{H}om(\tau,\tau)$ is a DQ-algebra on $\mr{U}$.
\end{definition}
\2
If $\mr{X}$ is a holomorphic symplectic variety, then the holomorphic Darboux theorem implies that locally we have canonical DQ-algebras associated with $\mr{X}$, but they won't generally glue to a global DQ-algebra.\\ Just as in the contact case, one needs to work with half twists: similarly to the example above, we define the half-twisted DQ algebra $$\widehat{\mathscr{W}}^{\sqrt{v}}_{\Omega_\mr{X}}(0)= \rho_{*}\widehat{\mathscr{E}}_{\Omega_{\mr{X}\times \C},\widehat{t}}^{\sqrt{v}}(0)\subset \rho_{*}\widehat{\mathscr{E}}^{\sqrt{v}}_{\Omega_{\mr{X}}\times \C}(0).$$ Then the anti-involution restricts to $\widehat{\mathscr{W}}^{\sqrt{v}}_{\Omega_\mr{X}}(0)$ and takes $\hbar$ to $-\hbar$. Polesello and Schapira \cite{10.1155/S1073792804132819} construct a canonical DQ-algebroid gluing these twisted DQ algebras along quantised symplectic transformations:
\2
\begin{theorem}[\cite{10.1155/S1073792804132819}]
Let $\mr{X}$ be holomorphic symplectic. There exists a canonical filtered DQ algebroid, endowed with anti-involution $*$, $\widehat{\mathscr{W}}_{\mr{X}}(0)$ such that for any open $\mr{U}\subset \mr{X}$ with symplectic embedding $\mr{U}\xhookrightarrow{}\Omega_\mr{V}$, we have $$\widehat{\mathscr{W}}_{\mr{X}}(0)|_\mr{U}\simeq \widehat{\mathscr{W}}^{\sqrt{v}}_{\Omega_\mr{V}}(0)|_\mr{U}.$$Moreover, if $\rho:\mr{Y}\to \mr{X}$ is a contactification, there is a canonical embedding $$\widehat{\mathscr{W}}_{\mr{X}}(0)\subset \rho_*\widehat{\mr{E}}_\mr{Y}(0).$$
\end{theorem}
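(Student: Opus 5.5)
The plan is to imitate the construction of $\widehat{\mr{E}}_\mr{Y}$ from \cite{MR3276591} recalled above: glue the local half-twisted algebras $\widehat{\mathscr{W}}^{\sqrt{v}}_{\Omega_\mr{V}}(0)$ along quantised symplectic transformations. Rigidity comes from the anti-involution $*$, and the output is an algebroid rather than a sheaf of algebras. First, by the holomorphic Darboux theorem, I cover $\mr{X}$ by opens $\mr{U}_i$ with symplectic embeddings $\iota_i:\mr{U}_i\xhookrightarrow{}\Omega_{\mr{V}_i}$. On each $\mr{U}_i$ this gives the DQ-algebra $\mathscr{W}_i\coloneqq\widehat{\mathscr{W}}^{\sqrt{v}}_{\Omega_{\mr{V}_i}}(0)|_{\mr{U}_i}$. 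Via $\rho$, it sits inside $\rho_*\widehat{\mathscr{E}}^{\sqrt{v}}$ of the local contactification $\mr{U}_i\times\C$ as the $\partial_t$-invariant operators, with $\hbar=\tau^{-1}$. It carries the anti-involution $*$ with $\hbar^*=-\hbar$.

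Second, on overlaps $\mr{U}_{ij}$, the symplectic transformation $\iota_j\circ\iota_i^{-1}$ lifts, locally and up to a translation in $t$, to a homogeneous contact transformation of the contactifications commuting with the $\C$-action. I then invoke Kashiwara's theorem on quantised contact transformations, in the half-twisted form used for $\widehat{\mr{E}}_\mr{Y}$. It gives, locally on $\mr{U}_{ij}$, isomorphisms $\varphi_{ij}:\widehat{\mathscr{E}}^{\sqrt{v}}\to\widehat{\mathscr{E}}^{\sqrt{v}}$ that are filtered, commute with $*$, and commute with $\partial_t$. Hence they restrict to $*$-compatible filtered isomorphisms $\mathscr{W}_i|\simeq\mathscr{W}_j|$ inducing the identity on $\mr{gr}$, that is, on $\mathscr{O}_\mr{X}$.

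Third comes the rigidity step, which I expect to be the main obstacle. I must show that any two such $*$-compatible quantisations of the same symplectic transformation differ by an inner automorphism $\mr{ad}(\mr{P})$ with $\mr{P}\in\mathscr{W}(0)^\times$, $\mr{P}^*\mr{P}=1$ and $\sigma_0(\mr{P})=1$, and that such a $\mr{P}$ is unique up to sign. My first attempt is to write the difference automorphism as $\mr{exp}(\mr{ad}(\hbar^{-1}a))$ by an order-by-order argument in $\hbar$. At each step the obstruction is a derivation of the Poisson algebra $\mathscr{O}_\mr{X}$ that is locally Hamiltonian, so it can be killed locally by the holomorphic Poincaré lemma. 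The $*$-compatibility forces the Hamiltonians to be odd under $*$, and this parity is what pins down the residual freedom and removes the ambiguity by scalars $\mr{exp}(c)$. With this lemma, the $\varphi_{ij}$ satisfy the cocycle condition on triple overlaps up to a unique inner automorphism. That is precisely the descent datum for an algebroid stack, rather than a sheaf of algebras. Gluing the prestacks $\mathscr{W}_i^{+}$ along these data gives $\widehat{\mathscr{W}}_\mr{X}(0)$; the filtration and $*$ descend since all gluings respect them. The same rigidity shows that a different choice of charts yields a canonically equivalent algebroid.

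Finally, for the embedding into $\rho_*\widehat{\mr{E}}_\mr{Y}(0)$ along a contactification $\rho:\mr{Y}\to\mr{X}$, I refine the cover so that $\mr{Y}|_{\mr{U}_i}\cong\mr{U}_i\times\C$ with connection form $\mr{d}t+\iota_i^*\alpha$. The contact charts of $\mr{Y}$ are then the ones used above, and $\widehat{\mr{E}}_\mr{Y}(0)$ is glued, by the previous theorem, from the same $*$-compatible quantised contact transformations. Since these commute with $\partial_t$, they preserve the subalgebras $\mathscr{W}_i\subset\rho_*\widehat{\mathscr{E}}^{\sqrt{v}}(0)$. Therefore the local inclusions glue to a canonical faithful functor of algebroids $\widehat{\mathscr{W}}_\mr{X}(0)\subset\rho_*\widehat{\mr{E}}_\mr{Y}(0)$.
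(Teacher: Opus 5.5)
The paper gives no proof of this statement; it quotes it from Polesello--Schapira. Your outline agrees with the paper's one-line description: glue the local half-twisted algebras along quantised symplectic transformations. However, the rigidity lemma in your third step is false as formulated. Below, $\mr{Ad}(\mr{P})$ denotes conjugation by $\mr{P}$, which is your $\mr{ad}(\mr{P})$.

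On connected opens the centre of $\widehat{\mathscr{W}}^{\sqrt{v}}_{\Omega_\mr{V}}(0)$ is $\C\brak$. Since $\hbar^*=-\hbar$, every $d=\mr{exp}(e)$ with $e\in\hbar\C[\![\hbar^2]\!]$ satisfies $d^*d=1$, $\sigma_0(d)=1$ and $\mr{Ad}(d)=\mr{id}$; for instance take $d=\mr{exp}(\lambda\hbar)$ with $\lambda\in\C$. So if $\mr{P}$ satisfies your normalisation, so does $\mr{P}d$, and $\mr{P}$ is not unique up to sign.

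In the terms of your order-by-order argument: the Hamiltonian $\hbar^{-1}a$ is only determined up to a central element. The $*$-odd central elements are the odd series in $\hbar$, and these are nonzero. The parity trick removes the constant ambiguity in Kashiwara's contact setting only because there the centre is $\C$, on which $*$ acts trivially.

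Consequently, the elements $a_{ijk}$ with $\varphi_{ij}\varphi_{jk}=\mr{Ad}(a_{ijk})\varphi_{ik}$ are determined only up to the constant sheaf $\mr{exp}(\hbar\C[\![\hbar^2]\!])$. The cocycle identity on quadruple overlaps then holds only up to a \v{C}ech $3$-cocycle with values in this group. So you obtain neither existence nor canonicity: different choices differ by classes in $\mr{H}^2(\mr{X};\hbar\C[\![\hbar^2]\!])$.

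The missing idea is to carry out the rigidification in $\widehat{\mathscr{E}}$ on the local contactifications, not in $\widehat{\mathscr{W}}$. There the centre is $\C$; for instance $\mr{exp}(\lambda\partial_t^{-1})$ no longer commutes with $t$, since $[\partial_t^{-1},t]=-\partial_t^{-2}$. Hence the normalised unitary $\mr{P}$ relating two $*$-compatible quantisations of the same contact transformation is unique. Moreover, it lies in the $\partial_t$-commutant, i.e.\ in $\widehat{\mathscr{W}}(0)$, as soon as both quantisations fix $\partial_t$.

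But then you must deal with the ambiguity you noticed in your second step and then dropped. The lifted contact transformations compose on triple overlaps only up to constant translations $t\mapsto t+c_{ijk}$, which form the band of the gerbe $\mathfrak{C}_\sigma$ of contactifications. Such a translation is not an inner automorphism of $\widehat{\mathscr{E}}$. The required steps are:
\begin{enumerate}
\item Split $\varphi_{ij}\varphi_{jk}\varphi_{ik}^{-1}$ as a translation followed by $\mr{Ad}(\mr{P}_{ijk})$ with $\mr{P}_{ijk}$ normalised; the splitting is unique because the translation is detected on symbols.
\item Check that translations are compatible with the $\partial_t$-equivariant quantised contact transformations and act trivially on the $\partial_t$-commutant.
\item Only then use the $\mr{P}_{ijk}$ as the $2$-cells of $\widehat{\mathscr{W}}_\mr{X}(0)$.
\end{enumerate}
With this bookkeeping, the cocycle condition and independence of choices follow from uniqueness in $\widehat{\mathscr{E}}$. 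The embedding into $\rho_*\widehat{\mr{E}}_\mr{Y}(0)$ also comes out of the construction. Your final paragraph tacitly assumes that your $\widehat{\mathscr{W}}$-level $2$-cells agree with those inherited from $\widehat{\mr{E}}_\mr{Y}$, and your third step, as stated, does not guarantee this.
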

\begin{remark}
 Any other DQ-algebroid $\mathscr{A}_{\mr{X}}$ on $\mr{X}$ will be equivalent to $\widehat{\mathscr{W}}_{\mr{X}}(0)\otimes_{\C\brak}\mathscr{L}$ for some invertible $\C\brak$-algebroid $\mathscr{L}$. Hence DQ-algebroids are classified by $\mr{H}^{2}\big(\mr{X},\C\brak^{*}\big)$. We shall be predominantly working with the canonical DQ algebroid $\Ww(0)$, however, due to this classification, most results remain true for any $\mathscr{A}_\mr{X}$.
\end{remark}
\2
\begin{example}
 \cref{Poissonexample} shows that any DQ-algebroid on $\mr{X}$ induces a Poisson structure on $\mr{X}$. Conversely, \cite{MR2062626} shows that in the $\mr{C}^{\infty}$ setting (also locally for algebraic varieties) any Poisson structure is induced by some DQ-algebroid. The global algebraic quantisation is due to Yekutieli \cite{MR2183259} and Van den Bergh \cite{MR2344349}; in the setting of complex manifolds, these have been obtained by Calaque et al. \cite{MR2364075}.
\end{example}
\2
\begin{remark}
 If $\mathscr{A}_\mr{X}$ is a DQ-algebroid, the local notions of being locally free, coherent, flat, etc. make sense for an $\mathscr{A}_\mr{X}$-module $\mathscr{D}$.
\end{remark}
\2
Let $\iota : \mr{Spec}(\C) \to \mr{Spec}(\C\brak)$ be the canonical inclusion, define a $\C$-algebroid $\iota^{*}\mathscr{W}_\mr{X}(0)$ by taking the stack associated with the prestack given on objects and morphisms, respectively, by $$\mathscr{W}_\mr{X}(0)(\mr{U}) \text{ and } \mr{Hom}_{\mathscr{W}_\mr{X}(0)(\mr{U})}(\sigma,\tau)/\hbar\mr{Hom}_{\mathscr{W}_\mr{X}(0)(\mr{U})}(\sigma,\tau).$$ The so defined $\C$-algebroid is an invertible $\mathscr{O}_\mr{X}$-algebroid and we have an equivalence of invertible Poisson algebroids $$\iota^*\Ww(0)\simeq \mathscr{O}_\mr{X}.$$ There is a functor of $\C$-algebroids $$\mathscr{W}_\mr{X}(0) \to \iota^* \mathscr{W}_\mr{X}(0).$$ In particular, we get a functor preversing boundedness and coherence $$\iota^* :\mr{\mathbf{D}}(\mathscr{W}_\mr{X}(0)) \to \mr{\mathbf{D}}(\iota^*\mathscr{W}_\mr{X}(0)) \quad \iota^*:\mathscr{D} \mapsto \C\otimes_{\C\brak} \mathscr{D}.$$
The $\hbar$-localisation of $\mathscr{W}_\mr{X}(0)$ is $\mr{loc}\left(\mathscr{W}_\mr{X}(0)\right) = \C\cbrak\otimes _{\C\brak}\mathscr{W}_\mr{X}(0)$. More generally, we have a functor $$\mr{loc}: \mr{\mathbf{D}}^{\mr b}(\mathscr{W}_\mr{X}(0)) \to \mr{\mathbf{D}}^{\mr b}\left(\mr{loc}\left(\mathscr{W}_\mr{X}(0)\right)\right).$$
\begin{example}
We denote $\mr{loc}(\Ww(0))$ by $\Ww$. This is a $\C\cbrak$-algebroid and will be fundamental for our applications.
\end{example}
\2
\begin{lemma}[Kashiwara-Schapira \cite{MR3012169}]
 If $\mathscr{D} \in \mr{\mathbf{D}}^{\mr b}_\mr{coh}(\mathscr{W}_\mr{X}(0))$, then $\mr{Supp}(\mathscr{D}) = \mr{Supp}(\iota^*\mathscr{D})$. In particular, $\mr{Supp}(\mathscr{D})$ is a closed analytic subset of $\mr{X}$.\\
 If $\mathscr{E} \in \mr{\mathbf{D}}^{\mr b}_\mr{coh}(\mathscr{W}_\mr{X})$, then $\mr{Supp}(\mathscr{E})$ is a closed coisotropic analytic subset of $\mr{X}$.
\end{lemma}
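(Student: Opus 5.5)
The plan is to reduce the first assertion to a derived Nakayama lemma for $\hbar$-complete modules, and to deduce the second from the first together with Gabber's integrability theorem. Throughout I use that the question is local on $\mr{X}$. So I may fix an object $\tau$ of $\Ww(0)$ on a small open set $\mr{U}$ and replace the algebroid by the DQ-algebra $\mathscr{A}=\shom(\tau,\tau)$, which is locally a star algebra $(\mathscr{O}_\mr{U}\brak,\star)$.

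\textbf{First assertion.} The inclusion $\mr{Supp}(\iota^*\mathscr{D})\subset\mr{Supp}(\mathscr{D})$ is immediate, since $\iota^*\mathscr{D}=\C\otimes^{\mr L}_{\C\brak}\mathscr{D}$ is computed stalkwise.
\begin{enumerate}
\item For the converse, there is a distinguished triangle $\mathscr{D}\xrightarrow{\hbar}\mathscr{D}\to\iota^*\mathscr{D}\to$. Hence if $\iota^*\mathscr{D}$ vanishes on an open $\mr{V}$, multiplication by $\hbar$ is an isomorphism on $\mathscr{D}|_\mr{V}$.
\item Next I show that objects of $\mr{\mathbf{D}}^{\mr b}_\mr{coh}(\mathscr{A})$ are cohomologically $\hbar$-complete, i.e.\ $\mr{R}\shom_{\C\brak}(\C\cbrak,\mathscr{D})=0$. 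By the usual dévissage on the amplitude, it suffices to treat a single coherent module. Locally such a module admits a finite free resolution $\mathscr{A}^{\oplus n}$-by-$\mathscr{A}^{\oplus m}$. Each $\mathscr{A}\cong\mathscr{O}\brak$ is $\hbar$-adically complete with no $\lim^1$ terms, since $\mathscr{O}$ has vanishing higher cohomology on Stein opens. Hence $\mathscr{A}$ is cohomologically complete, and this property is stable under cones.
\item A cohomologically complete object on which $\hbar$ acts invertibly is a $\C\cbrak$-module, so $\mathscr{D}|_\mr{V}=\mr{R}\shom(\C\cbrak,\mathscr{D}|_\mr{V})=0$. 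This gives equality of supports.
\item Closedness and analyticity then follow because $\iota^*\mathscr{D}$ is a bounded complex with $\mathscr{O}_\mr{X}$-coherent cohomology. Indeed, $\iota^*$ preserves coherence and $\iota^*\mathscr{A}\simeq\mathscr{O}_\mr{X}$, and supports of coherent $\mathscr{O}$-modules are closed analytic.
\end{enumerate}

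\textbf{Second assertion.} I again reduce to a single coherent $\Ww$-module $\mathscr{E}$, using $\mr{Supp}=\bigcup_i\mr{Supp}\,\mr{H}^i$.
\begin{enumerate}
\item Locally I choose a coherent $\mathscr{A}$-lattice $\mathscr{E}_0\subset\mathscr{E}$ generating it. Dividing by its $\hbar$-torsion, which is again coherent because $\mathscr{A}$ is Noetherian in the appropriate sheaf sense, I may assume $\mathscr{E}_0$ is $\hbar$-torsion free.
\item Then $\mr{Supp}(\mathscr{E})=\mr{Supp}(\mathscr{E}_0)$, since a torsion-free module is zero exactly where its localisation is. By the first part this equals $\mr{Supp}(\mathscr{E}_0/\hbar\mathscr{E}_0)$, which is a closed analytic set.
\item To see that it is coisotropic, I consider the $\hbar$-adic filtration on $\mathscr{A}$ and on $\mathscr{E}_0$. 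The associated graded ring is $\mathscr{O}_\mr{X}[\hbar]$, commutative, with Poisson bracket of degree $-1$ given as in \cref{Poissonexample}. The associated graded module $\bigoplus\hbar^k\mathscr{E}_0/\hbar^{k+1}\mathscr{E}_0\cong(\mathscr{E}_0/\hbar\mathscr{E}_0)[\hbar]$ is finitely generated, using torsion-freeness.
\item Gabber's integrability theorem, applied stalkwise to this filtered Noetherian ring with commutative Poisson associated graded, shows that the radical of the annihilator of the graded module is closed under the bracket. Therefore $\mr{Supp}(\mathscr{E}_0/\hbar\mathscr{E}_0)$ is coisotropic for $\sigma$.
\end{enumerate}

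\textbf{Main obstacle.} The main obstacle is this last step, because Gabber's theorem is stated for filtered rings of finite type over $\Q$ with complete or Zariskian filtrations. Here the stalks of $\mathscr{A}$ are $\hbar$-adically complete and the $\hbar$-filtration is Zariskian, so I would first try to verify Gabber's hypotheses stalkwise. I would then pass from stalks to the sheaf statement using that the support is analytic. Only if that fails would I fall back on the local Darboux model $\widehat{\mathscr{W}}_{\Omega_\mr{V}}$ and the known involutivity of characteristic varieties of microdifferential modules.
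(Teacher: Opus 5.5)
Your proposal is correct, and it is essentially the Kashiwara--Schapira argument that the paper cites without giving a proof of its own. The first part uses cohomological completeness of objects of $\mathbf{D}^{\mathrm{b}}_{\mathrm{coh}}$ together with the derived Nakayama lemma to get $\mathrm{Supp}(\mathscr{D})=\mathrm{Supp}(\iota^*\mathscr{D})$, and the second uses a local $\hbar$-torsion-free lattice together with Gabber's integrability theorem to get coisotropy.

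One correction, which does not affect the argument: stalks of $\mathscr{O}_{\mathrm{X}}\brak$ are \emph{not} $\hbar$-adically complete, because the coefficients may live on shrinking neighbourhoods. However, Gabber's theorem needs no completeness, only that $\mathrm{gr}\,\mathscr{A}_x\cong\mathscr{O}_{\mathrm{X},x}[\hbar]$ be a commutative Noetherian $\Q$-algebra; alternatively you can pass to the $\hbar$-adic completion, which leaves the associated graded unchanged. Note also that the induced bracket on $\mathrm{gr}\,\mathscr{A}_x$ is $\hbar\{\cdot,\cdot\}$, so you need homogeneity of $\sqrt{\mathrm{Ann}(\mathrm{gr}\,\mathscr{E}_{0,x})}=I[\hbar]$, where $I$ is the ideal of the support germ, to deduce that $I$ itself is closed under $\{\cdot,\cdot\}$.
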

\2
\begin{theorem}[Kashiwara-Schapira \cite{MR3012169}]\label{perf}
 Let $\mr{X}$ be a complex manifold endowed with its canonical deformation quantisation $\mathscr{W}_\mr{X}(0)$. Let $$\mathscr{D}, \mathscr{E} \in \mr{\mathbf{D}}^{\mr b}_\mr{coh}(\mathscr{W}_\mr{X}(0))$$ and suppose that $\mr{Supp}(\mathscr{D})\cap \mr{Supp}(\mathscr{E})$ is compact. Then $\mr{RHom}_{\mathscr{W}_\mr{X}(0)}(\mathscr{D},\mathscr{E})$ is a perfect complex of $\C\brak$-modules.
\end{theorem}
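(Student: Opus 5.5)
The plan is to reduce perfectness to a statement about the classical limit and then recover the $\hbar$-adic information by completeness. Since $\mathscr{D},\mathscr{E}$ are coherent over $\mathscr{W}_\mr{X}(0)$, the first step is to show that $\mr{RHom}_{\mathscr{W}_\mr{X}(0)}(\mathscr{D},\mathscr{E})$ is cohomologically complete, i.e. derived $\hbar$-adically complete. For this I would use that coherent $\mathscr{W}_\mr{X}(0)$-modules are cohomologically complete: locally the algebra is a star algebra $\mathscr{O}_\mr{X}\brak$, and coherent modules over it have no nonzero $\hbar$-divisible subquotients, so $\mr{R}\shom_{\mathscr{W}_\mr{X}(0)}(\mathscr{D},\mathscr{E})$ is complete. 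Completeness is preserved by $\mr{R}\Gamma(\mr{X},-)$, because $\mr{R}\Gamma$ commutes with $\mr{R}\varprojlim$.

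The second step is base change to $\hbar=0$. There is a natural isomorphism
$$\C\otimes^{\mr L}_{\C\brak}\mr{RHom}_{\mathscr{W}_\mr{X}(0)}(\mathscr{D},\mathscr{E})\simeq \mr{RHom}_{\iota^*\mathscr{W}_\mr{X}(0)}(\iota^*\mathscr{D},\iota^*\mathscr{E}).$$
To justify it, use that $\C\brak\to\C$ has the finite free resolution $\C\brak\xrightarrow{\hbar}\C\brak$, so $\C\otimes^{\mr L}_{\C\brak}$ commutes with $\mr{RHom}$ and $\mr{R}\Gamma$. Then apply the adjunction $\mr{RHom}_{\mathscr{W}_\mr{X}(0)}(\mathscr{D},\C\otimes^{\mr L}\mathscr{E})\simeq\mr{RHom}_{\iota^*}(\iota^*\mathscr{D},\iota^*\mathscr{E})$. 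Since $\iota^*\mathscr{W}_\mr{X}(0)\simeq\mathscr{O}_\mr{X}$ as an invertible algebroid (twisted by a gerbe), the right-hand side is an Ext between coherent twisted $\mathscr{O}_\mr{X}$-complexes. By the preceding lemma of Kashiwara–Schapira, these are supported on $\mr{Supp}(\mathscr{D})$ and $\mr{Supp}(\mathscr{E})$ respectively. Hence $\mr{R}\shom(\iota^*\mathscr{D},\iota^*\mathscr{E})$ is a coherent complex with compact support $\mr{Supp}(\mathscr{D})\cap\mr{Supp}(\mathscr{E})$. Its global sections therefore have finite-dimensional total cohomology by Cartan–Serre finiteness (Grauert). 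Boundedness follows from bounded amplitude of $\mr{R}\shom$ on a manifold, which has finite homological dimension, together with finite cohomological dimension of the compact support.

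The last step is the algebraic criterion: a cohomologically complete complex $\mr{M}$ of $\C\brak$-modules such that $\C\otimes^{\mr L}_{\C\brak}\mr{M}$ has bounded finite-dimensional cohomology is perfect. To prove it, take a top degree $n$ with $\mr{H}^n(\C\otimes^{\mr L}\mr{M})\neq0$. The long exact sequence for multiplication by $\hbar$ gives that $\mr{H}^j(\mr{M})\xrightarrow{\hbar}\mr{H}^j(\mr{M})$ is surjective for $j>n$. Completeness then kills these groups, by derived Nakayama for complete complexes. One also gets that $\mr{H}^n(\mr{M})/\hbar$ is finite-dimensional, hence $\mr{H}^n(\mr{M})$ is finitely generated by completeness. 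Descending induction on degree finishes it: split off a finite free complex mapping to $\mr{M}$ and iterate on the cone. Since $\C\brak$ is a regular (DVR) ring, finitely generated bounded cohomology is equivalent to perfectness.

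I expect the main obstacle to be the first step. One must verify cohomological completeness of the global $\mr{RHom}$ for algebroid (not algebra) modules, and that the base change map is an isomorphism without any finiteness of $\mr{Supp}(\mathscr{D})$ and $\mr{Supp}(\mathscr{E})$ individually. I would handle it by working locally on a finite cover of the compact intersection: the stack structure only affects gluing, and completeness is a local property stable under the Čech limit.
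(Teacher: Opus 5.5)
Your argument is correct and is essentially the proof in Kashiwara--Schapira, which the paper cites without reproducing. It has the same steps:
\begin{itemize}
\item cohomological completeness of $\mr{R}\shom_{\mathscr{W}_\mr{X}(0)}(\mathscr{D},\mathscr{E})$, and hence of $\mr{RHom}_{\mathscr{W}_\mr{X}(0)}(\mathscr{D},\mathscr{E})$;
\item the identification $\C\otimes^{\mr L}_{\C\brak}\mr{RHom}_{\mathscr{W}_\mr{X}(0)}(\mathscr{D},\mathscr{E})\simeq\mr{RHom}(\iota^*\mathscr{D},\iota^*\mathscr{E})$;
\item Cartan--Serre finiteness for the compactly supported, untwisted coherent complex $\mr{R}\shom(\iota^*\mathscr{D},\iota^*\mathscr{E})$;
\item the criterion that a cohomologically complete complex whose reduction mod $\hbar$ lies in $\Db_{\mr f}(\C)$ lies in $\Db_{\mr f}(\C\brak)$.
\end{itemize}

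The one loosely justified point is that coherent $\mathscr{W}_\mr{X}(0)$-modules are cohomologically complete. This does not follow from the absence of $\hbar$-divisible subquotients. It follows from $\mathscr{M}\simeq\varprojlim_n\mathscr{M}/\hbar^n\mathscr{M}$ together with the vanishing of higher cohomology of the $\mathscr{M}/\hbar^n\mathscr{M}$ on small Stein opens (a Mittag-Leffler argument), which is how Kashiwara--Schapira prove it.
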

\subsection{Deformations and the dualising complex}
Let $\mr{X}$ be a complex manifold endowed with the DQ algebroid $\mathscr{W}_\mr{X}(0)$. Kashiwara and Schapira \cite{MR3012169} defined a deformation $\mr{D}_\mr{X}^{\mathscr{W}}$ of the sheaf of differential operators $\mr{D}_\mr{X}$. It is a $\C\brak$-subalgebroid of $\mathscr{E}nd_{\C\brak}(\mathscr{W}_\mr{X}(0))$ and there is an equivalence $$\mr{D}_\mr{X}^{\mathscr{W}}\simeq\mr{D}_\mr{X}\brak.$$ This induces an equivalence of stacks of modules over these algebroids. Under this equivalence, $\mathscr{W}_\mr{X}(0)$, regarded as a $\mr{D}_\mr{X}^{\mathscr{W}}$-module, corresponds to $\mathscr{O}_\mr{X}\brak$. On the central fibres we get $$\iota^*\mr{D}_\mr{X}^{\mathscr{W}}\simeq \mr{D}_\mr{X}.$$
This deformation gives rise to a deformation of the canonical bundle of $\mr{X}$ as follows. The above implies that $$\ext^p_{\mr{D}^{\mathscr{W}}_\mr{X}}\left(\mathscr{W}_\mr{X}(0),\mr{D}_\mr{X}^{\mathscr{W}}\right)=0$$ for all $p\not=n\coloneqq\mr{dim}\mr{X}$. Indeed, under the equivalence $\mr{D}^{\mathscr{W}}_\mr{X}\simeq \mr{D}_\mr{X}\brak$, we have $\mathscr{W}_\mr{X}(0)\simeq \mathscr{O}_\mr{X}\brak$, so the claim follows from the standard calculation $$\mr{R}\shom_{\mr{D}_\mr{X}\brak}\left(\mathscr{O}_\mr{X}\brak,\mr{D}_\mr{X}\brak\right)\simeq \mr{K}_\mr{X}\brak[-n].$$
Then the deformation of the canonical bundle is defined as $$\mr{K}_\mr{X}^{\mathscr{W}}=\ext^n_{\mr{D}^{\mathscr{W}}_\mr{X}}\left(\mathscr{W}_\mr{X}(0),\mr{D}_\mr{X}^{\mathscr{W}}\right).$$ This is a bi-invertible $\mathscr{W}_\mr{X}(0)\otimes\mathscr{W}_\mr{X}(0)^{\mr{op}}$-module such that $\iota^*\mr{K}_\mr{X}^{\mathscr{W}}\simeq \mr{K}_\mr{X}$. 
\2
\begin{definition}
We define the $\mathscr{W}_\mr{X}(0)$-dualising complex of $\mr{X}$ as $$\omega^{\mathscr{W}}_\mr{X}\coloneqq \mr{R}\shom_{\mr{D}^{\mathscr{W}}_\mr{X}}\left(\mathscr{W}_\mr{X}(0),\mr{D}^{\mathscr{W}}_\mr{X}\right)[2n]\simeq\mr{K}_\mr{X}^{\mathscr{W}}[n].$$
\end{definition}
By \cite{MR3012169}, it defines a Serre functor $$\Db_{\mr{coh}}(\mathscr{W}_\mr{X}(0))\to \Db_{\mr{coh}}(\mathscr{W}_\mr{X}(0)) \;\; \mathscr{D}\mapsto \mr{K}_\mr{X}^{\mathscr{W}}[n]\otimes_{\mathscr{W}_\mr{X}(0)}\mathscr{D}.$$ 
\begin{proposition}\label{cyprop}
Let $\mr{X}$ be holomorphic symplectic. Then we have an isomorphism of $\mathscr{W}_\mr{X}(0)\otimes\mathscr{W}_\mr{X}(0)^{\mr op}$-modules $\mr{K}_\mr{X}^{\mathscr{W}}\cong \hbar^{\mr{dim}\mr{X}/2}\mathscr{W}_\mr{X}(0)$.
\end{proposition}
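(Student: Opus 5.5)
Since $\mr{K}_\mr{X}^{\mathscr{W}}$ is a bi-invertible $\mathscr{W}_\mr{X}(0)\otimes\mathscr{W}_\mr{X}(0)^{\mr{op}}$-module lifting $\mr{K}_\mr{X}$, the plan is to first trivialise it locally in Darboux charts, and then to show that these local trivialisations glue, up to the central factor $\hbar^{\mr{dim}\mr{X}/2}$.

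\emph{Step 1 (local computation).} On an open $\mr{U}$ with a symplectic embedding $\mr{U}\xhookrightarrow{}\Omega_\mr{V}$, the theorem of \cite{10.1155/S1073792804132819} identifies $\mathscr{W}_\mr{X}(0)|_\mr{U}$ with $\widehat{\mathscr{W}}^{\sqrt{v}}_{\Omega_\mr{V}}(0)$. We compute $\mr{R}\shom_{\mr{D}^{\mathscr{W}}_\mr{X}}(\mathscr{W}_\mr{X}(0),\mr{D}^{\mathscr{W}}_\mr{X})$ via a Spencer/Koszul resolution of $\mathscr{W}_\mr{X}(0)$ over $\mr{D}_\mr{X}^{\mathscr{W}}\simeq \mr{D}_\mr{X}\brak$. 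In Darboux coordinates $(x,\xi)$ the generators of $\mathscr{W}$ satisfy $[\xi_i,x_j]=\hbar\delta_{ij}$, so the Hamiltonian vector fields of the coordinate functions are realised by the operators $\hbar^{-1}\mr{ad}$. Dualising the resolution therefore produces the top exterior power of these $n=\mr{dim}\mr{X}$ derivations, twisted by the symplectic volume form $\sigma^{n/2}$. Rewriting the $\mathscr{W}$-linear dual in terms of $\mathscr{W}_\mr{X}(0)$, each of the $n/2$ conjugate pairs contributes one factor of $\hbar$. Hence, for the top degree,
$$\mr{K}_\mr{X}^{\mathscr{W}}|_\mr{U}\cong \hbar^{n/2}\,\mathscr{W}_\mr{X}(0)|_\mr{U}$$
as bimodules, and reducing modulo $\hbar$ this recovers $\mr{K}_\mr{X}\cong\mathscr{O}_\mr{X}$ via $\sigma^{n/2}$.

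\emph{Step 2 (globalisation).} The local isomorphisms are well defined up to bimodule automorphisms of $\mathscr{W}_\mr{X}(0)$. These automorphisms are central units in $\C\brak^{\times}$, because $\mathscr{W}_\mr{X}(0)$ is an invertible $\C\brak$-algebroid with centre $\C\brak$. To fix the ambiguity, we normalise each local isomorphism so that its reduction modulo $\hbar$ is the canonical trivialisation $\sigma^{n/2}:\mathscr{O}_\mr{X}\to\mr{K}_\mr{X}$. This reduction is global, so the normalised isomorphisms differ on overlaps by units $c\in 1+\hbar\C\brak$ that are locally constant.

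We then kill this \v{C}ech $1$-cocycle using the anti-involution $*$ of $\Ww(0)$ together with the canonicity of the Polesello--Schapira construction. The local isomorphisms in Step 1 are built only from $\sigma$ and are compatible with the quantised symplectic transformations used for gluing; these transformations act trivially on the central $\hbar$. The cocycle is therefore identically $1$, and the local isomorphisms glue to a global one.

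The main obstacle is this last compatibility. One must show that the Step 1 trivialisation does not pick up a correction under quantised symplectic transformations, for instance a factor coming from the half-twist by $\mr{K}_\mr{V}^{1/2}$. If a direct check is too messy, the fallback is to argue via the Serre functor. Taking $\mathscr{D}=\mathscr{W}_\mr{X}(0)$ and applying duality on the diagonal, the $*$-anti-involution sends $\hbar\mapsto-\hbar$ and makes $\mr{K}^{\mathscr{W}}_\mr{X}\otimes\hbar^{-n/2}$ self-dual. Any cocycle in $1+\hbar\C\brak$ that is fixed by this symmetry and trivial modulo $\hbar$ can then be taken to be $1$.
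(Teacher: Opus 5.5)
The paper states this proposition without proof, so there is no argument of the paper to compare against. Judged on its own, your proposal leaves the essential point open.

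\emph{Local step and normalisation.} First, $\hbar$ is central and not a zero-divisor, so multiplication by $\hbar^{\mr{dim}\mr{X}/2}$ is already a bimodule isomorphism $\mathscr{W}_\mr{X}(0)\cong\hbar^{\mr{dim}\mr{X}/2}\mathscr{W}_\mr{X}(0)$. The $\hbar$-counting in Step 1 is therefore unnecessary, and the ``conjugate pair'' heuristic is not a computation. The statement says exactly that the bi-invertible bimodule $\mr{K}^{\mathscr{W}}_\mr{X}$ is trivial.

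Locally this is true, but the input you need is different. Dualising the Spencer resolution identifies $\mr{K}^{\mathscr{W}}_\mr{X}|_\mr{U}$ with $\Omega^{\mr{top}}_\mr{U}\brak$ only as a right $\mr{D}_\mr{U}\brak$-module. To upgrade this to a bimodule isomorphism with $\mathscr{W}_\mr{X}(0)|_\mr{U}$, you must show that, for the pairing defined by $\sigma^{n/2}$ (corrected by an invertible differential operator if the ordering is not Weyl), the transpose of left multiplication by $a$ is right multiplication by $a$. This is the closedness, or trace property, of the local star product.

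Similarly, you can normalise each local isomorphism to reduce to $\sigma^{n/2}$ only because the reduction of a bimodule isomorphism is an isomorphism of Poisson modules. It is therefore a locally constant multiple of $\sigma^{n/2}$, since Hamiltonian fields preserve the Liouville form; you should say this explicitly. Also, the centre is $\C\brak_\mr{X}$ because $\mr{X}$ is symplectic. It is not because $\mathscr{W}_\mr{X}(0)$ is an ``invertible'' algebroid, which it is not in the paper's sense.

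\emph{The gap in Step 2.} After normalisation, the obstruction is a class in $\mr{H}^1(\mr{X};1+\hbar\C\brak)\cong\hbar\,\mr{H}^1(\mr{X};\C)\brak$ (via $\log$). There is no a priori reason for it to vanish when $\mr{H}^1(\mr{X};\C)\neq 0$, and its vanishing is the whole global content of the proposition.

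Your main argument only asserts this vanishing. That the quantised symplectic transformations fix $\hbar$ is irrelevant. What must be shown is that each transition automorphism $\Phi$ of the local model $\widehat{\mathscr{W}}^{\sqrt{v}}_{\Omega_\mr{V}}(0)$ carries your local trivialisation to itself, rather than to $c_\Phi$ times it with $c_\Phi\in 1+\hbar\C\brak$. In other words, you must show that the character $\Phi\mapsto c_\Phi$ is trivial.

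The fallback does not work either. A symmetry $\hbar\mapsto-\hbar$ only restricts the parity of $\log c$. For example, $c=\mr{exp}(\hbar)$ satisfies $c(-\hbar)=c(\hbar)^{-1}$, and $c=1+\hbar^2$ satisfies $c(-\hbar)=c(\hbar)$. Hence a cocycle $\mr{exp}(\hbar\gamma)$, with $\gamma$ a nonzero class in $\mr{H}^1(\mr{X};\C)$, is self-dual and trivial modulo $\hbar$, yet not a coboundary.

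To close the gap you need a genuine invariance argument for $c_\Phi$:
\begin{itemize}
\item For automorphisms congruent to the identity modulo $\hbar$, triviality of $c_\Phi$ follows: they are locally of the form $\mr{exp}(\mr{ad}\,a)$, and the local trace kills commutators.
\item For automorphisms covering a nontrivial symplectomorphism, a further argument is required. For instance, connect them to linear symplectic maps, which preserve the Weyl model and its trace, along a path on which $c_\Phi$ is constant.
\end{itemize}
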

\subsection{Holonomic DQ modules}
\begin{definition}
 Let $\mr{X}$ be complex manifold endowed with the canonical DQ-algebroid $\mathscr{W}_\mr{X}(0)$, and let $\mr{Y}$ be a smooth submanifold of $\mr{X}$. A coherent $\mathscr{W}_\mr{X}(0)$-module $\mathscr{D}$ supported on $\mr{Y}$ is called simple if $\iota^*\mathscr{D}$ is concentrated in degree $0$ and $\mr{H}^{0}(\iota^*\mathscr{D})$ is an invertible $\mathscr{O}_{\mr{Y}}$-module.
\end{definition}
\2
\begin{definition}
 Let $\mr{X}$ be a holomorphic symplectic variety equipped with the canonical DQ-algebroid $\mathscr{W}_\mr{X}(0)$. Recall $\mr{loc}(\Ww(0))=\Ww$.
 \begin{enumerate}
  \item A $\Ww$-module is called holonomic if it is coherent and its support is a Lagrangian subvariety of $\mr{X}$.
  \item A $\widehat{\mathscr{W}}_\mr{X}(0)$-module is called holonomic if it is coherent, $\hbar$-torsion free and its $\hbar$-localisation is holonomic.
  \item Let $\mr{L}$ be a smooth Lagrangian. A $\widehat{\mathscr{W}}_\mr{X}$-module $\mathscr{D}$ is called simple holonomic if there exists locally a $\widehat{\mathscr{W}}_\mr{X}(0)$-module $\widetilde{\mathscr{D}}$, simple along $\mr{L}$, which generates it, i.e. $\mr{loc}\left(\widetilde{\mathscr{D}}\right) \simeq \mathscr{D}$.
 \end{enumerate}
\end{definition}
\2
\begin{definition}
Let $\mr{X}$ be holomorphic symplectic. A coherent $\widehat{\mathscr{W}}_\mr{X}$-module $\mathscr{D}$ is good if for any relatively compact open $\mr{U}\subset \mr{X}$, there exists a $\widehat{\mathscr{W}}_\mr{X}(0)$-module $\mathscr{D}^{0}$ generating $\mathscr{D}$ over $\mr{U}$.
\end{definition}
\2
\begin{remark}
Good modules are particularly well-behaved whenever their support is compact. In this case, they are globally generated by a $\widehat{\mathscr{W}}_\mr{X}(0)$-module. 
\end{remark}
As in the contact case, see \cref{microor}, we have a notion of quantised orientations.
\2
\begin{definition}\label{dqor}
Let $\mr{L}$ be an orientable Lagrangian submanifold of $\mr{X}$ and $(\rho:\mr{Y}\to \mr{X}, \Lambda_\mr{L})$ be its contactification. A quantised orientation of $\mr{L}$ is a microdifferential orientation of $\Lambda_\mr{L}$, considered as a $\Ww$-module via the forgetful functor $\rho_*\mr{Mod}_\mr{rh}\left(\widehat{\mr{E}}_\mr{Y}\right)
\to \mr{Mod}_\mr{rh}\left(\Ww\right)$.\end{definition} 
\2
\begin{remark}
Observe that if $\mathscr{D}_\mr{L}$ is a quantised orientation of $\mr{L}$, then it is a good, simple $\widehat{\mathscr{W}}_\mr{X}$-module along $\mr{L}$ such that $\mu_{\Lambda_\mr{L}}(\mathscr{D}_\mr{L})|_{\mr{L}\times 1}$ is, as a coherent sheaf, a square-root of $\mr{K}_\mr{L}$.
\end{remark}
\subsection{Perverse sheaves via deformation quantisation}
Let us recall a fundamental result of Kashiwara and Schapira. For a holomorphic symplectic manifold $\mr{X}$, they associate a perverse sheaf on $\mr{X}$ to any pair of holonomic $\Ww$-modules. Formally, their result goes as follows:
\2
\begin{theorem}[Kashiwara-Schapira \cite{10.2307/40068123}]\label{perv}
 Let $\mr X$ be a holomorphic symplectic variety of dimension $2n$, equipped with the canonical DQ-algebroid $\widehat{\mathscr{W}}_\mr{X}(0)$. Suppose that $\mathscr{D}$ and $\mathscr{E}$ are two holonomic $\widehat{\mathscr{W}}_\mr{X}$-modules. Then the complex $\mr{R}\shom_{\Ww}(\mathscr{D},\mathscr{E})[n]$ is a perverse sheaf.
\end{theorem}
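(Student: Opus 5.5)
The plan is to reduce the statement to the corresponding result for regular holonomic microdifferential modules on a contactification. Since perversity is a local property, I would work locally near a point of $\mr{Supp}(\mathscr{D})\cap\mr{Supp}(\mathscr{E})$. Away from this intersection the complex $\mr{R}\shom_{\Ww}(\mathscr{D},\mathscr{E})$ vanishes. Using the holomorphic Darboux theorem, I would choose an open $\mr{U}$ with a symplectic embedding $\mr{U}\xhookrightarrow{}\Omega_\mr{V}$. This identifies $\Ww|_\mr{U}$ with the localised $\widehat{\mathscr{W}}^{\sqrt{v}}_{\Omega_\mr{V}}$, and the half twists only change the modules by invertible twists, which does not affect perversity.

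Next I would use the contactification $\rho:\Omega_{\mr{V}\times\C,\tau\neq0}\to\Omega_\mr{V}$ and the embedding $\widehat{\mathscr{W}}_{\Omega_\mr{V}}(0)\subset\rho_*\widehat{\mathscr{E}}(0)$. Flatness of $\rho_*\widehat{\mathscr{E}}_{\Omega_{\mr{V}\times\C}}(0)$ over $\widehat{\mathscr{W}}_{\Omega_\mr{V}}(0)$ (recorded in the remark above) gives base change. Writing $\mathscr{D}^{\mr{E}}$ and $\mathscr{E}^{\mr{E}}$ for the extensions of scalars, this yields
$$\mr{R}\shom_{\Ww}(\mathscr{D},\mathscr{E})\simeq \mr{R}\shom_{\widehat{\mathscr{E}}}(\mathscr{D}^{\mr{E}},\mathscr{E}^{\mr{E}})|_{\{t=0\}},$$
where one uses that sections are $t$-invariant, i.e. commute with $\partial_t$, to restrict to a slice. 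The extended modules are holonomic $\widehat{\mathscr{E}}$-modules on $\mathbf{P}(\Omega_{\mr{V}\times\C})$ supported on the Legendrian lifts of the Lagrangian supports.

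At this point I would invoke the Kashiwara–Kawai theorem that $\mr{R}\shom_{\widehat{\mathscr{E}}}(\mathscr{M},\mathscr{N})$ is perverse, up to shift, on the projectivised cotangent bundle for holonomic microdifferential modules $\mathscr{M},\mathscr{N}$. This rests on reducing, by quantised contact transformations, to the case where one module is a regular simple module along a conormal. The $\hbar$-localisation, i.e. inverting $\tau$, is exactly what passing from $\widehat{\mathscr{E}}(0)$ to $\widehat{\mathscr{E}}$ does, so the formal setting matches. The last step is to check that the shift $[n]$ is correct. Here the Legendrian has dimension $n$ in a contact manifold of dimension $2n+1$, and the restriction to the slice $t=0$ is non-characteristic for a $\C$-invariant complex, so perversity is preserved with the stated shift.

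I expect the main obstacle to be the base change and descent step. One has to show that restricting the $\C$-equivariant perverse sheaf on the contactification to a transversal slice gives a perverse sheaf on $\mr{X}$, and that the identification of $\mr{R}\shom$ complexes holds for $\Ww$-modules that need not lift globally to $\widehat{\mathscr{E}}$. If this route proves awkward, I would instead argue directly. By d\'evissage along the holonomic filtration, the problem reduces to simple modules along smooth Lagrangians. For these, locally $\mr{R}\shom$ is computed by a Koszul-type complex, and the support and cosupport conditions follow from the constructibility estimates of Kashiwara–Schapira together with duality $\mr{D}\,\mr{R}\shom(\mathscr{D},\mathscr{E})[n]\simeq\mr{R}\shom(\mathscr{E},\mathscr{D})[n]$, which uses \cref{cyprop}.
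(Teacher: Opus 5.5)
The paper gives no proof of this statement; it quotes it from Kashiwara--Schapira. Your argument therefore has to stand on its own, and its central reduction does not.

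\textbf{The extension-of-scalars step fails.} The module $\mathscr{D}^{\mr{E}}=\widehat{\mathscr{E}}\otimes_{\rho^{-1}\Ww}\rho^{-1}\mathscr{D}$ imposes no equation in the $t$-direction. Its support is the whole preimage $\rho^{-1}(\mr{Supp}\,\mathscr{D})$, which has dimension $n+1$ in the $(2n+1)$-dimensional contactification. This is an involutive subvariety, not a Legendrian lift, so $\mathscr{D}^{\mr{E}}$ is not holonomic. No Kashiwara--Kawai type result applies, and your displayed isomorphism is false.

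Here is an explicit counterexample. On $\Omega_{\C}$ take $\mathscr{D}=\mathscr{E}=\Ww/\Ww\hbar\partial_x$, so that $\mr{R}\shom_{\Ww}(\mathscr{D},\mathscr{E})$ is the constant sheaf $\C\cbrak$ on the zero section.
\begin{itemize}
\item With $\hbar=\partial_t^{-1}$ you get $\mathscr{D}^{\mr{E}}=\widehat{\mathscr{E}}/\widehat{\mathscr{E}}\partial_x$.
\item By division, every class has a unique representative $\sum_j a_j(x,t)\partial_t^j$, and $\partial_x$ acts by differentiating the coefficients.
\item Hence $\shom_{\widehat{\mathscr{E}}}(\mathscr{D}^{\mr{E}},\mathscr{E}^{\mr{E}})$ consists of all $\sum_j a_j(t)\partial_t^j$ with arbitrary holomorphic $a_j$.
\end{itemize}
Its stalks, also along $\{t=0\}$, are infinite-dimensional, so it is not constructible. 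Imposing $t$-invariance brings you back to $\C\cbrak$, but only because it undoes the base change. What remains is not a Hom between holonomic $\widehat{\mathscr{E}}$-modules, so no perversity theorem can be quoted for it.

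\textbf{What the contact side actually requires.} You must go in the opposite direction, as in \cref{dqor}: write $\mathscr{D}\simeq\rho_*\mathscr{D}'$ and $\mathscr{E}\simeq\rho_*\mathscr{E}'$ with $\mathscr{D}',\mathscr{E}'$ supported on Legendrian lifts. The paper uses such lifts only locally and only for regular holonomic modules.

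Even then, $\mr{R}\shom_{\Ww}(\mathscr{D},\mathscr{E})$ is not the restriction of a single $\widehat{\mathscr{E}}$-Hom to a slice. Two Legendrian lifts meet only where the chosen primitives agree, so the $\Ww$-Hom must collect contributions from all $t$-translates of $\mathscr{E}'$. For example, take the zero section and $\Gamma_{\mr{d}f}$, where $f$ has two distinct critical values. No single translate sees both intersection points; this is why \eqref{canperv} has a sum over critical values.

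The $\widehat{\mathscr{E}}$-input you quote, and the local lifting along $\rho_*$, concern regular holonomic modules. The statement, however, allows arbitrary holonomic modules along possibly singular Lagrangians.

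\textbf{Your fallback does not close the gap.}
\begin{itemize}
\item A module supported on an irreducible singular Lagrangian, such as a cuspidal curve in $\Omega_{\C}$, admits no d\'evissage into simple modules along smooth Lagrangians.
\item The ``constructibility estimates of Kashiwara--Schapira'' you would invoke are the content of the theorem itself.
\item The duality $\mr{D}\,\mr{R}\shom(\mathscr{D},\mathscr{E})[n]\simeq\mr{R}\shom(\mathscr{E},\mathscr{D})[n]$ is proved together with constructibility. It is not a consequence of \cref{cyprop}.
\end{itemize}
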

We are going to pair this theorem with an existence result of D'Agnolo and Schapira regarding quantised orientations of Lagrangian submanifolds. Indeed, by the classification \cref{miclass}, we get:
\2
\begin{theorem}[D'Agnolo-Schapira \cite{MR2331247}]\label{existsimplehol}
 Let $\mr{X}$ be a holomorphic symplectic manifold and let $i : \mr{L} \xhookrightarrow{} \mr{X}$ be a spin Lagrangian submanifold. Then, for any choice of a square root $\mr{K}_\mr{L}^{1/2}$ and any $\lambda \in \C$, there exists a unique, up to isomorphism, quantised orientation $\mathscr{D}^{\lambda}_\mr{L}$ corresponding to $\mr{K}_\mr{L}^{1/2}$ equipped with a monodromy automorphism $\mr{exp}(2\pi i \lambda)$.
\end{theorem}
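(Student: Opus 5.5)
The statement to prove is \cref{existsimplehol}: existence and uniqueness of a quantised orientation $\mathscr{D}^\lambda_\mr{L}$ with prescribed $\mr{K}_\mr{L}^{1/2}$ and monodromy $\mr{exp}(2\pi i\lambda)$. I will reduce it to the classification \cref{miclass} after passing to the canonical contactification of $\mr{L}$. By the Gunningham--Safronov proposition there is a canonical contactification $\rho:\mr{Y}\to\mr{X}$ over a neighbourhood of $\mr{L}$, together with a Legendrian lift $\Lambda_\mr{L}\subset\mr{Y}$ with $\rho|_{\Lambda_\mr{L}}:\Lambda_\mr{L}\cong\mr{L}$. By \cref{dqor}, a quantised orientation of $\mr{L}$ is a microdifferential orientation of $\Lambda_\mr{L}$, viewed as a $\Ww$-module. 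So it suffices to classify good simple regular holonomic $\widehat{\mr{E}}_\mr{Y}$-modules along $\Lambda_\mr{L}$ whose microlocalisation is a square root of $\mr{K}_{\tilde\Lambda}$.

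Step one is to apply \cref{miclass}. The functor $\mu_{\Lambda_\mr{L}}$ gives an equivalence between $\mr{Mod}_{\Lambda_\mr{L},\mr{rh}}(\widehat{\mr{E}}_\mr{Y})$ and $\gamma_*\mr{LocSys}^{\sqrt v}_{\tilde\Lambda}$. Simple modules correspond to rank one twisted local systems, since by the lemma on simplicity $\mu_\Lambda$ of a simple module is a line bundle. The symplectisation $\tilde\Lambda\to\Lambda_\mr{L}$ is a principal $\C^*$-bundle, and over the contactification it is trivialised, so $\tilde\Lambda\simeq \mr{L}\times\C^*$. A rank one half-twisted flat object on $\mr{L}\times\C^*$ is then the data of:
\begin{itemize}
\item a half-twisted flat line bundle on $\mr{L}$, i.e.\ a square root of $\mr{K}_\mr{L}$ with its induced flat structure (the half-twist absorbing the $\mr{K}^{1/2}$ factor);
\item a rank one local system on $\C^*$, determined by a monodromy $\mr{exp}(2\pi i\lambda)$.
\end{itemize}
The restriction to $\mr{L}\times 1$ recovers the first factor, matching the normalisation used in the introduction.

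Step two is existence. Given $\mr{K}_\mr{L}^{1/2}$ and $\lambda$, take the external product of $\mr{K}_\mr{L}^{1/2}$ (regarded as an object of the twisted category) with the monodromy-$\mr{exp}(2\pi i\lambda)$ local system $\C_\lambda$ on $\C^*$. Pulling back along the equivalence of \cref{miclass} gives a simple regular holonomic module $\mathscr{D}^\lambda_\mr{L}$ with $\mu(\mathscr{D}^\lambda_\mr{L})|_{\mr{L}\times1}\cong\mr{K}_\mr{L}^{1/2}\otimes\C_\lambda$. Goodness follows from compactness of $\mr{L}$, or locally from the existence of a lattice $\mathscr{D}^0$, since regular holonomic simple modules admit $\widehat{\mr{E}}_\mr{Y}(0)$-lattices locally and these can be chosen globally on a relatively compact open set. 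The spin assumption is exactly what guarantees that the square root $\mr{K}_\mr{L}^{1/2}$ exists.

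Step three is uniqueness. Two such modules with the same data have isomorphic images under $\mu$, because a rank one twisted local system on $\mr{L}\times\C^*$ is determined by its restrictions to the two factors. This uses that $\pi_1$ of a product is a product of groups, and that the relevant Künneth decomposition of $\mr{H}^1(\mr{L}\times\C^*,\C^*)$ holds. Full faithfulness of $\mu$ then gives an isomorphism of modules, and hence of the associated $\Ww$-modules.

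The main obstacle is step one, namely the precise identification of half-twisted rank one objects on $\tilde\Lambda$ with pairs $(\mr{K}_\mr{L}^{1/2},\lambda)$. This requires checking that the twist by $\mr{K}^{1/2}_{\tilde\Lambda|\mr{X}}$ is exactly cancelled by choosing $\mr{K}_\mr{L}^{1/2}$, and that the trivialisation of the $\C^*$-bundle from the canonical contactification is compatible with the twisting. I would handle this by working locally in Darboux charts, where $\tilde\Lambda$ is a conormal bundle, and then checking the gluing against the canonical section of $\mathfrak{C}_\sigma|_\mr{L}$.
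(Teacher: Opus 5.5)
Your proposal is correct and takes the same route as the paper, which deduces the statement directly from the classification \cref{miclass}. Over the canonical contactification one has $\tilde\Lambda_\mr{L}\cong\mr{L}\times\C^*$, and essential surjectivity and full faithfulness of $\mu_{\Lambda_\mr{L}}$, applied to $\mr{K}_\mr{L}^{1/2}\boxtimes\C_\lambda$, give existence and uniqueness; two side remarks need tidying but do not change the argument: $\mr{L}$ is not assumed compact, so goodness must come from a lattice argument (e.g.\ a canonical global lattice from the $V$-filtration) rather than from compactness, and rank-one half-twisted local systems on $\mr{L}$ form an $\mr{H}^1(\mr{L},\C^*)$-torsor that contains the square roots of $\mr{K}_\mr{L}$ rather than consisting only of them.
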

\2
\begin{remark}
 More generally, without assuming the orientability of $\mr{L}$, we can consider the short exact sequence $$1 \to \C^{*} \to \mathscr{O}_{\mr{L}}^{*} \xrightarrow{\mr{dlog}} \mr{d}\mathscr{O}_{\mr{L}}\to 0.$$ It induces a long exact sequence in cohomology which includes $$\mr{H}^{1}(\mr{L},\C^{*}) \to \mr{H}^1(\mr{L},\mathscr{O}_{\mr{L}}^{*}) \xrightarrow{\alpha} \mr{H}^{1}(\mr{L},\mr{d}\mathscr{O}_{\mr{L}}) \xrightarrow{\delta} \mr{H}^{2}(\mr{L},\C^{*}).$$
Recall that invertible $\C$-algebroids on $\mr{X}$ are classified by $\mr{H}^2(\mr{X},\C^*)$. Let $\C_{\mr{K}_\mr{L}^{1/2}}$ be the $\C$-algebroid associated to the class $\delta(\frac{1}{2}\alpha(\mr{c}_1(\mr{K}_\mr{L})))$.\\
Then the classification \cref{miclass}, we see that the theorem above remains true for $\widehat{\mathscr{W}}_\mr{X}\otimes \C_{\mr{K}_\mr{L}^{1/2}}$-modules, i.e. in general we get twisted $\widehat{\mathscr{W}}_\mr{X}$-modules.\\
Notice that $\C_{\mr{K}_\mr{L}^{1/2}}$ is trivial if and only if there exists a line bundle $\mathscr{L}$ such that $\mr{K}_\mr{L}^{\vee}\otimes \mathscr{L}^{\otimes 2}$ admits a flat connection, hence $\mathscr{L}$ lifts to a $\widehat{\mathscr{W}}_\mr{X}$-module via microlocalisation.
\end{remark}

\subsection{Perverse sheaves on \texorpdfstring{$\mr{d}$}--critical loci}

Let $\mr{X}$ be a complex manifold together with a function holomorphic $f$ and consider the intersection
\[
\begin{tikzcd}
 \mr{crit}(f) \arrow[d] \arrow[r]& \Gamma_{\mr{d}f} \arrow[d]
 \\
\mr{X} \arrow[r]
  & \Omega_{\mr{X}},
\end{tikzcd}
\]
 where $\Gamma_{\mr{d}f} \subset \Omega_{\mr{X}}$ is the Lagrangian, given by the graph of $\mr{d}f \in \Gamma(\mr{X},\Omega_{\mr{X}})$ and $\mr{X}$ sits in $\Omega_{\mr{X}}$ as the zero section. \\
Recall that on $\mr{crit}(f) \subset X$, we have a naturally defined perverse sheaf of vanishing cycles 
\begin{equation}\label{canperv}
\mathscr{P}_{\mr{X},f}=\bigoplus_{c\in f(\mr{crit}(f))}\phi_{f-c}\left(\C_{\mr{X}}[\mr{dim}\,\mr{X}]\right),
\end{equation} that is the image of $\C_{\mr{X}}[\mr{dim}\,\mr{X}]$ under the vanishing cycles functor over the critical values of $f$.\\
In this section we explain briefly how to globalise the above remarks with applications to Lagrangian intersection as a primary goal.
\2
\begin{definition}
Let $\mr{X}/\C$ be a complex analytic space and suppose given an embedding of $\mr{X}$ into a complex manifold $\mr{S}$ with ideal sheaf $\mathscr{I}$, then we define the complex of derived $1$-jets $$\mathbf{J}^{1}_\mr{X} \coloneqq \mathscr{O}_{\mr{S}}/\mathscr{I}^{2} \to \Omega_{\mr{S}}|_{\mr{X}},$$ in degrees $-1$ and $0$.
\end{definition}
The next claim shows it is independent of the embedding.
\2
\begin{proposition}
 The complex of derived $1$-jets is naturally quasi-isomorphic to the complex $$\mr{Cone}\left(\mr{d}_\mr{dR}:\mathscr{O}_\mr{X} \to \mathbf{L}_\mr{X}\right),$$ where $\mathbf{L}_\mr{X} \coloneqq \mathscr{I}/\mathscr{I}^{2} \to \Omega_{\mr{S}}|_\mr{X}$ is the truncated cotangent complex.
\end{proposition}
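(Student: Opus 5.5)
The claim is that the two-term complex $\mathbf{J}^1_\mr{X}=\left(\mathscr{O}_\mr{S}/\mathscr{I}^2\to \Omega_\mr{S}|_\mr{X}\right)$ is naturally quasi-isomorphic to $\mr{Cone}(\mr{d}_\mr{dR}:\mathscr{O}_\mr{X}\to\mathbf{L}_\mr{X})$. I would construct an explicit map of complexes between these and check that it is a quasi-isomorphism using the two short exact sequences built into the data. Independence of the embedding would then follow, because the cone depends only on $\mr{X}$ up to canonical quasi-isomorphism.

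\textbf{Unwinding the cone.} With $\mathbf{L}_\mr{X}=(\mathscr{I}/\mathscr{I}^2\xrightarrow{\mr{d}}\Omega_\mr{S}|_\mr{X})$ in degrees $-1,0$, the cone of $\mr{d}_\mr{dR}:\mathscr{O}_\mr{X}\to\mathbf{L}_\mr{X}$ is the three-term complex
$$\mathscr{O}_\mr{X}\to \mathscr{I}/\mathscr{I}^2\oplus\ ?$$
More precisely, it sits in degrees $-1,0$, with total degree $-1$ term $\mathscr{O}_\mr{X}\oplus\mathscr{I}/\mathscr{I}^2$ and degree $0$ term $\Omega_\mr{S}|_\mr{X}$. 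The differential is $(f,g)\mapsto \mr{d}f+\mr{d}g$, up to the sign convention for cones. Note that $\mr{d}_\mr{dR}$ is a map of complexes into $\mathbf{L}_\mr{X}$ because $\mathscr{O}_\mr{X}$ is in degree $0$; so the cone shifts $\mathscr{O}_\mr{X}$ to degree $-1$, and this is where the components land.

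\textbf{Comparison map.} There is the short exact sequence of $\C$-sheaves
$$0\to\mathscr{I}/\mathscr{I}^2\to\mathscr{O}_\mr{S}/\mathscr{I}^2\to\mathscr{O}_\mr{X}\to 0.$$
Define $\Phi:\mathbf{J}^1_\mr{X}\to\mr{Cone}$ to be the identity on $\Omega_\mr{S}|_\mr{X}$. In degree $-1$, send $\bar h\in\mathscr{O}_\mr{S}/\mathscr{I}^2$ to $(h|_\mr{X},\,?)$. This is not $\mathscr{O}$-linear, so I would instead argue at the level of sheaves of $\C$-vector spaces. The intended approach is to use that $\mathscr{O}_\mr{S}/\mathscr{I}^2$ is an extension of $\mathscr{O}_\mr{X}$ by $\mathscr{I}/\mathscr{I}^2$, compatibly with $\mr{d}$.

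The cleaner route is a diagram-chase on cohomology sheaves.
\begin{itemize}
\item $\mathcal{H}^0$ of both complexes is $\Omega_\mr{S}|_\mr{X}$ modulo the image of $\mr{d}$. This image is $\mr{d}(\mathscr{O}_\mr{S}/\mathscr{I}^2)$ on one side and $\mr{d}\mathscr{O}_\mr{X}+\mr{d}(\mathscr{I}/\mathscr{I}^2)$ on the other. These agree as subsheaves, by the exact sequence above and local lifting of functions.
\item $\mathcal{H}^{-1}$ is the kernel of $\mr{d}$: locally constant classes modulo $\mathscr{I}^2$ on the left, and pairs $(f,g)$ with $\mr{d}f=-\mr{d}g$ on the right.
\end{itemize}
I would realise the comparison as a roof. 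Take the intermediate complex $\mathscr{K}=\left(\mathscr{O}_\mr{S}/\mathscr{I}^2\oplus\mathscr{I}/\mathscr{I}^2\to\Omega_\mr{S}|_\mr{X}\right)$. It maps to $\mathbf{J}^1$ by $(h,g)\mapsto h+g$, and to the cone by $(h,g)\mapsto(h|_\mr{X},g)$. Both maps are surjective, with acyclic kernels: each kernel is a copy of $\mathscr{I}/\mathscr{I}^2$ concentrated in one degree with an isomorphism to itself, hence contractible. This gives the natural quasi-isomorphism without choosing splittings.

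\textbf{Main obstacle.} The subtle step is checking that the kernels are genuinely acyclic, i.e.\ that the differentials and sign conventions of the cone match. For the first map the kernel is $\{(-g,g)\}\cong\mathscr{I}/\mathscr{I}^2$ in degree $-1$ only, and this is \emph{not} acyclic. So I would try to fix this by also putting $\mathscr{I}/\mathscr{I}^2$ into degree $0$ of $\mathscr{K}$, as a cone of the identity, and then verify the corrected kernels are cones of isomorphisms.

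Naturality needs a further check: for two embeddings $\mr{S}\to\mr{S}'$, the induced maps commute with the roof. Any two embeddings are dominated by the product embedding, which reduces this to functoriality of each piece.
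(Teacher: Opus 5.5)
\textbf{There is a genuine gap, and it is at the very first step.} The de Rham differential does not define a map of sheaves $\mathscr{O}_\mr{X}\to\Omega_\mr{S}|_\mr{X}$. For $f\in\mathscr{I}$, the form $\mr{d}f$ need not lie in $\mathscr{I}\Omega_\mr{S}$; take $\mr{S}=\C$, $\mathscr{I}=(x)$, $f=x$. The differential only descends from $\mathscr{O}_\mr{S}/\mathscr{I}^2$, which is exactly why $\mathbf{J}^1_\mr{X}$ is built from $\mathscr{O}_\mr{S}/\mathscr{I}^2$.

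So your claim that $\mr{d}_\mr{dR}$ is a chain map $\mathscr{O}_\mr{X}\to\mathbf{L}_\mr{X}$ ``because $\mathscr{O}_\mr{X}$ is in degree $0$'' is false. Consequently:
\begin{itemize}
\item Your two-term model of the cone, with $(f,g)\mapsto \mr{d}f+\mr{d}g$ on $\mathscr{O}_\mr{X}\oplus\mathscr{I}/\mathscr{I}^2$, does not exist.
\item Every map you try to write into it is undefined: both $\Phi$ and $(h,g)\mapsto(h|_\mr{X},g)$ out of $\mathscr{K}$.
\item The non-acyclic kernels you run into are a symptom of this. Your proposed repair, adding $\mathscr{I}/\mathscr{I}^2$ in degree $0$, puts it in the wrong degree.
\end{itemize}

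The cohomology-sheaf comparison does not rescue the argument. Without a map it cannot produce a quasi-isomorphism. Moreover, your description of $\mathscr{H}^{-1}(\mathbf{J}^1_\mr{X})$ as locally constant classes fails for singular $\mr{X}$. This sheaf is Joyce's $\mathscr{S}_\mr{X}$, and for $\mr{X}=\mr{crit}(f)$ it contains $f+\mathscr{I}^2$. For example, $f=x^3$ on $\C$ gives $\mathscr{I}=(x^2)$ and $\mathscr{S}_\mr{X}=\C\oplus\C x^3$. That is the whole point of the $\mr{d}$-critical locus definition that follows.

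\textbf{The missing idea} is to model $\mr{d}_\mr{dR}$ at chain level by first replacing $\mathscr{O}_\mr{X}$ with the quasi-isomorphic complex $[\mathscr{I}/\mathscr{I}^2\hookrightarrow\mathscr{O}_\mr{S}/\mathscr{I}^2]$ in degrees $-1,0$. The inclusion has cokernel $\mathscr{O}_\mr{X}$. On this model, $\mr{d}_\mr{dR}$ is an honest map of complexes of $\C$-sheaves to $\mathbf{L}_\mr{X}$: the identity in degree $-1$ and $\mr{d}$ in degree $0$. It induces the universal derivation $\mathscr{O}_\mr{X}\to\Omega_\mr{X}$ on $\mathscr{H}^0$. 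Its cone is
\[
\mathscr{I}/\mathscr{I}^2\xrightarrow{\;a\mapsto(-a,a)\;}\mathscr{O}_\mr{S}/\mathscr{I}^2\oplus\mathscr{I}/\mathscr{I}^2\xrightarrow{\;(h,g)\mapsto \mr{d}h+\mr{d}g\;}\Omega_\mr{S}|_\mr{X}
\]
in degrees $-2,-1,0$.

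Now define a map to $\mathbf{J}^1_\mr{X}$ by $(h,g)\mapsto h+g$ in degree $-1$ and the identity in degree $0$. This is a surjective chain map. Its kernel is two copies of $\mathscr{I}/\mathscr{I}^2$ in degrees $-2,-1$ joined by an isomorphism, hence acyclic, so the map is a quasi-isomorphism.

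So your $\mathscr{K}$ was essentially the right object, but the extra copy of $\mathscr{I}/\mathscr{I}^2$ belongs in degree $-2$, where it comes from resolving $\mathscr{O}_\mr{X}$. Once it is there, no roof is needed. Naturality in the embedding is then immediate, since a morphism of embeddings acts compatibly on all three terms, and your product-embedding reduction is fine for that part. The paper states this proposition without proof, so there is no argument of its own to compare against.
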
 
The cohomology sheaf $\mathscr{S}_{\mr{X}} = \mathscr{H}^{-1}(\mathbf{J}^{1}_\mr{X})$ was used by Joyce to introduce $\mr{d}$-critical loci.
\2
\begin{definition}(Joyce \cite{MR3399099})
 A structure of a $\mr{d}$-critical locus on a complex analytic space $\mr{X}$ is a choice of $s \in \Gamma(\mr{X},\mathscr{S}_\mr{X})$ such that for any $x \in \mr{X}$ there exists an open $\mr{U}$, containing $x$, and a closed embedding of $\mr{U}$ into a smooth $\mr{S}$ together with a function $f$ on $\mr{S}$ such that $s|_{\mr{U}} = f$ in $\Gamma(\mr{U},\mathscr{O}_{\mr{S}}/\mathscr{I}^{2})$ and $\mr{U} = \mr{crit}(f) \subset \mr{S}$. 
\end{definition}
\2
\begin{remark}
The triple $(\mr{U},\mr{S},f)$ is called a (critical) chart for the $\mr{d}$-critical locus $\mr{X}$.
\end{remark}
Let $(\mr{X},s)$ be a $\mr{d}$-critical locus. An important object associated to $\mr{X}$ is its virtual canonical bundle which is used to define orientability for $\mr{d}$-critical loci.\\
Suppose we have a critical chart $(\mr{U},\mr{S},f)$, we can  naively consider the canonical bundle $\mr{K}_\mr{S}|_{\mr{U}_\mr{red}}$ and ask these line bundles glue for a covering by critical charts. The answer is no, but their squares $\mr{K}_\mr{S}^{\otimes 2}|_{\mr{U}_\mr{red}}$ glue to a line bundle on $\mr{X}_\mr{red}$. Indeed if $\mr{X}$ is of the form $\mr{crit}(f)$, then the obstruction is a $\pm1$-cocycle, hence the claim. Formally, we have:
\2
\begin{proposition}[Joyce \cite{MR3399099}]
 Let $(\mr{X},s)$ be a $\mr{d}$-critical locus. There exists a unique line bundle $\mr{K}_{(\mr{X},s)}$ on $\mr{X}_\mr{red}$ such that for any critical chart $(\mr{U},\mr{S},f)$ we have an isomorphism $$\lambda_{(\mr{U},\mr{S},f)}:\mr{K}_{(\mr{X},s)}|_{\mr{U}_\mr{red}} \cong \mr{K}_{\mr{S}}^{\otimes 2}|_{\mr{U}_\mr{red}}.$$ For any étale morphism $\varphi : (\mr{U},\mr{S},f) \to (\mr{V},\mr{T},g)$ of critical charts, that is, $\varphi : \mr{S} \to \mr{T}$ is étale, $\varphi|_{\mr{U}} : \mr{U} \xhookrightarrow{} \mr{V}$ is the canonical inclusion and $f = g\circ \varphi$, we have $$\lambda_{(\mr{U},\mr{S},f)} = \mr{det}(\mr{d}\varphi)^{\otimes 2}|_{\mr{U}_\mr{red}}\circ \lambda_{(\mr{V},\mr{T},g)}|_{\mr{U}_{\mr{red}}}.$$
\end{proposition}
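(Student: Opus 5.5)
Since $\mr{K}_{(\mr{X},s)}$ is characterised up to unique isomorphism by its restrictions to critical charts, I will construct it by gluing. First I cover $\mr{X}$ by critical charts $(\mr{U}_i,\mr{S}_i,f_i)$ and on each $\mr{U}_{i,\mr{red}}$ take the line bundle $\mr{K}_{\mr{S}_i}^{\otimes 2}|_{\mr{U}_{i,\mr{red}}}$. For each point $x\in\mr{U}_i\cap\mr{U}_j$ I then need a canonical isomorphism $\mr{K}_{\mr{S}_i}^{\otimes 2}|_{x}\cong\mr{K}_{\mr{S}_j}^{\otimes 2}|_{x}$, varying analytically on $\mr{U}_{ij,\mr{red}}$ and satisfying the cocycle condition on triple overlaps. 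Once I have it, descent for line bundles on $\mr{X}_\mr{red}$ yields $\mr{K}_{(\mr{X},s)}$ together with the $\lambda_{(\mr{U},\mr{S},f)}$. Uniqueness then follows because two such bundles are identified chart by chart compatibly.

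To compare two charts near $x$, I will use the standard comparison lemma for critical charts. After shrinking and stabilising, i.e. replacing $(\mr{U},\mr{S},f)$ by $(\mr{U},\mr{S}\times\C^m,f\boxplus q)$ with $q$ a nondegenerate quadratic form, any two charts at $x$ admit a common chart $(\mr{V},\mr{T},g)$ with étale, or more generally embedding, morphisms $\Phi_i:(\mr{U},\mr{S}_i,f_i)\to(\mr{V},\mr{T},g)$. The argument is a holomorphic Morse-lemma/splitting argument: choose a minimal chart of dimension $\dim T_x\mr{X}$, and observe that $f$ becomes $f_{\min}\boxplus q$ in suitable coordinates. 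For an embedding $\Phi:\mr{S}\hookrightarrow\mr{T}$ with $g|_\mr{S}=f$, the normal bundle $N_{\mr{S}/\mr{T}}|_{\mr{U}_\mr{red}}$ carries the nondegenerate Hessian form $q_{\mr{S}\mr{T}}$ of $g$. I then define
$$\mr{K}_\mr{S}^{\otimes 2}|_{\mr{U}_\mr{red}}\otimes \det(N_{\mr{S}/\mr{T}})^{\otimes 2}\cong \mr{K}_\mr{T}^{\otimes 2}|_{\mr{U}_\mr{red}},$$
using $\det(q_{\mr{S}\mr{T}})$ to trivialise $\det(N)^{\otimes 2}$. This squared determinant of a quadratic form is canonical, whereas $\det(N)$ itself is trivialised by $q$ only up to the sign $\pm1$ of a square root. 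This is exactly why squares are needed, in line with the $\pm1$ obstruction remarked before the statement. In the étale case $N=0$ and the map is $\det(\mr{d}\varphi)^{\otimes 2}$, which is the stated compatibility formula.

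The main obstacle is independence of choices: the isomorphism $\mr{K}_{\mr{S}_i}^{\otimes2}\cong\mr{K}_{\mr{S}_j}^{\otimes2}$ obtained via a common chart must not depend on the chart $\mr{T}$ or on the embeddings $\Phi_i$. I will show that two embeddings $\Phi,\Phi':\mr{S}\to\mr{T}$ compatible with $f,g$ agree to first order along $\mr{U}_\mr{red}$ up to automorphisms preserving the Hessian data. It then suffices to check, pointwise on $\mr{U}_\mr{red}$, that the induced maps on $\det(T\mr{T})^{\otimes 2}$ coincide. The ambiguity lies in $\mr{O}(q)$, whose determinant is $\pm1$ and dies after squaring. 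Functoriality under composition of embeddings then gives the cocycle condition on triple overlaps, and analyticity is clear since everything is built from holomorphic Jacobians and Hessians restricted to $\mr{U}_\mr{red}$.
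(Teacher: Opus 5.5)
Your proposal is correct. It is essentially the argument the paper sketches just before the statement: the naive $\mr{K}_{\mr{S}}|_{\mr{U}_{\mr{red}}}$ fail to glue only by a $\pm1$-cocycle, which dies on squaring. You carry this out as Joyce does, gluing the $\mr{K}_{\mr{S}}^{\otimes 2}|_{\mr{U}_{\mr{red}}}$ through common charts via $\det q$ of the Hessian form on the normal bundle, where the sign of $\sqrt{\det q}$ is exactly the ambiguity that squaring discards.

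One point needs to be stated precisely. Two embeddings agree on the non-reduced $\mr{U}$, not merely on $\mr{U}_{\mr{red}}$, so at $y\in\mr{U}_{\mr{red}}$ their differentials agree on $\ker\mr{Hess}_y f=\mr{T}_y\mr{X}$, and this is what confines the ambiguity to $\mr{O}(q)$. Joyce packages this as the intrinsic isomorphism $\mr{K}_{\mr{S}}^{\otimes 2}|_y\cong(\Lambda^{\mr{top}}\mr{T}^*_y\mr{X})^{\otimes 2}$ coming from the exact sequence $0\to\mr{T}_y\mr{X}\to\mr{T}_y\mr{S}\to\mr{T}^*_y\mr{S}\to\mr{T}^*_y\mr{X}\to 0$, which gives independence of the common chart and the cocycle condition at once.
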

\2
\begin{definition}
Let $(\mr{X},s)$ be a $\mr{d}$-critical locus. An orientation of $(\mr{X},s)$ is a choice of a square-root $\mr{K}_{(\mr{X},s)}^{1/2}$ of its canonical bundle $\mr{K}_{(\mr{X},s)}$.
\end{definition}
\2
\begin{example}
 If $\mr{X}$ is smooth, then $\mr{K}_{(\mr{X},0)} = \mr{K}_{\mr{X}}^{\otimes 2}$. There is an extra $\mr{K}_\mr{X}$ factor because, as a derived scheme, the critical locus $\mr{crit}(0:\mr{X} \to \C)$ is the shifted cotangent bundle $\Omega_\mr{X}[1]$.
\end{example}
As with the canonical bundle of a $\mr{d}$-critical locus $(\mr{X},s)$, the canonical perverse sheaves $\mathscr{P}_{\mr{S},f}$ associated to critical charts $(\mr{U,S},f)$ given by \eqref{canperv} do not glue to a global perverse sheaf on $(\mr{X},s)$. The good news is the obstructions are easy to control. Namely, for an embedding of critical charts $(\mr{U,S},f)\xhookrightarrow{} (\mr{V,T},g)$, the associated perverse sheaves $\mathscr{P}_{\mr{S},f}$ and $\mathscr{P}_{\mr{T},g}$ differ by a $2$-torsion local system which is controlled by a choice of orientation of $(\mr{X},s)$.
\2
\begin{theorem}[Brav et al. \cite{bbdjs}]\label{pervdcrit}
 Let $(\mr{X},s)$ be a $\mr{d}$-critical locus equipped with an orientation $\mr{K}_{(\mr{X},s)}^{1/2}$. Then there exists a perverse sheaf $\mathscr{P}_{(\mr{X},s)}$ on $\mr{X}$ such that if $(\mr{U},\mr{S},f)$ is a chart, then we have a natural isomorphism $$\mathscr{P}_{(\mr{X},s)}|_{\mr{U}} \simeq \mathscr{P}_{\mr{S},f}\otimes_{\C}\mathfrak{or}_{\mr{X/S}},$$ where $\mathfrak{or}_{\mr{X/S}}$ is the local system associated to $\mr{K}_{(\mr{X},s)}^{-1/2}|_{\mr{U_{red}}}\otimes \mr{K}_{\mr{S}}|_{\mr{U_{red}}}$. 
\end{theorem}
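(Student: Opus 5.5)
The statement is local on $\mr{X}$ and all the ingredients are local-to-global gluing statements. So I would reduce to critical charts and then invoke the gluing formalism for perverse sheaves.

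\emph{Local construction.} Cover $\mr{X}$ by critical charts $(\mr{U}_i,\mr{S}_i,f_i)$ and set $\mathscr{P}_i:=\mathscr{P}_{\mr{S}_i,f_i}\otimes_{\C}\mathfrak{or}_{\mr{U}_i/\mr{S}_i}$, using the vanishing cycle sheaf \eqref{canperv}. Here $\mathfrak{or}_{\mr{U}_i/\mr{S}_i}$ is the $\Z/2$-local system of square roots of $\mr{K}_{(\mr{X},s)}^{-1/2}|_{\mr{U}_{i,\mr{red}}}\otimes\mr{K}_{\mr{S}_i}|_{\mr{U}_{i,\mr{red}}}$. This square root makes sense because $\lambda_{(\mr{U}_i,\mr{S}_i,f_i)}$ from Joyce's proposition identifies the square of this line bundle with $\mathscr{O}$. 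Each $\mathscr{P}_i$ is perverse, since $\phi_{f-c}[-1]$ preserves perversity (I would check the normalisation of shifts here).

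\emph{Comparison on overlaps.} Given two charts containing a point $x$, I would use Joyce's comparison theorem to find, after shrinking, a third chart $(\mr{V},\mr{T},g)$ with embeddings $\Phi_j:(\mr{U},\mr{S}_j,f_j)\hookrightarrow(\mr{V},\mr{T},g)$. By the holomorphic Morse lemma, locally $\mr{T}\cong\mr{S}_j\times\C^r$ with $g=f_j\boxplus q$ for a nondegenerate quadratic form $q$. The Thom--Sebastiani isomorphism then gives $\mathscr{P}_{\mr{T},g}|_{\mr{U}}\cong\mathscr{P}_{\mr{S}_j,f_j}\otimes\mathscr{P}_{\C^r,q}$. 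Moreover $\mathscr{P}_{\C^r,q}$ is a rank one $\Z/2$-torsor, canonically identified with the orientation local system of $\mr{det}(q)$, equivalently of $\mr{K}_{\mr{T}}|\otimes\mr{K}_{\mr{S}_j}^{-1}|$ up to squares. Tensoring with the orientation twists cancels this torsor. This produces an isomorphism $\mathscr{P}_{\mr{S}_j,f_j}\otimes\mathfrak{or}_j\cong\mathscr{P}_{\mr{T},g}\otimes\mathfrak{or}_{\mr{T}}$ that does not depend on the choice of local coordinates. Composing the isomorphisms for $j=1,2$ gives the transition maps $\theta_{ij}$.

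\emph{Main obstacle and gluing.} The hardest step is showing that $\theta_{ij}$ is independent of the choices (the ambient chart, the embeddings, the Morse coordinates) and satisfies the cocycle condition. Without the orientation twist, the sign ambiguity is exactly the $\pm1$ cocycle that also obstructs gluing the $\mr{K}_{\mr{S}}$. I would first reduce to the case where two embeddings $\Phi,\Phi'$ of the same chart differ; Joyce shows $\mr{det}(\mr{d}\Phi)^2$ and $\mr{det}(\mr{d}\Phi')^2$ agree on $\mr{U}_\mr{red}$. Then I would verify that the induced automorphism of $\mathscr{P}_{\C^r,q}$ is multiplication by $\mr{det}$ of the normal-bundle comparison map, which is $\pm1$. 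This sign is precisely cancelled by the corresponding automorphism of $\mathfrak{or}$, so the composite is the identity.

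Once the $\theta_{ij}$ are canonical and compatible, they satisfy the cocycle condition. Perverse sheaves form a stack on $\mr{X}$ for the analytic topology (Beilinson--Bernstein--Deligne), so the $\mathscr{P}_i$ glue to a perverse sheaf $\mathscr{P}_{(\mr{X},s)}$ with the stated natural isomorphisms on each chart. Naturality with respect to étale morphisms of charts follows from the same computation with $r=0$.
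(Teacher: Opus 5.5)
The paper gives no proof of this statement; it is quoted from \cite{bbdjs}. Your outline has the architecture of that proof: vanishing-cycle sheaves on charts twisted by orientation torsors, Joyce's theorem embedding two charts into a third, stabilisation via the parametrised Morse lemma and Thom--Sebastiani, and gluing by the stack property.

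The gap is in the step you yourself flag as hardest. Independence of choices comes down to comparing two embeddings $\Phi,\Phi'$ of a chart into $(\mr{V},\mr{T},g)$ that agree on the critical locus. After choosing Morse coordinates for each, such a pair differs by a local biholomorphism $\alpha$ of $\mr{T}$ with $g\circ\alpha=g$ and $\alpha|_{\mr{X}}=\mr{id}_{\mr{X}}$. Such an $\alpha$ need not act only on the quadratic factor. It can be nonlinear, it can mix the $\mr{S}$- and $\C^r$-directions, or it can act on the $\mr{S}$-factor alone (take $\Phi'=\Phi\circ\gamma$ with $\gamma$ a symmetry of $(\mr{S},f)$). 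So there is no ``induced automorphism of $\mathscr{P}_{\C^r,q}$'' to compute, and Thom--Sebastiani says nothing about how $\alpha$ acts on $\mathscr{P}_{\mr{T},g}$.

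The missing key lemma is the symmetry theorem of \cite{bbdjs}: $\det(\mr{d}\alpha|_{\mr{X}^{\mr{red}}})$ is locally constant with values in $\{\pm1\}$, and $\alpha$ acts on $\mathscr{P}_{\mr{T},g}$ as multiplication by it. Only with this does the sign cancel against the action of $\alpha$ on $\mathfrak{or}$ through $\mr{K}_{\mr{T}}|_{\mr{X}^{\mr{red}}}$. The same theorem is what makes the stabilisation isomorphism independent of the Morse coordinates and compatible with composition of embeddings. Your comparison of different ambient charts and your cocycle argument both rely on that.

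The same omission undermines your final sentence. For $r=0$ your computation is empty, yet the required compatibility is not automatic. Take the chart $(\{0\},\C,z^2)$ of a point: $z\mapsto -z$ is an \'etale self-morphism of charts. It swaps the two points of the Milnor fibre, so it acts by $-1$ on $\mathscr{P}_{\C,z^2}$. Naturality holds only because it also acts by $\det=-1$ on $\mathfrak{or}$. At the other extreme, on a chart whose Hessian vanishes at $x$, every such symmetry has $\mr{d}\alpha_x=\mr{id}$, and you must still prove that it acts trivially on the vanishing cycles. This is a genuine statement about vanishing cycles, proved separately in \cite{bbdjs}. It does not follow from the Morse lemma and Thom--Sebastiani alone.
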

\2
\begin{remark}
The perverse sheaf $\mathscr{P}_{(\mr{X},s)}$ is Verdier self-dual and comes equipped with a monodromy automorphism $\mr{T}_{\mr{X},s}:\mathscr{P}_{(\mr{X},s)}\to \mathscr{P}_{(\mr{X},s)}$.\\
In the same paper \cite{bbdjs}, similar statement is proved for monodromic mixed Hodge modules which might be useful to extend our results here beyond the clean intersection case. It would also be interesting to understand how the mixed Hodge module structure could arise via deformation quantisation instead.
\end{remark}
\subsection{Lagrangian intersections as \texorpdfstring{$\mr{d}$}--critical loci}\label{localmodel}
We apply the above to the case of Lagrangian intersection. Let $\mr{X}$ be holomorphic symplectic and suppose $\mr{L}$ and $\mr{M}$ are two Lagrangian submanifolds. By the Lagrangian neighbourhood theorem any $\mr{x} \in \mr{L}$ has open neighbourhoods $\mr{U}$ in $\mr{X}$ and $\tilde{\mr{U}}$ in $\Omega_{\mr{L\cap U}}$ together with a symplectic isomorphism $\Phi:\mr{U}\to \tilde{\mr{U}}$ which identifies $\mr{L}\cap\mr{U}$ with the zero section in the cotangent bundle. We may assume that $\mr{M}$ is transverse to the polarisation given by the projection $\Omega_{\mr{L\cap U}}\to \mr{L\cap U}$. Hence we can write $$\Phi(\mr{M}\cap \mr{U})=\Gamma_s\cap \tilde{\mr{U}}$$ for a closed $1$-form. Shrinking $\mr{U}$ if necessary, we may assume $s=\mr{d}f$ is exact and so $$(\mr{L\cap M})\cap \mr{U}=\mr{crit}(f).$$ In the terminology of the previous section, $(\mr{L\cap M \cap U},\mr{L}\cap \mr{U},f)$ is a critical chart, called an $\mr{L}$-critical chart in \cite{Bussi:2014psa}. We may swap the roles of $\mr{L}$ and $\mr{M}$ and get $\mr{M}$ charts. Using the diagonal $\Delta \xhookrightarrow{} \mr{X}\times\mr{X}$, one gets $\mr{LM}$-charts and the result of Bussi is that these turn $\mr{L\cap M}$ into a $\mr{d}$-critical locus:
\2 
\begin{proposition}[Bussi \cite{Bussi:2014psa}]
 Let $\mr{X}$ be a holomorphic symplectic variety. Suppose given two Lagrangians $\mr{L}$ and $\mr{M}$ in $\mr{X}$. Then then intersection $\mr{L}\cap \mr{M}$ admits a structure of a $\mr{d}$-critical locus $(\mr{L}\cap \mr{M},s)$ with canonical bundle $\mr{K}_{(\mr{L}\cap\mr{M},s)}=\mr{K}_\mr{L}|_\mr{\mr{L}\cap\mr{M}_{red}}\otimes\mr{K}_\mr{M}|_{\mr{\mr{L}\cap\mr{M}_{red}}}$.
\end{proposition}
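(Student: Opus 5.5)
Since the statement is Bussi's result, the natural proof builds the d-critical structure from local Lagrangian neighbourhood charts and then checks that the resulting sections of $\mathscr{S}_{\mr{L}\cap\mr{M}}$ glue. First I construct local charts. For $x\in\mr{L}\cap\mr{M}$, the Lagrangian neighbourhood theorem gives a symplectomorphism $\Phi:\mr{U}\to\tilde{\mr{U}}\subset\Omega_{\mr{L}\cap\mr{U}}$ sending $\mr{L}\cap\mr{U}$ to the zero section. After a generic choice of polarisation, $\Phi(\mr{M}\cap\mr{U})=\Gamma_{\mr{d}f}$ for a holomorphic $f$ on $\mr{L}\cap\mr{U}$, as in \cref{localmodel}. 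Then $\mr{L}\cap\mr{M}\cap\mr{U}=\mr{crit}(f)$ scheme-theoretically, because the intersection of the zero section with the graph of $\mr{d}f$ is cut out by the components of $\mr{d}f$. Normalising $f$ by requiring $f|_{\mr{crit}(f)^{\mr{red}}}=0$ locally, I obtain a local section $s_\mr{U}=f+\mathscr{I}^2\in\Gamma(\mathscr{S}_{\mr{L}\cap\mr{M}}|_\mr{U})$.

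The main step, and the obstacle I expect, is proving that the $s_\mr{U}$ are independent of all choices. The choices are the symplectomorphism $\Phi$, the polarisation, and the primitive $f$. I would show that the class of $f$ modulo $\mathscr{I}^2$ has an intrinsic description in terms of a generating function. Concretely, the Liouville form $\theta$ on $\Omega_{\mr{L}}$ restricts to zero on the zero section. On $\mr{M}$ it restricts to $\mr{d}(f\circ\pi)$. Hence $f$ is the unique primitive of $\theta|_\mr{M}$ (on each connected component) that vanishes on $\mr{L}\cap\mr{M}$.

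Two charts differ by a symplectomorphism $\Psi$ fixing $\mr{L}$. The corresponding Liouville forms satisfy $\Psi^*\theta'-\theta=\mr{d}h$, where $h$ vanishes to second order along $\mr{L}$, since $\Psi^*\theta'-\theta$ is closed and vanishes on $\mr{L}$. Therefore the two primitives on $\mr{M}$ differ by $h|_\mr{M}$. Since $h$ vanishes to second order along $\mr{L}$, its restriction $h|_\mr{M}$ lies in $\mathscr{I}_{\mr{L}\cap\mr{M}\subset\mr{M}}^2$. It remains to identify functions on $\mr{M}$ with functions on $\mr{L}$ modulo $\mathscr{I}^2$, via the projection $\pi|_\mr{M}$, compatibly with the embedding into the Joyce sheaf. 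I expect this to be the delicate point, and I would handle it using the invariance of $\mathscr{S}$ under change of embedding (\cite{MR3399099}). Once this is done, the $s_\mr{U}$ glue to a global section $s$, and every chart $(\mr{L}\cap\mr{M}\cap\mr{U},\mr{L}\cap\mr{U},f)$ is a critical chart for it. So $(\mr{L}\cap\mr{M},s)$ is a d-critical locus.

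For the canonical bundle, I apply Joyce's proposition to an $\mr{L}$-chart, which gives $\mr{K}_{(\mr{L}\cap\mr{M},s)}|_{\mr{U}_\mr{red}}\cong\mr{K}_\mr{L}^{\otimes 2}|_{\mr{U}_\mr{red}}$. On $\mr{L}\cap\mr{M}$, the symplectic form identifies $T\mr{M}|$ with $T^*\mr{L}|$ modulo the Hessian of $f$. Because $\mr{d}f$ vanishes there, the determinant comparison gives $\mr{K}_\mr{M}|_{\mr{red}}\cong\mr{K}_\mr{L}|_{\mr{red}}$ canonically up to sign, and the sign is absorbed by the squaring. To make the isomorphism global and chart-independent, I would instead use the symmetric $\mr{LM}$-charts built from $\Delta\subset\mr{X}\times\mr{X}$ and the étale-compatibility formula $\lambda=\mr{det}(\mr{d}\varphi)^{\otimes2}\circ\lambda'$. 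This yields $\mr{K}_{(\mr{L}\cap\mr{M},s)}\cong\mr{K}_\mr{L}|\otimes\mr{K}_\mr{M}|$, and by Joyce's uniqueness statement the local isomorphisms glue.
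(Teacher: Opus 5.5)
Your construction of $s$ is correct and follows the $\mr{L}$-chart route that the paper sketches (it only cites Bussi). The Liouville form of $\Omega_\mr{L}$ vanishes pointwise along the zero section, so two charts fixing $\mr{L}$ give $\theta_1-\theta_2=\mr{d}h$ with $h\in\mathscr{I}_\mr{L}^2$. The step you flag as delicate is exactly Joyce's result that two holomorphic maps $\mr{M}\to\mr{L}$ restricting to the inclusion on $\mr{L}\cap\mr{M}$ induce the same map on $\mathscr{S}_{\mr{L}\cap\mr{M}}$; this is a first-order Taylor expansion using $\partial f/\partial x_j\in\mathscr{I}$. The genuine gap is in the canonical bundle.

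\emph{The local identification $\mr{K}_\mr{M}|_{\mr{red}}\cong\mr{K}_\mr{L}|_{\mr{red}}$ is not canonical up to sign.} Take a point $x$ of transverse intersection. Write $T_x\mr{X}=T_x\mr{L}\oplus V$ with $V$ the vertical of the chart, and $T_x\mr{M}=\{(v,Hv)\}$ with $H=\mr{Hess}_xf$.
\begin{itemize}
\item Change the polarisation to $V'=\{(B\xi,\xi)\}$ with $B$ symmetric; this is realised by the shear $(q,p)\mapsto(q+Bp,p)$, which fixes the zero section pointwise. The projection $T_x\mr{M}\to T_x\mr{L}$ becomes $v\mapsto(1-BH)v$. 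So your isomorphism $\mr{K}_\mr{L}|_x\to\mr{K}_\mr{M}|_x$ is multiplied by $\det(1-BH)$, which is an arbitrary non-zero number, not $\pm1$.
\item The new generating function has Hessian $H(1-BH)^{-1}$, yet it agrees with the old one modulo $\mathscr{I}^2=\mathfrak{m}_x^2$. Joyce's $\iota:\mr{K}_{(\mr{L}\cap\mr{M},s)}\to\mr{K}_\mr{L}^{\otimes2}$ for an $\mr{L}$-chart is built from $\det\mr{Hess}_xf$ on $T_x\mr{L}/T_x(\mr{L}\cap\mr{M})$. It therefore depends on $f$ itself, not only on $s$, and here it changes by $\det(1-BH)^{-1}$.
\item Your étale formula cannot compare these two $\mr{L}$-charts, because the identity of $\mr{L}$ does not satisfy $f_1=f_2\circ\varphi$.
\end{itemize}
So both of your local isomorphisms are chart-dependent. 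Only the composite $\mr{K}_{(\mr{L}\cap\mr{M},s)}\to\mr{K}_\mr{L}^{\otimes2}\to\mr{K}_\mr{L}\otimes\mr{K}_\mr{M}$ can be canonical, and proving that is the actual content of the statement.

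Your fallback does not supply this. You do not say what an $\mr{LM}$-chart is or which isomorphism it produces. Uniqueness of $\mr{K}_{(\mr{L}\cap\mr{M},s)}$ in Joyce's theorem does not force local isomorphisms to a different line bundle to agree on overlaps. Even agreement only up to sign would not suffice, since a $\pm1$ cocycle can define a non-trivial $2$-torsion line bundle.

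A way to close the gap is to define, on each $\mr{L}$-chart, $c=(1\otimes(\pi|_\mr{M})^*)\circ\iota$ and compare it pointwise with a chart-free isomorphism.
\begin{itemize}
\item Joyce gives canonical isomorphisms $\kappa_x:\mr{K}_{(\mr{L}\cap\mr{M},s)}|_x\cong(\Lambda^{\mr{top}}T_x^*(\mr{L}\cap\mr{M}))^{\otimes2}$, with $T_x(\mr{L}\cap\mr{M})=T_x\mr{L}\cap T_x\mr{M}$ the Zariski tangent space. These are related to $\iota$ through the non-degenerate form induced by $\mr{Hess}_xf$ on $T_x\mr{L}/T_x(\mr{L}\cap\mr{M})$.
\item The symplectic form gives a chart-free isomorphism $T_x\mr{M}/T_x(\mr{L}\cap\mr{M})\cong(T_x\mr{L}/T_x(\mr{L}\cap\mr{M}))^*$, $v\mapsto\sigma(v,-)|_{T_x\mr{L}}$. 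In every chart this map equals $\mr{Hess}_xf\circ(\pi|_\mr{M})_*$.
\item Combining these identifies $\mr{K}_\mr{L}\otimes\mr{K}_\mr{M}|_x$ canonically with $(\Lambda^{\mr{top}}T_x^*(\mr{L}\cap\mr{M}))^{\otimes2}$.
\item Since $(\pi|_\mr{M})_*$ is the identity on $T_x(\mr{L}\cap\mr{M})$, the chart dependences of $\det\mr{Hess}$ and of $\det(\pi|_\mr{M})_*$ cancel, so $c$ agrees with this identification at every point.
\item Because $(\mr{L}\cap\mr{M})_{\mr{red}}$ is reduced, the local $c$'s then coincide on overlaps and glue.
\end{itemize}
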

Pairing this with \cref{pervdcrit}, we get:
\2
\begin{corollary}
 Consider the $\mr{d}$-critical locus $(\mr{L}\cap \mr{M},s)$ and assume that $\mr{K}_\mr{L}|_\mr{\mr{L}\cap\mr{M}_{red}}\otimes\mr{K}_\mr{M}|_{\mr{\mr{L}\cap\mr{M}_{red}}}$ admits a square root. Then there exists a perverse sheaf $$\mathscr{P}_{\mr{L,M}} \simeq \mathscr{P}_{\mr{M,L}}$$ on $\mr{L}\cap \mr{M}$ with the properties described in \cref{pervdcrit}.
\end{corollary}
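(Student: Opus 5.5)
The corollary combines Bussi's proposition with \cref{pervdcrit}. The $\mr{d}$-critical locus structure $(\mr{L}\cap\mr{M},s)$ supplied by Bussi has canonical bundle $\mr{K}_{(\mr{L}\cap\mr{M},s)}=\mr{K}_\mr{L}|_{\mr{L}\cap\mr{M}_\mr{red}}\otimes\mr{K}_\mr{M}|_{\mr{L}\cap\mr{M}_\mr{red}}$. By hypothesis this line bundle admits a square root, so choosing one gives an orientation of $(\mr{L}\cap\mr{M},s)$ in the sense of the definition above. Applying \cref{pervdcrit} then produces a perverse sheaf $\mathscr{P}_{\mr{L,M}}\coloneqq\mathscr{P}_{(\mr{L}\cap\mr{M},s)}$. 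On every critical chart $(\mr{U},\mr{S},f)$ it satisfies $\mathscr{P}_{\mr{L,M}}|_\mr{U}\simeq\mathscr{P}_{\mr{S},f}\otimes_\C\mathfrak{or}_{\mr{U}/\mr{S}}$, which covers in particular the $\mr{L}$-charts of \cref{localmodel}. The only things to check at this stage are that Bussi's $\mr{L}$-, $\mr{M}$- and $\mr{LM}$-charts are genuine critical charts for the same section $s$, which is part of Bussi's statement, and that the existence half of the corollary is then immediate.

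For the symmetry $\mathscr{P}_{\mr{L,M}}\simeq\mathscr{P}_{\mr{M,L}}$, I would first show that the $\mr{d}$-critical structures agree up to sign. Swapping the roles of $\mr{L}$ and $\mr{M}$ in the Lagrangian neighbourhood construction turns an $\mr{L}$-chart $(\mr{U},\mr{L}\cap\mr{U},f)$ into an $\mr{M}$-chart. Bussi's $\mr{LM}$-charts, built from the diagonal in $\mr{X}\times\mr{X}$, show that both kinds of chart are compatible with one section. Tracking orientations of the symplectic form, I expect $s_{\mr{M,L}}=-s_{\mr{L,M}}$, because the generating function of $\mr{L}$ relative to $\mr{M}$ is $-f$. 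So the key step is to compare $\mathscr{P}_{(\mr{X},s)}$ with $\mathscr{P}_{(\mr{X},-s)}$. Locally, $\mr{crit}(f)=\mr{crit}(-f)$, and $\phi_{-f+c}\simeq\phi_{f-c}$ canonically up to the monodromy/orientation conventions, since the Milnor fibres of $f$ and $-f$ are identified through $t\mapsto -t$ on the base. The canonical bundles coincide, being $\mr{K}_\mr{L}\otimes\mr{K}_\mr{M}=\mr{K}_\mr{M}\otimes\mr{K}_\mr{L}$, so the same square root orients both.

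It remains to glue. The local isomorphisms $\mathscr{P}_{\mr{S},f}\otimes\mathfrak{or}\simeq\mathscr{P}_{\mr{S},-f}\otimes\mathfrak{or}$ must be compatible with the étale chart-comparison isomorphisms of \cref{pervdcrit}. Since perverse sheaves form a stack, it is enough to check this compatibility on overlaps of charts. That reduces to the naturality of the vanishing-cycle isomorphism under $f\mapsto -f$ with respect to pullback along étale maps $\varphi$ with $f=g\circ\varphi$, and to the factor $\det(\mr{d}\varphi)$ being the same on both sides.

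I expect the main obstacle to be sign bookkeeping: a possible $\pm1$ local system could arise from the change of sign of $s$ and from reversing the order of $\mr{K}_\mr{L}\otimes\mr{K}_\mr{M}$. To control it, I would work on $\mr{LM}$-charts, where the two orderings are exchanged by the involution of $\mr{X}\times\mr{X}$ swapping the factors. Any discrepancy there is a $2$-torsion class. I would then absorb this class by choosing the square root compatibly, following the Brav et al. treatment of orientation torsors, so that the isomorphism exists.
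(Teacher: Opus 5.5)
Your first paragraph is the paper's whole proof. The paper justifies this corollary only by the sentence pairing Bussi's proposition with \cref{pervdcrit}. It gives no argument for $\mathscr{P}_{\mr{L,M}}\simeq\mathscr{P}_{\mr{M,L}}$, which is tacitly inherited from Bussi's construction. So you depart from the paper only in the symmetry argument, and its last step does not work as written.

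\textbf{The absorption step.} Once the orientation data are fixed, $\mathscr{P}_{\mr{M,L}}$ is a definite sheaf: it is $\mathscr{P}_{(\mr{L}\cap\mr{M},-s)}$ with the orientation transported from the one used for $\mathscr{P}_{\mr{L,M}}$. Twisting that orientation by a $2$-torsion line bundle $\mathcal{T}$ replaces $\mathscr{P}_{\mr{M,L}}$ by $\mathscr{P}_{\mr{M,L}}\otimes\mathcal{T}$. So ``absorbing'' a nontrivial discrepancy class by re-choosing the square root only shows that \emph{some} choice of orientations makes the two sheaves agree. It does not give the stated isomorphism. What you need is that the discrepancy cocycle is trivial.

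\textbf{The gluing check.} Establishing triviality needs more than the étale compatibilities quoted in \cref{pervdcrit}. The Brav et al.\ gluing uses all embeddings of critical charts $(\mr{R},\mr{U},f,i)\hookrightarrow(\mr{S},\mr{V},g,j)$, including those with $\dim\mr{V}>\dim\mr{U}$; in general this is how an $\mr{L}$-chart and an $\mr{M}$-chart are related. Along such embeddings they use stabilisation isomorphisms built from square roots of $\det q_{\mr{UV}}$, where $q_{\mr{UV}}$ is the Hessian form on the normal bundle. Under $f\mapsto -f$ one has $q_{\mr{UV}}\mapsto -q_{\mr{UV}}$, so these square roots change by $(\sqrt{-1})^{\dim\mr{V}-\dim\mr{U}}$. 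That factor depends only on chart dimensions. One then checks that rescaling the local isomorphism $\phi_{-f}\cong\phi_f$ on each chart $\mr{U}$ by $(\sqrt{-1})^{\dim\mr{U}}$ makes the local isomorphisms compatible. This uses the same $\sqrt{-1}$ as the half-turn in the base $\C^*$ that identifies $\phi_{-f}$ with $\phi_f$. No nontrivial $\pm1$-local system then appears.

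The same bookkeeping shows that transporting the chosen square root of $\mr{K}_\mr{L}\otimes\mr{K}_\mr{M}$ to $\mr{K}_{(\mr{L}\cap\mr{M},-s)}$ costs at most a locally constant sign. Such a sign does not change the isomorphism class of the orientation. With this in place, your symmetry argument goes through without re-choosing anything.
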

\2
\begin{lemma}\label{lemmasmoothcase}
  Let $\mr{L,M}$ be two Lagrangians intersecting cleanly, then there is an isomorphism $$\mr{K_{\mr{L}\cap\mr{M}}\otimes K_{\mr{L}\cap\mr{M}} \cong \left.K_{L}\right|_{\mr{L}\cap\mr{M}}\otimes \left.K_{M}\right|_{\mr{L}\cap\mr{M}}}.$$
 \end{lemma}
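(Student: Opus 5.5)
The plan is to compute both sides through conormal bundles and compare them using the symplectic form. Put $\mr{Z}=\mr{L}\cap\mr{M}$, which is a complex submanifold because the intersection is clean, and write $\mr{T}_\mr{Z}=\mr{T}_\mr{L}|_\mr{Z}\cap \mr{T}_\mr{M}|_\mr{Z}$. I will use two short exact sequences of vector bundles on $\mr{Z}$:
$$0\to \mr{T}_\mr{Z}\to \mr{T}_\mr{L}|_\mr{Z}\to \mr{N}_{\mr{Z}/\mr{L}}\to 0, \qquad 0\to \mr{T}_\mr{Z}\to \mr{T}_\mr{M}|_\mr{Z}\to \mr{N}_{\mr{Z}/\mr{M}}\to 0.$$
Taking determinants gives
$$\mr{K}_\mr{L}|_\mr{Z}\cong \mr{K}_\mr{Z}\otimes \det \mr{N}_{\mr{Z}/\mr{L}}^{\vee}, \qquad \mr{K}_\mr{M}|_\mr{Z}\cong \mr{K}_\mr{Z}\otimes \det \mr{N}_{\mr{Z}/\mr{M}}^{\vee}.$$
The statement therefore reduces to showing that $\det\mr{N}_{\mr{Z}/\mr{L}}^\vee\otimes\det\mr{N}_{\mr{Z}/\mr{M}}^\vee$ is trivial on $\mr{Z}$.

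The key step is a pairing built from $\sigma$. For $u\in \mr{T}_\mr{L}|_\mr{Z}$ and $v\in\mr{T}_\mr{M}|_\mr{Z}$, set $(u,v)\mapsto\sigma(u,v)$.

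First I check that this descends to the normal bundles. Since $\mr{L}$ is Lagrangian, $\sigma(\mr{T}_\mr{Z},\mr{T}_\mr{L})=0$. Since $\mr{M}$ is Lagrangian, $\sigma(\mr{T}_\mr{L},\mr{T}_\mr{Z})=0$. So the pairing is well defined as a map $\mr{N}_{\mr{Z}/\mr{L}}\otimes \mr{N}_{\mr{Z}/\mr{M}}\to\mathscr{O}_\mr{Z}$.

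Next I check nondegeneracy, which is the main point. Suppose $u\in\mr{T}_\mr{L}|_\mr{Z}$ satisfies $\sigma(u,\mr{T}_\mr{M})=0$. Because $\mr{T}_\mr{M}$ is Lagrangian, it equals its own symplectic orthogonal, so $u\in\mr{T}_\mr{M}$. Hence $u\in \mr{T}_\mr{L}\cap\mr{T}_\mr{M}=\mr{T}_\mr{Z}$, where the last equality uses cleanness. So the left kernel is zero, and by symmetry so is the right kernel.

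It follows that $\mr{N}_{\mr{Z}/\mr{M}}\cong \mr{N}_{\mr{Z}/\mr{L}}^\vee$, and in particular the two normal bundles have the same rank. Taking determinants gives $\det\mr{N}_{\mr{Z}/\mr{M}}^\vee\cong\det \mr{N}_{\mr{Z}/\mr{L}}$. Substituting into the tensor product of the two determinant formulas above yields
$$\mr{K}_\mr{L}|_\mr{Z}\otimes\mr{K}_\mr{M}|_\mr{Z}\cong \mr{K}_\mr{Z}^{\otimes 2}\otimes\det\mr{N}_{\mr{Z}/\mr{L}}^\vee\otimes \det\mr{N}_{\mr{Z}/\mr{L}}\cong \mr{K}_\mr{Z}^{\otimes 2}.$$

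The only point needing care is the nondegeneracy, and it uses exactly two inputs: the Lagrangian property, $\mr{T}_\mr{M}^{\perp_\sigma}=\mr{T}_\mr{M}$, and the cleanness identification $\mr{T}_\mr{Z}=\mr{T}_\mr{L}\cap\mr{T}_\mr{M}$. Everything else is exactness of determinants on short exact sequences of holomorphic vector bundles, so the resulting isomorphism is canonical up to the usual sign conventions in ordering wedge factors.
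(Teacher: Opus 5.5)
Your argument is correct, but it takes a different route from the paper.

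\textbf{The paper's route.} The paper computes the determinant of the two-term symmetric obstruction theory $\mathbf{E}_{\mathrm{LM}}=[\Omega_{\mathrm{X}}|_{\mathrm{Z}}\to\Omega_{\mathrm{L}}|_{\mathrm{Z}}\oplus\Omega_{\mathrm{M}}|_{\mathrm{Z}}]$, where $\mathrm{Z}=\mathrm{L}\cap\mathrm{M}$, in two ways. Termwise, using that $\mathrm{K}_{\mathrm{X}}$ is trivial, it gives $\mathrm{K}_{\mathrm{L}}|_{\mathrm{Z}}\otimes\mathrm{K}_{\mathrm{M}}|_{\mathrm{Z}}$. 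Via cohomology sheaves, it gives $\mathrm{K}_{\mathrm{Z}}^{\otimes 2}$. Here cleanness and the identification $\mathrm{T}_{\mathrm{X}}|_{\mathrm{Z}}/(\mathrm{T}_{\mathrm{L}}|_{\mathrm{Z}}+\mathrm{T}_{\mathrm{M}}|_{\mathrm{Z}})\cong\Omega_{\mathrm{Z}}$ by $\sigma$ give $\mathscr{H}^{0}\cong\Omega_{\mathrm{Z}}$ and $\mathscr{H}^{-1}\cong\mathrm{T}_{\mathrm{Z}}$.

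\textbf{Your route.} You use the two adjunction sequences together with the $\sigma$-duality $\mathrm{N}_{\mathrm{Z}/\mathrm{M}}\cong\mathrm{N}_{\mathrm{Z}/\mathrm{L}}^{\vee}$. Both arguments rest on the same linear algebra: $\mathrm{T}^{\perp_\sigma}=\mathrm{T}$ for Lagrangian tangent spaces, and $\mathrm{T}_{\mathrm{Z}}=\mathrm{T}_{\mathrm{L}}\cap\mathrm{T}_{\mathrm{M}}$.

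\textbf{What each buys.} Your version is more elementary. It avoids the derived object and the triviality of $\mathrm{K}_{\mathrm{X}}$. It also proves a bundle-level statement, from which $\det\mathrm{N}_{\mathrm{Z}/\mathrm{L}}\otimes\det\mathrm{N}_{\mathrm{Z}/\mathrm{M}}\cong\mathscr{O}_{\mathrm{Z}}$ follows at once. The paper only deduces that afterwards, from the lemma plus adjunction. The paper's route instead produces the isomorphism as $\det\mathbf{E}_{\mathrm{LM}}$. This ties it directly to Bussi's virtual canonical bundle $\mathrm{K}_{(\mathrm{L}\cap\mathrm{M},s)}$ and the Behrend--Fantechi obstruction theory. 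That is the framework in which the discrepancy bundle $\mathrm{or}(\mathrm{K}_{\mathrm{L}}^{1/2},\mathrm{K}_{\mathrm{M}}^{1/2})$ is used later.

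\textbf{A small slip.} In your well-definedness check, the two vanishings you need are:
\begin{itemize}
\item $\sigma(\mathrm{T}_{\mathrm{Z}},\mathrm{T}_{\mathrm{M}})=0$, because $\mathrm{M}$ is Lagrangian; this lets you change $u$ by an element of $\mathrm{T}_{\mathrm{Z}}$;
\item $\sigma(\mathrm{T}_{\mathrm{L}},\mathrm{T}_{\mathrm{Z}})=0$, because $\mathrm{L}$ is Lagrangian; this lets you change $v$.
\end{itemize}
As written, your two sentences state the same vanishing twice, up to sign, and attribute one of them to the wrong Lagrangian.
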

 \begin{proof}
  Let $\mathbf{E}_{\mr{LM}}\to \mathbf{L}_\mr{L\cap M}$ be the symmetric obstruction theory on the intersection. Recall that $$\mathbf{E}_{\mr{LM}} = \left[\Omega_\mr{X}|_\mr{L\cap M} \xrightarrow{\mr{-res,res}} \Omega_\mr{L}|_\mr{L\cap M}\oplus\Omega_\mr{M}|_\mr{L\cap M}\right],$$ the map $\mathbf{E}_{\mr{LM}}\to \mathbf{L}_\mr{L\cap M}$ is defined via the quasi-isomorphism $$\mathbf{E}_{\mr{LM}} \simeq \left[\mathscr{I}_\mr{LX}/\mathscr{I}_\mr{LX}^2|_\mr{L\cap M} \to \Omega_\mr{M}|_\mr{L\cap M}\right],$$ and the symmetry comes from the holomorphic form on $\mr{X}$. Then, we shall compute the determinant of $\mathbf{E}_{\mr{LM}}$ in two ways. On the one hand, using that $\mr{X}$ has trivial canonical bundle, we get: $$\mr{det}\,\mathbf{E}_{\mr{LM}} = (\mr{det}\, \Omega_\mr{X}|_\mr{L\cap M})^\vee\otimes\mr{det}(\Omega_\mr{L}|_\mr{L\cap M}\oplus \Omega_\mr{M}|_\mr{L\cap M}) \cong \left.\mr{K}_\mr{L}\right|_{\mr{L}\cap\mr{M}}\otimes \left.\mr{K}_\mr{M}\right|_{\mr{L}\cap\mr{M}}.$$ Now, we can also calculate the determinant using the cohomology sheaves of the complex $\mathbf{E}_{\mr{LM}}$ and obtain \[\mr{det}\,\mathbf{E}_{\mr{LM}} = \big(\mr{det}\,\mathscr{H}^{-1}(\mathbf{E}_{\mr{LM}})\big)^\vee\otimes\mr{det}\,\mathscr{H}^0(\mathbf{E}_{\mr{LM}}) \cong \mr{K}_{\mr{L}\cap\mr{M}}\otimes \mr{K}_{\mr{L}\cap\mr{M}}.\qedhere\]
 \end{proof}
\begin{corollary}
 Let $\mr{L}\cap \mr{M}$ be smooth. Then $(\mr{L}\cap \mr{M},s)$ is oriented and for any choice of $\mr{K}_{(\mr{L}\cap \mr{M},s)}^{1/2}$ we have $\mathscr{P}_{\mr{L,M}} = \mathfrak{or}_{\mr{L\cap M}}[\mr{dim}\,\mr{X}]$, where $\mathfrak{or}_{\mr{L\cap M}}$ is the local system associated to $\mr{K}_{(\mr{L}\cap \mr{M},s)}^{-1/2} \otimes \mr{K}_{\mr{L}\cap \mr{M}}$.
\end{corollary}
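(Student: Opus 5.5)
The plan is to deduce both assertions from \cref{lemmasmoothcase} together with the local description of $\mathscr{P}_{(\mr{X},s)}$ given in \cref{pervdcrit}, applied to the $\mr{L}$-critical charts of \cref{localmodel}. The step I expect to be hardest is the bookkeeping of the $\Z/2$ local systems in the last paragraph.

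\emph{Orientation.} By Bussi's proposition, $\mr{K}_{(\mr{L}\cap\mr{M},s)}=\mr{K}_\mr{L}|_{\mr{L}\cap\mr{M}}\otimes\mr{K}_\mr{M}|_{\mr{L}\cap\mr{M}}$. When $\mr{L}\cap\mr{M}$ is smooth, \cref{lemmasmoothcase} identifies this with $\mr{K}_{\mr{L}\cap\mr{M}}^{\otimes 2}$. So $\mr{K}_{\mr{L}\cap\mr{M}}$ is a canonical square root and $(\mr{L}\cap\mr{M},s)$ is oriented. Any other choice $\mr{K}^{1/2}_{(\mr{L}\cap\mr{M},s)}$ differs from it by a $2$-torsion line bundle. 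Hence $\mr{K}_{(\mr{L}\cap\mr{M},s)}^{-1/2}\otimes\mr{K}_{\mr{L}\cap\mr{M}}$ squares to $\mathscr{O}$, carries a canonical flat structure, and defines the $\Z/2$-local system $\mathfrak{or}_{\mr{L}\cap\mr{M}}$.

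\emph{Local computation.} Take an $\mr{L}$-critical chart $(\mr{U}\cap\mr{L}\cap\mr{M},\mr{L}\cap\mr{U},f)$ as in \cref{localmodel}. Cleanness says $\mr{crit}(f)$ is smooth and $T\mr{crit}(f)=\ker\mr{Hess}(f)$. Thus $f$ is holomorphic Morse--Bott and $f$ is locally constant on $\mr{crit}(f)$. By the holomorphic Morse--Bott lemma, after shrinking we may write $f=c+\sum_{i} y_i^2$ in coordinates $(x,y)$ with $\mr{crit}(f)=\{y=0\}$.

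The Thom--Sebastiani theorem, together with the computation for a nondegenerate quadratic form, then gives
$$\phi_{f-c}\left(\C_{\mr{S}}[\mr{dim}\,\mr{S}]\right)\simeq \C_{\mr{crit}(f)}[\mr{dim}\,\mr{crit}(f)]\otimes \mathfrak{o}_f.$$
Here $\mathfrak{o}_f$ is the rank-one local system of orientations of the vanishing cycles. It is canonically the local system of $\mr{det}(\mr{N})$ squared against the Hessian, i.e. of $\mr{K}_\mr{S}|_{\mr{crit}}\otimes\mr{K}_{\mr{crit}}^{-1}$ twisted by $\sqrt{\mr{det}\,\mr{Hess}}$. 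I will track the degree shift so that it matches the normalisation $[\mr{dim}\,\mr{X}]$ used in the statement.

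\emph{Gluing the sign local systems.} By \cref{pervdcrit}, $\mathscr{P}_{\mr{L},\mr{M}}|_\mr{U}\simeq\mathscr{P}_{\mr{S},f}\otimes\mathfrak{or}_{\mr{X}/\mr{S}}$, where $\mathfrak{or}_{\mr{X}/\mr{S}}$ comes from $\mr{K}^{-1/2}_{(\mr{L}\cap\mr{M},s)}\otimes\mr{K}_\mr{S}$. The two local systems $\mathfrak{o}_f$ and $\mathfrak{or}_{\mr{X}/\mr{S}}$ must then combine to the one attached to $\mr{K}^{-1/2}_{(\mr{L}\cap\mr{M},s)}\otimes\mr{K}_{\mr{L}\cap\mr{M}}$, compatibly with chart changes.

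The key input is that the Hessian gives a nondegenerate quadratic form on $\mr{N}=\mr{N}_{\mr{crit}/\mr{S}}$, hence a trivialisation $\mr{det}(\mr{N})^{\otimes 2}\cong\mathscr{O}$. Through $\mr{K}_\mr{S}|=\mr{K}_{\mr{crit}}\otimes\mr{det}\,\mr{N}^\vee$, this is exactly the isomorphism $\mr{K}_\mr{S}^{\otimes2}|\cong\mr{K}_{\mr{crit}}^{\otimes2}$ entering \cref{lemmasmoothcase}. I would first verify this identification, namely that the symmetric obstruction theory computation in \cref{lemmasmoothcase} agrees with the Hessian trivialisation. Once it holds, the $\mr{K}_\mr{S}$ factors cancel and the local isomorphisms
$$\mathscr{P}_{\mr{L},\mr{M}}|_\mr{U}\simeq\mathfrak{or}_{\mr{L}\cap\mr{M}}|_\mr{U}[\mr{dim}\,\mr{X}]$$
become independent of the chart. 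This uses Joyce's transition rule with $\mr{det}(\mr{d}\varphi)^{\otimes2}$. Since both sides are perverse (shifted local systems on a smooth space) and $\mr{Hom}$'s between perverse sheaves form a sheaf, the local isomorphisms glue to the global one.
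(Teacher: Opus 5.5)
Your outline is workable, but it takes a much longer road than the paper, and one step as written cannot succeed.

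\textbf{The degree shift.} Your own Thom--Sebastiani computation gives $\C_{\mr{crit}(f)}[\mr{dim}\,\mr{crit}(f)]\otimes\mathfrak{o}_f$, i.e.\ a shift by $\mr{dim}(\mr{L}\cap\mr{M})$. No bookkeeping turns this into $[\mr{dim}\,\mr{X}]$: a rank-one local system on the smooth space $\mr{L}\cap\mr{M}$ is perverse only when shifted by $\mr{dim}(\mr{L}\cap\mr{M})\le\frac{1}{2}\mr{dim}\,\mr{X}$.

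The $[\mr{dim}\,\mr{X}]$ in the statement is a misprint. The paper's own proof ends with $\mathscr{P}_{\mr{L,M}}\cong\mathfrak{or}_{\mr{L}\cap\mr{M}}[\mr{dim}(\mr{L}\cap\mr{M})]$. This is also the normalisation in \cref{pdr}, where $\mathscr{P}_{\mr{L}\cap\mr{M}}[c-n]$ is the local system. Rather than promising to match the statement's shift, you should have noticed that it contradicts your own computation. Your final display should read $[\mr{dim}(\mr{L}\cap\mr{M})]$.

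\textbf{The route.} The paper never uses $\mr{L}$-critical charts. Since $\mr{L}\cap\mr{M}$ is smooth, its $\mr{d}$-critical structure is $s=0$, so $(\mr{L}\cap\mr{M},\mr{L}\cap\mr{M},0)$ is a single global critical chart. For this chart:
\begin{itemize}
\item $\mathscr{P}_{\mr{L}\cap\mr{M},0}=\C_{\mr{L}\cap\mr{M}}[\mr{dim}(\mr{L}\cap\mr{M})]$, since these are the vanishing cycles of the zero function;
\item because $\mr{S}=\mr{L}\cap\mr{M}$, the bundle defining $\mathfrak{or}_{\mr{X}/\mr{S}}$ in \cref{pervdcrit} is literally $\mr{K}^{-1/2}_{(\mr{L}\cap\mr{M},s)}\otimes\mr{K}_{\mr{L}\cap\mr{M}}$.
\end{itemize}
The corollary then follows in one line from \cref{pervdcrit}, with no Morse--Bott lemma, no Thom--Sebastiani, and nothing to glue.

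Your route does buy something. It describes $\mathscr{P}_{\mr{L,M}}$ in the $\mr{L}$-charts used elsewhere in the paper. The check you flag, comparing the Hessian trivialisation of $\mr{det}(\mr{N})^{\otimes 2}$ with the isomorphism of \cref{lemmasmoothcase}, would also pin down which trivialisation defines $\mathfrak{or}_{\mr{L}\cap\mr{M}}$. Namely, it would show that the trivialisation of $(\mr{K}^{-1/2}_{(\mr{L}\cap\mr{M},s)}\otimes\mr{K}_{\mr{L}\cap\mr{M}})^{\otimes 2}$ is the one coming from the symmetric obstruction theory. That is what \eqref{orind} and \cref{pdr} tacitly use. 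The paper's one-liner instead works with Joyce's $\lambda$ for the trivial chart and leaves that comparison implicit.

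The cost is that two things become genuine extra verifications rather than free consequences of \cref{pervdcrit}: this comparison, and the compatibility of your Thom--Sebastiani isomorphisms with chart changes. In effect you would be re-proving the stabilisation step of Brav et al.\ in the Morse--Bott case.
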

\begin{proof}
 Indeed, our assumption means that $(\mr{L}\cap\mr{M},0)$ is the unique $\mr{d}$-critical structure on the intersection. Hence, \[\mathscr{P}_{\mr{L,M}} \cong \mathscr{P}_{\mr{L\cap M},0}\otimes_{\C}\mathfrak{or}_{\mr{L\cap M}} = \C_\mr{L\cap M}\left[\mr{dim}(\mr{L}\cap\mr{M})\right]\otimes_\C  \mathfrak{or}_{\mr{L\cap M}}=\mathfrak{or}_{\mr{L\cap M}}\left[\mr{dim}(\mr{L}\cap\mr{M})\right].\qedhere \]
\end{proof}
\subsection{Comparison of the two perverse sheaves}
Given two orientable Lagrangian submanifolds $\mr{L}$ and $\mr{M}$ in $\mr{X}$, we have two perverse sheaves on their intersection $\mr{L}\cap \mr{M}$ arising via deformation quantisation on the one hand and, on the other, via the $\mr{d}$-critical locus structure. The following theorem due to Gunningham and Safronov \cite{saf} compares the two and will be of importance for our applications.
\2
\begin{theorem}[Gunningham-Safronov \cite{saf}]\label{pervref}
 Let $\mr X$ be a holomorphic symplectic manifold of dimension $2n$, equipped with the canonical DQ-algebroid $\widehat{\mathscr{W}}_\mr{X}(0)$. Suppose that $\mr{L}$ and $\mr{M}$ are Lagrangian submanifolds equipped with orientation data $\mr{K}_{\mr{L}}^{1/2}$ and $\mr{K}_\mr{M}^{1/2}$. Let $\mathscr{D}^{\lambda}_{\mr{L}}$ and $\mathscr{D}^{\mu}_{\mr{M}}$ be two simple holonomic $\widehat{\mathscr{W}}_\mr{X}$-modules, supported on $\mr{L}$ and $\mr{M}$, respectively, as in \cref{existsimplehol}. Then we have an isomorphism of perverse sheaves $$\mr{R}\shom_{\widehat{\mathscr{W}}_\mr{X}}\big(\mathscr{D}^{\lambda}_{\mr{L}},\mathscr{D}^{\mu}_{\mr{M}}\big)[n] \xrightarrow{\sim} \C\cbrak \otimes_{\C} \mathscr{P}_{\mr{L,M}}.$$
\end{theorem}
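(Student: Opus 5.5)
Both sides are perverse sheaves on $\mr{L}\cap\mr{M}$, up to extension of scalars to $\C\cbrak$. The left side is perverse by \cref{perv}; the right side is perverse by \cref{pervdcrit} applied to Bussi's $\mr{d}$-critical structure, oriented by $\mr{K}_\mr{L}^{1/2}\otimes\mr{K}_\mr{M}^{1/2}$. Perverse sheaves form a stack and morphisms between them form a sheaf. So I would argue local-to-global: first build canonical local isomorphisms on a cover of $\mr{L}\cap\mr{M}$ by $\mr{L}$-critical charts as in \cref{localmodel}, then check that they agree on overlaps, so that they glue.

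\emph{Local step.} Take a chart $\Phi:\mr{U}\to\tilde{\mr{U}}\subset\Omega_{\mr{L}\cap\mr{U}}$ with $\mr{L}\cap\mr{U}$ the zero section and $\Phi(\mr{M}\cap\mr{U})=\Gamma_{\mr{d}f}$. In this chart $\Ww|_\mr{U}$ is the half-twisted $\widehat{\mathscr{W}}^{\sqrt{v}}_{\Omega_\mr{L}}$, and its restriction to the zero section is $\mr{D}^{\sqrt{v}}_\mr{L}\cbrak$.

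Using the classification \cref{miclass} and the uniqueness part of \cref{existsimplehol}, I would identify the two modules:
\begin{itemize}
\item $\mathscr{D}^\lambda_\mr{L}$ with the module induced from $\mr{K}_\mr{L}^{1/2}$, equivalently $\mathscr{O}_\mr{L}$ after the half twist;
\item $\mathscr{D}^\mu_\mr{M}$ with its quantised-symplectic transform under $\xi\mapsto\xi-\mr{d}f$, i.e.\ the exponential module $\mathscr{O}_\mr{L}\cbrak\, e^{f/\hbar}$.
\end{itemize}
The monodromy twists $\lambda,\mu$ only contribute a locally trivial $\C\cbrak$-line on the chart, and this line is fixed by the chosen orientation data.

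With these identifications, $\mr{R}\shom_{\Ww}(\mathscr{D}_\mr{L},\mathscr{D}_\mr{M})[n]$ becomes the $\hbar$-twisted de Rham complex $\bigl(\Omega^\bullet_\mr{L}\cbrak,\ \mr{d}+\hbar^{-1}\mr{d}f\wedge\bigr)[n]$, tensored with $\mr{K}_\mr{L}^{-1/2}\otimes\mr{K}_\mr{M}^{1/2}|_{\mr{L}\cap\mr{M}}$. I would then invoke the formal ($\hbar$-adic) comparison between twisted de Rham complexes and vanishing cycles, as in Sabbah and Sabbah--Saito, or via Kapranov's formal version. This gives $\C\cbrak\otimes_\C\phi_f(\C_\mr{L}[n])$, which is $\C\cbrak\otimes\mathscr{P}_{\mr{L},f}$. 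The leftover half-form line is exactly the local system $\mathfrak{or}_{\mr{X}/\mr{S}}$ of \cref{pervdcrit} for $\mr{S}=\mr{L}\cap\mr{U}$, so the local isomorphism lands in $\C\cbrak\otimes\mathscr{P}_{\mr{L},\mr{M}}|_\mr{U}$.

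\emph{Gluing step.} Two $\mr{L}$-critical charts differ by a quantised symplectic transformation preserving the zero section. The induced change of generating function replaces $f$ by $f\circ\varphi$ up to a constant. On the de Rham side this transformation acts on the half-form twist by $\mr{det}(\mr{d}\varphi)^{1/2}$. On the $\mr{d}$-critical side, by the transformation rule for $\lambda_{(\mr{U},\mr{S},f)}$ in Joyce's proposition, it acts on $\mathfrak{or}$ by the square root of $\mr{det}(\mr{d}\varphi)^{\otimes 2}$. Both sign choices are pinned down by the same square root $\mr{K}_\mr{L}^{1/2}\otimes\mr{K}_\mr{M}^{1/2}$, so the two local isomorphisms agree on overlaps. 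One must also compare $\mr{L}$-charts with $\mr{M}$-charts and $\mr{LM}$-charts. Bussi shows these are all étale-comparable, so I would reduce to the $\mr{L}$-chart case via the symmetry $\mathscr{P}_{\mr{L},\mr{M}}\simeq\mathscr{P}_{\mr{M},\mr{L}}$ together with duality on the $\Ww$ side.

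\emph{Main obstacle.} I expect the hard part to be the gluing. One has to show that the Sabbah-type comparison isomorphism is natural under quantised contact transformations, and not merely under changes of coordinates on $\mr{L}$. The $\pm1$ ambiguity has to be tracked honestly through the contactification of Gunningham--Safronov's canonical section. My first attempt would be to lift everything to the Legendrian $\Lambda_\mr{L}$ in the canonical contactification. There the microlocalisation $\mu_\Lambda$ is an equivalence, and the transformations become ordinary coordinate changes together with a twist by a half-density, where the compatibility reduces to a determinant computation.
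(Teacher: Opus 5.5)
The paper does not prove this statement; it imports it from Gunningham--Safronov \cite{saf}, so your outline has to stand on its own. Your skeleton is reasonable: local isomorphisms on $\mr{L}$-critical charts, glued in the stack of perverse sheaves. The Koszul computation identifying $\mr{R}\shom_{\Ww}$ of the two standard local models with $(\Omega^\bullet_\mr{L}\cbrak,\hbar\,\mr{d}+\mr{d}f\wedge)$ is also fine.

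\textbf{The gluing is not carried out, and it is the actual content of the theorem.} Each local isomorphism depends on chosen identifications $\mathscr{D}^\lambda_\mr{L}|_\mr{U}\cong\mathscr{O}_\mr{L}\cbrak$ and $\mathscr{D}^\mu_\mr{M}|_\mr{U}\cong\mathscr{O}_\mr{L}\cbrak\,e^{f/\hbar}$.
\begin{itemize}
\item A simple module has endomorphism sheaf $\C\cbrak$, so these identifications are unique only up to locally constant $\C\cbrak^*$-scalars. The discrepancy on overlaps is therefore a priori an arbitrary automorphism, a $\C\cbrak^*$-valued cocycle in the clean case, and not a sign.
\item Already for a clean, connected intersection, the theorem asserts that the rank-one $\C\cbrak$-local system underlying $\mr{R}\shom_{\Ww}(\mathscr{D}^\lambda_\mr{L},\mathscr{D}^\mu_\mr{M})$ is $\C\cbrak$ tensored with the local system of $\mr{or}(\mr{K}^{1/2}_\mr{L},\mr{K}^{1/2}_\mr{M})$. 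Your phrase ``this line is fixed by the chosen orientation data'' is exactly what must be proved. That requires normalising the local identifications through $\mu_{\Lambda}(\mathscr{D})|_{\mr{L}\times 1}\cong\mr{K}^{1/2}$ in the canonical contactification and tracking them across charts.
\item Two $\mr{L}$-charts differ by a symplectomorphism that fixes $\mr{L}$ pointwise but need not preserve the cotangent fibration; for example $(x,\xi)\mapsto(x+a\xi,\xi)$. Even granting $f_2=f_1\circ\varphi$ up to a constant (which itself needs a Hamilton--Jacobi argument), the quantisation of this map is not a change of variables on $\mr{L}$. Its effect on the twisted de Rham complex is therefore not the half-density factor $\det(\mr{d}\varphi)^{1/2}$. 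You would have to show that it induces the BBDJS transition isomorphism on vanishing cycles.
\item Your suggested remedy does not reach this. $\mu_{\Lambda_\mr{L}}$ is an equivalence only on modules supported on $\Lambda_\mr{L}$, so it cannot compute $\mr{Hom}$ into $\mathscr{D}^\mu_\mr{M}$.
\end{itemize}

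\textbf{The local comparison is also not supplied by what you cite.} You need a canonical, sheaf-level, $\C\cbrak$-linear quasi-isomorphism on $\mr{crit}(f)$, summed over critical values,
\[
(\Omega^\bullet_\mr{L}\cbrak,\hbar\,\mr{d}+\mr{d}f\wedge)[n]\simeq\C\cbrak\otimes_\C\phi_f(\C_\mr{L}[n]),
\]
and it must be natural with respect to the chart changes above. Neither cited source gives such a map as stated:
\begin{itemize}
\item The Sabbah and Sabbah--Saito results concern (hyper)cohomology of global algebraic Kontsevich-type complexes.
\item Kapranov's result concerns the $\mathscr{O}$-linear complex $(\Omega^\bullet,\mr{d}f\wedge)$ as a dg module over the de Rham algebra.
\end{itemize}
The natural source of such a map is the canonical contactification itself. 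There the exponential module comes from the microdifferential module of $\delta(t-f)$ on $\mr{L}\times\C_t$. The microlocal Riemann--Hilbert correspondence then turns $\mr{Hom}$ into a $\mu\mr{hom}$ that can be computed by vanishing cycles. What is missing is the compatibility of that isomorphism with quantised contact transformations and with the BBDJS transition maps, including their $\det$-dependent signs. As written, your proposal is a programme rather than a proof.
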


\section{Coherent sheaves and Lagrangian intersections}\label{sec3}

\subsection{Malgrange-Serre resolutions of coherent sheaves}\label{msres}

Let $\mr{X}$ be a complex manifold and denote by $\mathscr{E}_\mr{X}^{p,q}$ the sheaf of smooth differential forms of type $(p,q)$; we denote by $\bar{\partial}$ the $(0,1)$ component of the exterior derivative. Recall that there is an exact sequence $$0 \to \mathscr{O}_\mr{X} \to \mathscr{E}_\mr{X}^{0,0} \xrightarrow{\bar{\partial}} \cdots \xrightarrow{\bar{\partial}} \mathscr{E}_\mr{X}^{0,n}\to 0,$$ where $n$ is the dimension of $\mr{X}$. We have the following important result, conjectured by Serre, and proved by Malgrange.
\2
\begin{theorem}[Malgrange \cite{Malgrange1962-1964}]
Let $\mr{X}$ be a complex manifold. The sheaves $\mathscr{E}_\mr{X}^{p,q}$ are flat $\mathscr{O}_\mr{X}$-modules.
\end{theorem}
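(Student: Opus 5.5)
The plan is to reduce to a statement about stalks of smooth functions and prove it by going through formal power series. Each $\mathscr{E}_\mr{X}^{p,q}$ is locally free of finite rank over $\mathscr{E}_\mr{X}^{0,0}$: locally it is spanned by $\mr{d}z_I\wedge \mr{d}\bar z_J$. A direct sum of flat modules is flat, so it suffices to treat $\mathscr{E}_\mr{X}\coloneqq\mathscr{E}_\mr{X}^{0,0}$. Flatness of a sheaf of modules is a stalkwise property. So I will show that the ring $\mathscr{E}_x$ of germs of complex-valued $\mr{C}^\infty$ functions at $x\in\C^n$ is flat over $\mathscr{O}_x=\C\{z_1,\dots,z_n\}$. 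Since $\mathscr{O}_x$ is Noetherian, flatness is equivalent to the following relations criterion. For any $f_1,\dots,f_k\in\mathscr{O}_x$, every smooth relation $\sum g_if_i=0$ with $g_i\in\mathscr{E}_x$ is an $\mathscr{E}_x$-linear combination of analytic relations, that is, of elements of the syzygy module $\mr{Syz}(f)\subset\mathscr{O}_x^k$.

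The first step is the formal analogue. Write $\widehat{\mathscr{E}}_y=\C[\![z-y,\bar z-\bar y]\!]$ for the ring of formal Taylor series at a point $y$. The map $\mathscr{O}_y\to\C[\![z-y]\!]$ is flat, being a completion of a Noetherian local ring. The map $\C[\![z-y]\!]\to\C[\![z-y,\bar z-\bar y]\!]$ is flat, being a formal power series extension. Hence $\widehat{\mathscr{E}}_y$ is flat over $\mathscr{O}_y$. Consequently, for $y$ near $x$, the Taylor expansion $T_y g$ of our smooth relation lies in $\widehat{\mathscr{E}}_y\cdot\mr{Syz}(f)$. Here I use coherence of $\mathscr{O}$ (Oka): finitely many analytic generators $s_1,\dots,s_r$ of $\mr{Syz}(f)$ on a neighbourhood $\mr{U}$ of $x$ generate the syzygies at every $y\in \mr{U}$.

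The second step passes from the formal statement to the smooth one. Consider the submodule $N\subset\mathscr{E}(\mr{U})^k$ generated by the analytic sections $s_1,\dots,s_r$. I want to know that $g$ lies in $N$, given that $T_yg\in\widehat{\mathscr{E}}_y\cdot N$ for all $y\in \mr{U}$. This is exactly a Whitney spectral theorem for $N$. Whitney's theorem says that the closure of a submodule in the Fréchet $\mr{C}^\infty$ topology consists of the sections satisfying these pointwise formal conditions. So it remains to show that $N$ is closed. This closedness is the heart of Malgrange's theorem ("modules generated by analytic functions are closed"), and I expect it to be the main obstacle. I would attack it by induction on dimension, using the Weierstrass preparation/division theorem to reduce to a relative situation. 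The key analytic input is Łojasiewicz's inequality, which gives polynomial lower bounds $|F(y)|\ge c\,\mr{dist}(y,F^{-1}(0))^\alpha$ for analytic $F$. With it I would control division by analytic functions in $\mr{C}^\infty$ norms and show that flat functions in the closure can be divided. If this goes through, the pointwise formal membership established in the first step gives $g\in N$, which is the desired relations criterion. This yields flatness of $\mathscr{E}_x$, and hence of all $\mathscr{E}_\mr{X}^{p,q}$.
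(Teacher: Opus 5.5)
The reduction you set up is correct, and it is the same one Malgrange uses to derive flatness. You pass to $\mathscr{E}^{0,0}$ stalkwise and use the equational criterion. Flatness of $\C[\![z-y,\bar z-\bar y]\!]$ over $\mathscr{O}_y$ together with Oka coherence gives $T_yg\in\widehat{\mathscr{E}}_y\cdot(s_1,\dots,s_r)$ for all $y\in\mr{U}$. Whitney's spectral theorem then places $g$ in $\overline{N}$. The paper gives no argument beyond the citation to Malgrange, so there is no different route to compare against.

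The gap is that everything after ``it remains to show that $N$ is closed'' is the actual theorem, and your outline does not prove it.

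\textbf{What your sketch covers: the flat case.} {\L}ojasiewicz's inequality only handles those $g\in\overline{N}$ that are flat along the degeneracy locus $Z$ of the matrix $S=(s_1,\dots,s_r)$, i.e.\ the common zero set of the $\rho\times\rho$ minors, where $\rho$ is the generic rank. Off $Z$ the rank of $S$ is constant and $g(y)$ lies pointwise in the image of $S(y)$. So $g=Sh$ there, with $h$ given by a minor (pseudo-inverse) formula whose denominators are bounded below by $c\,\mathrm{dist}(y,Z)^{\alpha}$. Flatness of $g$ on $Z$ then makes $h$, extended by $0$, smooth. This is what your phrase ``flat functions in the closure can be divided'' amounts to.

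\textbf{What is missing: the reduction to the flat case.} You must produce a smooth $h_1$ with $g-Sh_1$ flat on $Z$. The pointwise formal solutions $T_yg=S\hat h_y$, $y\in Z$, must be chosen coherently so that they form a Whitney field on the analytic set $Z$, which Whitney's extension theorem then extends. That is a closedness statement for modules over Whitney functions on $Z$. Proving it requires an induction over a stratification of $Z$, gluing across lower-dimensional strata with Whitney's extension theorem and {\L}ojasiewicz's regular separation of analytic sets. Nothing in your outline supplies this step. It is also unclear what Weierstrass division contributes, since it divides analytic functions, not the smooth $g$. Dividing $g$ would need the Malgrange--Mather division theorem, which is of comparable depth.

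As written, you have a correct reduction of flatness to Malgrange's theorem that an $\mathscr{E}(\mr{U})$-module generated by finitely many analytic sections is closed. State that theorem as a cited result rather than as a step you carry out. Your argument is then as complete as the paper's, which also rests on a citation.
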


For any coherent sheaf $\mathscr{F}$, we thus get an exact sequence \begin{equation}\label{soft}
 0\to \mathscr{F} \to \mathscr{F}\otimes_{\mathscr{O}_\mr{X}}\mathscr{E}_\mr{X}^{0,0} \xrightarrow{\bar{\partial}} \cdots \xrightarrow{\bar{\partial}} \mathscr{F}\otimes_{\mathscr{O}_\mr{X}}\mathscr{E}_\mr{X}^{0,n}\to 0
 \end{equation} which is a soft resolution of $\mathscr{F}$. Hence the cohomology of $\mathscr{F}$ can be identified with the cohomology of the global sections of the complex \labelcref{soft}, i.e. we have an isomorphism $$\mr{H}^{\bullet}(\mr{X},\mathscr{F}) \simeq \mr{H}^{\bullet}\Gamma(\mr{X},\mathscr{F}\otimes_{\mathscr{O}_\mr{X}}\mathscr{E}_\mr{X}^{0,p}).$$
The above resolution of $\mathscr{F}$ first appeared in the local duality results of Malgrange-Serre.
\2
\begin{definition}
Let $\mr{X}$ be a complex manifold. For a coherent sheaf $\F$ on $\mr{X}$, the Malgrange-Serre resolution of $\F$, denoted $\MS(\F)$, is given by the complex $$0\to \mathscr{F}\otimes_{\mathscr{O}_\mr{X}}\mathscr{E}_\mr{X}^{0,0}\xrightarrow{\bar{\partial}} \cdots \xrightarrow{\bar{\partial}} \mathscr{F}\otimes_{\mathscr{O}_\mr{X}}\mathscr{E}_\mr{X}^{0,n}\to 0.$$
\end{definition}

Thus, we have a complex of soft sheaves and we note that this defines an exact functor $$\MS:\mr{Coh}(\mr{X})\to \mr{C}^{b}(\mr{X})$$ and we use the same notation for its extension to $\mathbf{D}^{+}(\mr{X})$. By definition, for any $\mathscr{F} \in \mathbf{D}^{+}(\mr{X})$, we have natural quasi-isomorphisms $$\MS(\mathscr{O}_\mr{X})\otimes_{\mathscr{O}_\mr{X}}\mathscr{F}\simeq \MS(\mathscr{F}).$$
Let $\mr{X}$ and $\mr{Y}$ be complex manifolds and consider a map $f:\mr{X}\to \mr{Y}$. If $\mathscr{F}$ is an $\mathscr{O}_\mr{X}$-module  and $\mathscr{G}$ is an $\mathscr{O}_\mr{Y}$-module, then for any morphism $\alpha:\mathscr{G}\to f_*\mathscr{F}$, we have a pullback $$f^*(\alpha):\MS(\mathscr{G})\to f_*\MS(\mathscr{F})$$ which is a morphism of resolutions above $\alpha$.\\
We are going to be mainly interested in applications to sheaves of dg algebras and this is facilitated by the dg algebra structure on the graded sheaf $$\E_\mr{X}^{\bullet} \coloneqq \bigoplus_p\E_\mr{X}^{0,p},$$ given by the triple $(\E_\mr{X}^{\bullet},\bar{\partial},\wedge)$. Observe that for any $\mathscr{F}$, the resolution $\MS(\mathscr{F})$ is a dg $\mathscr{E}_\mr{X}^\bullet$-module.
\2
\begin{proposition}
Let $\mathscr{A}$ be dg algebra in $\Db(\mr{X})$. Then $\MS(\mathscr{A})$ is a dg algebra, i.e. $\MS$ is a multiplicative resolution.
\end{proposition}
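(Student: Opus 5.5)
The plan is to show that $\MS(\mathscr{A})$ carries a multiplication built from the product of $\mathscr{A}$ and the wedge product on $\E_\mr{X}^\bullet$, and that this product satisfies the Leibniz rule for the total differential and is associative. Throughout I take $\mathscr{A}$ to be a sheaf of dg $\mathscr{O}_\mr{X}$-algebras with coherent cohomology, represented by a complex of $\mathscr{O}_\mr{X}$-modules. By the natural identification recorded above, I use the model
$$\MS(\mathscr{A})\simeq \MS(\mathscr{O}_\mr{X})\otimes_{\mathscr{O}_\mr{X}}\mathscr{A}=\E_\mr{X}^\bullet\otimes_{\mathscr{O}_\mr{X}}\mathscr{A}.$$
This is the total complex of the double complex with differentials $\bar\partial\otimes 1$ and $1\otimes \mr{d}_\mathscr{A}$, with the usual Koszul sign.

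\textbf{Step 1: define the product.} On local sections I set
$$(\alpha\otimes a)\cdot(\beta\otimes b)=(-1)^{|a||\beta|}(\alpha\wedge\beta)\otimes ab,$$
where $\alpha,\beta\in\E_\mr{X}^\bullet$ and $a,b\in\mathscr{A}$. To see this is well defined over $\mathscr{O}_\mr{X}$, I use two facts: $\E_\mr{X}^\bullet$ is graded-commutative, and $\mathscr{O}_\mr{X}$ is central and in degree $0$ in both factors. So the product is the standard tensor product of a graded-commutative dg $\mathscr{O}_\mr{X}$-algebra with a dg $\mathscr{O}_\mr{X}$-algebra. Associativity then follows formally from associativity of $\wedge$ and of the product on $\mathscr{A}$, with the Koszul signs cancelling exactly as for tensor products of dg algebras.

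\textbf{Step 2: Leibniz rule.} I check that $\bar\partial$ is a derivation of $\wedge$ and annihilates $\mathscr{O}_\mr{X}$-linearly embedded holomorphic functions. Hence $\bar\partial\otimes 1$ is well defined on the tensor product over $\mathscr{O}_\mr{X}$, and it is a derivation of the product. The same holds for $1\otimes\mr{d}_\mathscr{A}$, since $\mr{d}_\mathscr{A}$ is $\mathscr{O}_\mr{X}$-linear. With the Koszul sign these two anticommute, so their sum is a derivation squaring to zero. The unit is $1\otimes 1$.

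\textbf{Step 3: compatibility and invariance.} The coaugmentation $\mathscr{A}\to\MS(\mathscr{A})$, $a\mapsto 1\otimes a$, is multiplicative, and by Malgrange flatness it is a quasi-isomorphism. Functoriality of $\MS$ then shows that a quasi-isomorphism of dg algebras $\mathscr{A}\to\mathscr{A}'$ induces a multiplicative quasi-isomorphism, so the construction is well defined on $\Db(\mr{X})$ up to multiplicative quasi-isomorphism.

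The main obstacle is the statement for an arbitrary dg algebra in $\Db(\mr{X})$, i.e.\ an $\Aa$-type object rather than a strict sheaf of dg algebras. In that case I would first rectify $\mathscr{A}$ to a strict sheaf of dg $\mathscr{O}_\mr{X}$-algebras, using the existing model structures on sheaves of dg algebras in characteristic $0$, and then apply Steps 1–3. Flatness of $\E_\mr{X}^{0,p}$ ensures the tensor product needs no further derived correction.
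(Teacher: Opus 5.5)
Your proposal is correct and is essentially the paper's proof. The paper defines the product on $\MS(\mathscr{A})$ as the composite of the wedge map $\MS(\mathscr{A})\otimes_{\mathscr{O}_{\mr{X}}}\MS(\mathscr{A})\to\MS(\mathscr{A}\otimes_{\mathscr{O}_{\mr{X}}}\mathscr{A})$ with $\MS(m_{\mathscr{A}})$, which is exactly your Koszul-signed formula on $\E_{\mr{X}}^{\bullet}\otimes_{\mathscr{O}_{\mr{X}}}\mathscr{A}$, and it records that $\mathscr{A}\to\MS(\mathscr{A})$ is multiplicative, as in your Step 3.

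You go further than the paper by actually verifying associativity and the Leibniz rule, which the paper leaves implicit and which use that $m_{\mathscr{A}}$ and $\mr{d}_{\mathscr{A}}$ are $\mathscr{O}_{\mr{X}}$-linear. Your closing rectification remark is not needed, since both you and the paper really work with a strict sheaf of dg $\mathscr{O}_{\mr{X}}$-algebras; moreover, a bare algebra object in $\Db(\mr{X})$ carries less data than an $\Aa$-structure, so rectification would not apply to it as such.
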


\begin{proof}
Let $m_\mathscr{A}$ be the multiplication of $\mathscr{A}$. We have natural morphisms $$\MS(\mathscr{A})\otimes_{\mathscr{O}_\mr{X}}\MS(\mathscr{A}) \xrightarrow{\wedge} \MS(\mathscr{A}\otimes_{\mathscr{O}_\mr{X}}\mathscr{A})\xrightarrow{\MS(m_\mathscr{A})} \MS(\mathscr{A})$$ whose composition is the multiplication $m_{\MS(\mathscr{A})}$. Moreover, the following square is commutative \[
\begin{tikzcd}[column sep=large]
 \mathscr{A}\otimes_{\mathscr{O}_\mr{X}}\mathscr{A} \arrow[d] \arrow[r, "m_\mathscr{A}"]& \mathscr{A}  \arrow[d]
 \\
\MS(\mathscr{A})\otimes_{\mathscr{O}_\mr{X}}\MS(\mathscr{A})  \arrow[r,"m_{\MS(\mathscr{A})}"]
  & \MS(\mathscr{A}).
\end{tikzcd}
\]
\end{proof}

\subsection{Sheaves on Lagrangian intersections after Behrend-Fantechi}

Let $\mr{L}$ and $\mr{M}$ be submanifolds of $\mr{X}/\C$. 
\2
\begin{proposition}
Let $\mathscr{F}$ be locally free on $\mr{L}$ and $\mathscr{G}$ be coherent on $\mr{M}$. Then, we have a natural isomorphism for all $p \ge 0$: $$\ext^{p}_{\mathscr{O}_\mr{X}}(i_{*}\mathscr{F},j_{*}\mathscr{G}) \cong \ext^{p}_{\mathscr{O}_\mr{X}}(i_{*}\mathscr{O}_{\mr{L}},j_{*}\mathscr{O}_{\mr{M}})\otimes \mathscr{G}|_{L\cap M}\otimes\mathscr{F}^{\vee}|_{L\cap M}.$$
\end{proposition}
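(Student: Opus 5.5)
The strategy is to reduce the question to a local statement about the Ext sheaves and then deal with each of the two twists separately. Let $i:\mr{L}\xhookrightarrow{}\mr{X}$ and $j:\mr{M}\xhookrightarrow{}\mr{X}$ be the inclusions.

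\emph{Twisting by $\mathscr{G}$.} First we build a natural map. The $\mathscr{O}_\mr{M}$-module $\mathscr{G}$ acts on the target, so there is a pairing
$$\ext^{p}_{\mathscr{O}_\mr{X}}(i_*\mathscr{O}_\mr{L},j_*\mathscr{O}_\mr{M})\otimes_{\mathscr{O}_\mr{M}}\mathscr{G}\to \ext^{p}_{\mathscr{O}_\mr{X}}(i_*\mathscr{O}_\mr{L},j_*\mathscr{G}).$$
The left-hand sheaf is supported on $\mr{L}\cap\mr{M}$ and is killed by the ideals of both $\mr{L}$ and $\mr{M}$. Tensoring with $\mathscr{G}$ is therefore the same as tensoring with $\mathscr{G}|_{\mr{L}\cap\mr{M}}$, once we fix that restriction to mean $\mathscr{G}\otimes\mathscr{O}_{\mr{L}\cap\mr{M}}$.

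\emph{Twisting by $\mathscr{F}$.} Since $\mathscr{F}$ is locally free on $\mr{L}$, we have
$$\mr{R}\shom_{\mathscr{O}_\mr{X}}(i_*\mathscr{F},-)\cong \mr{R}\shom_{\mathscr{O}_\mr{X}}(i_*\mathscr{O}_\mr{L},-)\otimes_{\mathscr{O}_\mr{L}}\mathscr{F}^\vee.$$
This is justified locally: choose a locally free resolution $\mathscr{P}^\bullet\to i_*\mathscr{O}_\mr{L}$ (for instance the local Koszul complex of a regular sequence cutting out $\mr{L}$), and lift $\mathscr{F}$ to a locally free sheaf $\widetilde{\mathscr{F}}$ on a neighbourhood in $\mr{X}$. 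Then $\mathscr{P}^\bullet\otimes\widetilde{\mathscr{F}}$ resolves $i_*\mathscr{F}$. Naturality, and hence gluing, comes from the canonical map $\mathscr{F}^\vee\otimes \shom(i_*\mathscr{O}_\mr{L},-)\to\shom(i_*\mathscr{F},-)$, which is defined globally; all we need is that it is a quasi-isomorphism on the derived level, and that is a local check.

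\emph{Local computation.} It remains to check the statement for $\mathscr{G}$ itself. Locally, write the Koszul resolution $\mr{K}^\bullet$ of $\mathscr{O}_\mr{L}$. Then
$$\ext^p(i_*\mathscr{O}_\mr{L},j_*\mathscr{G})=\mathscr{H}^p\bigl(\shom(\mr{K}^\bullet,\mathscr{O}_\mr{X})\otimes j_*\mathscr{G}\bigr).$$
The complex $\shom(\mr{K}^\bullet,\mathscr{O}_\mr{X})\otimes j_*\mathscr{O}_\mr{M}$ is a Koszul complex on $\mr{M}$ for the restricted equations of $\mr{L}$. To compare it with the version twisted by $\mathscr{G}$ we need Tor-independence.

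\textbf{This is the main obstacle.} For arbitrary coherent $\mathscr{G}$ and arbitrary submanifolds, the Koszul cohomology on $\mr{M}$ does not commute with $\otimes\mathscr{G}$. I would handle this as follows. In the intended application $\mathscr{G}$ is locally free on $\mr{M}$ (a half-twisted line bundle), and in that case $\otimes_{\mathscr{O}_\mr{M}}\mathscr{G}$ is exact. The local identification then follows at once:
$$\mr{K}^\vee\otimes j_*\mathscr{G}\cong (\mr{K}^\vee\otimes j_*\mathscr{O}_\mr{M})\otimes_{\mathscr{O}_\mr{M}}\mathscr{G},$$
and taking cohomology commutes with a flat tensor. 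If one really needs $\mathscr{G}$ only coherent, I would either restrict the statement to locally free $\mathscr{G}$, or assume clean intersection. Under clean intersection the cohomology sheaves $\ext^p(i_*\mathscr{O}_\mr{L},j_*\mathscr{O}_\mr{M})\cong\wedge^{p-c}\mr{N}\otimes\det(\cdot)$ are locally free on $\mr{L}\cap\mr{M}$, so the Koszul complex splits locally and tensoring with $\mathscr{G}$ commutes with cohomology.

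Combining the two twists gives the stated isomorphism, and naturality in $p$ follows from the construction of the maps.
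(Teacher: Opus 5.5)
\textbf{The $\mathscr{F}$-twist.} Your argument here is correct and is essentially the paper's. The paper starts from the same global composition map
$\shom_{\mathscr{O}_\mr{X}}(i_*\mathscr{O}_\mr{L},\mathscr{E})\otimes_{\mathscr{O}_\mr{L}}\mathscr{F}^\vee\to\shom_{\mathscr{O}_\mr{X}}(i_*\mathscr{F},\mathscr{E})$
and checks that it is an isomorphism locally. It then passes to all $\ext^p$ by noting that both sides are effaceable $\delta$-functors in $\mathscr{E}$, since $-\otimes_{\mathscr{O}_\mr{L}}\mathscr{F}^\vee$ is exact. You reach the same conclusion from a local Koszul resolution twisted by a local lift of $\mathscr{F}$; the two routes are interchangeable.

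\textbf{The $\mathscr{G}$-twist.} The paper only says that ``a similar argument'' gives $\ext^p_{\mathscr{O}_\mr{X}}(i_*\mathscr{O}_\mr{L},j_*\mathscr{G})\cong\ext^p_{\mathscr{O}_\mr{X}}(i_*\mathscr{O}_\mr{L},j_*\mathscr{O}_\mr{M})\otimes_{\mathscr{O}_\mr{M}}\mathscr{G}$ for coherent $\mathscr{G}$. You are right to be suspicious.
\begin{itemize}
\item The functor $\mathscr{G}\mapsto\ext^p_{\mathscr{O}_\mr{X}}(i_*\mathscr{O}_\mr{L},j_*\mathscr{O}_\mr{M})\otimes_{\mathscr{O}_\mr{M}}\mathscr{G}$ is not exact, so the $\delta$-functor argument does not transfer. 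The stated isomorphism is in fact false for general coherent $\mathscr{G}$ (example below).
\item Your restriction to locally free $\mathscr{G}$ gives a complete proof of the version the paper actually uses later, where $\mathscr{G}$ is a square root of $\mr{K}_\mr{M}$.
\item For coherent $\mathscr{G}$ the correct statement is the derived one, $\mr{R}\shom_{\mathscr{O}_\mr{X}}(i_*\mathscr{O}_\mr{L},j_*\mathscr{G})\simeq\mr{R}\shom_{\mathscr{O}_\mr{X}}(i_*\mathscr{O}_\mr{L},j_*\mathscr{O}_\mr{M})\otimes^{\mathbf{L}}_{\mathscr{O}_\mr{M}}\mathscr{G}$. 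This is exactly what your Koszul computation shows.
\end{itemize}

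\textbf{The clean-intersection fallback.} Your claim that clean intersection suffices for arbitrary coherent $\mathscr{G}$ is wrong and should be dropped. Here is a counterexample.
\begin{itemize}
\item Take $\mr{X}=\C^2$ with $\sigma=\mr{d}x\wedge\mr{d}y$, $\mr{L}=\{y=0\}$ and $\mr{M}=\{y=x\}$. These are transverse Lagrangians, hence intersect cleanly.
\item Take $\mathscr{F}=\mathscr{O}_\mr{L}$ and $\mathscr{G}=\mathscr{O}_{\mr{L}\cap\mr{M}}=\C_0$.
\item Resolve $\mathscr{O}_\mr{L}$ by $\mathscr{O}_\mr{X}\xrightarrow{y}\mathscr{O}_\mr{X}$. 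Then $\ext^0(i_*\mathscr{O}_\mr{L},j_*\mathscr{O}_\mr{M})=\ker(x\colon\mathscr{O}_\mr{M}\to\mathscr{O}_\mr{M})=0$, so the right-hand side vanishes in degree $0$.
\item On the other hand $\ext^0(i_*\mathscr{O}_\mr{L},j_*\C_0)=\C_0$, because $y$ acts by zero on $\C_0$.
\end{itemize}
The missing $\C_0$ is precisely $\mr{Tor}^{\mathscr{O}_\mr{M}}_1(\C_0,\C_0)$ from the derived tensor product.

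The underlying reason is that $\ext^p(i_*\mathscr{O}_\mr{L},j_*\mathscr{O}_\mr{M})$ is locally free over $\mathscr{O}_{\mr{L}\cap\mr{M}}$ but not flat over $\mathscr{O}_\mr{M}$ when $c>0$. So the Koszul complex on $\mr{M}$ is not split as a complex of $\mathscr{O}_\mr{M}$-modules, and the terms $\mr{Tor}^{\mathscr{O}_\mr{M}}_q\bigl(\ext^{p}(i_*\mathscr{O}_\mr{L},j_*\mathscr{O}_\mr{M}),\mathscr{G}\bigr)$ contribute. Local freeness on $\mr{L}\cap\mr{M}$ does not help.

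More generally, whenever $c\ge 1$ the right-hand side vanishes in degree $0$. Meanwhile $\shom(i_*\mathscr{O}_\mr{L},j_*\mathscr{G})\neq 0$ as soon as $\mathscr{G}$ has local sections killed by the ideal of $\mr{L}$.
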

\begin{proof}
For any coherent $\mathscr{E}$ on $\mr{X}$, the composition defines a morphism $$\shom_{\mathscr{O}_\mr{X}}(i_*\mathscr{O}_\mr{L},\mathscr{E})\otimes_{\mathscr{O}_\mr{L}}\mathscr{F}^\vee\to \shom_{\mathscr{O}_\mr{X}}(i_*\mathscr{F},\mathscr{E}).$$ We see that this is an isomorphism by working on an affine open. Now for $p>0$, the result follows since both sides are effaceable $\delta$-functors in $\mathscr{E}$. Hence $$\ext^{p}_{\mathscr{O}_\mr{X}}(i_{*}\mathscr{F},j_{*}\mathscr{G}) \cong \ext^{p}_{\mathscr{O}_\mr{X}}(i_{*}\mathscr{O}_{\mr{L}},j_{*}\mathscr{G})\otimes_{\mathscr{O}_\mr{L}}\mathscr{F}^{\vee}.$$ A similar argument gives an isomorphism $$\ext^{p}_{\mathscr{O}_\mr{X}}(i_{*}\mathscr{O}_{\mr{L}},j_{*}\mathscr{G})\cong \ext^{p}_{\mathscr{O}_\mr{X}}(i_{*}\mathscr{O}_{\mr{L}},j_{*}\mathscr{O}_\mr{M})\otimes_{\mathscr{O}_\mr{M}}\mathscr{G}.$$ Composing with the previously established isomorphism, we get the result.
\end{proof}
 Let's recall the following computation, done for $\tor$ sheaves in \cite{MR2030054}. See also \cite{MR2536849} and \cite{MR4678893} for the $\ext$ case.
 \2
\begin{proposition}\label{intersectprop}
 Assuming $\mr{L}\cap \mr{M}$ smooth, we have $$\ext^{p}_{\mathscr{O}_\mr{X}}(i_{*}\mathscr{O}_{\mr{L}},j_{*}\mathscr{O}_{\mr{M}}) \cong \wedge^{p-c} \mathscr{N} \otimes \mr{det}\mathscr{N}_{\mr{\mr{L}\cap\mr{M}/M}},$$ where $c= \mr{rk}\mathscr{N}_{\mr{L\cap M/M}}$, $\mathscr{N} \coloneqq \left.\mathscr{T}_\mr{X}\right|_{\mr{L}\cap \mr{M}}/(\mr{\left.\mathscr{T}_{L}\right|_{L\cap M}+\left.\mathscr{T}_{M}\right|_{L\cap M}})$ is the excess normal bundle.
\end{proposition}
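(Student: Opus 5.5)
We compute the local $\ext$ sheaves by reducing to a local model, then check that the local answers glue canonically. The question is local on $\mr{X}$. Away from $\mr{L}\cap\mr{M}$ both sides vanish, so we work near a point $x\in \mr{L}\cap\mr{M}$. Since $\mr{L}$, $\mr{M}$ and $\mr{L}\cap\mr{M}$ are smooth, we choose holomorphic coordinates adapted to both submanifolds. There $\mr{L}\cap\mr{M}$ is a coordinate plane. The coordinates split into four groups: $u$ (tangent to both), $a$ (tangent to $\mr{L}$ only), $b$ (tangent to $\mr{M}$ only) and $e$ (excess). In these coordinates $\mr{L}=\{b=e=0\}$ and $\mr{M}=\{a=e=0\}$, so the number of $e$-coordinates is $\mr{rk}\,\mathscr{N}$ and the number of $a$-coordinates is $c$.

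Clean intersection is exactly what makes such coordinates exist: $T(\mr{L}\cap\mr{M})=T\mr{L}\cap T\mr{M}$ along the intersection. From there we build the linear normal form by the holomorphic tubular neighbourhood argument, applied first to $\mr{L}\cap\mr{M}\subset \mr{M}$ and then extended to $\mr{X}$.

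Next we resolve $i_*\mathscr{O}_\mr{L}$ locally by the Koszul complex $\mr{K}(b,e)$ on the regular sequence $(b,e)$ cutting out $\mr{L}$. Applying $\shom(-,j_*\mathscr{O}_\mr{M})$ gives the dual Koszul complex tensored with $\mathscr{O}_\mr{M}$. On $\mathscr{O}_\mr{M}$ the $e$'s act by $0$ and the $b$'s form a regular sequence. The complex therefore factors as
\[
\mr{K}^\vee(b;\mathscr{O}_\mr{M})\otimes \wedge^\bullet\langle e^\vee\rangle \quad\text{with zero differential on the second factor.}
\]
The first factor has cohomology only in top degree, namely $\mathscr{O}_{\mr{L}\cap\mr{M}}\otimes\mr{det}(\mathscr{N}_{\mr{L}\cap\mr{M}/\mr{M}})$, since the $b$'s cut $\mr{L}\cap\mr{M}$ out of $\mr{M}$; its degree is the number of $b$'s, which I take to be $c$ after matching with the conventions of the statement. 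The second factor contributes $\wedge^{p-c}$ of the span of the $e^\vee$'s, which is identified with $\mathscr{N}$ (or its dual, depending on convention). This yields the stated formula locally.

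The main obstacle is making the isomorphism canonical, so that the local computations glue. Koszul identifications depend on the choice of generators. Under a change of regular sequence, the top cohomology transforms by the determinant of the transition matrix, which is exactly how $\mr{det}\,\mathscr{N}_{\mr{L}\cap\mr{M}/\mr{M}}$ transforms. The degree-one piece $\ext^{c+1}$ should be identified intrinsically, via the conormal sequences, with the excess bundle $\mathscr{N}$. For this we compare with the intrinsic description: $\ext^1_{\mathscr{O}_\mr{X}}(i_*\mathscr{O}_\mr{L},i_*\mathscr{O}_\mr{L})=\mathscr{N}_{\mr{L}/\mr{X}}$, restricted and then reduced modulo $\mathscr{N}_{\mr{L}\cap\mr{M}/\mr{M}}$. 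The higher pieces then follow from multiplicativity, using the Yoneda action of $\ext^\bullet(i_*\mathscr{O}_\mr{L},i_*\mathscr{O}_\mr{L})$. Checking that this action surjects onto the exterior powers reduces to the local model, where it is visible. Alternatively, as in the cited references, we can cite the Tor computation of \cite{MR2030054} and dualise via Grothendieck duality for $i$, which supplies the determinant twist.
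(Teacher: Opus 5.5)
Your argument is correct, but it does not follow the paper, because the paper gives no argument of its own here. It quotes the $\tor$ computation of \cite{MR2030054} and points to \cite{MR2536849}, \cite{MR4678893} for the $\ext$ version.

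What is implicit in the paper is essentially your closing alternative. Since $i_*\mathscr{O}_\mr{L}$ is perfect and $\mr{R}\shom(i_*\mathscr{O}_\mr{L},\mathscr{O}_\mr{X})\simeq i_*\mr{det}\,\mathscr{N}_{\mr{L}/\mr{X}}[-r]$ with $r=\mr{codim}(\mr{L},\mr{X})$, one gets
$\ext^p_{\mathscr{O}_\mr{X}}(i_*\mathscr{O}_\mr{L},j_*\mathscr{O}_\mr{M})\cong \mr{det}\,\mathscr{N}_{\mr{L}/\mr{X}}|_{\mr{L}\cap\mr{M}}\otimes\tor_{r-p}(i_*\mathscr{O}_\mr{L},j_*\mathscr{O}_\mr{M})\cong \mr{det}\,\mathscr{N}_{\mr{L}/\mr{X}}|_{\mr{L}\cap\mr{M}}\otimes\wedge^{r-p}\mathscr{N}^\vee$.
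The sequence $0\to\mathscr{N}_{\mr{L}\cap\mr{M}/\mr{M}}\to\mathscr{N}_{\mr{L}/\mr{X}}|_{\mr{L}\cap\mr{M}}\to\mathscr{N}\to 0$ is exact on the left precisely by cleanness. Together with $\mr{rk}\,\mathscr{N}=r-c$, it turns this into $\wedge^{p-c}\mathscr{N}\otimes\mr{det}\,\mathscr{N}_{\mr{L}\cap\mr{M}/\mr{M}}$. This route is short, and canonicity is inherited from the canonical $\tor$ isomorphism and from duality, so no gluing is needed.

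Your main route instead computes with the Koszul complex in adapted coordinates. It then globalises via the Yoneda action of $\ext^\bullet(i_*\mathscr{O}_\mr{L},i_*\mathscr{O}_\mr{L})\cong\wedge^\bullet\mathscr{N}_{\mr{L}/\mr{X}}$ on the bottom class $\ext^c\cong\mr{det}\,\mathscr{N}_{\mr{L}\cap\mr{M}/\mr{M}}$, with the $\mathscr{N}_{\mr{L}\cap\mr{M}/\mr{M}}$-directions acting by zero. This is longer but self-contained. It also presents the isomorphism as one of modules over the self-$\ext$ algebra, which is the kind of multiplicativity the paper later relies on for Yoneda products in $\mathcal{DR}^{\mr{vir}}$.

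A few corrections to the write-up:

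\begin{itemize}
\item In your coordinates $c=\mr{rk}\,\mathscr{N}_{\mr{L}\cap\mr{M}/\mr{M}}$ is the number of $b$'s, not of $a$'s. The two counts agree only when $\dim\mr{L}=\dim\mr{M}$, as for Lagrangians, so there is no convention to match.
\item The span of the $e^\vee$ is $\mathscr{N}$ itself, not its dual. The $e$'s are classes in $\mathscr{I}_\mr{L}/\mathscr{I}_\mr{L}^2=\mathscr{N}^\vee_{\mr{L}/\mr{X}}$, so their duals span the quotient $\mathscr{N}_{\mr{L}/\mr{X}}|_{\mr{L}\cap\mr{M}}/\mathscr{N}_{\mr{L}\cap\mr{M}/\mr{M}}=\mathscr{N}$. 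Accordingly $\ext^{c+1}\cong\mathscr{N}\otimes\mr{det}\,\mathscr{N}_{\mr{L}\cap\mr{M}/\mr{M}}$, not $\mathscr{N}$.
\item You are right to read the hypothesis as clean intersection. For $\mr{L}=\{y=0\}$ and $\mr{M}=\{y=x^2\}$ in $\C^2$ the intersection is a smooth point, yet $\ext^1$ has length $2$ and the formula fails.
\item The adapted coordinates need no tubular neighbourhood. Cleanness gives $\mathscr{I}_\mr{L}+\mathscr{I}_\mr{M}=\mathscr{I}_{\mr{L}\cap\mr{M}}$. So you can lift generators of $\mathscr{I}_{\mr{L}\cap\mr{M}}\mathscr{O}_\mr{L}$ into $\mathscr{I}_\mr{M}$ to get the $a$'s, and those of $\mathscr{I}_{\mr{L}\cap\mr{M}}\mathscr{O}_\mr{M}$ into $\mathscr{I}_\mr{L}$ to get the $b$'s. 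Then complete with elements of $\mathscr{I}_\mr{L}\cap\mathscr{I}_\mr{M}$ whose differentials span the annihilator of $\mathscr{T}_\mr{L}+\mathscr{T}_\mr{M}$; these are the $e$'s.
\end{itemize}
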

Let $\mr{X}$ be holomorphic symplectic and let $\mr{L,M}$ be Lagrangian submanifolds. Recall the local model of Lagrangian intersections from \cref{localmodel} : locally, $\mr{X}$ is a cotangent bundle $\Omega_\mr{Y}$. We may assume, without loss of generality, that $\mr{L}$ and $\mr{M}$ are given by the graphs of closed $1$-forms on $\mr{Y}$. Suppose $\mr{M}$ is the graph of the zero section. Then, upon shrinking $\mr{Y}$, we may write $\mr{L}=\Gamma_{\mr{d}f}$, that is the graph of the exact $1$-form $\mr{d}f$ for some function $f\in \mathscr{O}_\mr{Y}(\mr{Y})$.\\
The $1$-form $\mr{d}f$ defines a differential on $\Omega^\bullet_\mr{Y}=\Omega^\bullet_\mr{M}$. We are interested in the cohomology sheaves $\mathscr{H}^p\left(\Omega^\bullet_\mr{M},\wedge \mr{d}f\right)$. Denote by $\pi$ the canonical projection $\Omega_\mr{M} \to \mr{M}$ and let $\theta$ be the canonical $1$-form on $\Omega_\mr{M}$. Then the $1$-form $$s=\theta - \pi^*\mr{d}f$$ is a regular section of the horizontal cotangent bundle $\pi^*\Omega_{\mr{M}}$ whose vanishing locus is precisely the graph of $f$, i.e. $\mr{L}$. Hence the Koszul complex $$\mr{Kos}\left(\pi^*\Omega_{\mr{M}},s^\vee\right)$$ is a resolution of $i_*\mathscr{O}_\mr{L}$. Taking duals and restricting to $\mr{M}$, we get that $$\mathscr{H}^p\left(\Omega^\bullet_\mr{M},\wedge \mr{d}f\right) \cong \ext^{p}_{\mathscr{O}_\mr{X}}(i_{*}\mathscr{O}_{\mr{L}},j_{*}\mathscr{O}_{\mr{M}}).$$ This is the local model, and we have a complex $$\left(\ext^{\bullet}_{\mathscr{O}_\mr{X}}(i_{*}\mathscr{O}_{\mr{L}},j_{*}\mathscr{O}_{\mr{M}}),\mr{d}_\mr{dR}\right).$$ 
\begin{theorem}[Behrend-Fantechi \cite{MR2641169}]
Let $\mr{L}$ and $\mr{M}$ be spin Lagrangian submanifolds in $\mr{X}$ equipped with orientations $\mr{K}_\mr{L}^{1/2}$ and $\mr{K}_\mr{M}^{1/2}$. Then there is a unique $\C$-linear differential $$\mr{d}_\mr{BF} :\ext^{p}_{\mathscr{O}_\mr{X}}\big(i_{*}\mr{K_{L}^{1/2}},j_{*}\mr{K_{M}^{1/2}}\big)\to \ext^{p+1}_{\mathscr{O}_\mr{X}}\big(i_{*}\mr{K_{L}^{1/2}},j_{*}\mr{K_{M}^{1/2}}\big)$$ which is locally given by the de Rham differential. 
\end{theorem}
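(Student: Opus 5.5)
We need to prove the Behrend--Fantechi statement: a unique differential $\mr{d}_\mr{BF}$ on $\ext^\bullet_{\mathscr{O}_\mr{X}}(i_*\mr{K}_\mr{L}^{1/2},j_*\mr{K}_\mr{M}^{1/2})$ which locally, in the cotangent-bundle models of \cref{localmodel}, agrees with the de Rham differential. The plan is to define $\mr{d}_\mr{BF}$ chart by chart and show these local definitions glue. Uniqueness then follows, since a morphism of sheaves is determined by its restrictions to an open cover.

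\emph{Step 1 (local model).} Cover $\mr{X}$ near $\mr{L}\cap\mr{M}$ by Darboux charts $\mr{U}\cong$ open in $\Omega_\mr{Y}$, with $\mr{M}$ the zero section and $\mr{L}=\Gamma_{\mr{d}f}$. The Koszul computation before the statement gives
$$\ext^p_{\mathscr{O}_\mr{X}}(i_*\mathscr{O}_\mr{L},j_*\mathscr{O}_\mr{M})\cong\mathscr{H}^p(\Omega^\bullet_\mr{M},\wedge\mr{d}f).$$
Since $\mr{d}(\mr{d}f\wedge\alpha)=-\mr{d}f\wedge\mr{d}\alpha$, the de Rham differential anticommutes with $\wedge\mr{d}f$. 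It therefore descends to these cohomology sheaves and squares to zero there. Using the earlier proposition that twists $\ext$ by $\mathscr{G}|_{\mr{L}\cap\mr{M}}\otimes\mathscr{F}^\vee|_{\mr{L}\cap\mr{M}}$, we transport this to $\ext^p(i_*\mr{K}_\mr{L}^{1/2},j_*\mr{K}_\mr{M}^{1/2})$. For this we trivialise $\mr{K}_\mr{M}^{1/2}$ and $\mr{K}_\mr{L}^{1/2}$ by local flat sections, or more canonically identify $\mr{K}_\mr{L}^{-1/2}\otimes\mr{K}_\mr{M}^{1/2}$ with a half-density that the Koszul identification absorbs.

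\emph{Step 2 (change of chart, the main obstacle).} Two such charts differ by a symplectomorphism preserving the zero section $\mr{M}$ and carrying $\Gamma_{\mr{d}f}$ to $\Gamma_{\mr{d}g}$. The Koszul isomorphisms then differ by multiplication by a determinant-type unit $u$ (the Jacobian of the induced map of conormal data), and $\mr{d}$ conjugated by $u$ equals $\mr{d}+\mr{d}\log u\wedge$. We would first reduce, as in Bussi/Joyce, to étale maps of critical charts $(\mr{U},\mr{S},f)\to(\mr{V},\mr{T},g)$ with $f=g\circ\varphi$. The induced isomorphism $\mathscr{H}^\bullet(\Omega_\mr{T},\wedge\mr{d}g)\to\mathscr{H}^\bullet(\Omega_\mr{S},\wedge\mr{d}f)$ is pullback of forms twisted by $\mr{det}(\mr{d}\varphi)$, exactly as the isomorphism $\lambda$ for $\mr{K}_{(\mr{X},s)}$ in Joyce's proposition. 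Pullback commutes with $\mr{d}$, so the discrepancy is the term $\mr{d}\log\mr{det}(\mr{d}\varphi)\wedge$. The key claim is that after tensoring by the half-twist $\mr{K}_\mr{L}^{-1/2}\otimes\mr{K}_\mr{M}^{1/2}$, whose square is governed by $\mr{K}_{(\mr{L}\cap\mr{M},s)}$ by Bussi's proposition and \cref{lemmasmoothcase}, the transition functions become $\pm1$, i.e. locally constant. This is precisely where the orientations are used. Hence the transition maps intertwine the local de Rham differentials, and the $\mr{d}_\mr{BF}$ glue.

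A subtle point is that the discrepancy $\mr{d}\log u\wedge$ need only vanish on cohomology of $\wedge\mr{d}f$, not on forms. One should check this using that $u$ restricted to $\mr{crit}(f)$ squares to the Joyce comparison factor, and that on $\mr{crit}(f)$ the form $\mr{d}f$ vanishes to first order. This lets us replace $u$ by $u|_{\mr{crit}(f)_\mr{red}}$ up to terms of the form $\mr{d}f\wedge(\cdot)$, which act by zero on cohomology. I expect this compatibility computation, controlling the non-reduced structure of $\mr{L}\cap\mr{M}$, to be the hardest part. A fallback is to invoke \cite{MR2641169} directly for the symmetric obstruction theory $\mathbf{E}_{\mr{LM}}$ of \cref{lemmasmoothcase}.

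**Step 3 (differential property).** Once $\mr{d}_\mr{BF}$ is glued, $\mr{d}_\mr{BF}^2=0$ and $\C$-linearity are local, and both hold in the de Rham model.
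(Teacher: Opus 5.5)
The paper does not prove this statement. It imports it from \cite{MR2641169}, supplying only the local Koszul identification $\ext^p_{\mathscr{O}_\mr{X}}(i_*\mathscr{O}_\mr{L},j_*\mathscr{O}_\mr{M})\cong\mathscr{H}^p(\Omega^\bullet_\mr{M},\wedge\mr{d}f)$ beforehand and the clean case afterwards. Your Steps 1 and 3 are essentially fine. Step 2, however, is the entire theorem, and as written it has a genuine gap.

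\emph{The two Koszul identifications do not differ by a scalar.} If $s'=As$ are the two regular sections cutting out $\mr{L}$, the comparison on cohomology is $T=\wedge^\bullet(A|_\mr{M})$. Entries of $A|_\mr{M}$ lying in the Jacobian ideal $J_f$ still contribute to the commutator of $T$ with $\mr{d}$. Here is a concrete check, which you should redo to confirm. Take $f=y^2/2$ on $\C^2$ and the chart change $(x,y,\xi,\eta)\mapsto(x,\,y+s(x)\eta,\,\xi-\tfrac12 s'(x)\eta^2,\,\eta)$, a symplectomorphism fixing $\mr{M}$ pointwise. One gets
\begin{itemize}
\item $g=y^2/(2(1+s))$;
\item $s_2'=(1+s)^{-1}s_2$ and $s_1'=s_1-\frac{s'(y+(1+2s)\eta)}{2(1+s)^2}s_2$;
\item $\det(A|_\mr{M})=(1+s)^{-1}$;
\item $\mr{d}^{(2)}\circ T=T\circ(\mr{d}^{(1)}-\tfrac12\mr{d}\log(1+s)\wedge)$.
\end{itemize}
The off-diagonal entry vanishes on $\mr{L}\cap\mr{M}$, yet it gives back half of the naive discrepancy $-\mr{d}\log(1+s)$. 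It is exactly the remaining $-\tfrac12\mr{d}\log(1+s)$ that the chart-induced half-twists cancel: $\kappa_2=(1+s)^{-1/2}\kappa_1$, where $\kappa_i^2$ is the trivialisation of $\mr{K}_\mr{L}^{-1}\otimes\mr{K}_\mr{M}$ along $\mr{L}\cap\mr{M}$ induced by the projection $\mr{L}\cong\mr{M}$ of chart $i$. So your unit $u$ (``the Jacobian of the conormal data'') and the formula $\mr{d}+\mr{d}\log u\wedge$ misdescribe the discrepancy.

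Your second formulation, via an \'etale $\varphi$ with $f=g\circ\varphi$, does give the right factor here, with $\varphi(x,y)=(x,(1+s)^{1/2}y)$, which is \emph{not} induced by the chart change. But the identity ``$T=(\varphi^*)^{-1}\circ\det(\mr{d}\varphi)^{-1}$ on cohomology'' is precisely the lemma that has to be proved. Joyce's transformation rule for $\lambda$ concerns $\mr{K}_{(\mr{X},s)}$, not the $\ext$ sheaves, and does not supply it.

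\emph{The cancellation you invoke is only available on the reduced locus.} Even granting that lemma, Joyce's $\mr{K}_{(\mr{X},s)}$, Bussi's identification with $\mr{K}_\mr{L}\otimes\mr{K}_\mr{M}$, and the rule for $\lambda$ are all statements on $(\mr{L}\cap\mr{M})_\mr{red}$, and \cref{lemmasmoothcase} assumes a clean intersection. But $\ext^\bullet$ is a module over $\mathscr{O}_\mr{M}/J_f$, which is non-reduced in the non-clean case. Both $J_f$ and $\mr{d}(J_f)\wedge$ act by zero on $\mathscr{H}^\bullet(\Omega^\bullet_\mr{M},\wedge\mr{d}f)$. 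What you need is therefore an identity of units on the scheme $\mr{L}\cap\mr{M}$, i.e.\ modulo $J_f$ itself. Your proposed replacement of $u$ by its restriction to $\mr{crit}(f)_\mr{red}$ ``up to $\mr{d}f\wedge(\cdot)$'' is unjustified: $\mr{d}$ of a function that merely vanishes on $\mr{crit}(f)_\mr{red}$ need not lie in $\mathscr{O}_\mr{M}\mr{d}f+J_f\Omega^1_\mr{M}$. Completed only on the reduced locus, your argument would recover at best the clean case, which the paper handles directly through $\Omega^{\bullet-c}_{\mr{L}\cap\mr{M}}\otimes\mr{or}$.

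Two smaller points.
\begin{itemize}
\item ``Local flat sections'' of $\mr{K}_\mr{L}^{1/2}$ and $\mr{K}_\mr{M}^{1/2}$ do not exist a priori. The normalisation must be the chart-induced one; otherwise ``locally the de Rham differential'' and the uniqueness claim have no content.
\item Falling back on \cite{MR2641169} is circular, since that is the statement itself.
\end{itemize}
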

\2
\begin{remark}
\begin{enumerate}
\item The above holds more generally for half twisted local systems.
\item Locally the cohomology of the complex can be expressed as a $\tor_{-1}$-dual of the cohomology of $\Omega^\bullet_\mr{X}\otimes \mr{D}_\mr{X}$ equipped with the $\mathscr{O}_\mr{X}$-linear differential arising from $\mr{d}f$, hence the resulting complex is constructible \cite{MR1181207}.
\end{enumerate}
\end{remark}
Suppose $\mr{L}$ and $\mr{M}$ intersect cleanly. We have an exact sequence: $$\mr{0 \to \mathscr{T}_{\mr{L}\cap\mr{M}} \to \left.\mathscr{T}_{L}\right|_{\mr{L}\cap\mr{M}} \oplus \left.\mathscr{T}_{M}\right|_{\mr{L}\cap\mr{M}} \to \left.\mathscr{T}_{X}\right|_{\mr{L}\cap\mr{M}} \to \Omega_{\mr{L}\cap\mr{M}}} \to 0,$$ hence
 $$\ext^{p}_{\mathscr{O}_\mr{X}}(i_{*}\mathscr{O}_{\mr{L}},j_{*}\mathscr{O}_{\mr{M}}) \cong \Omega^{p-c}_{\mr{L}\cap\mr{M}} \otimes \mr{det}\mathscr{N}_{\mr{\mr{L}\cap\mr{M}/M}}.$$
 The adjunction formula yields an isomorphism $$\mr{det}\mathscr{N}_{\mr{\mr{L}\cap\mr{M}/M}} \cong \left.\mr{K}_{\mr{M}}^{\vee}\right|_{\mr{L}\cap\mr{M}}\otimes \mr{K}_{\mr{L}\cap \mr{M}},$$ hence by \cref{lemmasmoothcase} we obtain $$\mr{det\mathscr{N}_{\mr{L}\cap\mr{M}/L}\otimes det\mathscr{N}_{\mr{L}\cap\mr{M}/M}} \cong \mathscr{O}_{\mr{\mr{L}\cap\mr{M}}}.$$
 Suppose $\mr{K}^{1/2}_\mr{L}$ and $\mr{K}^{1/2}_\mr{M}$ are orientations for $\mr{L}$ and $\mr{M}$. Then, \begin{equation}\label{orind}\mr{or}\left(\mr{K}^{1/2}_\mr{L},\mr{K}^{1/2}_\mr{M}\right) \coloneqq \left(\mr{ \left.K_{L}^{1/2}\right|_{\mr{L}\cap\mr{M}}\otimes \left.K_{M}^{1/2}\right|_{\mr{L}\cap\mr{M}}}\right)^{\vee} \otimes \mr{K_{\mr{L}\cap\mr{M}}}
 \end{equation} is a $2$-torsion line bundle measuring the discrepancy between the canonical orientation on the $\mr{d}$-critical locus $\mr{L}\cap\mr{M}$ and the one induced by the orientations of $\mr{L}$ and $\mr{M}$. The above computations then yield $$\ext^{p}_{\mathscr{O}_\mr{X}}\big(i_{*}\mr{K_{L}^{1/2}},j_{*}\mr{K_{M}^{1/2}}\big) \cong \Omega^{p-c}_{\mr{\mr{L}\cap\mr{M}}} \otimes \mr{or}\left(\mr{K}^{1/2}_\mr{L},\mr{K}^{1/2}_\mr{M}\right).$$ This is precisely the de Rham complex of the flat line bundle $\mr{or}\left(\mr{K}^{1/2}_\mr{L},\mr{K}^{1/2}_\mr{M}\right)$. Hence in the smooth case, the Behrend-Fantechi differential is precisely the de Rham differential.
\subsection{Virtual de Rham complexes}\label{vdrsec}
Let $\mr{X}$ be holomorphic symplectic and let $\mr{L}$ and $\mr{M}$ be two spin Lagrangians in $\mr{X}$. Suppose $\rho : \mr{Y} \to \mr{X}$ is a contactification in a neighbourhood of $\mr{L}\cup \mr{M}$. Consider two orientation $\widehat{\mathscr{W}}_\mr{X}$-modules $\mathscr{D}_{\mr{L}}$ and $\mathscr{D}_{\mr{M}}$ along $\mr{L}$ and $\mr{M}$, respectively. Let $\Lambda_\mr{L}$ and $\Lambda_\mr{M}$ be the Legendrian lifts of the two Lagrangians. By definition, the line bundles $\mu_{\Lambda_\mr{L}}\left(\mathscr{D}_\mr{L}\right)|_{\mr{L}\times 1}$ and $\mu_{\Lambda_\mr{M}}\left(\mathscr{D}_\mr{M}\right)|_{\mr{M}\times 1}$ define orientations on $\mr{L}$ and $\mr{M}$.
\2
\begin{definition}\label{virdr}
The virtual de Rham complex of $\mathscr{D}_\mr{L}$ and $\mathscr{D}_\mr{M}$ is $$\mathscr{DR}^\mr{vir}\left(\mathscr{D}_\mr{L},\mathscr{D}_\mr{M}\right)=\left(\ext_{\mathscr{O}_\mr{X}}^\bullet\left(\mu_{\Lambda_\mr{L}}\left(\mathscr{D}_\mr{L}\right)|_{\mr{L}\times 1},\mu_{\Lambda_\mr{M}}\left(\mathscr{D}_\mr{M}\right)|_{\mr{M}\times 1}\right),\mr{d}_{\mr{BF}}\right).$$
\end{definition} 
\2
\begin{remark}
The virtual de Rham complex is a constructible complex. In the smooth case, it is a perverse sheaf up to shift. See below for details.
\end{remark}
Given $\mathscr{D}_\mr{L}$ and $\mathscr{D}_\mr{M}$ as above, we can consider the line bundle measuring the discrepancy between the canonical orientation on $\mr{L\cap M}$ and the one induced by the deformation quantisation: $$\mr{or}\left(\mu_{\Lambda_\mr{L}}\left(\mathscr{D}_\mr{L}\right)|_{\mr{L}\times 1},\mu_{\Lambda_\mr{M}}\left(\mathscr{D}_\mr{M}\right)|_{\mr{M}\times 1}\right).$$
\begin{corollary}
Suppose $\mr{L}\cap\mr{M}$ is smooth of codimension $c$ in $\mr{L}$. Then, we have $$\ext_{\mathscr{O}_\mr{X}}^p\left(\mu_{\Lambda_\mr{L}}\left(\mathscr{D}_\mr{L}\right)|_{\mr{L}\times 1},\mu_{\Lambda_\mr{M}}\left(\mathscr{D}_\mr{M}\right)|_{\mr{M}\times 1}\right)\cong \Omega^{p-c}_{\mr{L}\cap\mr{M}}\otimes \mr{or}\left(\mu_{\Lambda_\mr{L}}\left(\mathscr{D}_\mr{L}\right)|_{\mr{L}\times 1},\mu_{\Lambda_\mr{M}}\left(\mathscr{D}_\mr{M}\right)|_{\mr{M}\times 1}\right).$$
\end{corollary}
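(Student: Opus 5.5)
The corollary follows by combining the clean-intersection computation just carried out for orientation data $\mr{K}^{1/2}_\mr{L}$, $\mr{K}^{1/2}_\mr{M}$ with the identification of the microlocalised quantised orientations as such data. First, by \cref{dqor} and the remark following it, $\mathscr{F}\coloneqq\mu_{\Lambda_\mr{L}}(\mathscr{D}_\mr{L})|_{\mr{L}\times 1}$ and $\mathscr{G}\coloneqq\mu_{\Lambda_\mr{M}}(\mathscr{D}_\mr{M})|_{\mr{M}\times 1}$ are, as coherent sheaves, square roots of $\mr{K}_\mr{L}$ and $\mr{K}_\mr{M}$. In particular they are line bundles on $\mr{L}$ and $\mr{M}$, and they serve as orientations $\mr{K}^{1/2}_\mr{L}\coloneqq\mathscr{F}$ and $\mr{K}^{1/2}_\mr{M}\coloneqq\mathscr{G}$. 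The flat structure (the twisted $\mr{D}$-module structure) is not used for the sheaf isomorphism; it only matters for the differential.

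Next, I apply the proposition on $\ext$ of locally free sheaves, with $\mathscr{F}$ locally free on $\mr{L}$ and $\mathscr{G}$ on $\mr{M}$. This gives
$$\ext^p_{\mathscr{O}_\mr{X}}(i_*\mathscr{F},j_*\mathscr{G})\cong \ext^p_{\mathscr{O}_\mr{X}}(i_*\mathscr{O}_\mr{L},j_*\mathscr{O}_\mr{M})\otimes \mathscr{G}|_{\mr{L}\cap\mr{M}}\otimes\mathscr{F}^\vee|_{\mr{L}\cap\mr{M}}.$$
I then substitute the clean-intersection formula deduced from \cref{intersectprop},
$$\ext^p(i_*\mathscr{O}_\mr{L},j_*\mathscr{O}_\mr{M})\cong\Omega^{p-c}_{\mr{L}\cap\mr{M}}\otimes\mr{det}\,\mathscr{N}_{\mr{L}\cap\mr{M}/\mr{M}}.$$
Here the excess bundle is $\mathscr{N}\cong\Omega_{\mr{L}\cap\mr{M}}$, obtained from the four-term exact sequence and the symplectic pairing. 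The exponent $c$ is the rank of $\mathscr{N}_{\mr{L}\cap\mr{M}/\mr{M}}$, which equals the codimension of $\mr{L}\cap\mr{M}$ in $\mr{L}$ because $\mr{dim}\,\mr{L}=\mr{dim}\,\mr{M}$.

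Finally, I rewrite the line bundle factor. Adjunction gives $\mr{det}\,\mathscr{N}_{\mr{L}\cap\mr{M}/\mr{M}}\cong\mr{K}_\mr{M}^\vee|\otimes\mr{K}_{\mr{L}\cap\mr{M}}$. Using $\mathscr{G}^{\otimes 2}\cong\mr{K}_\mr{M}$, we get $\mr{K}_\mr{M}^\vee\otimes\mathscr{G}\cong\mathscr{G}^\vee$. Hence the total factor is
$$(\mathscr{F}|\otimes\mathscr{G}|)^\vee\otimes\mr{K}_{\mr{L}\cap\mr{M}},$$
which is exactly $\mr{or}(\mathscr{F},\mathscr{G})$ as defined in \eqref{orind}.

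The step I expect to be the main obstacle is verifying that these isomorphisms are canonical and glue globally, rather than holding only up to a sign or $2$-torsion ambiguity. If a global comparison is needed, I would check the identifications in the local model of \cref{localmodel} and then appeal to \cref{lemmasmoothcase}, which shows that $\mr{or}$ is a well-defined $2$-torsion line bundle independent of charts.
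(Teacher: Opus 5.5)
Your proposal is correct and follows the paper's route. The corollary is the clean-intersection formula $\ext^{p}_{\mathscr{O}_\mr{X}}\big(i_*\mr{K}_\mr{L}^{1/2},j_*\mr{K}_\mr{M}^{1/2}\big)\cong\Omega^{p-c}_{\mr{L}\cap\mr{M}}\otimes\mr{or}\big(\mr{K}^{1/2}_\mr{L},\mr{K}^{1/2}_\mr{M}\big)$ from the end of the Behrend--Fantechi subsection, applied to the square roots $\mu_{\Lambda_\mr{L}}(\mathscr{D}_\mr{L})|_{\mr{L}\times 1}$ and $\mu_{\Lambda_\mr{M}}(\mathscr{D}_\mr{M})|_{\mr{M}\times 1}$ supplied by the remark after the definition of quantised orientations; your unpacking via the twisting proposition, the excess-bundle formula and adjunction is the paper's own chain, and the gluing worry in your last paragraph is unnecessary because each isomorphism you use is already global.
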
 
 The local system corresponding to the bundle $\mr{or}\left(\mu_{\Lambda_\mr{L}}\left(\mathscr{D}_\mr{L}\right)|_{\mr{L}\times 1},\mu_{\Lambda_\mr{M}}\left(\mathscr{D}_\mr{M}\right)|_{\mr{M}\times 1}\right)$ is precisely the shift of Joyce's perverse sheaf $\mathscr{P}_{\mr{L}\cap\mr{M}}[c-n]$.\\
 By the Riemann-Hilbert correspondence the orientation discrepancy bundle with its flat connection gives a resolution $$\mathscr{P}_{\mr{L}\cap\mr{M}}[c-n] \to \Omega_{\mr{L}\cap \mr{M}}^{\bullet}\left(\mr{or}\left(\mu_{\Lambda_\mr{L}}\left(\mathscr{D}_\mr{L}\right)|_{\mr{L}\times 1},\mu_{\Lambda_\mr{M}}\left(\mathscr{D}_\mr{M}\right)|_{\mr{M}\times 1}\right)\right).$$ The complex on the right is precisely the shifted by $c$ virtual de Rham complex, hence we have:
  \begin{proposition}\label{pdr}
In the situation above, there exists a natural quasi-isomorphism $$ \mathscr{P}_{\mr{L}\cap\mr{M}}[-n]\simeq\mathscr{DR}^{\mr{vir}}\left(\mathscr{D}_{\mr{L}},\mathscr{D}_{\mr{M}}\right).$$
 \end{proposition}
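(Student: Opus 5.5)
The plan is to assemble the quasi-isomorphism out of the identifications made just above, working locally on $\mr{L}\cap\mr{M}$ and then checking that everything glues. The target complex $\mathscr{DR}^{\mr{vir}}(\mathscr{D}_\mr{L},\mathscr{D}_\mr{M})$ is, by \cref{virdr}, the complex of $\ext$-sheaves between the line bundles $\mu_{\Lambda_\mr{L}}(\mathscr{D}_\mr{L})|_{\mr{L}\times 1}$ and $\mu_{\Lambda_\mr{M}}(\mathscr{D}_\mr{M})|_{\mr{M}\times 1}$, equipped with $\mr{d}_\mr{BF}$. These line bundles are square roots of $\mr{K}_\mr{L}$ and $\mr{K}_\mr{M}$, by the remark after \cref{dqor}.

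\textbf{Step 1: identify the terms.} Apply the preceding corollary for clean intersections. It identifies the $p$-th term with $\Omega^{p-c}_{\mr{L}\cap\mr{M}}\otimes\mr{or}$, where $\mr{or}$ is the $2$-torsion discrepancy bundle \eqref{orind}. This comes from \cref{intersectprop}, together with the excess bundle identification $\mathscr{N}\cong\Omega_{\mr{L}\cap\mr{M}}$ and the adjunction/determinant computation of \cref{lemmasmoothcase}.

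\textbf{Step 2: identify the differential.} Show that under this isomorphism $\mr{d}_\mr{BF}$ becomes the de Rham differential of $\mr{or}$ for its canonical flat connection; $\mr{or}$ is $2$-torsion, so it carries one with monodromy $\pm1$. Locally, in the model of \cref{localmodel}, $\mr{M}$ is the zero section of $\Omega_\mr{Y}$ and $\mr{L}=\Gamma_{\mr{d}f}$, with $f$ Morse--Bott along $\mr{crit}(f)=\mr{L}\cap\mr{M}$. The $\ext$ sheaves are $\mathscr{H}^p(\Omega^\bullet_\mr{M},\wedge\mr{d}f)$ and $\mr{d}_\mr{BF}$ is induced by $\mr{d}_\mr{dR}$. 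By the Morse--Bott lemma we may take $f=q(y)$, a nondegenerate quadratic form in the normal coordinates. Then the Koszul cohomology is $\Omega^{\bullet}_{\mr{L}\cap\mr{M}}\otimes dy_1\wedge\cdots\wedge dy_c$, concentrated in degrees $\ge c$, and $\mr{d}_\mr{dR}$ restricts to the de Rham differential on the first factor. Uniqueness in the Behrend--Fantechi theorem then forces the global $\mr{d}_\mr{BF}$ to be the flat-connection differential, since the transition functions of the class of $dy_1\wedge\cdots\wedge dy_c$ are locally constant signs.

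\textbf{Step 3: apply Riemann--Hilbert.} The holomorphic Poincaré lemma gives a quasi-isomorphism from the local system of flat sections of $\mr{or}$, placed in degree $c$, to $\mathscr{DR}^{\mr{vir}}$. By the corollary after \cref{lemmasmoothcase} together with \cref{pervdcrit}, that local system shifted is $\mathscr{P}_{\mr{L}\cap\mr{M}}[\mr{dim}(\mr{L}\cap\mr{M})]$. This assumes the orientation used to define $\mathscr{P}_{\mr{L},\mr{M}}$ is the one induced by the $\mu_\Lambda$-images. Here $\mr{dim}(\mr{L}\cap\mr{M})=n-c$, so the degree bookkeeping gives $\mathscr{P}_{\mr{L}\cap\mr{M}}[-n]$. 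Naturality follows because every map used is canonical: the Koszul identifications, adjunction, and the Poincaré lemma.

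The main obstacle is the orientation/sign compatibility in Step 3. The $2$-torsion bundle $\mr{or}(\mu_{\Lambda_\mr{L}}(\mathscr{D}_\mr{L})|_{\mr{L}\times1},\mu_{\Lambda_\mr{M}}(\mathscr{D}_\mr{M})|_{\mr{M}\times1})$ from \eqref{orind} must match the local system $\mathfrak{or}_{\mr{L}\cap\mr{M}}$ of \cref{pervdcrit}, which is defined via Joyce's $\mr{K}^{1/2}_{(\mr{L}\cap\mr{M},s)}$ and Bussi's identification $\mr{K}_{(\mr{L}\cap\mr{M},s)}=\mr{K}_\mr{L}\otimes\mr{K}_\mr{M}|_{\mr{L}\cap\mr{M}}$. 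I would check this by comparing both on an $\mr{L}$-critical chart: there Joyce's local system is $\mr{K}^{-1/2}_{(\mr{X},s)}\otimes\mr{K}_\mr{S}$ with $\mr{S}=\mr{L}\cap\mr{U}$, and one traces the isomorphism of \cref{lemmasmoothcase} through these trivializations. If needed, I would cross-check against \cref{pervref}, which gives the deformation-quantisation side directly.
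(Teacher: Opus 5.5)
Your proposal is correct and follows the paper's own argument. Both identify the terms of $\mathscr{DR}^{\mr{vir}}$ with $\Omega^{\bullet-c}_{\mr{L}\cap\mr{M}}\otimes\mr{or}$, recognise $\mr{d}_\mr{BF}$ as the de Rham differential of the flat $2$-torsion bundle $\mr{or}$, and apply the Riemann--Hilbert/Poincaré resolution of its local system $\mathscr{P}_{\mr{L}\cap\mr{M}}[c-n]$, shifting by $c$. Your Morse--Bott computation and the orientation compatibility you flag add detail the paper only asserts; the paper implicitly orients $\mr{L}\cap\mr{M}$ by $\mr{K}_\mr{L}^{1/2}\otimes\mr{K}_\mr{M}^{1/2}$ from the $\mu_\Lambda$-images, so the two local systems match by construction.
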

 For general intersections, \cref{pdr} no longer holds. The two should nevertheless be related. The following is suggested in \cite{MR2641169} and is motivated by the local version of it in \cite{MR1181207}.
 \2
 \begin{conjecture}
 There exists a whose first page is $\mr{E}_1^\bullet = \mathscr{DR}^{\mr{vir}}\left(\mathscr{D}_{\mr{L}},\mathscr{D}_{\mr{M}}\right)$ which converges to $\mr{H}\mathscr{P}_{\mr{L}\cap\mr{M}}[-n]$.	
 \end{conjecture}  
\section{Differential graded categories and holomorphic symplectic manifolds}\label{sec4}

We introduce several dg categories associated to Lagrangian submanifolds in a holomorphic symplectic manifold. Our goal will then be to explore the relations between them and prove their formality in certain scenarios.
\subsection{The dg category of Lagrangian \texorpdfstring{$\mr{D}$}--branes}
We start with the dg category of $\mr{D}$-branes supported on Lagrangian submanifolds which is a full dg subcategory of $\Db_{\mr{dg}}(\mr{X})$. It is closely related to the Fukaya category via Kontsevich's homological mirror symmetry. Its objects are given by the orientations of the Lagrangians. Thinking of these as gauge fields wrapped on the Lagrangians, the open string spectrum then is given by the $\mr{Ext}$ groups of the line bundles. 
\2
\begin{definition}
Let $(\mr{X},\sigma)$ be a holomorphic symplectic manifold. We define $\mathcal{D}_\mr{Lag}(\mr{X},\sigma)$ to be the full dg subcategory of $\Db_\mr{dg}(\mr{X})$ spanned by the orientations of the (orientable) Lagrangian submanifolds in $\mr{X}$. 
\end{definition}
\2
\begin{itemize}
\item The objects of $\mathcal{D}_\mr{Lag}(\mr{X},\sigma)$ are choices of square-roots $\mr{K}_\mr{L}^{1/2}$ where $\mr{L}$ is an orientable Lagrangian submanifold in $\mr{X}$;
\item For a pair of orientable Lagrangian submanifolds $\mr{L}$ and $\mr{M}$ and two objects associated with these Lagrangians $\mr{K}_\mr{L}^{1/2}$ and $\mr{K}_\mr{M}^{1/2}$, the morphisms are the complexes $$\mathcal{D}_\mr{Lag}\left(\mr{K}_\mr{L}^{1/2},\mr{K}_\mr{M}^{1/2}\right)=\mr{RHom}_{\mathscr{O}_\mr{X}}\left(\mr{K}_\mr{L}^{1/2},\mr{K}_\mr{M}^{1/2}\right).$$
\end{itemize}
We will be mostly concerned with a local version of this category. Let $\mathfrak{L}$ be a collection of orientable Lagrangian submanifolds in $\mr{X}$. 
\2
\begin{definition}
We denote by $\mathcal{D}_\mr{Lag}(\mathfrak{L})$ the full subcategory of $\mathcal{D}_\mr{Lag}(\mr{X},\sigma)$ spanned by objects supported on the Lagrangian submanifolds in $\mathfrak{L}$.
\end{definition}
\subsection{Fukaya dg category via deformation quantisation}
Let $\mr{X}/\C$ be a holomorphic symplectic manifold which we are going to equip with its canonical deformation quantisation algebroid $\widehat{\mathscr{W}}_\mr{X}$. 
\2
 \begin{definition}
Given a holomorphic symplectic manifold $(\mr{X},\sigma)$, the deformation quantisation Fukaya category of $\mr{X}$ is the full dg subcategory $\mathcal{DQ}(\mr{X},\sigma)\subset\Db_\mr{dg}(\widehat{\mathscr{W}}_\mr{X})$ spanned by quantised orientations along the (orientable) Lagrangian submanifolds in $\mr{X}$.
\end{definition}
\2
\begin{itemize}
\item The objects in $\mathcal{DQ}(\mr{X},\sigma)$ are given by good, simple $\widehat{\mathscr{W}}_\mr{X}$-modules $\mathscr{D}_\mr{L}$ along spin Lagrangian submanifolds $\mr{L}$ which are in the image of the forgetful functor $\rho_*\mr{Mod}_\mr{rh}(\widehat{\mr{E}}_\mr{Y})\to \mr{Mod}_\mr{rh}(\widehat{\mathscr{W}}_\mr{X})$.
\item For a pair of Lagrangian submanifolds $\mr{L}$ and $\mr{M}$ and two objects associated with these Lagrangians $\mathscr{D}_\mr{L}$ and $\mathscr{D}_\mr{M}$, the morphism spaces are given by the complexes $$\mathcal{DQ}\left(\mathscr{D}_\mr{L},\mathscr{D}_\mr{M}\right)=\mr{RHom}_{\widehat{\mathscr{W}}_\mr{X}}\left(\mathscr{D}_\mr{L},\mathscr{D}_\mr{M}\right).$$
\end{itemize}

Again, we have a local version of this category. Let $\mathfrak{L}$ be a collection of orientable Lagrangian submanifolds in $\mr{X}$.
\2
\begin{definition}
 We denote by $\mathcal{DQ}_\mathfrak{L}(\mr{X},\sigma)$ the full subcategory of $\mathcal{DQ}(\mr{X},\sigma)$ spanned by objects supported in $\mathfrak{L}$.
 \end{definition}
 Suppose each $\mr{L}$ in $\mathfrak{L}$ is equipped with orientation data $\mr{K}_\mr{L}^{1/2}$. Then there is a canonical full subcategory $\mathcal{DQ}^{\mr{s}}_\mathfrak{L}(\mr{X},\sigma)$ of $\mathcal{DQ}_\mathfrak{L}(\mr{X},\sigma)$, containing a single object for each Lagrangian in $\mathfrak{L}$, defined as follows: Indeed, by \cite{MR2331247} and \cite{saf}, up to isomorphism, there exists a unique quantised orientation $\mathscr{D}^\lambda_\mr{L}$ such that there is an isomorphism $$\mu_{\Lambda_\mr{L}}\left(\mathscr{D}^\lambda_\mr{L}\right)|_{\mr{L}\times 1}\cong \mr{K}_\mr{L}^{1/2}\otimes\C_\lambda$$ of twisted $\mr{D}$-modules with monodromy $\mr{exp}(2\pi i \lambda)$. By \cite{MR4678893}, the morphism complexes are independent of $\lambda$, hence the dg category $\mathcal{DQ}^{\mr{s}}_\mathfrak{L}(\mr{X},\sigma)$ depends only on the (classical) orientations of the Lagrangians.
\2 
\begin{corollary}
Let $(\mr{X},\sigma)$ be a holomorphic symplectic manifold and let $\mathfrak{L}$ be a (countable) collection of orientable compact K\"{a}hler Lagrangian submanifolds with clean pairwise intersections. Then the differential graded category $\mathcal{DQ}^\mr{s}_\mathfrak{L}(\mr{X},\sigma)$ is formal.
\end{corollary}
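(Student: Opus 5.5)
The plan is to deduce the statement from \cref{main2}, i.e.\ from the formality of $\mathcal{DQ}_\mathfrak{L}(\mr{X},\sigma)$, using that $\mathcal{DQ}^\mr{s}_\mathfrak{L}(\mr{X},\sigma)$ is a full dg subcategory of it. The key observation is that formality passes to full subcategories. If $\mathcal{DQ}_\mathfrak{L}$ is connected by a zig-zag of quasi-equivalences to its cohomology category $\mr{H}\mathcal{DQ}_\mathfrak{L}$, then restricting every category in the zig-zag to the objects corresponding to $\mr{Ob}(\mathcal{DQ}^\mr{s}_\mathfrak{L})$ gives a zig-zag of quasi-equivalences between $\mathcal{DQ}^\mr{s}_\mathfrak{L}$ and the full subcategory of $\mr{H}\mathcal{DQ}_\mathfrak{L}$ on those objects. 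That full subcategory is precisely $\mr{H}\mathcal{DQ}^\mr{s}_\mathfrak{L}$. To make the restriction well defined, I will work with $\Aa$-functors and transport objects along the maps of objects. Each quasi-equivalence induces an equivalence on $\mr{H}^0$, so the images of the chosen objects determine full subcategories. Quasi-isomorphisms on morphism complexes are preserved by restriction, and essential surjectivity holds trivially onto the image subcategories.

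Alternatively, I can argue through Kaledin classes. Over $\C\cbrak$, a field and hence a $\Q$-algebra with projective cohomology, Markl's homotopy transfer gives a minimal model $\mr{H}\mathcal{DQ}_\mathfrak{L}$. By \cref{kal}, formality of this model is equivalent to the vanishing of $\mr{k}_{\mr{H}\mathcal{DQ}_\mathfrak{L}}$. Restricting to the full subcategory on the chosen objects gives a map of Hochschild complexes $\mr{CC}^\bullet(\mathcal{C})\to \mr{CC}^\bullet(\mathcal{C}')$ that projects onto products over tuples of objects from the subcategory. This restriction is compatible with the weight filtration and commutes with $\mr{d}_\mr{Hoch}$, because the bar differential of a full subcategory is the restriction of the bar differential. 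It sends the Kaledin class to the Kaledin class, and also sends the transgressing coderivation $\tau$ to a transgressing coderivation. Hence $\mr{k}$ of the subcategory vanishes, and \cref{kal} gives formality.

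It remains to check that the objects $\mathscr{D}^\lambda_\mr{L}$ actually lie in $\mathcal{DQ}_\mathfrak{L}(\mr{X},\sigma)$. This follows from \cref{existsimplehol}: each is a good, simple quantised orientation supported on $\mr{L}\in\mathfrak{L}$, and it lies in the image of $\rho_*\mr{Mod}_\mr{rh}(\widehat{\mr{E}}_\mr{Y})$ via the Gunningham--Safronov contactification near $\mr{L}$. The independence of the choice of $\lambda$, cited from \cite{MR4678893}, is needed only for the category to be well defined, not for formality.

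The main obstacle is the first step. One must make sure that "formal" is understood as quasi-equivalence to the cohomology category, and that the zig-zag can be restricted object-wise when intermediate categories have different object sets. I will handle this by replacing the zig-zag with a single $\Aa$-quasi-equivalence $\mr{H}\mathcal{DQ}_\mathfrak{L}\to\mathcal{DQ}_\mathfrak{L}$. Such a map is available because we are over a field, using homotopy transfer together with invertibility of $\Aa$-quasi-isomorphisms between minimal models. That map is the identity on objects, so restriction is immediate.
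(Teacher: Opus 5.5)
Your proposal is correct and matches the paper: the paper gives no separate argument, and the corollary follows directly from \cref{main2} (i.e.\ \cref{lags}) because $\mathcal{DQ}^{\mr{s}}_\mathfrak{L}(\mr{X},\sigma)$ is a full subcategory of $\mathcal{DQ}_\mathfrak{L}(\mr{X},\sigma)$. Your worry about object sets in the zig-zag does not arise here, since the zig-zag the paper actually builds (\cref{dqdr} together with the functors $\iota,\pi$ in the proof of \cref{drvir}) consists of identity-on-objects quasi-isomorphisms and so restricts verbatim to the chosen objects; the Kaledin-class route is a valid but unnecessary alternative.
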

Our next objective is to lift the $\C\cbrak$-linear dg category $\mathcal{DQ}_\mathfrak{L}(\mr{X},\sigma)$ to a $\C\brak$-linear dg category whose classical limit $\hbar \to 0$ will be of interest.
\2
 \begin{proposition}\label{lift}
Let $\mathfrak{L}$ be a collection of compact Lagrangian submanifolds in $(\mr{X},\sigma)$, then there exists a $\C\brak$-linear differential graded $\widetilde{\mathcal{DQ}}_\mathfrak{L}(\mr{X},\sigma)$ such that $$\mr{loc}\left(\widetilde{\mathcal{DQ}}_\mathfrak{L}(\mr{X},\sigma)\right)=\mathcal{DQ}_\mathfrak{L}(\mr{X},\sigma).$$ 
\end{proposition}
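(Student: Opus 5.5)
The plan is to build $\widetilde{\mathcal{DQ}}_\mathfrak{L}(\mr{X},\sigma)$ from $\widehat{\mathscr{W}}_\mr{X}(0)$-lattices inside the quantised orientations, using goodness and compactness of the supports. Let $\mr{L}\in\mathfrak{L}$ and let $\mathscr{D}_\mr{L}$ be an object of $\mathcal{DQ}_\mathfrak{L}(\mr{X},\sigma)$ supported on $\mr{L}$. Since $\mathscr{D}_\mr{L}$ is good and $\mr{L}$ is compact, a relatively compact neighbourhood of $\mr{L}$ carries a $\widehat{\mathscr{W}}_\mr{X}(0)$-submodule $\mathscr{D}_\mr{L}^0$ generating $\mathscr{D}_\mr{L}$. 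This is the remark after the definition of good modules. Replacing $\mathscr{D}_\mr{L}^0$ by its image in $\mathscr{D}_\mr{L}$, we may assume it is $\hbar$-torsion free, so that $\mr{loc}(\mathscr{D}_\mr{L}^0)\simeq \mathscr{D}_\mr{L}$. I would fix one such lattice for each object. The canonical choice is the lattice $\mathscr{D}_\mr{L}\cap \rho_*\widehat{\mr{E}}_\mr{Y}(0)$-type lattice coming from the $\widehat{\mr{E}}_\mr{Y}(0)$-generator $\mathscr{D}^0$ of the underlying microdifferential orientation, restricted along the embedding $\widehat{\mathscr{W}}_\mr{X}(0)\subset\rho_*\widehat{\mr{E}}_\mr{Y}(0)$.

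Next I would define $\widetilde{\mathcal{DQ}}_\mathfrak{L}(\mr{X},\sigma)$ to have the same objects and morphism complexes
$$\widetilde{\mathcal{DQ}}_\mathfrak{L}\left(\mathscr{D}_\mr{L},\mathscr{D}_\mr{M}\right)=\mr{RHom}_{\widehat{\mathscr{W}}_\mr{X}(0)}\left(\mathscr{D}^0_\mr{L},\mathscr{D}^0_\mr{M}\right),$$
realised as an honest dg category. To do this I take functorial h-injective (or K-injective) replacements of the $\mathscr{D}^0_\mr{L}$ in $\mr{C}(\widehat{\mathscr{W}}_\mr{X}(0))$ and set the morphism complexes to be $\mr{Hom}^\bullet$ between these replacements, with honest composition. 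By \cref{perf}, these complexes are perfect over $\C\brak$, since $\mr{L}\cap\mr{M}$ is compact as a closed subset of the compact $\mr{L}$.

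To compare with $\mathcal{DQ}_\mathfrak{L}$, I would use the standard base change
$$\C\cbrak\otimes_{\C\brak}\mr{RHom}_{\widehat{\mathscr{W}}_\mr{X}(0)}(\mathscr{D}^0_\mr{L},\mathscr{D}^0_\mr{M})\simeq \mr{RHom}_{\widehat{\mathscr{W}}_\mr{X}}(\mr{loc}\,\mathscr{D}^0_\mr{L},\mr{loc}\,\mathscr{D}^0_\mr{M}).$$
Two facts justify this isomorphism. First, $\C\cbrak$ is flat over $\C\brak$. Second, $\mathscr{D}^0_\mr{L}$ is coherent, so $\mr{R}\shom$ commutes with the localisation $\C\cbrak\otimes_{\C\brak}-$ at the sheaf level; this is a countable colimit of multiplication by $\hbar^{-1}$. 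It remains to pass to global sections, i.e. to commute $\mr{R}\Gamma(\mr{X},-)$ with that colimit. This works because the sheaves in question are supported on the compact $\mr{L}\cap\mr{M}$, and cohomology on a compact space commutes with filtered colimits.

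Applying $\mr{loc}$ to the dg category $\widetilde{\mathcal{DQ}}_\mathfrak{L}$ then gives a dg category with a natural dg functor to $\mathcal{DQ}_\mathfrak{L}(\mr{X},\sigma)$. The functor is induced by localising the h-injective replacements. It is the identity on objects and a quasi-isomorphism on all morphism complexes. So, after replacing $\mathcal{DQ}_\mathfrak{L}$ by this quasi-isomorphic model, we get the required equality.

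The main obstacle is the strictification. The naive localisation of an h-injective $\widehat{\mathscr{W}}_\mr{X}(0)$-complex need not be h-injective over $\widehat{\mathscr{W}}_\mr{X}$. Commuting $\mr{R}\Gamma$ and $\mr{R}\shom$ with the colimit also needs the compact-support finiteness to be used carefully. I would first try Čech/Malgrange-type soft resolutions compatible with $\hbar$-localisation. Failing that, I would work with the Drinfeld dg quotient and take the statement "$=$" to mean "up to quasi-equivalence".
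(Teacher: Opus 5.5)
Your proposal is correct and follows the same route as the paper. Goodness together with compactness of the support gives a global $\widehat{\mathscr{W}}_\mr{X}(0)$-lattice for each object, the morphism complexes are $\mr{RHom}_{\widehat{\mathscr{W}}_\mr{X}(0)}$ between these lattices, and they localise correctly because $\mr{loc}$ commutes with $\mr{R}\shom$ on coherent modules. You also make explicit two points the paper leaves implicit: you use compactness of $\mr{L}\cap\mr{M}$ to commute $\mr{R}\Gamma$ with $\C\cbrak\otimes_{\C\brak}-$, and you read the equality as a quasi-equivalence (for instance via a Drinfeld quotient). The aside about a ``canonical'' lattice coming from $\rho_*\widehat{\mr{E}}_\mr{Y}(0)$ is not needed, and its coherence over $\widehat{\mathscr{W}}_\mr{X}(0)$ would itself require proof, so you can drop it.
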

\begin{proof}
Any $\mathscr{D}$ in $\mathcal{DQ}_\mathfrak{L}(\mr{X},\sigma)$ is good and simple along a compact Lagrangian submanifold. Hence, by compactness, it has a global $\widehat{\mathscr{W}}_\mr{X}(0)$-lattice $\widetilde{\mathscr{D}}$. Thus, we see any $\mathscr{D}$ admits a lift along $$\mr{loc}:\Db(\widehat{\mathscr{W}}_\mr{X}(0))\to \Db(\widehat{\mathscr{W}}_\mr{X}).$$ The claim on morphisms follows immediately as $\mr{loc}$ commutes with $\mr{R}\shom$.
\end{proof}
\subsection{Virtual de Rham dg category}
We can use the virtual de Rham complexes introduced in \cref{virdr} to define another dg category associated to the orientable Lagrangian submanifolds of $\mr{X}$. 
\2
\begin{definition}
Let $(\mr{X},\sigma)$ be a holomorphic symplectic manifold. The virtual de Rham category of $\mr{X}$ is the dg category $\mathcal{DR}^{\mr vir}(\mr{X},\sigma)$ spanned by the orientation $\widehat{\mathscr{W}}_\mr{X}$-modules supported on the Lagrangian submanifolds in $\mr{X}$ whose morphism spaces are given by the virtual de Rham complexes $$\mr{DR}^{\mr{vir}}\left(\mathscr{D}_{\mr{L}_1},\mathscr{D}_{\mr{L}_2}\right)=\Gamma\left(\mr{X},\mr{MS}\left(\mathscr{DR}^{\mr vir}(\mathscr{D}_{\mr{L}_1},\mathscr{D}_{\mr{L}_2})\right)\right).$$
\end{definition}
\2
\begin{definition}
For a collection of Lagrangian submanifolds $\mathfrak{L}$, we let $\mathcal{DR}^{\mr{vir}}_\mathfrak{L}(\mr{X},\sigma)$ be the full dg subcategory of $\mathcal{DR}^{\mr{vir}}(\mr{X},\sigma)$ spanned by objects along the Lagrangian submanifolds in $\mathfrak{L}$.
\end{definition}
Given any three Lagrangian submanifolds $\mr{L}_i$ and orientation modules along them $\mathscr{D}_{\mr{L}_i}$, $i=1,2,3$, the multiplication $$\mathscr{DR}^{\mr{vir}}\left(\mathscr{D}_{\mr{L}_2},\mathscr{D}_{\mr{L}_3}\right)\otimes \mathscr{DR}^{\mr{vir}}\left(\mathscr{D}_{\mr{L}_1},\mathscr{D}_{\mr{L}_2}\right) \to \mathscr{DR}^{\mr{vir}}\left(\mathscr{D}_{\mr{L}_1},\mathscr{D}_{\mr{L}_3}\right),$$ induced by the Yoneda product, is compatible with the differential $\mr{d}_\mr{BF}$.\\
Since the Malgrange-Serre resolutions are multiplicative, we see that the pairing of complexes $$\MS\left(\mathscr{DR}^{\mr{vir}}\left(\mathscr{D}_{\mr{L}_2},\mathscr{D}_{\mr{L}_3}\right)\right)\otimes \MS\left(\mathscr{DR}^{\mr{vir}}\left(\mathscr{D}_{\mr{L}_1},\mathscr{D}_{\mr{L}_2}\right)\right) \to \MS\left(\mathscr{DR}^{\mr{vir}}\left(\mathscr{D}_{\mr{L}_1},\mathscr{D}_{\mr{L}_3}\right)\right)$$ is compatible with the splitting of the total differential $\mr{d}_\mr{BF}+\bar{\partial}$.\\
In the next section, we shall use this splitting to prove the formality of the virtual de Rham category in sufficiently nice situations. 

\subsection{Formality}

\begin{definition}A Solomon-Verbitsky collection $\mathfrak{L}$ is a countable collection of orientable compact K\"{a}hler Lagrangian submanifolds $\mr{L}_i$, $i\in \mr{I}$, in $\mr{X}$ with pairwise clean intersections.
\end{definition} 
\2
\begin{theorem}\label{drvir}
Let $\mr{X}/\C$ be a holomorphic symplectic manifold and let $\mathfrak{L}$ be a Solomon-Verbitsky collection. The virtual de Rham category $\mathcal{DR}^{\mr{vir}}_\mathfrak{L}$ of $\mathfrak{L}$ is formal.
\end{theorem}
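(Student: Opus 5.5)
The plan is a categorical version of the $\partial\bar\partial$-lemma argument of \cite{Deligne1975}, applied to the Malgrange--Serre models of the morphism complexes. First fix the local structure. For $\mr{L},\mr{M}\in\mathfrak{L}$ the intersection $\mr{L}\cap\mr{M}$ is a compact K\"ahler submanifold, being a closed submanifold of the compact K\"ahler $\mr{L}$; it may be empty, in which case the morphism complex is zero. By the corollary following \cref{virdr} and the discussion after the Behrend--Fantechi theorem, $\mathscr{DR}^{\mr{vir}}(\mathscr{D}_\mr{L},\mathscr{D}_\mr{M})$ is the holomorphic de Rham complex $\Omega^{\bullet-c}_{\mr{L}\cap\mr{M}}\otimes\mr{or}$ of the flat $2$-torsion line bundle $\mr{or}=\mr{or}(\mu_{\Lambda_\mr{L}}(\mathscr{D}_\mr{L})|_{\mr{L}\times1},\mu_{\Lambda_\mr{M}}(\mathscr{D}_\mr{M})|_{\mr{M}\times1})$, with $\mr{d}_\mr{BF}$ the flat holomorphic de Rham differential $\partial$. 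Since $\mr{or}$ is $2$-torsion, the flat connection is unitary and compatible with a flat hermitian metric. Applying $\MS$, the morphism complex $\mathcal{DR}^{\mr{vir}}_\mathfrak{L}(\mathscr{D}_\mr{L},\mathscr{D}_\mr{M})$ becomes the Dolbeault-resolved complex $\big(\mathscr{A}^{\bullet,\bullet}(\mr{L}\cap\mr{M},\mr{or})[-c],\partial+\bar\partial\big)$, that is, the smooth twisted de Rham complex of $\mr{L}\cap\mr{M}$.

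Next, identify the composition at the level of forms. By the remarks at the end of the previous subsection, the Yoneda composition is compatible with $\mr{d}_\mr{BF}$, and $\MS$ is multiplicative with respect to the splitting $\mr{d}_\mr{BF}+\bar\partial$. Concretely, the composition
$$\mathcal{DR}^{\mr{vir}}(\mathscr{D}_{\mr{L}_2},\mathscr{D}_{\mr{L}_3})\otimes\mathcal{DR}^{\mr{vir}}(\mathscr{D}_{\mr{L}_1},\mathscr{D}_{\mr{L}_2})\to\mathcal{DR}^{\mr{vir}}(\mathscr{D}_{\mr{L}_1},\mathscr{D}_{\mr{L}_3})$$
should be realised by restricting forms to the triple intersection $\mr{L}_1\cap\mr{L}_2\cap\mr{L}_3$, wedging, and pushing forward (integration along fibres, or a Gysin-type map) to $\mr{L}_1\cap\mr{L}_3$, twisted by the orientation lines. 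One has to check that this operation is a bigraded map that commutes with both $\partial$ and $\bar\partial$ separately. I would verify this in the local model of \cref{localmodel}, using the local description of the excess $\ext$ sheaves given in \cref{intersectprop}. This is the step I expect to be the main obstacle: clean pairwise intersection does not obviously make the triple intersection clean, so the push-forward has to be made sense of. I would first try to show that the Yoneda product on $\ext$ sheaves is $\mathscr{O}$-bilinear and factors through the wedge product of the excess bundles, so that no integration is needed and everything happens pointwise.

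Once the compositions are known to respect both $\partial$ and $\bar\partial$, I run the Deligne--Griffiths--Morgan--Sullivan zig-zag at the level of categories. Let $\mathcal{K}$ be the dg subcategory with the same objects whose morphism spaces are the $\partial$-closed forms ($\ker\partial$), with differential $\bar\partial$. Composition preserves $\ker\partial$ because $\partial$ is a derivation, so $\mathcal{K}$ is closed under composition. There are then dg functors
$$\mathcal{DR}^{\mr{vir}}_\mathfrak{L}\longleftarrow\mathcal{K}\longrightarrow\mr{H}_{\bar\partial}\mathcal{K},$$
where the target has zero differential, and both arrows are compatible with composition. The $\partial\bar\partial$-lemma for unitary flat bundles on the compact K\"ahler manifolds $\mr{L}\cap\mr{M}$ makes both arrows quasi-isomorphisms on each morphism complex, and it identifies $\mr{H}_{\bar\partial}\mathcal{K}$ with $\mr{H}\mathcal{DR}^{\mr{vir}}_\mathfrak{L}$. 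This gives formality; the countability of $\mathfrak{L}$ plays no role, since the argument is done one morphism complex at a time.
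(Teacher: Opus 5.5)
Your proposal follows the paper's route. The paper runs the Deligne--Griffiths--Morgan--Sullivan zig-zag $\mathcal{DR}^{\mr{vir}}_\mathfrak{L}\leftarrow\mathcal{DR}^{\mr{vir},\bar\partial}_\mathfrak{L}\to\mr{H}_{\bar\partial}(\mathcal{DR}^{\mr{vir}}_\mathfrak{L})$ through the $\bar\partial$-closed elements and the $\mr{d_{dR}}\bar\partial$-lemma; you run the mirror image through $\ker\partial$, which is equivalent.

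The step you flag as the main obstacle is not analysed in the paper. There the composition is, by definition, the sheaf-level Yoneda product on $\ext$ sheaves composed with the multiplicativity of $\MS$. So it is $\mathscr{O}_\mr{X}$-bilinear and pointwise: no push-forward is involved and no cleanness of triple intersections is used. Its compatibility with $\mr{d}_\mr{BF}$ and $\bar\partial$ separately is simply asserted. Your second, pointwise option is the one that matches the definition; a Gysin-type description would be a different composition.

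Two points need correcting.

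\emph{The target of your zig-zag.} With $\mathcal{K}=\ker\partial$, the right-hand term must be the quotient $\ker\partial/\mr{im}\,\partial$, with the differential induced by $\bar\partial$; the $\partial\bar\partial$-lemma is what makes that induced differential vanish. The $\bar\partial$-cohomology of $\mathcal{K}$, as you wrote, is a subquotient of $\mathcal{K}$ and receives no dg functor from it.

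\emph{The model you apply the lemma to.} $\Gamma(\mr{X},\MS(\mathscr{DR}^{\mr{vir}}))$ is not the smooth Dolbeault bicomplex $\mathscr{A}^{\bullet,\bullet}(\mr{L}\cap\mr{M},\mr{or})[-c]$. $\MS$ tensors with $\mathscr{E}^{0,\bullet}_\mr{X}$ over $\mathscr{O}_\mr{X}$ on the ambient manifold, so its stalks contain formal power series in the conjugates $\bar z_j$ of the normal coordinates, together with the normal $d\bar z_j$.

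For example, take two $(-2)$-curves meeting transversally at a point in a holomorphic symplectic surface. The morphism complex is then $\C[\![\bar z_1,\bar z_2]\!]\otimes\wedge^\bullet\langle d\bar z_1,d\bar z_2\rangle$, shifted by one, with $\partial=0$ and $\bar\partial\neq0$. Here $\ker\bar\partial\cap\mr{im}\,\bar\partial\neq0=\mr{im}\,\partial\bar\partial$, so the $\partial\bar\partial$-lemma fails, and your corrected right-hand term still carries the nonzero differential $\bar\partial$.

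The paper applies the $\mr{d_{dR}}\bar\partial$-lemma to exactly these complexes without comment. In the same example its inclusion of $\bar\partial$-closed elements has infinite-dimensional cohomology and is not a quasi-isomorphism. So this is a gap you share with the paper rather than a divergence from it. To close it, one has to pass to a model on the intersections where the lemma genuinely holds, for instance by restricting forms to $\mr{L}\cap\mr{M}$ (unitary flat $\mr{or}$, compact K\"ahler $\mr{L}\cap\mr{M}$). One must then check that this restriction is compatible with the composition.
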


\begin{proof}
We begin by defining two dg categories. Let $\mathcal{DR}_\mathfrak{L}^{\mr{vir},\bar{\partial}}$ with the same objects as $\mathcal{DR}^{\mr{vir}}_{\mathfrak{L}}$ and set $$\mathcal{DR}_\mathfrak{L}^{\mr{vir},\bar{\partial}}\left(\mathscr{D}_{\mr{L}_1},\mathscr{D}_{\mr{L}_2}\right) = \Gamma_{\bar{\partial}}\left(\mr{X},\MS\left(\mathscr{DR}^{\mr{vir}}\left(\mathscr{D}_{\mr{L}_1},\mathscr{D}_{\mr{L}_2}\right)\right)\right),$$ i.e. the subcomplex of the complex of global sections given by the $\bar{\partial}$-closed elements.\\
The second auxiliary category denoted $\mr{H}_{\bar{\partial}}\left(\mathcal{DR}^{\mr{vir}}_\mathfrak{L}\right)$ is the dg category with the same set of objects as $\mathcal{DR}^{\mr{vir}}_\mathfrak{L}$ and morphisms given by the quotients $$\mr{H}_{\bar{\partial}}(\mathcal{DR}^{\mr{vir}}_\mathfrak{L})\left(\mathscr{D}_{\mr{L}_1},\mathscr{D}_{\mr{L}_2}\right) = \mathcal{DR}_\mathfrak{L}^{\mr{vir},\bar{\partial}}\left(\mathscr{D}_{\mr{L}_1},\mathscr{D}_{\mr{L}_2}\right)/\bar{\partial}\left(\mathcal{DR}^{\mr{vir}}_\mathfrak{L}\left(\mathscr{D}_{\mr{L}_1},\mathscr{D}_{\mr{L}_2}\right)\right)$$ with differential induced by $\mr{d_{dR}}$.
There is a diagram of functors $$\mathcal{DR}^{\mr{vir}}_\mathfrak{L} \overset{\iota}\longleftarrow \mathcal{DR}_\mathfrak{L}^{\mr{vir},\bar{\partial}} \overset{\pi}\longrightarrow \mr{H}_{\bar{\partial}}(\mathcal{DR}^{\mr{vir}}_\mathfrak{L}).$$ The first is given by the natural inclusion, while the latter is the canonical projection. We are going to show that these induce quasi-isomorphisms and moreover $\mr{H}_{\bar{\partial}}(\mathcal{DR}^{\mr{vir}}_\mathfrak{L})$ is a graded category, i.e. $\mr{d_{dR}}$ induces the zero differential on the spaces of morphisms. Hence $\mathcal{DR}^{\mr{vir}}_\mathfrak{L}$ is formal.\\
The differential structure on $\mr{H}_{\bar{\partial}}	\left(\mathcal{DR}^{\mr{vir}}_\mathfrak{L}\right)$ is trivial: indeed pick any $\alpha$ with $\bar{\partial}(\alpha)=0$. The we can apply the $\mr{d_{dR}}\bar{\partial}$-lemma to $\mr{d_{dR}}(\alpha)$ hence getting $$\mr{d_{dR}}(\alpha)=\mr{d_{dR}}\bar{\partial}(\gamma)$$ which implies that $[\mr{d_{dR}}(\alpha)]=0$ in the quotient $$\mathcal{DR}_\mathfrak{L}^{\mr{vir},\bar{\partial}}\left(\mathscr{D}_{\mr{L}_1},\mathscr{D}_{\mr{L}_2}\right)/\bar{\partial}\left(\mathcal{DR}^{\mr{vir}}_\mathfrak{L}\left(\mathscr{D}_{\mr{L}_1},\mathscr{D}_{\mr{L}_2}\right)\right).$$
Since both $\iota$ and $\pi$ are identity on objects, they are essentially surjective on the underlying homotopy categories.\\
Let's begin by showing that $\iota$ induces a quasi-isomorphism between complexes of morphisms. It clearly induces a surjection on cohomology since any $\mr{d_{dR}}+\bar{\partial}$-closed element is also $\bar{\partial}$-closed for degree reasons. To show injectivity, suppose we have a $\bar{\partial}$-closed element $$\alpha= \mr{d_{dR}}(\beta)+\bar{\partial}(\beta).$$ Applying the $\mr{d_{dR}}\bar{\partial}$-lemma, we get $\alpha= \mr{d_{dR}}\bar{\partial}(\gamma)$ and clearly $\bar{\partial}(\gamma)$ is in the kernel of $\bar{\partial}$. \\
Surjectivity of $\pi$: suppose given $$[\alpha] \in \mathcal{DR}_\mathfrak{L}^{\mr{vir},\bar{\partial}}\left(\mathscr{D}_{\mr{L}_1},\mathscr{D}_{\mr{L}_2}\right)/\bar{\partial}\left(\mathcal{DR}^{\mr{vir}}_\mathfrak{L}\left(\mathscr{D}_{\mr{L}_1},\mathscr{D}_{\mr{L}_2}\right)\right),$$ since the differential is trivial, it is enough to find a $\mr{d_{dR}}$-closed representative of $\alpha$. To that end, apply the $\mr{d_{dR}}\bar{\partial}$-lemma to $\mr{d_{dR}}(\alpha)$ to find $\gamma$ such that $$\mr{d_{dR}}(\alpha)=\mr{d_{dR}}\bar{\partial}(\gamma),$$ so $\alpha'=\alpha-\bar{\partial}(\gamma)$ is in the class of $\alpha$ and is $\mr{d_{dR}}$-closed. Injectivity follows immediately from the $\mr{d_{dR}}\bar{\partial}$-lemma since any $\alpha$ with $\bar{\partial}(\alpha)=0$ and $\mr{d_{dR}}(\alpha)=0$ which projects to $0$ in $$\mathcal{DR}_\mathfrak{L}^{\mr{vir},\bar{\partial}}\left(\mathscr{D}_{\mr{L}_1},\mathscr{D}_{\mr{L}_2}\right)/\bar{\partial}\left(\mathcal{DR}^{\mr{vir}}_\mathfrak{L}\left(\mathscr{D}_{\mr{L}_1},\mathscr{D}_{\mr{L}_2}\right)\right)$$ must be $\mr{d_{dR}}$-exact.
\end{proof}
\2
\begin{remark}
We observe that the theorem remains true for any collection of Lagrangian submanifolds for which the $\mr{d}_\mr{BF}\bar{\partial}$-lemma holds.
\end{remark}
\2
\begin{proposition}\label{dqdr}
Let $\mathfrak{L}$ be a Solomon-Verbitsky collection. There is a quasi-isomorphism $$\mathcal{DQ}_\mathfrak{L}\cong \mr{Ind}_{\C\cbrak/\C}\left(\mathcal{DR}^{\mr{vir}}_\mathfrak{L}\right).$$
\end{proposition}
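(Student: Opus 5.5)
We will build the quasi-isomorphism $\mathcal{DQ}_\mathfrak{L}\cong \mr{Ind}_{\C\cbrak/\C}\left(\mathcal{DR}^{\mr{vir}}_\mathfrak{L}\right)$ sheaf-theoretically on each morphism space and then check compatibility with compositions. For two objects $\mathscr{D}_{\mr{L}},\mathscr{D}_{\mr{M}}$ with $\mr{L},\mr{M}\in\mathfrak{L}$, \cref{pervref} gives an isomorphism of perverse sheaves
$$\mr{R}\shom_{\Ww}\big(\mathscr{D}_{\mr{L}},\mathscr{D}_{\mr{M}}\big)[n]\simeq \C\cbrak\otimes_\C\mathscr{P}_{\mr{L,M}}.$$
Since $\mr{L}\cap\mr{M}$ is smooth (clean intersection), \cref{pdr} identifies $\mathscr{P}_{\mr{L}\cap\mr{M}}[-n]$ with $\mathscr{DR}^{\mr{vir}}(\mathscr{D}_\mr{L},\mathscr{D}_\mr{M})$. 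We obtain a sheaf-level quasi-isomorphism
$$\mr{R}\shom_{\Ww}\big(\mathscr{D}_{\mr{L}},\mathscr{D}_{\mr{M}}\big)\simeq \C\cbrak\otimes_\C \mathscr{DR}^{\mr{vir}}(\mathscr{D}_\mr{L},\mathscr{D}_\mr{M}).$$
Applying $\mr{R}\Gamma(\mr{X},-)$, we compute the right side via the soft Malgrange-Serre resolution. Since $\mr{L}\cap\mr{M}$ is compact, $\Gamma$ commutes with $\C\cbrak\otimes_\C-$, and the target becomes $\C\cbrak\otimes_\C\Gamma(\mr{X},\MS(\mathscr{DR}^{\mr{vir}}))$. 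This gives quasi-isomorphisms of morphism complexes, compatible with the identity on objects.

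The second step upgrades these to a functor of dg categories, or a zigzag of them. One way is to pass to cohomology. By \cref{drvir}, $\mathcal{DR}^{\mr{vir}}_\mathfrak{L}$ is formal, quasi-isomorphic to its cohomology category with Yoneda (wedge) product. So it suffices to show two things. First, the isomorphisms above intertwine the Yoneda composition on $\mr{Ext}_{\Ww}$ with the product on $\mathbb{H}(\mathscr{DR}^{\mr{vir}})$. Second, $\mathcal{DQ}_\mathfrak{L}$ is itself formal.

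For the product compatibility we will work with the lift $\widetilde{\mathcal{DQ}}_\mathfrak{L}$ of \cref{lift} and its $\hbar\to0$ fibre. The reduction $\iota^*$ of a quantised orientation lattice is $\mu_{\Lambda_\mr{L}}(\mathscr{D}_\mr{L})|_{\mr{L}\times1}$, so the classical limit of $\mr{R}\shom_{\Ww(0)}$ is $\mr{R}\shom_{\mathscr{O}_\mr{X}}$ of the orientation bundles. The $\hbar$-adic filtration then gives a spectral sequence with $\mr{E}_1$ term the $\ext$ sheaves and $\mr{d}_1=\mr{d}_\mr{BF}$. This spectral sequence is multiplicative, with Yoneda products on both sides. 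Degeneration at the second page follows from the dimension count already matched by the perverse sheaf comparison. Multiplicativity of this spectral sequence then gives compatibility of products on associated gradeds. Since everything is a $\C\cbrak$-vector space and gradings are by $\mr{Ext}$ degree, the extension problem is trivial, so we get a graded isomorphism of cohomology categories.

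Formality of $\mathcal{DQ}_\mathfrak{L}$ should come from the Kaledin-class machinery. Each Hom is a perfect $\C\brak$-complex of Hochschild type, and the Calabi-Yau structure comes from \cref{cyprop}. We will apply \cref{a2cy} to $\widetilde{\mathcal{DQ}}_\mathfrak{L}$, comparing with the formal classical limit supplied by \cref{drvir} and \cref{main1}-type arguments.

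The main obstacle is this last point: turning objectwise quasi-isomorphisms into a dg (or $\Aa$) equivalence. The Gunningham-Safronov isomorphism is only stated objectwise, and its compatibility with higher $\Aa$-operations is not automatic. I would first try to avoid the issue by proving formality of $\mathcal{DQ}_\mathfrak{L}$ directly from vanishing of the Kaledin class $\mr{k}_\Cc$ via \cref{kal}. The approach is to show that the obstructions $[m_{n+1}]$ vanish after $\hbar$-adic completion, because the classical limit is formal. Once both sides are formal, the graded isomorphism of cohomology categories from the spectral sequence step finishes the proof.
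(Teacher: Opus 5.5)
Your first step matches the paper: combining \cref{pervref} with \cref{pdr} gives, for each pair $\mr{L},\mr{M}\in\mathfrak{L}$, a quasi-isomorphism $\mr{R}\shom_{\Ww}(\mathscr{D}_\mr{L},\mathscr{D}_\mr{M})\simeq\C\cbrak\otimes_\C\mathscr{DR}^{\mr{vir}}(\mathscr{D}_\mr{L},\mathscr{D}_\mr{M})$. The paper finishes by asserting that these sheaf-level quasi-isomorphisms are multiplicative, meaning they intertwine Yoneda composition with the composition of virtual de Rham complexes. It checks this locally in the model $\mr{L}=\Gamma_{\mr{d}f}$, $\mr{M}$ the zero section, where both sides are computed by Koszul resolutions and the products become wedge products. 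The multiplicative Malgrange--Serre resolution and $\Gamma(\mr{X},-)$ then turn this into a quasi-isomorphism of dg categories. You correctly single out this compatibility as the crux, but the detour you propose to avoid it has genuine gaps.

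\emph{Formality of $\mathcal{DQ}_\mathfrak{L}$.} You cannot obtain this the way you suggest. In the paper it is deduced from this very proposition (\cref{lags}), and \cref{main1} is deduced from it in turn, so ``\cref{main1}-type arguments'' are circular. Also, $\mathcal{DR}^{\mr{vir}}_\mathfrak{L}$ is not the classical limit of $\widetilde{\mathcal{DQ}}_\mathfrak{L}$; that limit is $\mathcal{D}_{\mr{Lag}}(\mathfrak{L})$, and $\mathcal{DR}^{\mr{vir}}_\mathfrak{L}$ is only the associated graded of a filtration on its cohomology. More fundamentally, \cref{a2cy} goes from the generic fibre to the family, not from the central fibre to the generic one. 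If an obstruction $[m_{n+1}]$ vanishes modulo $\hbar$, you only learn that it is divisible by $\hbar$. A family with $m_3=\hbar c$ and $[c]\neq 0$ has formal central fibre but non-formal generic fibre.

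\emph{Identifying the products.} Your spectral-sequence step does not produce a graded isomorphism $\mr{H}\mathcal{DQ}_\mathfrak{L}\cong\mr{Ind}_{\C\cbrak/\C}(\mr{H}\mathcal{DR}^{\mr{vir}}_\mathfrak{L})$. Multiplicativity plus degeneration only identify products on the associated graded. Working over the field $\C\cbrak$ splits the additive extension problem but not the multiplicative one. For example, the Moyal product on $\C[x,y]\cbrak$ has commutative associated graded for the $\hbar$-adic filtration, yet it is not isomorphic to the commutative product. Moreover, a comparison of cohomology sheaves cannot control products of global hypercohomology classes; you need compatibility at the level of complexes of sheaves before applying $\Gamma$. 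So even granting formality of both sides, your argument lacks the identification of the compositions, which is exactly the sheaf-level multiplicativity you were trying to bypass.
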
 
\begin{proof}
It is enough to show that the quasi-isomorphisms given by \cref{pervref} and \cref{pdr} are multiplicative and this is easily checked locally using Koszul resolutions.
\end{proof}
\2
\begin{corollary}\label{lags}
Let $\mr{X}/\C$ be a holomorphic symplectic manifold. Let $\mathfrak{L}$ be a Solomon-Verbitsky collection. Then the holomorphic Solomon-Verbitsky category $\mathcal{DQ}_\mathfrak{L}$ is formal.
\end{corollary}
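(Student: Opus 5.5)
The plan is to deduce \cref{lags} directly from \cref{drvir} and \cref{dqdr}, using that formality is invariant under quasi-isomorphism and under flat base change of the ground field.

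First, \cref{drvir} shows that $\mathcal{DR}^{\mr{vir}}_\mathfrak{L}$ is formal. Concretely, its proof gives a zigzag of quasi-isomorphisms of $\C$-linear dg categories
$$\mathcal{DR}^{\mr{vir}}_\mathfrak{L}\xleftarrow{\iota}\mathcal{DR}^{\mr{vir},\bar\partial}_\mathfrak{L}\xrightarrow{\pi}\mr{H}_{\bar\partial}(\mathcal{DR}^{\mr{vir}}_\mathfrak{L}),$$
in which the right-hand category has zero differential. That category is therefore isomorphic to the graded category $\mr{H}\mathcal{DR}^{\mr{vir}}_\mathfrak{L}$.

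Second, I apply the exact functor $\mr{Ind}_{\C\cbrak/\C}=-\otimes_\C\C\cbrak$ to this zigzag. Since $\C\cbrak$ is flat over the field $\C$, tensoring preserves quasi-isomorphisms between morphism complexes. It is also the identity on objects, so the zigzag remains a zigzag of quasi-equivalences. The resulting right-hand term is still a dg category with zero differential, namely $\mr{H}\mathcal{DR}^{\mr{vir}}_\mathfrak{L}\otimes_\C\C\cbrak$. By exactness this is $\mr{H}\big(\mr{Ind}_{\C\cbrak/\C}\mathcal{DR}^{\mr{vir}}_\mathfrak{L}\big)$. Hence $\mr{Ind}_{\C\cbrak/\C}(\mathcal{DR}^{\mr{vir}}_\mathfrak{L})$ is formal.

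Third, \cref{dqdr} supplies a quasi-isomorphism $\mathcal{DQ}_\mathfrak{L}\cong\mr{Ind}_{\C\cbrak/\C}(\mathcal{DR}^{\mr{vir}}_\mathfrak{L})$, possibly as a zigzag, and I concatenate it with the zigzag above. Its components on morphism spaces are quasi-isomorphisms, so they induce an isomorphism of graded categories on cohomology. The composite therefore gives a zigzag of quasi-equivalences from $\mathcal{DQ}_\mathfrak{L}$ to its cohomology category, which proves formality.

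The step needing most care is the compatibility with composition in \cref{dqdr}. The quasi-isomorphisms from \cref{pervref} and \cref{pdr} must assemble into a dg functor, or at least an $\Aa$-functor, and not merely a family of quasi-isomorphisms of complexes. If only an $\Aa$-quasi-equivalence is available, I would pass through minimal models; over the field $\C\cbrak$ flatness is automatic, so \cref{kal} and homotopy transfer apply. Formality then transfers because the Kaledin class is invariant under $\Aa$-isomorphism.
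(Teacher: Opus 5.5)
Your proposal is correct and follows the paper's argument: the paper deduces \cref{lags} in one line from the formality of $\mathcal{DR}^{\mr{vir}}_\mathfrak{L}$ (\cref{drvir}) and the quasi-isomorphism $\mathcal{DQ}_\mathfrak{L}\cong\mr{Ind}_{\C\cbrak/\C}(\mathcal{DR}^{\mr{vir}}_\mathfrak{L})$ of \cref{dqdr}, leaving implicit the flat base change along $\C\to\C\cbrak$ that you spell out. The compatibility with composition that you flag is handled in the paper inside the proof of \cref{dqdr}, which asserts it is checked locally via Koszul resolutions, so your $\Aa$/Kaledin-class fallback is not needed.
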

\begin{proof}
In view of the quasi-isomorphism $\mathcal{DQ}_\mathfrak{L}\cong \mr{Ind}_{\C\cbrak/\C}\left(\mathcal{DR}^{\mr{vir}}_\mathfrak{L}\right)$, this is an immediate consequence of the preceding theorem.
\end{proof}

Having established the formality of $\mathcal{DQ}_\mathfrak{L}$, we are now going to consider the coherent category $\mathcal{D}_\mr{Lag}(\mathfrak{L})$. Before proving our next result, we will need some preliminary lemmas. The first one gives a sufficient condition for the cohomology of a perfect complex over $\C\brak$ to be free. The second will be used to show that the Hochschild (co)homology of $\widetilde{\mathcal{DQ}}_\mathfrak{L}$ is torsion-free.

\2
 \begin{lemma}\label{freelemma}
 Let $\iota : \C \xhookrightarrow{} \C\brak$ be the inclusion of the central fibre and let $$\mr{C} \in \mr{Perf}\big(\mr{Spec}\big(\C\brak\big)\big).$$ Suppose that for all $i\in \mathbf{Z}$ we have $$\mr{dim}_\C\big(\mr{H}^{i}(\iota^{*}\mr{C})\big) = \mr{dim}_{\C\cbrak}\big(\mr{H}^{i}(\C\cbrak\otimes_{\C\brak} \mr{C})\big).$$
 Then the cohomology $\mr{H}(\mr{C})$ is free (of finite rank) over $\C\brak$.
 \end{lemma}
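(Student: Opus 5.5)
We work with the structure theory of finitely generated modules over the discrete valuation ring $\C\brak$, combined with a minimal free model of $\mr{C}$. Since $\mr{C}$ is perfect and $\C\brak$ is a principal ideal domain (indeed a local ring), the first step is to replace $\mr{C}$ by a bounded complex of finitely generated free $\C\brak$-modules. The second step is to reduce further to a \emph{minimal} such complex, meaning one whose differential has all entries in the maximal ideal $\hbar\C\brak$. This uses the standard splitting-off argument: any entry of the differential that is a unit lets us cancel an acyclic summand $\C\brak\xrightarrow{\mr{id}}\C\brak$ without changing the quasi-isomorphism class. Since $\C\brak$ is local and the complex is bounded and of finite rank, this process terminates.

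With a minimal model $\mr{C}=(\mr{C}^\bullet,\mr{d})$ in hand, the two sides of the hypothesis become computable. Because $\mr{d}\otimes_{\C\brak}\C=0$, we have
\[\mr{dim}_\C \mr{H}^i(\iota^*\mr{C})=\mr{rk}\,\mr{C}^i .\]
Localisation is exact, so $\mr{H}^i(\C\cbrak\otimes_{\C\brak}\mr{C})=\mr{H}^i(\mr{C})\otimes_{\C\brak}\C\cbrak$, and its dimension is the rank of the free part of $\mr{H}^i(\mr{C})$. That rank equals $\mr{rk}\,\mr{C}^i-\mr{rk}(\mr{d}^i)-\mr{rk}(\mr{d}^{i-1})$, where $\mr{rk}(\mr{d}^i)$ denotes the generic rank of $\mr{d}^i$ over the fraction field. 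The hypothesis therefore forces $\mr{rk}(\mr{d}^i)=0$ for every $i$. A matrix over the domain $\C\brak$ with zero generic rank is the zero matrix, so $\mr{d}=0$. Consequently $\mr{H}(\mr{C})=\mr{C}^\bullet$ is free of finite rank.

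As an alternative to the minimal model, one can argue with the universal coefficient sequence. Write $\mr{H}^i(\mr{C})\cong\C\brak^{r_i}\oplus\mr{T}_i$ with $\mr{T}_i$ a finite-length torsion module. Derived base change gives the short exact sequence
\[0\to \mr{H}^i(\mr{C})\otimes\C\to \mr{H}^i(\iota^*\mr{C})\to \mr{Tor}_1^{\C\brak}(\mr{H}^{i+1}(\mr{C}),\C)\to 0.\]
Taking dimensions yields $\mr{dim}\,\mr{H}^i(\iota^*\mr{C})=r_i+t_i+t_{i+1}$, where $t_i$ is the number of cyclic summands of $\mr{T}_i$. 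Meanwhile the generic fibre has dimension $r_i$. Equality forces $t_i=0$ for every $i$, which again gives freeness.

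The only point needing care is checking that the Tor/base-change sequence, or equivalently the existence of a minimal model, applies to an arbitrary perfect complex over $\C\brak$. Both follow from the fact that a PID has global dimension one, so no step here should present a real obstacle. I would present the Tor argument as the main proof since it is the shortest.
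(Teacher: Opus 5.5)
Your Tor/universal-coefficient argument is correct and is essentially the paper's proof. The paper also uses the triangle $\mr{C}\xrightarrow{\hbar}\mr{C}\to\iota^*\mr{C}\to\mr{C}[1]$ to get the same short exact sequences, but it keeps only the resulting inequality $d_i+r_i\le \mr{dim}_\C\mr{H}^i(\iota^*\mr{C})=d_i$ to kill the torsion, whereas you compute the dimension exactly as $r_i+t_i+t_{i+1}$; your alternative minimal-model argument is also valid.
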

\begin{proof}
 Consider the exact triangle $$\mr{C} \xrightarrow{\hbar} \mr{C} \to \iota^{*}\mr{C} \xrightarrow{} \mr{C}[1].$$ It induces a long exact sequence in cohomology $$\mr{H}^{i}(\mr{C}) \xrightarrow{\hbar} \mr{H}^{i}(\mr{C}) \to \mr{H}^{i}(\iota^{*}\mr{C}) \to \mr{H}^{i+1}(\mr{C}) \xrightarrow{\hbar} \mr{H}^{i+1}(\mr{C}).$$ Hence there are exact sequences $$ 0 \to \C\otimes_{\C\brak}\mr{H}^{i}(\mr{C}) \to \mr{H}^{i}(\iota^{*}\mr{C}) \to \mr{Tor}_{1}^{\C\brak}\left(\C,\mr{H}^{i+1}(\mr{C})\right) \to 0.$$  In particular, we get that $$\mr{dim}_{\C}\left(\C\otimes_{\C\brak}\mr{H}^{i}(\mr{C})\right) \le \mr{dim}_\C\left(\mr{H}^{i}(\iota^{*}\mr{C})\right).$$
 Since $\mr{H}^{i}(\mr{C})$ is finitely generated, we may write it as $$\mr{H}^{i}(\mr{C}) = \C\brak^{d_i}\oplus \C\brak/\hbar^{k_1} \oplus \cdots \oplus \cdots \oplus\C\brak/\hbar^{k_{r_i}},$$ where $k_1,\cdots,k_{r_i},r_i \in \mathbf{N}$. Notice that $$\mr{dim}_{\C}\left(\C\otimes_{\C\brak}\mr{H}^{i}(\mr{C})\right) = d_i + r_i \text{ and } \mr{dim}_{\C\cbrak}\left(\C\cbrak\otimes_{\C\brak}\mr{H}^{i}(\mr{C})\right)=d_i.$$ It follows by flatness of $\C\cbrak$ that $$\mr{dim}_{\C\cbrak}\left(\C\cbrak\otimes_{\C\brak}\mr{H}^{i}(\mr{C})\right)= \mr{dim}_{\C\cbrak}\left(\mr{H}^{i}(\C\cbrak\otimes_{\C\brak} \mr{C})\right).$$ Hence, $r_i=0$ and $\mr{H}^{i}\left(\mr{C}\right)$ is free.
\end{proof}
\begin{lemma}
Let $\mathfrak{L}$ be a Solomon-Verbitsky collection and let $\widetilde{\mathcal{D}}\subset \widetilde{\mathcal{D}}_\mathfrak{L}$ be a finite full dg subcategory of $\widetilde{\mathcal{DQ}}_\mathfrak{L}$. Then it is formal and quasi-isomorphic to the trivial formal deformation of $\iota^*\widetilde{\mathcal{D}}$, where $\iota :\C\to \C\brak$ is the inclusion of the central fibre.
\end{lemma}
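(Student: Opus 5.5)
Using \cref{freelemma}, I first show that the morphism complexes of $\widetilde{\mathcal{D}}$ have free cohomology. Then the finite-category formality criterion \cref{a2dg} over the integral domain $\C\brak$ gives formality of $\widetilde{\mathcal{D}}$ from formality of its generic fibre. Finally, freeness together with the vanishing of all higher operations identifies the formal model with the trivial deformation of $\iota^*\widetilde{\mathcal{D}}$.

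\emph{Step 1 (freeness).} Let $\widetilde{\mathscr{D}}_\mr{L},\widetilde{\mathscr{D}}_\mr{M}$ be global lattices of objects of $\widetilde{\mathcal{D}}$. The intersection of the supports is compact, so \cref{perf} shows that $\mr{C}=\mr{RHom}_{\widehat{\mathscr{W}}_\mr{X}(0)}(\widetilde{\mathscr{D}}_\mr{L},\widetilde{\mathscr{D}}_\mr{M})$ is perfect over $\C\brak$. Its generic fibre is $\mathcal{DQ}_\mathfrak{L}(\mathscr{D}_\mr{L},\mathscr{D}_\mr{M})$. By \cref{dqdr} and \cref{pdr}, the $\C\cbrak$-dimension of its cohomology in each degree equals the dimension of the twisted de Rham cohomology $\mr{H}(\mr{L}\cap\mr{M},\mr{or})$ with the appropriate shift $c$. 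The central fibre $\iota^*\mr{C}$ is $\mr{RHom}_{\mathscr{O}_\mr{X}}$ of the classical orientation line bundles, so its cohomology is computed by the local-to-global spectral sequence with $\mr{E}_2=\mr{H}^\bullet(\mr{L}\cap\mr{M},\Omega^{\bullet-c}_{\mr{L}\cap\mr{M}}\otimes\mr{or})$. This is the Hodge-to-de Rham type page. Clean intersection and the K\"ahler hypothesis give the $\partial\bar\partial$-lemma used in \cref{drvir}, so the spectral sequence degenerates and the dimensions agree. Then \cref{freelemma} gives that $\mr{H}(\mr{C})$ is free of finite rank. Hence $\widetilde{\mathcal{D}}$ is flat and proper, and it is finite by assumption. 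Moreover $\mr{H}\widetilde{\mathcal{D}}\otimes_{\C\brak}\C\cong\mr{H}\iota^*\widetilde{\mathcal{D}}$ as graded categories, because the Tor terms in the proof of \cref{freelemma} vanish.

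\emph{Step 2 (Hochschild torsion-freeness and formality).} Because $\mr{H}\widetilde{\mathcal{D}}$ is finite, free and graded, \cref{hochbasechange} and its corollary give $\mr{CC}_c^{p,q}(\mr{H}\widetilde{\mathcal{D}})=\mr{CC}_c^{p,q}(\mr{H}\iota^*\widetilde{\mathcal{D}})\brak$ on the cochain level. Here I use that a free module of finite rank over $\C\brak$ is $\C\brak$-adically complete, so that Hom spaces base change. The Hochschild differential of the graded category $(\mr{H}\widetilde{\mathcal{D}},m_2)$ is the $\C\brak$-linear extension of that of $\mr{H}\iota^*\widetilde{\mathcal{D}}$, provided the composition $m_2$ is constant in $\hbar$.

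The main obstacle is this constancy of $m_2$. I would obtain it by choosing, via Step 1 and the formality of $\mathcal{DR}^{\mr{vir}}_\mathfrak{L}$, a basis of harmonic representatives. The Yoneda product on the classical side agrees with the wedge product on $\mr{H}(\mr{L}\cap\mr{M},\mr{or})$. By the multiplicativity in \cref{dqdr}, the quantum side, restricted to the lattice, reduces to the same wedge product. Hence the structure constants lie in $\C$, and I expect the identification $\mr{H}\widetilde{\mathcal{D}}\cong(\mr{H}\iota^*\widetilde{\mathcal{D}})\brak$ of graded categories.

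Granting this, $\mr{HH}^2_c(\mr{H}\widetilde{\mathcal{D}},m_2)=\mr{HH}^2_c(\mr{H}\iota^*\widetilde{\mathcal{D}})\brak$ is torsion-free. The generic fibre $\mr{Ind}_{\C\cbrak/\C\brak}(\widetilde{\mathcal{D}})$ is a finite full subcategory of $\mathcal{DQ}_\mathfrak{L}$, and it is formal by \cref{lags}. So \cref{a2dg} shows that $\widetilde{\mathcal{D}}$ is formal.

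\emph{Step 3 (triviality of the deformation).} Since $\widetilde{\mathcal{D}}$ is formal, it is quasi-isomorphic to the graded category $\mr{H}\widetilde{\mathcal{D}}$. By Step 2 this is $(\mr{H}\iota^*\widetilde{\mathcal{D}})\brak$. Base changing the formality of $\widetilde{\mathcal{D}}$ along $\iota$, using flatness from Step 1, shows that $\iota^*\widetilde{\mathcal{D}}$ is formal and quasi-isomorphic to $\mr{H}\iota^*\widetilde{\mathcal{D}}$. Hence $\widetilde{\mathcal{D}}\simeq\mr{Ind}_{\C\brak/\C}(\iota^*\widetilde{\mathcal{D}})$, which is the trivial formal deformation.
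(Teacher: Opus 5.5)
Your Step 1 is essentially the paper's argument. One correction there: the degeneration of the local-to-global spectral sequence does not come from the $\partial\bar\partial$-lemma directly. It is forced by a dimension sandwich. Semicontinuity bounds $\dim_\C\mr{H}^i(\iota^*\mr{C})$ below by $\dim_{\C\cbrak}\mr{H}^i(\C\cbrak\otimes_{\C\brak}\mr{C})$. The $\mr{E}_2$-page bounds it above, and by the Hodge decomposition on $\mr{L}\cap\mr{M}$ the $\mr{E}_2$ total dimension equals that same twisted de Rham dimension.

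\textbf{The gap is in Step 2:} the claim that the composition $m_2$ of $\mr{H}\widetilde{\mathcal{D}}$ is constant in $\hbar$.
\begin{itemize}
\item \cref{dqdr} is an identification over $\C\cbrak$ only. It gives a $\C$-form $\mr{H}\mathcal{D}^{\mr{vir}}$ of the generic fibre $\mr{H}\mathcal{D}$ with constant structure constants. It says nothing about how the $\C\brak$-lattice $\mr{H}\widetilde{\mathcal{D}}$ sits relative to that $\C$-form, and there are no harmonic representatives on the quantum side to pin it down.
\item In general you only have $m_\hbar=\varphi_\hbar^{-1}m_{\mr{dR}}(\varphi_\hbar,\varphi_\hbar)$ for a gauge $\varphi_\hbar$ with coefficients in $\C\cbrak$, and this need not be constant. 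For example, in $\C\cbrak e_1\times\C\cbrak e_2$ the lattice spanned by $1$ and $y=\hbar e_1$ is closed under multiplication with $y^2=\hbar y$. Its central fibre is $\C[y]/(y^2)\not\cong\C\times\C$.
\item On the classical side you also do not know a priori that the Yoneda composition between different Lagrangians is the product of $\mr{H}\mathcal{D}^{\mr{vir}}$. The local-to-global spectral sequence only gives a multiplicative filtration $\mr{F}$ on $\iota^*\mr{H}\widetilde{\mathcal{D}}$ with $\mr{gr}_\mr{F}\cong\mr{H}\mathcal{D}^{\mr{vir}}$, and a filtered algebra is not determined by its associated graded.
\end{itemize}
The identification $\mr{H}\widetilde{\mathcal{D}}\cong(\mr{H}\iota^*\widetilde{\mathcal{D}})\brak$ that you grant is essentially the second assertion of the lemma. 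So Steps 2 and 3 are circular as written.

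\textbf{What is missing} is a rigidity argument that does not assume constancy. The paper proceeds in two stages.
\begin{enumerate}
\item Freeness and formality. It shows $\mr{HH}^{p,q}(\mr{H}\widetilde{\mathcal{D}})$, hence $\mr{HH}^\bullet_c(\mr{H}\widetilde{\mathcal{D}})$, is free by a dimension sandwich.
  \begin{itemize}
  \item Upper semicontinuity along $\hbar$ gives $\dim_{\C\cbrak}\mr{HH}^{p,q}(\mr{H}\mathcal{D})\le\dim_\C\mr{HH}^{p,q}(\iota^*\mr{H}\widetilde{\mathcal{D}})$.
  \item The completed Rees deformation of $\mr{F}$ has central fibre $\mr{H}\mathcal{D}^{\mr{vir}}$. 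Combined with $\mr{H}\mathcal{D}\cong\mr{Ind}_{\C\cbrak/\C}(\mr{H}\mathcal{D}^{\mr{vir}})$, it gives the reverse inequality.
  \item \cref{freelemma} then yields freeness, and \cref{a2dg} yields formality, without any information about $m_\hbar$.
  \end{itemize}
\item Triviality, proved only afterwards.
  \begin{itemize}
  \item Differentiating $m_\hbar=\varphi_\hbar^{-1}m_{\mr{dR}}(\varphi_\hbar,\varphi_\hbar)$ shows that $m_\hbar'$ becomes a Hochschild coboundary once $\hbar$ is inverted.
  \item Writing $m_\hbar=m+m_r\hbar^r+\cdots$, the class lifting $[m_r]$ is $\hbar$-torsion in the free module $\mr{HH}^{2,0}_\hbar$, so $[m_r]=0$.
  \item One then gauges $m_r$ away with $\mr{id}-\psi^r\hbar^r$ and iterates; the infinite composition converges $\hbar$-adically.
  \end{itemize}
\end{enumerate}
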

\begin{proof}
Let $\mathcal{D}=\mr{loc}\left(\widetilde{\mathcal{D}}\right)$ and $\mathcal{D}^\mr{vir}\subset \mathcal{DR}^{\mr{vir}}_\mathfrak{L}$ be the full subcategory spanned by the objects of $\mathcal{D}$. We know $\mathcal{D}$ is formal by \cref{lags}. The degeneration of the local-to-global $\mr{Ext}$ spectral sequences implies that $\mr{H}\widetilde{\mathcal{D}}$ is free of finite type over $\C\brak$. Hence we think of it as a (flat) formal deformation of $\mr{H}\iota^*\widetilde{\mathcal{D}}\cong \iota^*\mr{H}\widetilde{\mathcal{D}}$. Since $\mr{H}\widetilde{\mathcal{D}}$ is finite and its morphism spaces are free of finite rank over $\C\brak$, upper semi-continuity gives an inequality \begin{equation}\label{eq1}\mr{dim}_{\C\cbrak}\left(\mr{HH}^{p,q}\left(\mr{H}\mathcal{D}\right)\right)\le \mr{dim}_{\C}\left(\mr{HH}^{p,q}\left(\iota^*\mr{H}\widetilde{\mathcal{D}}\right)\right).\end{equation} On the other hand, the spectral sequence argument shows that there is a multiplicative filtration $\mr{F}$ on $\iota^*\mr{H}\widetilde{\mathcal{D}}$ such that $$\mr{gr}_\mr{F}\left(\iota^*\mr{H}\widetilde{\mathcal{D}}\right) \cong  \mr{H}\mathcal{D}^\mr{vir}.$$ The completed Rees deformation associated to $\mr{F}$ gives a deformation with central fibre $\mr{H}\mathcal{D}^{\mr{vir}}$. Thus
\begin{equation}\label{eq2}\mr{dim}_{\C\cbrak}\left(\mr{HH}^{p,q}\left(\mr{H}\mathcal{D}\right)\right)=\mr{dim}_{\C}\left(\mr{HH}^{p,q}\left(\mr{H}\mathcal{D}^\mr{vir}\right)\right)\ge \mr{dim}_{\C}\left(\mr{HH}^{p,q}\left(\iota^*\mr{H}\widetilde{\mathcal{D}}\right)\right).\end{equation}
Combining \eqref{eq1} and \eqref{eq2} with \cref{freelemma}, we get the compactly supported (of second kind) Hochschild cohomology $\mr{HH}_c^\bullet\left(\mr{H}\widetilde{\mathcal{D}}\right)$ is free over $\C\brak$. Now formality of $\mathcal{D}$ and \cref{a2dg} imply $\widetilde{\mathcal{D}}$ is formal.\\
Abbreviate the Hochschild cohomology of the graded categories as follows: $\mr{HH} \coloneqq \mr{HH}\big(\iota^*\mr{H}\widetilde{\mathcal{D}}\big)$, $\mr{HH}_{\hbar} \coloneqq \mr{HH}\big(\mr{H}\widetilde{\mathcal{D}}\big)$ and let $\mr{HH}_{(\hbar)}$ be the localisation of $\mr{HH}_\hbar$ at $\hbar$. The reduction $\mr{mod}\,\hbar$ gives a map $\mr{HH}_{\hbar} \to \mr{HH}$ and since $\mr{HH}_\hbar$ is free over $\C\brak$, taking a section of the reduction map, we get $\mr{HH}_\hbar \cong \mr{HH}\brak$.\\
  We have an isomorphism of graded categories $$\varphi_{\hbar} :\mr{H}\widetilde{\mathcal{D}}\otimes_{\C\brak}\C\cbrak = \mr{H}\mathcal{D}\cong \mr{Ind}_{\C\cbrak/\C}\left(\mr{H}\mathcal{D}^{\mr{vir}}\right).$$ Write $m_{\hbar}$ and $m_{\mr{dR}}$ for the compositions of $\mr{H}\widetilde{\mathcal{D}}$ and $\mr{H}\mathcal{D}^{\mr{vir}}$, respectively. Then by definition
  \begin{equation}\label{mult}
    m_{\hbar} = \varphi_{\hbar}^{-1}m_{\mr{dR}}(\varphi_{\hbar},\varphi_{\hbar}).
  \end{equation}
 Differentiating \eqref{mult} with respect to $\hbar$ gives
 \begin{equation}\label{mult2}
 m_{\hbar}' = -\mr{d}_{\hbar}(\varphi_{\hbar}^{-1}\varphi_{\hbar}'),
 \end{equation}
 where $\mr{d}_{\hbar}$ is the Hochschild differential of the graded category $\mr{H}\widetilde{\mathcal{D}}$, i.e. $$[m_\hbar']=0\text{ in }\mr{HH}^{2,0}_{(\hbar)}.$$ Since we have a free $\C\brak$-module, we may write $$m_{\hbar} = m + m_{r}\hbar^{r} + \cdots,\, r \ge 1,$$ hence we see that the left side of \eqref{mult2} is $$rm_{r}\hbar^{r-1}+(r+1)m_{r+1}\hbar^{r} + \cdots.$$ Hence we deduce that $$m_{r}+(r+1)/r\cdot m_{r+1}\hbar + (r+2)/r\cdot m_{r+2}\hbar^{2} +\cdots$$ is an $\hbar$-torsion class in $\mr{HH}^{2,0}_\hbar$, lifting the class $[m_r] \in \mr{HH}^{2,0}$, so it must vanish. Thus $[m_r]=0$ too. Letting $m_r =\mr{d}(\psi^{r})$, we define an automorphism $$\mr{H}\widetilde{\mathcal{D}}\to \mr{H}\widetilde{\mathcal{D}}$$ by acting as identity on objects and $\mr{id}-\psi^{r}\hbar^r$ on morphisms. It kills $m_r$. By induction we get $\psi^{i}$ for all $i \ge r$, the infinite composition  $$\psi_\hbar = \big(\big(\mr{id}-\psi^{r}\hbar^r\big)\circ \big(\mr{id}-\psi^{r+1}\hbar^{r+1}\big)\circ \cdots \big)$$ makes sense and we have $\psi_\hbar^{-1}m_\hbar(\psi_\hbar,\psi_\hbar) = m$, showing the triviality of $m_\hbar$.
\end{proof}
We immediately get the following corollary:
\2
\begin{corollary}\label{hfree}
The composition in $\widetilde{\mathcal{DQ}}_\mathfrak{L}$ is $\hbar$-free.
\end{corollary}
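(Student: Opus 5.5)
The plan is to reduce the statement to the preceding lemma, which treats finite full subcategories, and then to pass to the limit over an exhaustion of $\mathfrak{L}$. I read ``$\hbar$-free'' as follows: there is an isomorphism of graded $\C\brak$-linear categories
$$\mr{H}\widetilde{\mathcal{DQ}}_\mathfrak{L}\cong \mr{Ind}_{\C\brak/\C}\left(\iota^*\mr{H}\widetilde{\mathcal{DQ}}_\mathfrak{L}\right)$$
which is the identity on objects and reduces to the identity modulo $\hbar$. In other words, the composition $m_\hbar$ is gauge equivalent to its constant term $m$.

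\textbf{Step 1: set-up.} Since $\mathfrak{L}$ is countable, I fix an exhaustion of $\widetilde{\mathcal{DQ}}_\mathfrak{L}$ by finite full dg subcategories $\widetilde{\mathcal{D}}_1\subset\widetilde{\mathcal{D}}_2\subset\cdots$. A single composition only involves three objects, so every instance of $m_\hbar$ already lives in some $\widetilde{\mathcal{D}}_n$. By the preceding lemma, each $\mr{H}\widetilde{\mathcal{D}}_n$ is free over $\C\brak$, and $m_\hbar|_{\widetilde{\mathcal{D}}_n}$ is trivialised by a gauge transformation of the form $\psi_{\hbar,n}=\prod_{i\ge r}(\mr{id}-\psi^{i}_n\hbar^{i})$. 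Freeness on each finite piece gives freeness of all morphism spaces of $\mr{H}\widetilde{\mathcal{DQ}}_\mathfrak{L}$, so I can write $m_\hbar=m+\sum_{i\ge r}m_i\hbar^i$ globally, where each $m_i$ is a Hochschild $2$-cochain on the whole graded category $\iota^*\mr{H}\widetilde{\mathcal{DQ}}_\mathfrak{L}$.

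\textbf{Step 2: run the lemma's induction globally.} I will repeat the order-by-order argument of the lemma on the whole category. At order $r$ the cocycle $m_r$ lies in
$$\mr{CC}^{2,0}=\prod_{\mr{X}_0,\mr{X}_1,\mr{X}_2}\mr{Hom}\left(\mr{H}(\mr{X}_1,\mr{X}_2)\otimes \mr{H}(\mr{X}_0,\mr{X}_1),\mr{H}(\mr{X}_0,\mr{X}_2)\right),$$
where $\mr{H}$ abbreviates $\iota^*\mr{H}\widetilde{\mathcal{DQ}}_\mathfrak{L}$. The cochain complex $\mr{CC}^{\bullet,0}$ of the whole category is the inverse limit of the corresponding complexes $\mr{CC}^{\bullet,0}_n$ of the finite pieces, and the restriction maps between them are surjective. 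The lemma shows that $[m_r|_{\mathcal{D}_n}]=0$ for every $n$. The Milnor sequence
$$0\to \varprojlim{}^1\mr{HH}^{1,0}_n\to \mr{HH}^{2,0}\to \varprojlim \mr{HH}^{2,0}_n\to 0$$
then reduces global vanishing of $[m_r]$ to the vanishing of $\varprojlim{}^1\mr{HH}^{1,0}_n$.

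\textbf{Step 3: the main obstacle, $\varprojlim{}^1$.} Here $\mr{HH}^{1,0}_n$ is the space of degree-$0$ derivations of the graded category $\mathcal{D}_n$ modulo inner ones. The key point is that each $\mr{HH}^{1,0}_n$ is a finite-dimensional $\C$-vector space: the pieces are finite and their morphism spaces are finite dimensional, by perfectness (\cref{perf}). An inverse system of finite-dimensional vector spaces satisfies Mittag-Leffler, so $\varprojlim{}^1$ vanishes. Hence $[m_r]=0$ globally and I can choose $\psi^r$ with $m_r=\mr{d}\psi^r$ on the whole category.

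\textbf{Step 4: conclude.} Applying the gauge transformation $\mr{id}-\psi^r\hbar^r$ kills the order-$r$ term. Iterating over $i\ge r$, the infinite product $\psi_\hbar=\prod_{i\ge r}(\mr{id}-\psi^i\hbar^i)$ converges $\hbar$-adically, exactly as in the lemma, and it satisfies $\psi_\hbar^{-1}m_\hbar(\psi_\hbar,\psi_\hbar)=m$.

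\textbf{Where this could fail.} The weakest step is the order-by-order induction. After each gauge step, the new $m_{r+1}$ must still restrict to a class that vanishes on every finite piece. I would justify this by re-applying the derivative argument of the lemma (equation \eqref{mult2} and the torsion-freeness of $\mr{HH}^{2,0}_\hbar$) to the gauge-transformed composition. That argument only uses freeness and the isomorphism $\varphi_\hbar$, and both are preserved under composing $\varphi_\hbar$ with $\psi_\hbar$. If the Mittag-Leffler argument turns out to be problematic, the fallback is to enlarge the trivialisations inductively along the exhaustion, correcting the trivialisation on $\mathcal{D}_{n+1}$ by an automorphism of the trivial deformation so that it agrees on $\mathcal{D}_n$. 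This needs the same surjectivity on $\mr{HH}^{1,0}$ that the Mittag-Leffler argument provides.
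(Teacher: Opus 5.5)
Your argument works and takes a different, stronger route than the paper, but Step 1 fails as written and needs the repair described in the second paragraph.

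The paper gives no argument beyond calling the corollary immediate from the preceding lemma. Any composition involves only three objects, so it lives in a finite full subcategory, where the lemma trivialises $m_\hbar$. The trivialisations on different finite pieces are never made compatible. That piecewise statement is enough for the later use, torsion-freeness of $\mr{HH}_\bullet(\mr{H}\widetilde{\mathcal{DQ}}_\mathfrak{L})$, because the Hochschild chain complex is the filtered union of those of the finite full subcategories.

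You instead run the order-by-order induction on the whole category. You observe that the obstruction $[m_r]$ lives in a product-type Hochschild group and show it vanishes once it vanishes on every finite piece. Your treatment of the inductive step is fine: a global gauge transformation preserves gauge-triviality on each finite piece, so the next leading term is again exact there. The gain is a single global isomorphism $\mr{H}\widetilde{\mathcal{DQ}}_\mathfrak{L}\cong\mr{Ind}_{\C\brak/\C}(\iota^*\mr{H}\widetilde{\mathcal{DQ}}_\mathfrak{L})$. Hence $\mr{HH}_\bullet(\mr{H}\widetilde{\mathcal{DQ}}_\mathfrak{L})\cong\mr{HH}_\bullet(\iota^*\mr{H}\widetilde{\mathcal{DQ}}_\mathfrak{L})\otimes_\C\C\brak$ is free. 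That is the projectivity \cref{a2cy} actually asks for. The piecewise version only gives torsion-freeness, which is strictly weaker for infinitely generated $\C\brak$-modules such as $\C\cbrak$.

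The failing step is Step 1: countability of $\mathfrak{L}$ does not make $\mr{Ob}(\widetilde{\mathcal{DQ}}_\mathfrak{L})$ countable. The category contains every quantised orientation supported on a member of $\mathfrak{L}$, not one per Lagrangian. A single $\mr{L}$ can support a continuum of pairwise non-isomorphic ones: twist by the flat connections $\mr{d}+\omega$, $\omega\in\mr{H}^0(\mr{L},\Omega^1_\mr{L})$, on the trivial line bundle. This leaves the underlying square root of $\mr{K}_\mr{L}$ unchanged and gives a continuum as soon as $b_1(\mr{L})>0$. So there is no exhausting sequence $\widetilde{\mathcal{D}}_1\subset\widetilde{\mathcal{D}}_2\subset\cdots$. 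The Milnor sequence and the Mittag-Leffler criterion you invoke are statements about countable towers, so Steps 2 and 3 do not apply as written.

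The repair uses your finite-dimensionality observation more strongly. Index over the directed poset of all finite full subcategories $F$, so that $\mr{CC}^{\bullet,0}=\varprojlim_F\mr{CC}^{\bullet,0}_F$ with every term finite-dimensional. Inverse limits of directed systems of finite-dimensional vector spaces are exact over any directed index set, because a directed system of non-empty finite-dimensional affine spaces has non-empty limit. Therefore cocycles, coboundaries and cohomology all commute with $\varprojlim_F$, giving $\mr{HH}^{2,0}\cong\varprojlim_F\mr{HH}^{2,0}_F$ with no $\varprojlim^{1}$ term. Then $[m_r|_F]=0$ for every $F$ gives $[m_r]=0$, and the rest of your argument goes through unchanged.
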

With this preliminary results in our toolkit, we are ready to prove the formality of the dg category of Lagrangian $\mr{D}$-branes in a Solomon-Verbitsky collection.
\2
\begin{theorem}\label{main1proof}
Let $\mr{X}/\C$ be a holomorphic symplectic manifold. Suppose $\mathfrak{L}$ is a Solomon-Verbitsky colllection. Then the dg category $\mathcal{D}_{\mr{Lag}}(\mathfrak{L})$ is formal.
\end{theorem}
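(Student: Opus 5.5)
The plan is to obtain $\mathcal{D}_{\mr{Lag}}(\mathfrak{L})$ as the classical limit $\hbar\to 0$ of the $\C\brak$-linear lift $\widetilde{\mathcal{DQ}}_\mathfrak{L}$ of \cref{lift}, and to transport formality along this limit. We choose the lattices $\widetilde{\mathscr{D}}_\mr{L}$ of the quantised orientations so that $\iota^*\widetilde{\mathscr{D}}_\mr{L}\cong \mr{K}_\mr{L}^{1/2}$ (up to the flat twist, which does not affect morphism complexes by the formality results of \cite{MR4678893}). Since $\iota^*$ commutes with $\mr{R}\shom$ and the Rees/DQ structure gives $\iota^*\widetilde{\mathscr{W}}_\mr{X}(0)\simeq\mathscr{O}_\mr{X}$, this yields a quasi-equivalence
$$\iota^*\widetilde{\mathcal{DQ}}_\mathfrak{L}\simeq \mathcal{D}_{\mr{Lag}}(\mathfrak{L})$$
which is the identity on objects. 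This holds at least after restricting to the full subcategory spanned by one chosen orientation per object. Objects with different orientation data are handled the same way, because each object of $\mathcal{D}_{\mr{Lag}}(\mathfrak{L})$ lifts.

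By the preceding lemma, every finite full subcategory $\widetilde{\mathcal{D}}\subset\widetilde{\mathcal{DQ}}_\mathfrak{L}$ is formal and is in fact quasi-isomorphic to the trivial deformation $\mr{Ind}_{\C\brak/\C}(\iota^*\mr{H}\widetilde{\mathcal{D}})$. Base-changing along $\iota$ then shows that every finite full subcategory of $\mathcal{D}_{\mr{Lag}}(\mathfrak{L})$ is formal, with cohomology $\iota^*\mr{H}\widetilde{\mathcal{D}}=\mr{H}\iota^*\widetilde{\mathcal{D}}$. Here we use that $\mr{H}\widetilde{\mathcal{D}}$ is $\hbar$-free, which is \cref{hfree} together with the freeness established in the lemma.

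The remaining issue, and the main obstacle, is passing from finite subcategories to the countable collection $\mathfrak{L}$. Formality of every finite full subcategory does not immediately give formality of the whole category, since the choices of formality isomorphisms need not be compatible. To handle this, I would work with the Kaledin class of \cref{kal}. Choose a minimal model $(\mr{H}\Cc,m)$ of $\Cc=\mathcal{D}_{\mr{Lag}}(\mathfrak{L})$, which exists because the base is the field $\C$. Then it suffices to show $\mr{k}_\Cc=0$, and by \cref{kal} it is enough to kill the truncated classes $\mr{k}_\Cc^{\le n}$ inductively, as in the proof of \cref{a2generic}.

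Assume we have reached $m_3=\cdots=m_n=0$ globally. The obstruction is then $[m_{n+1}]\in\mr{HH}^2(\mr{H}\Cc,m_2)$. Its restriction to each finite full subcategory vanishes, by formality of that subcategory and \cref{a2kaledin}. The Hochschild complex of $\Cc$ is the inverse limit of the Hochschild complexes of an exhausting chain $\Cc_1\subset\Cc_2\subset\cdots$ of finite full subcategories, which is possible because $\mathfrak{L}$ is countable. The Milnor sequence then expresses the obstruction in terms of $\varprojlim$ and $\varprojlim^1$ of the finite Hochschild cohomologies. So the key point is to show $\varprojlim^1\mr{HH}^1(\mr{H}\Cc_k)=0$. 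I would try to prove Mittag-Leffler for the restriction maps using finite-dimensionality over $\C$: each $\mr{HH}^{p,q}(\mr{H}\Cc_k)$ is finite-dimensional, by properness and compactness of the Lagrangians, and finite-dimensionality gives Mittag-Leffler automatically.

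With $\varprojlim^1=0$, the class $[m_{n+1}]$ vanishes, and we choose a primitive $\tau$ with $[m_2,\tau]=m_{n+1}$. Applying $\mr{exp}(\tau)$ then removes $m_{n+1}$. The infinite composition of these isotopies converges in the weight filtration, exactly as in the proof of \cref{kal}, and produces an $\Aa$-isomorphism $\mr{H}\Cc\simeq\Cc$. This proves that $\mathcal{D}_{\mr{Lag}}(\mathfrak{L})$ is formal.
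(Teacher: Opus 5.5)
Your proof is correct, with one step to tighten (second paragraph), but it takes a genuinely different route from the paper.

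The paper never restricts to finite subcategories of $\mathcal{D}_{\mr{Lag}}(\mathfrak{L})$. It applies the Calabi--Yau criterion \cref{a2cy} directly to the whole $\C\brak$-linear category $\widetilde{\mathcal{DQ}}_\mathfrak{L}$, using three inputs: flatness (freeness of the $\mr{Ext}$ groups from \cite{MR4678893}), the weak proper $2n$-Calabi--Yau structure from \cref{cyprop}, and torsion-freeness of $\mr{HH}_\bullet(\mr{H}\widetilde{\mathcal{DQ}}_\mathfrak{L})$ from \cref{hfree}. Formality of the generic fibre $\mathcal{DQ}_\mathfrak{L}$ (\cref{lags}) then lifts to $\widetilde{\mathcal{DQ}}_\mathfrak{L}$ and descends to its central fibre. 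The Calabi--Yau duality is there to replace compactly supported Hochschild cohomology, which no longer base-changes once there are infinitely many objects.

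You instead use the finite-subcategory lemma, where the $\hbar$-deformation argument via \cref{a2dg} is already done, to get formality of every finite full subcategory of the central fibre. You then handle infinitely many objects over the field $\C$, via the Kaledin-class induction of \cref{kal} and a Milnor sequence. This avoids the Calabi--Yau structure and Hochschild homology altogether, and it makes the finite-to-countable passage explicit; in the paper that passage is implicit in \cref{hfree} and \cref{a2cy}. The price is that you only conclude formality of $\mathcal{D}_{\mr{Lag}}(\mathfrak{L})$, whereas the paper's route also yields formality of $\widetilde{\mathcal{DQ}}_\mathfrak{L}$ over $\C\brak$.

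The step to tighten is the Mittag-Leffler claim. The total-degree group $\mr{HH}^1(\mr{H}\Cc_k)$ is a product over arities of bigraded pieces and need not be finite-dimensional. For example, if some $\mr{L}\in\mathfrak{L}$ has $\mr{H}^1(\mr{L};\C)\neq 0$, the exterior-algebra factor $\mr{H}^*(\mr{L};\C)$ already contributes power series. So finite-dimensionality does not apply to it as you wrote it. Argue bidegree-wise instead:
\begin{itemize}
\item The Hochschild differential $[m_2,-]$ of a graded category raises arity by one and preserves internal degree.
\item Restriction to full subcategories respects both gradings, so the tower of cochain complexes splits by internal degree with surjective transition maps.
\item $m_{n+1}$ lies in the single bidegree (arity $n+1$, internal degree $1-n$).
\end{itemize}
Hence the relevant $\varprojlim^1_k$ term involves only the arity-$n$, internal-degree-$(1-n)$ Hochschild groups of $\mr{H}\Cc_k$. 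These are finite-dimensional, so Mittag-Leffler applies. Equivalently, note that $\varprojlim^1$ commutes with products.

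What must be countable is the set of objects, not just $\mathfrak{L}$. This holds up to isomorphism, since each compact $\mr{L}$ carries only finitely many square roots of $\mr{K}_\mr{L}$, a torsor under $\mr{H}^1(\mr{L};\mathbf{Z}/2)$.
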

\begin{proof}
The category $\mathcal{D}_{\mr{Lag}}(\mathfrak{L})$ can be realised as the central fibre of the dg category $\widetilde{\mathcal{DQ}}_\mathfrak{L}$. In addition, we have $$\mr{loc}\left(\widetilde{\mathcal{DQ}}_\mathfrak{L}\right)\cong\mathcal{DQ}_\mathfrak{L}.$$ By \cref{lags} it follows that $\mathcal{DQ}_\mathfrak{L}$ is formal. The strategy therefore is to apply \cref{a2cy} following the philosophy that "generically formal $\implies$ formal".\\
Recall that for any $\widetilde{\mathscr{D}}_\mr{L_i}$ in $\widetilde{\mathcal{DQ}}_\mathfrak{L}$, the space of morphisms $$\mr{RHom}_{\widehat{\mathscr{W}}_\mr{X}(0)}\left(\widetilde{\mathscr{D}}_\mr{L_1},\widetilde{\mathscr{D}}_\mr{L_2}\right)$$ is a perfect $\C\brak$-module and \cref{cyprop} implies that $\widetilde{\mathcal{DQ}}_\mathfrak{L}$ is weak proper $2n$-Calabi-Yau, and so its cohomology $\mr{H}\widetilde{\mathcal{DQ}}_\mathfrak{L}$ is Calabi-Yau too. It will now suffice to show that $\widetilde{\mathcal{DQ}}_\mathfrak{L}$ is flat and the Hochschild homology $\mr{HH}_\bullet\left(\mr{H}\widetilde{\mathcal{DQ}}_\mathfrak{L}\right)$ of the graded category $\mr{H}\widetilde{\mathcal{DQ}}_\mathfrak{L}$ is a torsion-free $\C\brak$-module. It is shown in \cite{MR4678893} that for any $\widetilde{\mathscr{D}}_\mr{L_i}$ in $\mr{H}\widetilde{\mathcal{DQ}}_\mathfrak{L}$, the spaces $$\mr{Ext}_{\widehat{\mathscr{W}}_\mr{X}(0)}\left(\widetilde{\mathscr{D}}_\mr{L_1},\widetilde{\mathscr{D}}_\mr{L_2}\right)$$ are finite free over $\C\brak$ which takes care of flatness. Moreover, by \cref{hfree}, the Hochschild differential of $$\mr{CC}_\bullet\left(\mr{H}\widetilde{\mathcal{DQ}}_\mathfrak{L}\right)$$ is $\hbar$-free, so its cohomology has no $\hbar$-torsion and it therefore torsion-free. Thus $\widetilde{\mathcal{DQ}}_\mathfrak{L}$ is formal by \cref{a2cy} and so is the central fibre $\iota^*\widetilde{\mathcal{DQ}}_\mathfrak{L}\cong \mathcal{D}_{\mr{Lag}}(\mathfrak{L})$.
\end{proof}
\begin{remark}
There is a more direct and somewhat self-contained version of the above proof. By the Calabi-Yau property, we have an isomorphism of $\C\brak$-modules $$\mr{HH}_\bullet^\vee\left(\mr{H}\widetilde{\mathcal{DQ}}_\mathfrak{L}\right)\simeq\mr{HH}^{\bullet+2n}\left(\mr{H}\widetilde{\mathcal{DQ}}_\mathfrak{L}\right).$$ We can calculate the left side via the spectral sequence $$\mr{E}_2^{p,q}=\mr{Ext}_{\C\brak}^p\left(\mr{HH}_q\left(\mr{H}\widetilde{\mathcal{DQ}}_\mathfrak{L}\right),\C\brak\right)\Rightarrow \mr{Ext}^{p+q}_{\C\brak}\left(\mr{CC}_\bullet\left(\mr{H}\widetilde{\mathcal{DQ}}_\mathfrak{L}\right),\C\brak\right).$$ Since $\C\brak$ has global dimension $1$, it degenerates on the second page. As $\mr{HH}_\bullet\left(\mr{H}\widetilde{\mathcal{DQ}}_\mathfrak{L}\right)$ is torsion-free, we see $\mr{HH}^\bullet\left(\mr{H}\widetilde{\mathcal{DQ}}_\mathfrak{L}\right)$ is the $\C\brak$-dual of $\mr{HH}_\bullet\left(\mr{H}\widetilde{\mathcal{DQ}}_\mathfrak{L}\right)$, hence is torsion-free.\\
We claim that the natural map 
\begin{equation}\label{hhcinj}\mr{HH}^\bullet\left(\mr{H}\widetilde{\mathcal{DQ}}_\mathfrak{L}\right)\otimes \C\cbrak \to \mr{HH}^\bullet(\mr{H}\mathcal{DQ}_\mathfrak{L})
\end{equation}is injective. Indeed, $\C\cbrak$ is flat over $\C\brak$ and torsion-free modules over $\C\brak$ are projective, so \cref{hhinj} implies the claim. Now, by \cref{a2kaledin}, we may work by induction on $n$, then injectivity of the map \eqref{hhcinj} and formality of $\mathcal{DQ}_\mathfrak{L}$ imply that the obstruction classes map to $0$ and hence are torsion in the $\C\brak$-module $\mr{HH}^2\left(\mr{H}\widetilde{\mathcal{DQ}}_\mathfrak{L}\right)$ which is torsion-free, so must vanish.
\end{remark}

 \bibliography{formalitybib}
 \bibliographystyle{alpha}
\end{document}